\DeclarePairedDelimiterX{\Iintv}[1]{\llbracket}{\rrbracket}{\iintvargs{#1}}
\NewDocumentCommand{\iintvargs}{>{\SplitArgument{1}{,}}m}
{\iintvargsaux#1} 
\NewDocumentCommand{\iintvargsaux}{mm} {#1\mkern1.5mu,\mkern1.5mu#2}
\newtheorem*{rep@theorem}{\rep@title}
\newcommand{\newreptheorem}[2]{
\newenvironment{rep#1}[1]{
 \def\rep@title{#2~\ref{##1}}
 \begin{rep@theorem}}
 {\end{rep@theorem}}}
\definecolor{RedOrange}{cmyk} {0, 0.77, 0.87, 0}
\definecolor{RoyalPurple}{cmyk} {0.84, 0.53, 0, 0}
\definecolor{YellowGreen}{cmyk} {0.44, 0, 0.74, 0}
\definecolor{Fuchsia}{cmyk} {0.47, 0.91, 0, 0.08}
\definecolor{Blue}{cmyk} {0.84, 0.53, 0, 0}
\definecolor{BlueViolet}{cmyk} {0.84, 0.53, 0, 0}
\definecolor{Black}{cmyk} {0.75, 0.68, 0.67, 0.9}
\newcommand{\R}{\mathbb{R}}
\newcommand{\B}{\mathbb{B}}
\newcommand{\N}{\mathbb{N}}
\newcommand{\E}{\mathbb{E}}
\newcommand{\Z}{\mathbb{Z}}
\renewcommand{\P}{\mathbb{P}}
\newcommand{\prob}{\mathbb{P}}
\newcommand{\Cyl}{\mathrm{Cyl}}
\newcommand{\lin}{\left[\kern-0.15em\left[}
\newcommand{\rin} {\right]\kern-0.15em\right]}
\newcommand{\linf}{[\kern-0.15em [}
\newcommand{\rinf} {]\kern-0.15em ]}
\newcommand{\ilin}{\left]\kern-0.15em\left]}
\newcommand{\irin} {\right[\kern-0.15em\right[}
\def\al#1{\begin{align*}#1\end{align*}}
\def\aln#1{\begin{align}#1\end{align}}
\renewcommand{\hat}{\widehat}
\renewcommand{\tilde}{\widetilde}
\newtheorem{lem}{Lemma}[section]
\newtheorem{prop}[lem]{Proposition}
\newtheorem{thm}[lem]{Theorem}
\newtheorem{cor}[lem]{Corollary}
\newtheorem{Def}[lem]{Definition}
\newtheorem {rem}[lem] {Remark}
\newtheorem {assum} {Assumption}
\newcounter{assu}
\definecolor{lilas}{RGB}{182, 102, 210}
\newcommand{\CO}{\color{black}}
\numberwithin{equation}{section}
\newcommand{\K}{K}
\author{Wai-Kit Lam}
\address[Wai-Kit Lam]{Department of Mathematics, National Taiwan University, No. 1, Sec. 4, Roosevelt Rd., Taipei 10617, Taiwan}
\email{waikitlam(at)ntu.edu.tw}
\author{Shuta Nakajima} 
\address[Shuta Nakajima]
{Keio University, Kanagawa, Japan}
\email{njima(at)keio.jp}
\keywords{First-passage percolation, Moderate deviations, Large deviations, Fluctuations}
\subjclass[2020]{Primary 60K35; secondary 60F10; 60F99; 82B43}
\title[Moderate deviations in first-passage percolation for bounded weights]{Moderate deviations in first-passage percolation for bounded weights}
\date{\today}
\begin{document}
\begin{abstract}
We investigate the moderate and large deviations in first-passage percolation (FPP) with bounded weights on $\mathbb{Z}^d$ for $d \geq 2$. Write $T(\mathbf{x}, \mathbf{y})$ for the first-passage time and denote by $\mu(\mathbf{u})$ the time constant in direction $\mathbf{u}$. In this paper, we establish that, if one assumes that the sublinear error term $T(\mathbf{0}, N\mathbf{u}) - N\mu(\mathbf{u})$ is of order $N^\chi$, then under some unverified (but widely believed) assumptions, for $\chi < a < 1$, \begin{align*}
  &\mathbb{P}\bigl(T(\mathbf{0}, N\mathbf{u}) > N\mu(\mathbf{u}) + N^a\bigr) = \exp{\Big(-\,N^{\frac{d(1+o(1))}{1-\chi}(a-\chi)}\Big)},\\
  &\mathbb{P}\bigl(T(\mathbf{0}, N\mathbf{u}) < N\mu(\mathbf{u}) - N^a\bigr) = \exp{\Big(-\,N^{\frac{1+o(1)}{1-\chi}(a-\chi)}\Big)}, \end{align*} with accompanying estimates in the borderline case $a=1$. Moreover, the exponents $\frac{d}{1-\chi}$ and $\frac{1}{1-\chi}$ also appear in the asymptotic behavior near $0$ of the rate functions for upper and lower tail large deviations. Notably, some of our estimates are established rigorously without relying on any unverified assumptions. Our main results highlight the interplay between fluctuations and the decay rates of large deviations, and bridge the gap between these two regimes.

  A key ingredient of our proof is an improved concentration via multi-scale analysis for several moderate deviation estimates, a phenomenon that has previously appeared in the contexts of two-dimensional last-passage percolation and two-dimensional rotationally invariant FPP.

\end{abstract}

\maketitle

\section{Model and main results}

\subsection{Introduction}

 First-passage percolation (FPP), first introduced in \cite{hammersley1965}, is a fundamental model in stochastic geometry that describes the random propagation of interfaces through disordered media. Its study is not only of deep intrinsic mathematical interest but also has numerous applications in modeling nonequilibrium growth, fluid flow in porous media, and scaling phenomena in statistical mechanics. Furthermore,  FPP in two dimensions is expected to fall into the ($1+1$) Kardar–Parisi–Zhang (KPZ) universality class, and in particular it serves as a crucial link between probability theory and statistical physics. Physicists even believe that for each  dimension, there is a universal structure. See \cite{barabasi1995fractal} for the detailed discussions.

The FPP model is set on the lattice $\mathbb{Z}^d$, where $d\geq 2$. To each nearest-neighbor edge $e\in E(\mathbb{Z}^d)$ we assign a nonnegative random weight $\tau_e$, representing the time needed to traverse that edge. The collection $(\tau_e)$ is assumed to be independent and identically distributed (i.i.d.), and nondegenerate. Throughout this paper, we will always assume that
\begin{equation*}
\mathbb{P}(\tau_e=0) < p_c(d),\quad \exists C>0,\,\mathbb{P}(\tau_e\leq C)=1,
\end{equation*}
where $p_c(d)$ is the critical probability for Bernoulli bond percolation on $\mathbb{Z}^d$.

For vertices $\mathbf{x},\mathbf{y}\in\mathbb{Z}^d$, the \emph{first-passage time} between $\mathbf{x}$ and $\mathbf{y}$ is defined by
\[
T(\mathbf{x},\mathbf{y}) := \inf_{\gamma:\, \mathbf{x}\to\mathbf{y}} \sum_{e\in\gamma} \tau_e =:\inf_{\gamma:\, \mathbf{x}\to\mathbf{y}} T(\gamma),
\]
where the infimum is over all lattice paths connecting $\mathbf{x}$ and $\mathbf{y}$. For convenience, we will extend the domain of $T$ to $\R^d\times \R^d$: if $\mathbf{x}, \mathbf{y} \in \R^d$, we define $T(\mathbf{x}, \mathbf{y}) = T(\lfloor \mathbf{x}\rfloor, \lfloor \mathbf{y}\rfloor)$, where $\lfloor\mathbf{x}\rfloor$ is the unique point in $\Z^d$ such that $\mathbf{x} \in \lfloor \mathbf{x}\rfloor + [0,1)^d$ ($\lfloor \mathbf{y}\rfloor $ is defined similarly).  By Kingman's subadditive ergodic theorem  \cite{kingman1973}, for each direction $\mathbf{x} \in \mathbb{Z}^d$, there exists a deterministic constant $\mu(\mathbf{x})$, called the time constant, such that
\aln{
\lim_{N\to\infty} \frac{T(\mathbf{0},N\mathbf{x})}{N} = \mu(\mathbf{x}) \quad \text{almost surely and in } L^1.\notag
}
The function $\mu$ can be uniquely extended to the whole $\mathbb{R}^d$ in a natural way, such that $\mu$ is a norm on $\R^d$; see \cite[Section~2.1]{50years} for the details. We define the \emph{limit shape} $\B_d:=\{\mathbf{x}\in \R^d:\mu(\mathbf{x})\leq 1\}$.

A central challenge in FPP is to understand the sublinear correction term
\[
T(\mathbf{0},N\mathbf{x}) - N\mu(\mathbf{x}),
\]
which is conjectured to fluctuate on the order of $N^\chi$ for some $\chi>0$.  The exponent $\chi=\chi(d,\mathbf{x})$, if it exists, is called the \emph{fluctuation exponent}. It is expected that $\chi$ is ``universal'': the value of $\chi$ should be the same for a large class of edge-weight distributions and independent of the direction $\mathbf{x}$.

Motivated by this perspective, we would like to rigorously analyze the upper- and lower-tail \emph{moderate deviations} in FPP. Assume that the fluctuation exponent $\chi$ exists in an appropriate sense. For an exponent \(a\) satisfying
$
\chi < a < 1,
$ 
we aim to determine the asymptotic behaviors of
\begin{equation}
\label{eq: MDP}
\mathbb{P}\Bigl(T(\mathbf{0},N\mathbf{x}) \;>\; N\,\mu(\mathbf{x}) \;+\; N^a\Bigr)\quad \text{and} \quad \mathbb{P}\Bigl(T(\mathbf{0},N\mathbf{x}) \;<\; N\,\mu(\mathbf{x}) \;-\; N^a\Bigr)
\end{equation}
as \(N\to\infty\). We refer to the borderline case \(a=1\) as the \emph{large deviation} regime.

The main objective of this paper is to establish moderate deviation estimates, and also study the behavior of the rate function near zero, under suitable assumptions on the fluctuation exponent \(\chi\) and on the geometry of the limit shape $\B_d$. We emphasize that some of these assumptions have not been fully verified, but they are widely believed to hold for a broad class of distributions.  These assumptions include both concentration conditions (Assumptions~\ref{assum: initial concentration},  \ref{cond: full concentration}, \ref{ass: lower tail concentration}, and \ref{ass: lower bound for lower tail deviations}) and curvature requirements on the limit shape (Assumptions~\ref{assum: finite curvature} and \ref{ass: positive curvature}). In particular, they ensure that the  fluctuations of \(T(\mathbf{0},N\mathbf{x})\) around $N\mu(\mathbf{x})$ can be controlled up to the moderate deviation scale \(N^a\) for \(a\in(\chi,1)\). Assuming these, the main results provide a rigorous  derivation of the decay rates for general dimensions, bridging the gap between fluctuations and the classical large deviation framework.

    \subsection{Notations}
We now introduce some notation used in the subsequent sections.

\textbf{Norms.} For a vector $\mathbf{x}$, $|\mathbf{x}|$ denotes its Euclidean norm, and $|\mathbf{x}|_\infty$ denotes its $\ell^\infty$-norm.

    \textbf{Tangent hyperplane and orthonormal basis with respect to the limit shape.}
Recall the limit shape $\B_d:=\{\mathbf{x}\in \R^d:\mu(\mathbf{x})\leq 1\}$. Given a vector $\mathbf{u}\in \mathbb{R}^d\setminus \{\mathbf{0}\}$, denote by $H_{\mathbf{u}}$ a tangent hyperplane to the boundary of $\mu(\mathbf{u})\mathbb{B}_d$, with ties broken by a predetermined rule. Moreover, we fix an orthonormal basis $\{\tilde{\mathbf{u}}_1,\dots,\tilde{\mathbf{u}}_d\}$ satisfying
\begin{equation}\label{def: utilde}
\mathbf{u} + \mathrm{span}\{\tilde{\mathbf{u}}_2,\dots,\tilde{\mathbf{u}}_d\} = H_{\mathbf{u}} \quad \text{and} \quad \tilde{\mathbf{u}}_1\cdot\mathbf{u} > 0.
\end{equation}
The choice of $\{\tilde{\mathbf{u}}_1,\dots,\tilde{\mathbf{u}}_d\}$ is not unique, and we will always fix a predetermined rule to construct such an orthonormal basis for each $\mathbf{u}$. When $\mathbf{u} = \mathbf{e}_1$, we will simply set $\tilde{\mathbf{u}}_i = \mathbf{e}_i$, where $(\mathbf{e}_i)$ is the standard basis for $\mathbb{R}^d$. We adopt this convention throughout the paper.

\textbf{Tilted cylinders.} Fix $\mathbf{u}\in \R^d\setminus \{\mathbf{0}\}$. Given $\mathbf{v}\in \R^d\setminus \{\mathbf{0}\}$, $I\subset \R$, and $h>0$, we define a \emph{tilted cylinder} in direction $\mathbf{v}$  and height $h$ as
 \al{
 {\rm Cyl}_{\mathbf{v}}(I,h):=  {\rm Cyl}^{\mathbf{u}}_{\mathbf{v}}(I,h) := \Big\{ y_1 {\mathbf{v}}+  \sum_{i=2}^d y_i \tilde{\mathbf{u}}_i :~\,y_1\in I,\,|y_i|\leq h\Big\}.
 }
 We write ${\rm Cyl}_{\mathbf{x},\mathbf{v}}(I,h):=\mathbf{x}+  {\rm Cyl}_{\mathbf{v}}(I,h)$  for the shift by $\mathbf{x}\in  \Z^d$.

\begin{figure}[t]
    \centering
    \includegraphics[width=0.35\linewidth]{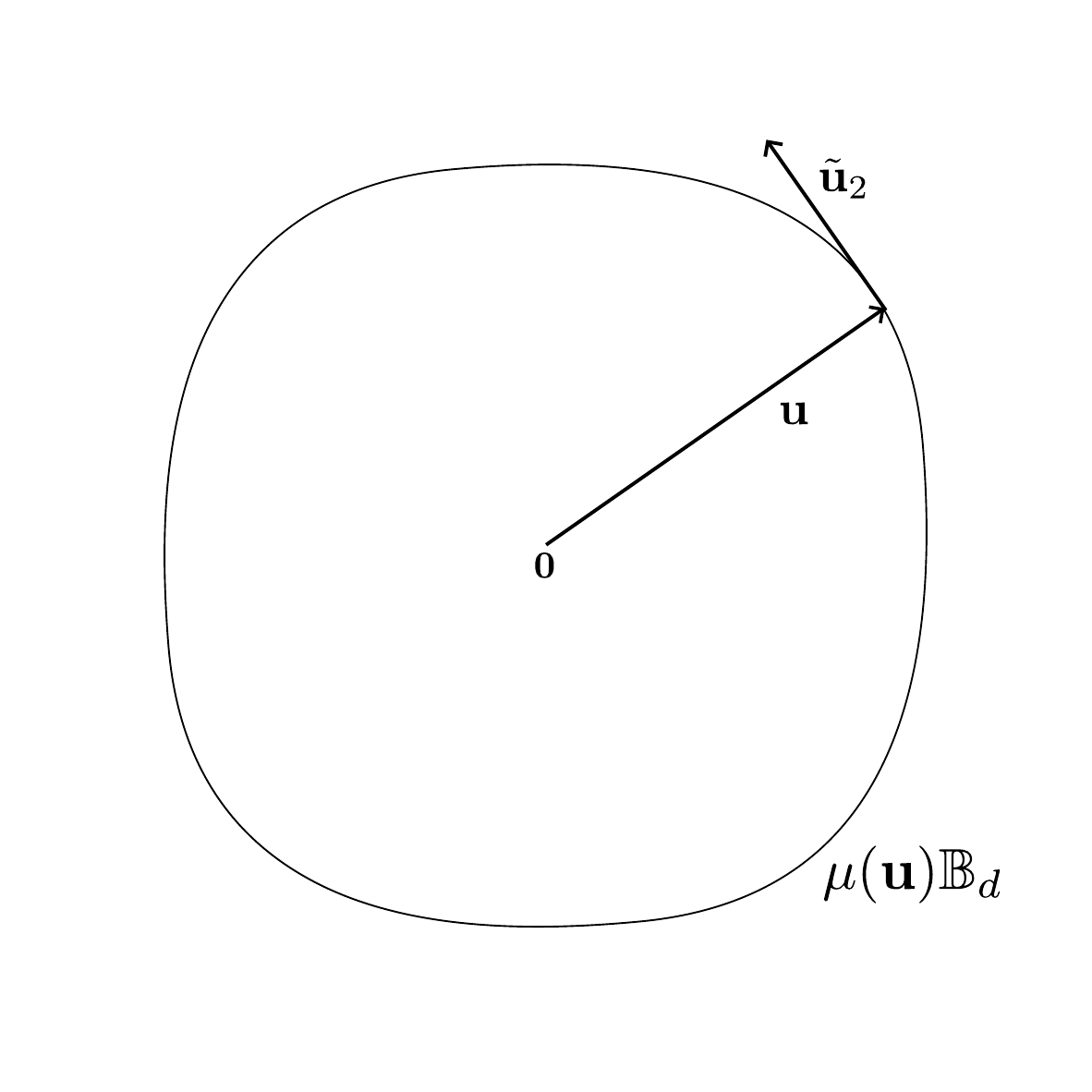}    \includegraphics[width=0.6\linewidth]{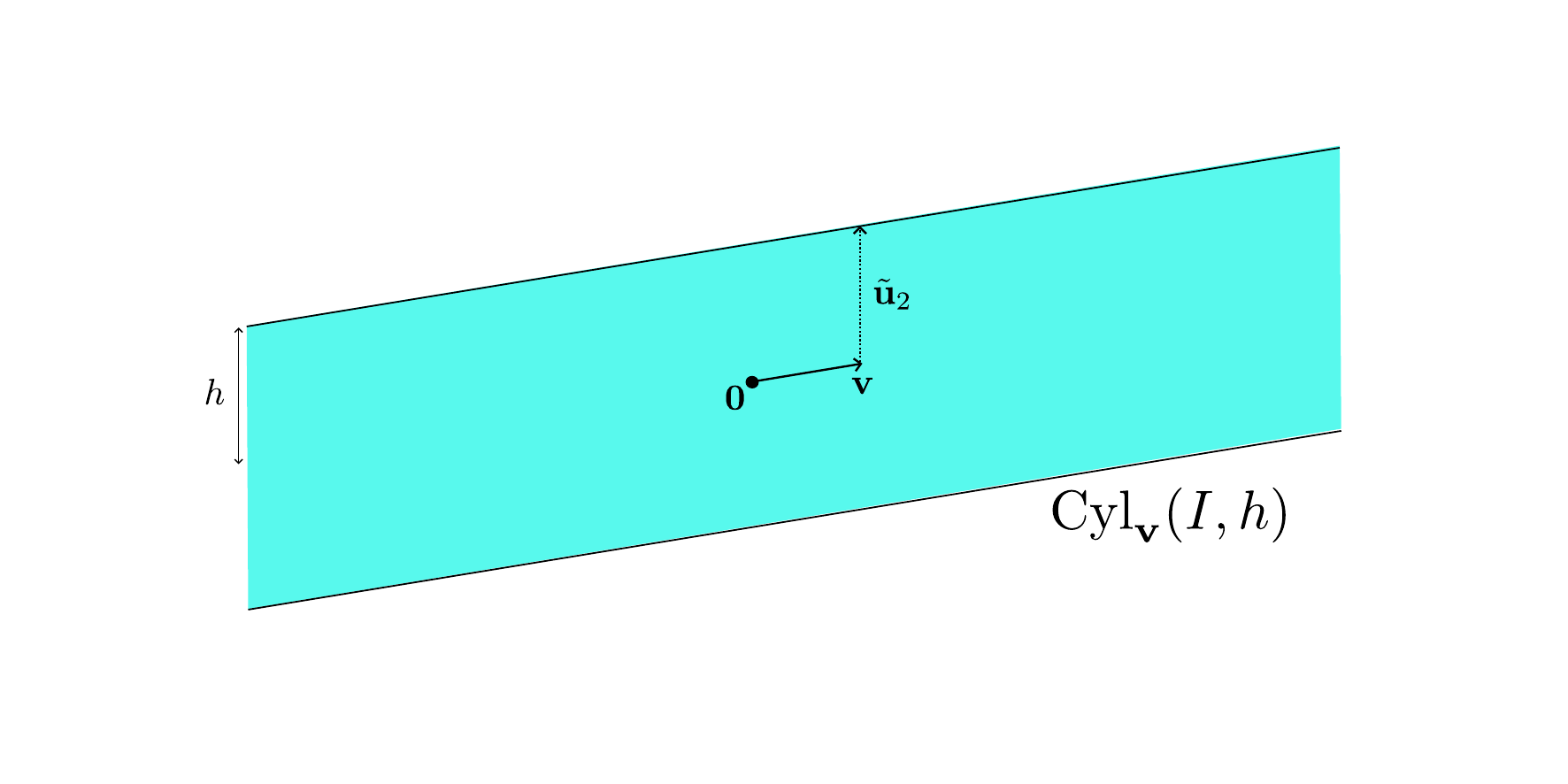} 
    \caption{A depiction of the limit shape (left) and a tilted cylinder (right). Here, the boundary of the limit shape is smooth, and at any direction $\mathbf{u}$ we can find a unique tangent plane $H_\mathbf{u}$ that touches $\mathbf{u}$ at the boundary. In this case, $\tilde{\mathbf{u}}_2$ is a unit vector in $H_\mathbf{u}$. As for the cylinder, note that the directions other than $\mathbf{v}$ are spanned by the $\tilde{\mathbf{u}}_j$, which might not be orthogonal to $\mathbf{v}$, and, in particular, a tilted cylinder may not be a rotated rectangle.}
    \label{fig:ballcylinder}
\end{figure}
\textbf{Geodesics.}
We denote the set of all minimizing paths between $A,B\subset \R^d$, called \emph{geodesics}, by
$$\mathbb{G}(A,B):=\{\gamma:~\gamma \text{ is a path from $\lfloor \mathbf{x}\rfloor$ to $\lfloor \mathbf{y}\rfloor$},~\mathbf{x}\in A,\,\mathbf{y}\in B,\,T(\gamma)=T(\lfloor \mathbf{x}\rfloor, \lfloor \mathbf{y}\rfloor)\}.$$
{Note that this set is almost surely nonempty under our current assumptions on the edge weights.}

\textbf{Restricted first-passage time.}
 If $A\subset \R^d$, we write
 \[
 T_A(\mathbf{x}, \mathbf{y}) := \inf\left\{\sum_{e\in \gamma} \tau_e: \text{$\gamma$ is a path in $A$ from $\mathbf{x}$ to $\mathbf{y}$}\right\}
 \]
 for the first-passage time restricted to $A$. If further $C, D\subset \mathbb{R}^d$, we write $T_A(C, D) = \inf_{\mathbf{x} \in C, \mathbf{y}\in D} T_A(\mathbf{x}, \mathbf{y})$.

  \textbf{Integer intervals.}
 Given $a,b\in \R$ with $a<b$, we define $\Iintv{a,b}:=[a,b]\cap\Z$. Furthermore, given $L\in \N$, we set $\Iintv{a,b}_L := [a+L^{-1},b-L^{-1}]\cap L^{-1}\Z$.

\subsection{Main results}
\label{sec: results}
In this section, we will state the assumptions and the main results. We will then discuss the meanings of the assumptions in Section~\ref{sec: assumption_meaning}. After that, we will discuss why we should expect the main results by using a heuristic argument in Section~\ref{sec: heuristic}. 

\vspace{8pt}
    
We fix $\mathbf{u}\in S^{d-1}:=\{\mathbf{x}\in \R^d:~|\mathbf{x}|=1\}$ from now on throughout this paper. 

\subsubsection{Upper bound results for upper tail moderate deviations}

We first state an upper bound for the left term in \eqref{eq: MDP}. We will consider the following assumptions.

  \begin{assum}[Upper Tail Concentration]\label{assum: initial concentration}
 There exist $\overline{\chi}\in (0,1)$, $\theta_0>0$, and $c_0>0$  such that, if  $K$ is large enough, then for any $a\in [\overline{\chi},1]$ and   for any $\mathbf{v}\in H_{\mathbf{u}}$ with $|K\mathbf{v}-K\mathbf{u}|<K^{\frac{\overline{\chi}+1+c_0}{2}}$,\footnote{The exponents  $\frac{\overline{\chi}+1+c_0}{2}$ and $\frac{a+1}{2}$ arise from the scaling relation (see Sections~\ref{sec: assumption_meaning} and~\ref{sec: related_work}).

}
    \al{
    \mathbb{P} \Big(T_{{\rm Cyl}_{\mathbf{v}}\Big([-2K,2K],K^{\frac{a+1}{2}}\Big)}(\mathbf{0},K\mathbf{v})> K\mu(\mathbf{v})+ K^{a}\Big)\leq  e^{-  K^{\theta_0 (a-\overline{\chi})}}.
    }
    \end{assum}
    
    \begin{assum}[Directional Finite Curvature] \label{assum: finite curvature}
         There is a constant $C > 0$ such that,  for any $\mathbf{u}^{\perp}\in \langle \tilde{\mathbf{u}}_2,\ldots, \tilde{\mathbf{u}}_d\rangle,$
	\[
    \mu(\mathbf{u} + \mathbf{u}^{\perp}) - \mu(\mathbf{u}) \leq C|\mathbf{u}^{\perp}|^2.
	\]
\end{assum}
\begin{thm}\label{thm: key prop for upper bound}
 Assume Assumptions~\ref{assum: initial concentration} and \ref{assum: finite curvature} with $\overline{\chi}\in (0,1)$.
 \begin{enumerate}
     \item For any $\varepsilon>0$, there exists $\delta > 0$ such that if  $N\in\N$ is large enough, then  for any $a\in [\overline{\chi} +\varepsilon,1]$ and for any $\mathbf{v}\in H_{\mathbf{u}}$ with $|N\mathbf{v}-N\mathbf{u}|<N^{\frac{\overline{\chi}+1+\delta}{2}}$,
\al{
\mathbb{P} \Big(T_{{\rm Cyl}_{\mathbf{v}}\Big(\R,N^{\frac{a+1}{2}}\Big) }(\mathbf{0}, N\mathbf{v})> N\mu(\mathbf{v})+ N^{a}\Big)\leq  e^{- N^{\frac{d(1-\varepsilon)}{1-\overline{\chi}} (a-\overline{\chi})}}.
}
In particular, we have
\al{
\mathbb{P} \Big(T(\mathbf{0}, N\mathbf{u})> N\mu(\mathbf{u})+ N^{a}\Big)\leq  e^{- N^{\frac{d(1-\varepsilon)}{1-\overline{\chi}} (a-\overline{\chi})}}.
}
\item For any $\varepsilon>0$, if $\zeta>0$ is small enough, then 
\al{
\limsup_{N\to\infty}  N^{-d} \log \P\Bigl(T(\mathbf{0}, N\mathbf{u}) > N\,\mu(\mathbf{u}) + \zeta N\Bigr) \leq - \zeta^{(1+\varepsilon)\frac{d}{1-\overline{\chi}}} .
}
  \end{enumerate}
\end{thm}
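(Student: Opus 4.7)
The plan is a multi-scale argument that bootstraps the initial concentration of Assumption~\ref{assum: initial concentration} at an intermediate scale $K$ up to the conjecturally sharp concentration at scale $N$, with Assumption~\ref{assum: finite curvature} crucially absorbing the transverse detour costs. Set $K := N^{(1-a)/(1-\overline{\chi})+\eta}$ for small $\eta = \eta(\varepsilon)>0$: at this intermediate scale, a tube of length $N$ composed of $n := N/K$ sub-cells of length $K$ has expected accumulated excess $nK^{\overline{\chi}} \ll N^a$, just below the moderate-deviation budget.

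Inside ${\rm Cyl}_{\mathbf{v}}(\R,N^{(a+1)/2})$ I build $M \asymp N^{(d-1)(a-\overline{\chi})/(1-\overline{\chi})}$ candidate paths. Each path $j$ consists of a tube of transverse width $K^{(\overline{\chi}+1)/2}$, parallel to $\mathbf{v}$ and offset by $\mathbf{w}_j$ (chosen from a transverse grid of spacing $K^{(\overline{\chi}+1)/2}$ in $\operatorname{span}\{\tilde{\mathbf{u}}_2,\ldots,\tilde{\mathbf{u}}_d\}$, with $|\mathbf{w}_j|\leq N^{(a+1)/2}$), glued on either end to a short "taper" of length $L_j \sim |\mathbf{w}_j|^2/N^a$ connecting $\mathbf{0}$ (resp.~$N\mathbf{v}$) to the tube. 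Viewing each taper as a segment in direction $\mathbf{v}+\mathbf{w}_j/L_j$ and invoking Assumption~\ref{assum: finite curvature}, the typical taper cost is $L_j\mu(\mathbf{v}) + C|\mathbf{w}_j|^2/L_j = L_j\mu(\mathbf{v}) + O(N^a)$, so the total typical path time is $N\mu(\mathbf{v}) + O(N^a)$. Partition each tube's middle portion into $n$ disjoint sub-cells of length $K$, apply Assumption~\ref{assum: initial concentration} to each sub-cell with exponent $a' := \overline{\chi}+\delta$ (whose proximity hypothesis $|K\mathbf{v}_j - K\mathbf{u}| \ll K^{(\overline{\chi}+1+c_0)/2}$ is met for small $\eta,\delta$), and a Chernoff bound on the sum of independent sub-cell excesses gives
\[
\mathbb{P}\bigl(\text{tube $j$ is slow by more than $\tfrac12 N^a$}\bigr) \leq \exp\bigl(-cN^{(a-\overline{\chi})/(1-\overline{\chi})}\bigr).
\]
The middle portions of the $M$ tubes are pairwise disjoint by the grid spacing, supplying $M$ independent trials; multiplying the tail probabilities yields $\exp(-cN^{d(a-\overline{\chi})/(1-\overline{\chi})})$ for Part~(1). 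For Part~(2), apply Part~(1) at an intermediate scale $K_0 = N^\beta$ and combine with a cross-sectional product over $\asymp N^{(d-1)(1-\overline{\chi})/2}$ parallel $K_0$-scale tubes of width $K_0^{(\overline{\chi}+1)/2}$; tuning $\beta$ close to $1$ and expanding in $\log\zeta/\log N$ produces the prefactor $\zeta^{(1+\varepsilon)d/(1-\overline{\chi})}$.

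\textbf{Main obstacle.} The delicate point is the endpoint accounting. A straight parallel tube offset by $\mathbf{w}_j$ would incur a prohibitive linear detour cost $|\mathbf{w}_j| \sim N^{(a+1)/2} \gg N^a$, while a single straight tilted tube ends at $N\mathbf{v}_j = N\mathbf{v}+\mathbf{w}_j$ rather than at $N\mathbf{v}$. The tapered construction circumvents both issues: Assumption~\ref{assum: finite curvature} forces the taper cost to be quadratic ($|\mathbf{w}_j|^2/L_j \leq N^a$ by the choice of $L_j$), while the middle portions remain genuinely disjoint across tubes. The short overlapping taper regions near $\{\mathbf{0}, N\mathbf{v}\}$ must then be handled by a separate, easier concentration estimate (applied at a scale much smaller than $K$), contributing only a negligible additional error. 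Executing this clean separation while retaining all $M$ independent tubes---so as not to lose the dimension factor $d$ in the exponent---is the technical heart of the argument.
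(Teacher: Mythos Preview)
Your parallel-tube picture is natural, but as written it does not bootstrap the initial exponent $\theta_0$ at all, and so it cannot reach the target $\frac{d}{1-\overline{\chi}}$. There are two linked failures.

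\textbf{The tapers are the bottleneck, not a side issue.} For a tube with offset $|\mathbf{w}_j|\sim N^{(a+1)/2}$ your own formula gives $L_j\sim |\mathbf{w}_j|^2/N^a\sim N$; the tapers are not ``short'' and certainly not ``at a scale much smaller than $K$''. The only tool you have for a taper of length $L\le N$ with budget $N^a$ is Assumption~\ref{assum: initial concentration} itself, which yields at best $\exp\bigl(-N^{\theta_0(a-\overline{\chi})}\bigr)$ regardless of $L$. Your decomposition forces
\[
\mathbb{P}\bigl(T>N\mu(\mathbf{v})+N^a\bigr)\le \mathbb{P}\bigl(\text{some taper slow}\bigr)+\mathbb{P}\bigl(\text{all middles slow}\bigr),
\]
and the first term is already of order $M\,e^{-N^{\theta_0(a-\overline{\chi})}}\approx e^{-N^{\theta_0(a-\overline{\chi})}}$, which is \emph{no improvement} over the hypothesis. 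Restricting to smaller $|\mathbf{w}_j|$ does not help: if you shrink until $L_j\le N^a$ so the taper cost is trivially absorbed, the number of admissible tubes drops below $1$ near $a=\overline{\chi}$.

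\textbf{Even the per-tube Chernoff bound falls short.} With $n=N/K$ sub-cells, the dangerous scenario is a single sub-cell with excess of order $K^{b_{\max}}$ where $K^{b_{\max}}\approx N^a$; by Assumption~\ref{assum: initial concentration} this occurs with probability $\exp\bigl(-K^{\theta_0(b_{\max}-\overline{\chi})}\bigr)=\exp\bigl(-N^{\theta_0(a-\overline{\chi})/(1-\overline{\chi})}\bigr)$. A multi-level pigeonhole over $b\in[\overline{\chi},b_{\max}]$ shows that your per-tube bound is only $\exp\bigl(-cN^{\min(1,\theta_0)(a-\overline{\chi})/(1-\overline{\chi})}\bigr)$, not the claimed $\exp\bigl(-cN^{(a-\overline{\chi})/(1-\overline{\chi})}\bigr)$. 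So even granting independence of the $M$ middles, the product gives exponent $(d-1+\min(1,\theta_0))/(1-\overline{\chi})$, which is still dominated by the taper term $\theta_0$ when $\theta_0$ is small.

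The paper avoids both problems by \emph{not} fixing candidate paths in advance. Instead it tracks, through a multi-scale cascade of slabs $L_m$ at distances $N^{m/M}$ from the endpoints, the set of points reachable within budget; if the cylinder time is large, some slab must be a bottleneck, and in that slab a $d$-dimensional volume of mesoscopic boxes must be ``bad'' (Lemmas~\ref{prop: bad slabs}--\ref{prop: many bad boxes}). This yields the one-step improvement $\theta_0\mapsto \overline{f}_{\overline{\chi}}(\theta_0)=\frac{\theta_0}{1-\overline{\chi}}\wedge\frac{d}{1-\overline{\chi}}$ of Proposition~\ref{prop: key prop for upper bound}, which \emph{is} a genuine gain since $\overline{\chi}>0$; iterating then reaches $\frac{d}{1-\overline{\chi}}$. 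The multi-scale slab structure is precisely what replaces your tapers near the endpoints, and the iteration is what you are missing in the per-tube estimate.
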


\subsubsection{Refinement of concentration condition for upper tail}
{We can show that Assumption~\ref{assum: initial concentration} can be satisfied for a particular $\overline{\chi}$.} Define the upper fluctuation exponent as
 \[
\overline{\chi}_u :=\overline{\chi}_u(\mathbf{u}) := \lim_{\delta \to 0} \lim_{n\to\infty} \inf\left\{\chi>0:~\begin{array}{c}
\forall \mathbf{x}\in \R^d\text{ with }\,|\mathbf{x}|\geq n,\, |  \mathbf{u} - (\mathbf{x}/|\mathbf{x}|)| <\delta, \text{ one has}\\
\E[(T_{{\rm Cyl}_{ \mathbf{x}}(\mathbb{R}, \lvert\mathbf{ x} \rvert^{(1+\chi)/2})}(\mathbf{0},\mathbf{x})-\mu(\mathbf{x}))_+]\leq \lvert \mathbf{x}\rvert^{\chi}
\end{array}\right\}.
\]
We will prove the following using a similar (but simpler) technique to the proof of Theorem~\ref{thm: key prop for upper bound}.
\begin{thm}\label{thm: refinement of condition}
 Assume  Assumption~\ref{assum: finite curvature}. 
    For any $\varepsilon>0$, Assumption~\ref{assum: initial concentration} holds with $\overline{\chi}= \overline{\chi}_u+\varepsilon$.
    \end{thm}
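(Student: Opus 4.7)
The plan is to upgrade the expectation bound defining $\overline{\chi}_u$ into an exponential tail estimate via a multi-scale concatenation, in the same spirit as (but coarser than) the proof of Theorem~\ref{thm: key prop for upper bound} since only some exponent $\theta_0(a-\overline{\chi})$ is required, not a sharp one. Fix $\varepsilon>0$ and set $\chi' := \overline{\chi}_u + \varepsilon/2 < \overline{\chi} := \overline{\chi}_u + \varepsilon$. By the definition of $\overline{\chi}_u$, one can pick $\delta_0>0$ and $n_0$ so that
\[
\E\Bigl[\bigl(T_{{\rm Cyl}_\mathbf{x}(\R,|\mathbf{x}|^{(1+\chi')/2})}(\mathbf{0},\mathbf{x})-\mu(\mathbf{x})\bigr)_+\Bigr]\leq |\mathbf{x}|^{\chi'}
\]
whenever $|\mathbf{x}|\geq n_0$ and $|\mathbf{u}-(\mathbf{x}/|\mathbf{x}|)|<\delta_0$. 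The hypothesis $|K\mathbf{v}-K\mathbf{u}|<K^{(\overline{\chi}+1+c_0)/2}$ with $c_0<1-\overline{\chi}$ ensures that the direction of $M\mathbf{v}$ lies within $\delta_0$ of $\mathbf{u}$ for every $M\leq K$ once $K$ is large. Assumption~\ref{assum: finite curvature} enters here to absorb the tilt-induced distortion $\mu(M\mathbf{v})-M\mu(\mathbf{u})=O(M|\mathbf{v}-\mathbf{u}|^2)$ into the tolerance $M^{\chi'}$ when transferring the expectation bound from $M\mathbf{u}$ to $M\mathbf{v}$.

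Introduce an intermediate scale $M=K^\beta$ with $\beta\in(0,1)$ to be tuned, and partition $[\mathbf{0},K\mathbf{v}]$ via the waypoints $\mathbf{x}_i:=iM\mathbf{v}$, $i=0,\dots,K/M-1$. On each segment restrict to the thinner sub-cylinder ${\rm Cyl}_{\mathbf{x}_i,\mathbf{v}}([0,M],M^{(1+\chi')/2})$, which fits inside the big cylinder ${\rm Cyl}_\mathbf{v}(\R,K^{(a+1)/2})$ provided $\beta\leq(a+1)/(1+\chi')$, and let $T^{(i)}$ denote the corresponding restricted passage time. Sub-cylinders are essentially disjoint along the $\mathbf{v}$-axis (with a one-edge buffer to kill boundary overlaps), so the $T^{(i)}$ are independent. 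Concatenating the corresponding restricted geodesics yields the subadditivity bound
\[
T_{\rm big}(\mathbf{0},K\mathbf{v})-K\mu(\mathbf{v})\leq\sum_i\bigl(T^{(i)}-M\mu(\mathbf{v})\bigr)_+,\qquad \E\bigl[(T^{(i)}-M\mu(\mathbf{v}))_+\bigr]\leq CM^{\chi'},
\]
with each $T^{(i)}$ almost surely bounded by $C_1M$ thanks to the uniform bound on the weights.

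To pass from mean control to an exponential tail, combine Markov with a pigeonhole / binomial argument: for any $a'\in(\chi',1)$, Markov gives $\P(T^{(i)}>M\mu(\mathbf{v})+M^{a'})\leq CM^{\chi'-a'}$, and under the constraint $\beta\geq(1-a)/(1-a')$ the event $\{T_{\rm big}>K\mu(\mathbf{v})+K^a\}$ forces at least $n_\star\geq cK^{a-\beta}$ of the individual events $\{T^{(i)}>M\mu(\mathbf{v})+M^{a'}\}$ to occur. Independence then yields
\[
\P\bigl(T_{\rm big}>K\mu(\mathbf{v})+K^a\bigr)\leq\binom{K/M}{n_\star}\bigl(CM^{\chi'-a'}\bigr)^{n_\star}\leq\exp\bigl(-cn_\star\log K\bigr),
\]
the last step holding once $a'$ is chosen slightly above $(1+\chi')/2$, which makes the $\log K$-coefficient of the logarithm of the binomial bound strictly negative.

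The main obstacle is the parameter tuning: $(a',\beta)$ must be selected so that $a-\beta\geq\theta_0(a-\overline{\chi})$ uniformly across $a\in[\overline{\chi},1]$ for some fixed $\theta_0>0$. The delicate range is $a$ close to $\overline{\chi}$, where a single-scale choice of $M$ may fail to generate enough bad segments for the binomial bound to bite. The endpoint $a=\overline{\chi}$ itself is trivial, since Markov applied directly at the global scale $K$ gives $\P\leq K^{\chi'-\overline{\chi}}=K^{-\varepsilon/2}\ll e^{-1}$. When $\overline{\chi}$ is small one anticipates needing to cascade the scheme over a sequence of scales $K\gg K_1\gg K_2\gg\cdots$, reinjecting the improved tail from each scale as input to the next; this is precisely the multi-scale analysis that the authors identify as the key technical device behind the paper.
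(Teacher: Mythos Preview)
Your proposal has a genuine gap, and it stems from missing the paper's two central simplifications.

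\textbf{The gap.} You acknowledge that your single-scale longitudinal partition fails when $a$ is close to $\overline{\chi}$, and you only gesture at a cascading fix without carrying it out. This is not a technicality: with your constraints $\beta\approx(1-a)/(1-a')$ and $a'>(1+\chi')/2$, one computes $a-\beta<0$ at $a=\overline{\chi}$ unless $\overline{\chi}\gtrsim 2/3$, so $n_\star<1$ and the binomial step is empty. A second, smaller issue is that your independence relies on truncating each sub-cylinder to the longitudinal window $[0,M]$, whereas the expectation bound defining $\overline{\chi}_u$ is stated for the cylinder over all of $\R$; truncation only increases the restricted passage time, so your $\E[(T^{(i)}-M\mu(\mathbf{v}))_+]\le CM^{\chi'}$ needs an extra geodesic-confinement step you have not supplied.

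\textbf{What the paper does instead.} First, the paper observes that it suffices to prove a \emph{single} stretched-exponential bound at the level $a=\overline{\chi}$: once one shows
\[
\mathbb{P}\bigl(T_{\mathrm{Cyl}_{\mathbf{v}}(\R,N^{(\overline{\chi}+1)/2})}(\mathbf{0},N\mathbf{v})>N\mu(\mathbf{v})+N^{\overline{\chi}}\bigr)\le e^{-N^{c}}
\]
for some fixed $c>0$, monotonicity in $a$ plus the choice $\theta_0\le c$ immediately gives Assumption~\ref{assum: initial concentration} for all $a\in[\overline{\chi},1]$. You never need an $a$-dependent bound, and no multi-scale cascade is required. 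Second, to obtain this fixed bound the paper does \emph{not} partition longitudinally. Instead it builds a small grid $\overline{\mathcal{Z}}_N$ of $\approx N^{(d-1)\varepsilon^7}$ points in the \emph{transverse} directions and, for each $\mathbf{x}\in\overline{\mathcal{Z}}_N$, a piecewise-linear trajectory $\mathbf{0}=\overline{\mathbf{x}}_0,\overline{\mathbf{x}}_1,\dots,\overline{\mathbf{x}}_{2M-1}=N\mathbf{v}$ that fans out to a transverse displacement proportional to $\mathbf{x}$ and back. If the big event holds, \emph{every} such trajectory must have some ``black'' segment; pigeonholing over the $O(M)$ segment-indices yields an index $m$ at which at least $N^{\varepsilon^8}$ trajectories are black. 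The black events at a fixed $m$ depend on cylinders that are separated in the transverse direction and hence independent, and each has probability $\le N^{-\varepsilon^6}$ by Markov applied to the $\overline{\chi}_u$ expectation bound (Assumption~\ref{assum: finite curvature} is used here to compare $\mu(\overline{\mathbf{x}}_{m+1}-\overline{\mathbf{x}}_m)$ with $\Delta_m\mu(\mathbf{v})$). A Chernoff bound then gives $\mathbb{P}\le e^{-N^{\varepsilon^8}}$, and one takes $\theta_0=\varepsilon^9$.

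In short: the key idea you are missing is that independence should be harvested \emph{transversally} (many parallel test-trajectories sharing the same endpoints) rather than longitudinally, and that a single fixed exponent at $a=\overline{\chi}$ already suffices for the assumption.
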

    This combined with Theorem~\ref{thm: key prop for upper bound} yields the following corollary.
    \begin{cor}\label{cor: chi and chiu}
         Assume  Assumption~\ref{assum: finite curvature}.   Then, the results of Theorem~\ref{thm: key prop for upper bound} hold with $\overline{\chi}=\overline{\chi}_u$.
    \end{cor}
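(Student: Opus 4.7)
The plan is to combine Theorems~\ref{thm: refinement of condition} and~\ref{thm: key prop for upper bound} by a small perturbation in the parameter $\overline{\chi}$. Fix the target accuracy $\varepsilon>0$ for which we want the conclusion of Theorem~\ref{thm: key prop for upper bound} to hold with $\overline{\chi}=\overline{\chi}_u$. First I would invoke Theorem~\ref{thm: refinement of condition} with an auxiliary parameter $\varepsilon'>0$ to upgrade the standing hypothesis Assumption~\ref{assum: finite curvature} to Assumption~\ref{assum: initial concentration} with $\overline{\chi}=\overline{\chi}_u+\varepsilon'$. Then I would feed this into Theorem~\ref{thm: key prop for upper bound} with $\overline{\chi}=\overline{\chi}_u+\varepsilon'$ and an inner accuracy $\varepsilon_0>0$, obtaining, for $a\in[\overline{\chi}_u+\varepsilon'+\varepsilon_0,1]$, the bound
\[
\P\Big(T_{\mathrm{Cyl}_{\mathbf{v}}(\R,N^{(a+1)/2})}(\mathbf{0},N\mathbf{v})>N\mu(\mathbf{v})+N^a\Big)\leq \exp\Big(-N^{\frac{d(1-\varepsilon_0)}{1-\overline{\chi}_u-\varepsilon'}(a-\overline{\chi}_u-\varepsilon')}\Big),
\]
together with the analogous statement for the large-deviation part (2).

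The core task is then to choose $\varepsilon',\varepsilon_0$ small enough (depending on $\varepsilon$) to ensure two things: first, $\varepsilon'+\varepsilon_0\leq \varepsilon$, so that the range of $a$ covers $[\overline{\chi}_u+\varepsilon,1]$; and second, the inequality
\[
\frac{d(1-\varepsilon_0)}{1-\overline{\chi}_u-\varepsilon'}(a-\overline{\chi}_u-\varepsilon')\;\geq\;\frac{d(1-\varepsilon)}{1-\overline{\chi}_u}(a-\overline{\chi}_u)
\]
holds uniformly for $a\in[\overline{\chi}_u+\varepsilon,1]$. I would verify this by noting that the ratio of the two sides tends to $1/(1-\varepsilon)>1$ as $\varepsilon',\varepsilon_0\to 0$, and that the uniform lower bound $a-\overline{\chi}_u\geq \varepsilon$ keeps the convergence uniform in $a$; hence both conditions can be met simultaneously. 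The constant $\delta$ governing the allowed cylinder tilt in the corollary is then absorbed by taking it at most the $\delta'$ produced by Theorem~\ref{thm: key prop for upper bound}, since $\tfrac{\overline{\chi}_u+1+\delta}{2}\leq \tfrac{\overline{\chi}_u+\varepsilon'+1+\delta'}{2}$ for $\delta\leq \delta'+\varepsilon'$.

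The same perturbation handles part (2): for $\zeta\in(0,1)$ small, one needs $\zeta^{(1+\varepsilon_0)d/(1-\overline{\chi}_u-\varepsilon')}\geq \zeta^{(1+\varepsilon)d/(1-\overline{\chi}_u)}$, i.e., the LHS exponent (as a power of $\zeta$) should be at most the RHS exponent; this follows again by taking $\varepsilon',\varepsilon_0$ sufficiently small. There is no substantive obstacle here---the heavy lifting has been packed into Theorems~\ref{thm: key prop for upper bound} and~\ref{thm: refinement of condition}---so the corollary reduces to a careful bookkeeping exercise matching small perturbation parameters to the target accuracy.
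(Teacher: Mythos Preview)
Your proposal is correct and follows exactly the approach the paper intends: the paper simply states that the corollary follows by combining Theorem~\ref{thm: refinement of condition} with Theorem~\ref{thm: key prop for upper bound}, and you have correctly spelled out the $\varepsilon$-bookkeeping needed to pass from $\overline{\chi}_u+\varepsilon'$ back to $\overline{\chi}_u$ in both parts (1) and (2), including the handling of the tilt parameter $\delta$.
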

    Moreover, we have the following without any curvature assumption:
   \begin{thm}\label{thm: bound for chiu}
       For any $\mathbf{v} \in S^{d-1}$, 
       $$\overline{\chi}_u(\mathbf{v})\leq \frac{3}{5}.$$
   \end{thm}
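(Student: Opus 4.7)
The plan is a multi-scale concatenation argument resting on two standard FPP estimates that hold under our hypotheses without any curvature assumption on $\mathbb{B}_d$: Kesten's bound on the number of edges in a geodesic, and Alexander's polylogarithmic correction $\mathbb{E}[T(\mathbf{0},\mathbf{x})] - \mu(\mathbf{x}) \leq C|\mathbf{x}|^{1/2}\log|\mathbf{x}|$. Fix $\chi \in (3/5,1)$ and $\mathbf{v} \in S^{d-1}$. It suffices to prove that for every sufficiently large $\mathbf{x}\in\R^d$ whose direction is close to $\mathbf{v}$, writing $N:=|\mathbf{x}|$ and $w:=N^{(1+\chi)/2}$, we have $\mathbb{E}[(T_{{\rm Cyl}_{\mathbf{x}}(\R,w)}(\mathbf{0},\mathbf{x}) - \mu(\mathbf{x}))_+]\leq N^{\chi}$.

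The key arithmetic observation is that $\chi>3/5$ is equivalent to $2(1-\chi) < (1+\chi)/2$, so there is room to pick an intermediate length scale $\ell := N^{2(1-\chi)+\eta}$ with small $\eta>0$ satisfying $\ell \ll w$. I would partition the segment $\mathbf{0}\to\mathbf{x}$ into $n \asymp N/\ell$ consecutive pieces with endpoints $\mathbf{x}_0=\mathbf{0}, \mathbf{x}_1, \ldots, \mathbf{x}_n \approx \mathbf{x}$ spaced by $\ell$ along $\mathbf{x}$, then concatenate the unrestricted geodesics of $T(\mathbf{x}_i, \mathbf{x}_{i+1})$ to produce an admissible path for $T_{{\rm Cyl}_{\mathbf{x}}(\R,w)}(\mathbf{0}, \mathbf{x})$.

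The main technical step is to show that, with probability $1-e^{-c\ell}$, each block geodesic remains inside ${\rm Cyl}_{\mathbf{x}}(\R,w)$. Under $\mathbb{P}(\tau_e=0) < p_c$, Kesten's geodesic-length estimate forces the geodesic between two points at distance $\ell$ to use at most $C\ell$ edges except on a set of probability $e^{-c\ell}$; since $w \gg \ell$, such a geodesic cannot escape the ambient cylinder. A union bound over the $n$ blocks keeps the total failure probability negligible, and on the failure event $T_{{\rm Cyl}_{\mathbf{x}}(\R,w)}$ is bounded deterministically by $CN$ using any axis-aligned path inside the cylinder (possible because $\tau_e \leq C$). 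Combining Alexander's bound applied in each block with the identity $\sum_i \mu(\mathbf{x}_{i+1}-\mathbf{x}_i) = \mu(\mathbf{x})$ (valid since the $\mathbf{x}_i$ are collinear along $\mathbf{x}$), summation yields
\[
\mathbb{E}\bigl[T_{{\rm Cyl}_{\mathbf{x}}(\R,w)}(\mathbf{0},\mathbf{x})\bigr] - \mu(\mathbf{x}) \leq C \frac{N}{\ell}\,\ell^{1/2}\log\ell + o(1) = CN\ell^{-1/2}\log\ell \leq N^{\chi},
\]
by our choice of $\ell$. To upgrade this to a bound on the positive part, apply the decomposition $\mathbb{E}[X_+] = \mathbb{E}[X] + \mathbb{E}[X_-]$ to $X := T_{{\rm Cyl}_{\mathbf{x}}(\R,w)} - \mu(\mathbf{x})$ and use $T_{{\rm Cyl}_{\mathbf{x}}(\R,w)}\geq T(\mathbf{0},\mathbf{x})$ together with Kesten's variance bound $\mathrm{Var}(T)\leq CN$, which controls the negative part by $O(N^{1/2}) \ll N^{\chi}$.

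The hard part is really just the block-level control in the third paragraph: verifying that the truncation to the cylinder costs only a lower-order error in expectation. The exponent $3/5$ emerges as the precise threshold at which the two constraints $\ell \leq w = N^{(1+\chi)/2}$ (so each block geodesic fits inside the cylinder using only the trivial transversal bound coming from the Kesten geodesic-length estimate) and $\ell \geq N^{2(1-\chi)}$ (so that summing the Alexander corrections gives at most $N^{\chi}$) become simultaneously feasible. Any improvement to this exponent would demand a rigorous transversal-fluctuation bound with exponent strictly less than $1$, or a sharper correction to $\mathbb{E}[T]-\mu(\mathbf{x})$.
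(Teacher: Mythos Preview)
Your proposal is correct and follows essentially the same strategy as the paper: partition $[\mathbf{0},\mathbf{x}]$ into blocks of length $\ell\approx N^{4/5}$, use the $\sqrt{\ell}$-type fluctuation bound on each block, and invoke Kesten's geodesic-length estimate to confine each block geodesic to the cylinder of width $N^{(1+\chi)/2}\approx N^{4/5}$; the exponent $3/5$ arises in both arguments from balancing $\ell \lesssim w$ against $N\ell^{-1/2}\lesssim N^{\chi}$. The only technical difference is that the paper uses the exponential concentration inequality to obtain a stretched-exponential tail bound on $T_{\mathrm{Cyl}}-\mu$ and then converts to the expectation via Cauchy--Schwarz, whereas you work directly with Alexander's expectation bound on each block and recover the positive part through $\E[X_+]=\E[X]+\E[X_-]$ and the variance estimate; both routes are valid and neither is materially shorter.
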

Although the bound \( 3/5 \) is likely far from optimal, it provides a meaningful estimate for moderate deviations from Theorem~\ref{thm: key prop for upper bound} and Corollary~\ref{cor: chi and chiu}. This is because there is at least one vector \( \mathbf{u} \) that satisfies Assumption~\ref{assum: finite curvature}; see Section~\ref{sec: assumption_meaning}.
\subsubsection{Lower bound results for upper tail moderate deviations}
Next, we will state the lower bound for the left term in \eqref{eq: MDP}. We will need Assumption~\ref{assum: finite curvature}, together with the following assumptions.
 
\begin{assum}[Fluctuation Exponent and Full Concentration]\label{cond: full concentration}
\begin{enumerate}
    \item 
 The following limit exists and satisfies:
    \[
     \chi := \lim_{|\mathbf{x}|\to\infty} \frac{\log\mathrm{Var}(T(\mathbf{0},\mathbf{x}))}{2\log|\mathbf{x}|}\geq  \limsup_{|\mathbf{x}|\to\infty} \frac{\log(\E T(\mathbf{0},\mathbf{x}) -\mu(\mathbf{x}))}{\log|\mathbf{x}|}\in [0,1).
     \]
          \item  For any $\epsilon>0$, there exists $\theta>0$ such that for any $\mathbf{x}\in \Z^d \setminus\{\mathbf{0}\}$,
          \begin{align*}
              \mathbb P\Big( |T(\mathbf{0},\mathbf{x})-\E T(\mathbf{0},\mathbf{x})| > |\mathbf{x}|^{\chi+\epsilon}  \Big)&\leq e^{- |\mathbf{x}|^\theta}.
          \end{align*}
          \end{enumerate}
\end{assum}
\begin{assum}[Directional Positive Curvature]\label{ass: positive curvature}
      There is a constant $C > 0$ such that  for any $\mathbf{u}^{\perp}\in \mathrm{span}\{ \tilde{\mathbf{u}}_2,\ldots, \tilde{\mathbf{u}}_d\},$
	\[
    \mu(\mathbf{u} + \mathbf{u}^{\perp}) - \mu(\mathbf{u}) \geq |\mathbf{u}^{\perp}|^2/C.
	\]
\end{assum}

\begin{thm}\label{thm: key prop for lower bound}
 Assume Assumptions~\ref{assum: finite curvature}, \ref{cond: full concentration}, and \ref{ass: positive curvature} with ${\chi}\in [0,1)$.
 \begin{enumerate}
     \item For any $\varepsilon>0$, and for any $a\in [\chi +\varepsilon,1]$, if  $N\in\N$ is large  enough, then
\al{
\mathbb{P} \Big(T(\mathbf{0}, N\mathbf{u})> N \mu(\mathbf{u})+ N^{a}\Big)\geq  e^{- N^{\frac{d(1+\varepsilon)}{1-{\chi}} (a-\chi)}}.
}
\item  For any $\varepsilon>0$, if $\zeta>0$ is small enough, then we have
\al{
\liminf_{N\to\infty}  N^{-d} \log \P\Bigl(T(\mathbf{0}, N\mathbf{u}) > N\,\mu(\mathbf{u}) + \zeta N\Bigr) \geq - \zeta^{(1-\varepsilon)\frac{d}{1-\chi}} .
}
  \end{enumerate}
\end{thm}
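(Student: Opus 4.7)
The plan is to construct an explicit event of probability $\geq \exp\!\big(-N^{d(1+\varepsilon)(a-\chi)/(1-\chi)}\big)$ that forces $T(\mathbf{0}, N\mathbf{u}) \geq N\mu(\mathbf{u}) + N^a$. The event will be a large intersection of independent ``typical-scale fluctuation'' events on a grid of sub-cylinders tiling a KPZ-shaped tube around the segment $[\mathbf{0}, N\mathbf{u}]$. Two scales dictate the construction: the longitudinal scale $\ell := N^{(1-a)/(1-\chi)}$, chosen so that the typical fluctuation $\ell^\chi$ summed over the $N/\ell$ longitudinal slabs reproduces the target $N^a$; and the transverse half-width $W := N^{(1+a)/2}$, chosen so that, by the quadratic lower bound in Assumption~\ref{ass: positive curvature}, any path exiting the tube already incurs deterministic excess cost $\gtrsim W^2/N = N^a$.

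First I would tile $\mathrm{Cyl}^{\mathbf{u}}_{\mathbf{u}}([0,N], W)$ by edge-disjoint sub-tubes $\mathcal{C}_{i,j}$ --- translates of a tilted cylinder of length $\ell$ and half-width $\ell^{(1+\chi)/2}$. A direct computation gives $(N/\ell)(W/\ell^{(1+\chi)/2})^{d-1} = N^{d(a-\chi)/(1-\chi)}$ such sub-tubes. Associate to each $\mathcal{C}_{i,j}$ the event
$$A_{i,j} := \bigl\{\, T_{\mathcal{C}_{i,j}}(\mathcal{F}^{-}_{i,j}, \mathcal{F}^{+}_{i,j}) \geq \ell\mu(\mathbf{u}) + c\,\ell^\chi \,\bigr\},$$
where $\mathcal{F}^{\pm}_{i,j}$ are the two transverse faces of $\mathcal{C}_{i,j}$ and $c > 0$ is a small absolute constant. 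Using the variance lower bound $\mathrm{Var}(T(\mathbf{0},\ell\mathbf{u})) \asymp \ell^{2\chi}$ supplied by Assumption~\ref{cond: full concentration} via a one-sided Paley--Zygmund argument, together with Assumption~\ref{assum: finite curvature} to localise the unrestricted geodesic inside $\mathcal{C}_{i,j}$, I would establish $\mathbb{P}(A_{i,j}) \geq p$ for some $p > 0$ uniform in the parameters. Edge-disjointness then gives independence and
$$\mathbb{P}\Bigl(\bigcap_{i,j} A_{i,j}\Bigr) \geq p^{N^{d(a-\chi)/(1-\chi)}} = \exp\!\Bigl(-C\, N^{d(a-\chi)/(1-\chi)}\Bigr).$$

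It then remains to show that $\bigcap A_{i,j}$ forces $T(\mathbf{0}, N\mathbf{u}) \geq N\mu(\mathbf{u}) + cN^a$. Any optimising path either stays in the tube --- in which case it must cross each slab, and in each slab must traverse some sub-tube, contributing $\ell\mu(\mathbf{u}) + c\ell^\chi$ and summing to $N\mu(\mathbf{u}) + cN^a$ --- or exits the tube, reaching transverse distance $\geq W$, where Assumption~\ref{ass: positive curvature} gives a curvature cost $\geq cW^2/N = cN^a$, and the full-concentration part of Assumption~\ref{cond: full concentration} promotes this to a probabilistic lower bound. Absorbing constants into $(1+\varepsilon)$ yields part~(1). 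Part~(2) follows from the identical construction with $N^a$ replaced by $\zeta N$; the resulting scales become $\ell = \zeta^{-1/(1-\chi)}$ and $W = \sqrt{\zeta}\,N$, and the total sub-tube count becomes $N^d\zeta^{d/(1-\chi)}$.

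The hard step will be the uniform lower bound $\mathbb{P}(A_{i,j}) \geq p$. Paley--Zygmund on $(T-\E T)_+^2$ gives such a bound for the unrestricted $T(\mathbf{0},\ell\mathbf{u})$ directly from Assumption~\ref{cond: full concentration}, but transferring to the restricted first-passage time requires controlling the transverse wandering of the unrestricted geodesic, for which Assumption~\ref{assum: finite curvature} is precisely designed. A secondary technical point is to maintain edge-disjointness while tiling finely enough that no cheap lateral escape through the gaps is created; this can be handled by separating layers of width $o(\ell)$ whose deterministic contribution is negligible.
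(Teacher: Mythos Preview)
Your overall strategy and scales match the paper's: tile a tube of half-width $N^{(1+a)/2}$ by sub-cells of longitudinal size $\ell = N^{(1-a)/(1-\chi)}$ and transverse size $\ell^{(1+\chi)/2}$, force each cell to be slow, and count $N^{d(a-\chi)/(1-\chi)}$ cells. The gap is in the deterministic step linking $\bigcap A_{i,j}$ to a lower bound on $T(\mathbf{0}, N\mathbf{u})$.

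Your events $A_{i,j}$ concern the \emph{restricted} face-to-face time $T_{\mathcal{C}_{i,j}}(\mathcal{F}^-_{i,j}, \mathcal{F}^+_{i,j})$. Edge-disjointness of the $\mathcal{C}_{i,j}$ does make these independent, but it does not force a slab-crossing path to ``traverse some sub-tube'' in the required sense: a path entering slab $i$ at $\mathcal{F}^-_{i,j_0}$ can exit $\mathcal{C}_{i,j_0}$ laterally, pass through several adjacent $\mathcal{C}_{i,j}$, and emerge at $\mathcal{F}^+_{i,j_k}$ without its time in any single sub-tube being bounded below by the corresponding restricted face-to-face time. Your proposed ``separating layers of width $o(\ell)$'' only make such weaving easier. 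One might hope to penalise lateral motion via Assumption~\ref{ass: positive curvature}, but at the critical transverse scale $D \sim \ell^{(1+\chi)/2}$ the curvature cost $D^2/\ell \sim \ell^\chi$ only matches, rather than dominates, the concentration error $\ell^{\chi + o(1)}$ from Assumption~\ref{cond: full concentration}; a careful separation of sub-scales is needed to create room.

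The paper resolves this by abandoning independence for the FKG inequality. Its basic event (Lemma~\ref{lem: S probab}) is that the \emph{unrestricted} passage time from a slightly-shrunken face $S^{3\epsilon}_K(\mathbf{x}) \subset L_i$ to the \emph{entire} hyperplane $\{(i+1)K\}\times\Z^{d-1}$ exceeds $K\mu(\mathbf{e}_1) + K^{\chi(1-\epsilon)}$; because the target is the full hyperplane, any slab-crossing path starting in that face is automatically controlled, with no lateral-escape issue. These events are not independent but are increasing, so FKG supplies the product bound. The per-cell probability is also not a constant $p$ as you claim---Assumption~\ref{cond: full concentration} only gives $\mathrm{Var}(T) = \ell^{2\chi+o(1)}$, from which Paley--Zygmund yields at best $\ell^{-o(1)}$, and the paper in fact obtains $e^{-K^{O(\epsilon)}}$ after a further FKG over nearby target faces plus a positive-curvature argument to dismiss far targets (Lemmas~\ref{lem:modified}--\ref{lem: S probab})---but this subpolynomial loss is precisely what is absorbed into the $(1+\varepsilon)$.
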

\subsubsection{Results for lower tail moderate deviations}
{In addition to the upper tail moderate deviations, we also provide estimates for the lower tail, under certain assumptions. We will consider the following assumption and use it to give an upper bound for the lower tail moderate deviations.}
\begin{assum}[Lower Tail Concentration]\label{ass: lower tail concentration}
    Let $\overline{\chi}\in (0,1)$ and $\mathbf{u}\in S^{d-1}$. There exists $\theta_0>0$ such that, if  $K$  is large enough, then for any $a\in [\overline{\chi},1]$ and $\mathbf{v} \in H_{\mathbf{u}}$, 
\aln{\label{Assum:initial concentration for lower tail}
\P\Bigl(T(\mathbf{0}, K\mathbf{v}) < K\,\mu(\mathbf{u}) -  K^{a}\Bigr)\leq e^{-K^{\theta_0(a-\overline{\chi})}}.
}
\end{assum}
\begin{thm}\label{thm: upper bound for  lower tail MD} 
Suppose Assumption~\ref{ass: lower tail concentration} with $\overline{\chi}\in (0,1)$.
 \begin{enumerate}
     \item 
    For any $\varepsilon>0$, if $N$ is large enough, then for any $a\in [\overline{\chi} + \varepsilon,1]$ and $\mathbf{v}\in  H_{\mathbf{{u}}}$,
\[
\P\Bigl(T(\mathbf{0}, N\mathbf{v}) < N\,\mu(\mathbf{u}) - N^{a}\Bigr) \le e^{-N^{\frac{1-\varepsilon}{1-\overline{\chi}}(a-\overline{\chi})}}.
\]
\item For any $\varepsilon>0$, if $\zeta>0$ is small enough, then
\[
\limsup_{N\to\infty}  N^{-1} \log \P\Bigl(T(\mathbf{0}, N\mathbf{u}) < N\,\mu(\mathbf{u}) - \zeta N\Bigr) \le - \zeta^{(1+\varepsilon)\frac{1}{1-\overline{\chi}}} .
\] 
\end{enumerate}

\end{thm}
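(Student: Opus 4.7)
The plan is to decompose the segment $[\mathbf{0}, N\mathbf{v}]$ into independent slab crossings at the natural scale $K := \lceil N^{(1-a)/(1-\overline{\chi})}\rceil$, so that $M := N/K \simeq N^{(a-\overline{\chi})/(1-\overline{\chi})}$. The guiding heuristic is that $M K^{\overline{\chi}} = N^a$, i.e., the target deficit is precisely the aggregate of the typical per-slab lower fluctuations of order $K^{\overline{\chi}}$ over the $M$ independent slabs, so a Gaussian-type concentration of $\sum T_i$ around its mean should produce a rate $\sim M$ in the exponent, matching the target $N^{(a-\overline{\chi})/(1-\overline{\chi})}$.

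First I would set up hyperplanes $H_0,\ldots,H_M$ parallel to $H_{\mathbf{u}}$ with $H_i$ at perpendicular distance $iK$, and let $T_i$ denote the first-passage time restricted to the slab $S_i$ between $H_{i-1}$ and $H_i$. Any path from $\mathbf{0}$ to $N\mathbf{v}$ traverses every slab, so $T(\mathbf{0},N\mathbf{v}) \ge \sum_i T_i$, and the $T_i$ are mutually independent. A union bound over the $O(K^C)$ pairs of relevant lattice endpoints $(\mathbf{x},\mathbf{y}) \in H_{i-1}\times H_i$ --- those whose displacement $\mathbf{y}-\mathbf{x}$ lies in a bounded neighborhood of $KH_{\mathbf{u}}$, a restriction harmless under standard bounded-weight geodesic-wandering controls --- yields the slab estimate
\[
\P\bigl(T_i < K\mu(\mathbf{u}) - K^\alpha\bigr) \le e^{-K^{\theta_0(\alpha-\overline{\chi})(1-o(1))}}, \qquad \alpha \in [\overline{\chi},1],
\]
by absorbing the polynomial prefactor into the stretched exponential.

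Setting $Y_i := K\mu(\mathbf{u}) - T_i$, part~(1) reduces to bounding $\P(\sum Y_i > N^a)$. I would handle this via truncation together with a Bernstein-type inequality: fix $\delta > 0$ small and set $\hat Y_i := Y_i \wedge K^{\overline{\chi}+\delta}$; then
\[
\P\Bigl(\sum Y_i > N^a\Bigr) \le \P\Bigl(\sum \hat Y_i > N^a\Bigr) + M\cdot e^{-K^{\theta_0 \delta}}.
\]
The slab tail estimate gives $\mathrm{Var}(\hat Y_i) \le CK^{2\overline{\chi}}$, and Bernstein's inequality with truncation parameter $B := K^{\overline{\chi}+\delta}$ bounds the first term by $\exp\bigl(-c\, N^{(a-\overline{\chi})/(1-\overline{\chi}) - C\delta}\bigr)$. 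Balancing the two terms by choosing $\delta$ suitably small depending on $\varepsilon$ and $\theta_0$ yields the claimed exponent $(1-\varepsilon)(a-\overline{\chi})/(1-\overline{\chi})$. Part~(2) follows from part~(1) with $a = 1$ and $N^a$ replaced by $\zeta N$; the scale becomes $K = \zeta^{-1/(1-\overline{\chi})}$ (independent of $N$), and the same argument yields $\exp\bigl(-c\zeta^{1/(1-\overline{\chi})}N\bigr)$.

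The main difficulty is the balance in the Bernstein step when the input $\theta_0$ is small: the truncation parameter $\delta$ must be simultaneously small (to keep the Bernstein exponent close to target) and large (so that the union-bound error term $Me^{-K^{\theta_0\delta}}$ beats the target), and these two requirements are compatible only when $\theta_0$ exceeds a threshold depending on $\varepsilon$. To handle arbitrary $\theta_0 > 0$, one would first bootstrap $\theta_0$ via an auxiliary multi-scale iteration --- applying the same slab-decomposition scheme at intermediate scales to progressively raise the effective exponent toward the fixed point $1/(1-\overline{\chi})$ --- and then perform the final Bernstein at the top scale. A secondary technical point is the transfer from point-to-point to slab-crossing concentration, which relies on standard transverse-wandering estimates available under the bounded-weight assumption.
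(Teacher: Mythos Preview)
Your slab decomposition and the recognition that a bootstrap is needed both match the paper's architecture. The genuine gap is that your Bernstein-with-truncation step does not give a strong enough one-step improvement for the bootstrap to reach the target exponent.

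Concretely, your error term $Me^{-K^{\theta_0\delta}}$ bounds the probability that \emph{some} slab has deficit exceeding $K^{\overline{\chi}+\delta}$, but a single such slab does not by itself force $\sum_i Y_i>N^a$; what matters is how \emph{many} large-deficit slabs occur. Optimizing $\delta$ (even allowing it to depend on $a$), your two-term scheme yields the one-step map $\theta\mapsto\theta/[(1+\theta)(1-\overline{\chi})]$, whose fixed point is $\overline{\chi}/(1-\overline{\chi})$, strictly below the target $1/(1-\overline{\chi})$. So the bootstrap stalls at the wrong value. The paper's fix (Proposition~\ref{prop: key prop for upper bound2}) replaces the single truncation by a multi-level pigeonhole: if $\sum_i Y_i>N^a$, then for \emph{some} level $b\in[\overline{\chi}+\varepsilon,\,b_\sharp^{\max}]$ (with $K^{b_\sharp^{\max}}$ of order $N^a$) there must be at least $N^aK^{-b-o(1)}$ slabs with $Y_i>K^b$; a Chernoff bound at each level, using Assumption~\ref{ass: lower tail concentration} at that specific level, then gives the one-step map $\theta\mapsto\min\{\theta,1\}/(1-\overline{\chi})$, which does iterate to $1/(1-\overline{\chi})$. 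The contribution you lose is precisely the top level $b=b_\sharp^{\max}$, where a single slab already accounts for the full deficit and Assumption~\ref{ass: lower tail concentration} gives exponent $K^{\theta_0(b_\sharp^{\max}-\overline{\chi})}=N^{\theta_0(a-\overline{\chi})/(1-\overline{\chi})}$ directly.

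For part~(2), the reduction fails more immediately: with $K=\zeta^{-1/(1-\overline{\chi})}$ independent of $N$ and $M=N/K\to\infty$, your error term $Me^{-K^{\theta_0\delta}}$ grows linearly in $N$ and gives no bound at all. The paper instead feeds the already-proved part~(1) into a lattice-animal argument tracking which mesoscopic boxes the geodesic visits, and runs the multi-level pigeonhole on those boxes.
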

{Last, we consider the lower bound for the lower tail moderate deviations.}
\begin{assum}[Lower Bound for Lower Tail Deviation]\label{ass: lower bound for lower tail deviations}
 There exists $\underline{\chi}\in [0,1)$ such that for any $\delta>0$ and for $K$ large enough depending on $\delta$,
\[
\P\Bigl(T(\mathbf{0}, K\mathbf{u}) < K\,\mu(\mathbf{u}) - 2 K^{\underline{\chi}}\Bigr)\ge e^{-K^{\delta}}.
\] 
\end{assum}
\begin{thm}\label{thm: lower bound for  lower tail MD} 
Assume Assumption~\ref{ass: lower bound for lower tail deviations} with $\underline{\chi}\in [0,1)$.    Let $1\geq a>\underline{\chi}\geq 0$.
\begin{enumerate}
    \item For any $\varepsilon>0$, if $N$ is large enough, then
\[
\P\Bigl(T(\mathbf{0}, N\mathbf{u}) < N\,\mu(\mathbf{u}) - N^{a}\Bigr) \ge e^{-N^{\frac{1+\varepsilon}{1-\underline{\chi}}(a-\underline{\chi})}}.
\]
\item For any $\varepsilon>0$, if $\zeta>0$ is small enough, 
\[
\liminf_{N\to\infty}  N^{-1} \log \P\Bigl(T(\mathbf{0}, N\mathbf{u}) < N\,\mu(\mathbf{u}) - \zeta N\Bigr) \ge - \zeta^{(1-\varepsilon)\frac{1}{1-\underline{\chi}}} .
\] 
\end{enumerate}

\end{thm}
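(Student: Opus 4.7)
The plan is a multi-scale concatenation combined with the FKG inequality. Decomposing the path from $\mathbf{0}$ to $N\mathbf{u}$ into $m$ segments of length $K$, each segment contributes the baseline deviation $2K^{\underline{\chi}}$ from Assumption~\ref{ass: lower bound for lower tail deviations} at probability cost $e^{-K^{\delta}}$. The aggregate deviation $2mK^{\underline{\chi}}$ matches the target $N^{a}$ when $mK=N$, forcing $K\asymp N^{(1-a)/(1-\underline{\chi})}$ and $m\asymp N^{(a-\underline{\chi})/(1-\underline{\chi})}$. The analogous choice for part~(2) with fixed small $\zeta>0$ is to take $K$ a large constant calibrated by $K\asymp\zeta^{-1/(1-\underline{\chi})}$ and $m:=\lfloor N/K\rfloor$.

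To implement this, I would set $\mathbf{y}_i:=\lfloor(iN/m)\mathbf{u}\rfloor$ for $0\le i\le m$, so that $\mathbf{y}_0=\mathbf{0}$, $\mathbf{y}_m=\lfloor N\mathbf{u}\rfloor$, and each $\mathbf{y}_{i+1}-\mathbf{y}_i$ lies within a bounded vector of $K\mathbf{u}$. Define the decreasing events
\[
A_i := \bigl\{T(\mathbf{y}_i,\, \mathbf{y}_i+\lfloor K\mathbf{u}\rfloor) < K\mu(\mathbf{u})-2K^{\underline{\chi}}\bigr\}, \qquad 0\le i\le m-1.
\]
By translation invariance and Assumption~\ref{ass: lower bound for lower tail deviations}, $\P(A_i)\ge e^{-K^{\delta}}$ for any $\delta>0$ and $K$ large enough. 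Since $T$ is nondecreasing in the edge weights, each $A_i$ is a decreasing event, so FKG gives
\[
\P\Bigl(\bigcap_{i=0}^{m-1}A_i\Bigr) \;\ge\; \prod_{i=0}^{m-1}\P(A_i) \;\ge\; e^{-mK^{\delta}}.
\]
On this intersection, concatenating the $m$ witnessing paths and patching each to $\mathbf{y}_{i+1}$ by an $O(1)$-long connector of weight $O(1)$ gives, by subadditivity,
\[
T(\mathbf{0},N\mathbf{u}) \;\le\; N\mu(\mathbf{u}) - 2mK^{\underline{\chi}} + O(m),
\]
where the $O(m)$ lumps together per-segment connector cost and the slack $mK\mu(\mathbf{u})=N\mu(\mathbf{u})+O(m)$. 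For part~(1), when $\underline{\chi}>0$ one has $m=o(N^{a})$ (since $a<1$), so the correction is negligible beside $2mK^{\underline{\chi}}=2N^{a}$ and the event forces $T(\mathbf{0},N\mathbf{u})<N\mu(\mathbf{u})-N^{a}$; the exponent $mK^{\delta}=N^{(a-\underline{\chi}+\delta(1-a))/(1-\underline{\chi})}$ is then tightened to $N^{(1+\varepsilon)(a-\underline{\chi})/(1-\underline{\chi})}$ by taking $\delta<\varepsilon(a-\underline{\chi})/(1-a)$. Part~(2) follows identically, producing $mK^{\delta}\asymp N\zeta^{(1-\delta)/(1-\underline{\chi})}$, and sending $N\to\infty$ and then $\delta\to 0$ yields the claimed rate.

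The main obstacle will be controlling the $O(m)$ correction together with the per-segment mismatch between $\mathbf{y}_{i+1}-\mathbf{y}_i$ and $\lfloor K\mathbf{u}\rfloor$ and the endpoint rounding. For $\underline{\chi}>0$ this is harmless because $2K^{\underline{\chi}}\to\infty$ dwarfs the per-connector $O(1)$ cost; the borderline case $\underline{\chi}=0$ (where $m\asymp N^{a}$ makes the correction the same order as the target) is more delicate and requires the constant $2$ in Assumption~\ref{ass: lower bound for lower tail deviations} to strictly beat the per-connector constant $C$. This can be arranged either by enlarging $m$ by a factor $1/(2-C)$ (whose effect on the probability exponent is absorbed into $\varepsilon$) or, if $C$ is too large, by a short bootstrapping step that first promotes the exponent from $\underline{\chi}=0$ to some positive value before rerunning the main argument.
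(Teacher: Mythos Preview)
Your proposal is correct and takes essentially the same approach as the paper: subdivide $[\mathbf{0},N\mathbf{u}]$ into $m\asymp N^{(a-\underline{\chi})/(1-\underline{\chi})}$ segments of length $K\asymp N^{(1-a)/(1-\underline{\chi})}$, apply Assumption~\ref{ass: lower bound for lower tail deviations} on each segment, and combine via FKG; part~(2) is the same with $K\asymp\zeta^{-1/(1-\underline{\chi})}$.

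The only difference is cosmetic. The paper defines $E_i:=\{T(i\widetilde{K}_*\mathbf{u},(i+1)\widetilde{K}_*\mathbf{u})<\widetilde{K}_*\mu(\mathbf{u})-\widetilde{K}_*^{\underline{\chi}}\}$ using only $\widetilde{K}_*^{\underline{\chi}}$ rather than the $2\widetilde{K}_*^{\underline{\chi}}$ furnished by the assumption; the extra factor of $2$ is precisely what absorbs the $O(1)$ rounding discrepancy between $\lfloor(i+1)\widetilde{K}_*\mathbf{u}\rfloor-\lfloor i\widetilde{K}_*\mathbf{u}\rfloor$ and $\lfloor\widetilde{K}_*\mathbf{u}\rfloor$, so no explicit connectors are needed. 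Under the standing normalization \eqref{ass: weight distribution} (edge weights $\le 1/(4d^2)$), the per-segment connector cost in your formulation is at most $d\cdot 1/(4d^2)=1/(4d)<1$, which the slack $2-1$ comfortably beats even when $\underline{\chi}=0$. So your worry about the borderline $\underline{\chi}=0$ case is legitimate in the abstract but already handled by the constants present; the suggested ``bootstrapping step'' is unnecessary.
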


\subsection{Discussion on the assumptions} \label{sec: assumption_meaning}
In this section, we aim to explain the meaning behind the {assumptions in Section~\ref{sec: results}} and why they are reasonable to assume.

\subsubsection*{Upper Tail Concentration (Assumption~\ref{assum: initial concentration})}
This assumption is an upper tail concentration for passage time restricted in a cylinder. {From Corollary~\ref{cor: chi and chiu} and Theorem~\ref{thm: bound for chiu}, we know that Assumption~\ref{assum: initial concentration} is satisfied for some $\overline{\chi} \leq 3/5$, under Assumption~\ref{assum: finite curvature} (note that Assumption~\ref{assum: finite curvature} is satisfied by at least one direction; see the discussion below). However, it is conceivable that Assumption~\ref{assum: initial concentration} holds for $\overline{\chi} = \chi$, the fluctuation exponent.} It is believed that every geodesic from $\mathbf{0}$ to $K\mathbf{v}$ should stay within ${\rm Cyl}_{\mathbf{v}}\Big(\R,K^{\frac{\chi+1+\varepsilon}{2}}\Big)$ for all large $K$ with high probability. (In fact, this has been proved by Chatterjee \cite{chatterjee2013} and Auffinger--Damron \cite{Auffinger2014} under a strong notion of fluctuation exponent and the so-called wandering exponent; see also the discussion on the wandering exponent in Section~\ref{sec: related_work}) For this reason, if $\overline{\chi} \geq \chi$, the restricted passage time should just be the same as the ordinary passage time, and Assumption~\ref{assum: initial concentration} essentially says that it is unlikely for $T(0, K\mathbf{v})$ to deviate from $K\mu(\mathbf{v})$ at an order higher than $K^{\overline{\chi}}$. Using this interpretation, the concentration bound is essentially
\[
\mathbb{P}(T(\mathbf{0}, K\mathbf{v}) > K \mu(\mathbf{v}) + tK^{\overline{\chi}}) \leq e^{-t^{\theta_0}},
\]
with $t = K^{a - \overline{\chi}}$, which should hold if $\overline{\chi} \geq \chi$. We will see that this assumption serves as the initial condition for the induction step.
\subsubsection*{Fluctuation Exponents and Full Concentration (Assumption~\ref{cond: full concentration})}
This condition has two parts. First, it postulates the existence of a fluctuation exponent $\chi\in[0,1)$ such that $\mathrm{Var}(T(\mathbf{0},\mathbf{x})) \sim |\mathbf{x}|^{2\chi}$ and $\E T(\mathbf{0},\mathbf{x}) - \mu(\mathbf{x}) \lesssim |\mathbf{x}|^{\chi}$. The first one is another usual interpretation of the fluctuation exponent, while the second one corresponds to the so-called \emph{nonrandom fluctuations}. There is no guarantee that the exponent for nonrandom fluctuations, if exists, is bounded above by the fluctuation exponent, though it is believed to be the case in FPP. The reader may refer to Section~\ref{sec: related_work} for a further discussion (see also \cite{auffingerdamronhanson2015}). The second part is a two-sided concentration inequality that ensures the passage time is concentrated around the time constant. It appears to be stronger than Assumption~\ref{assum: initial concentration} (since the one in Assumption~\ref{cond: full concentration} is two-sided). Note that Assumption~\ref{cond: full concentration} is satisfied for $\chi = \chi_a$ defined in \cite{chatterjee2013}. Furthermore, one can show that Assumption~\ref{cond: full concentration} implies $\chi = \overline{\chi}_u$.

\subsubsection*{Lower Tail Concentration (Assumption~\ref{ass: lower tail concentration})}
This assumption describes a concentration inequality for the lower tail of the passage time. It states that the probability of the passage time being significantly smaller than its expected value is exponentially small. The exponent $\overline{\chi}$ in the decay rate suggests that deviations below the mean occur at a rate dictated by the fluctuation exponent. This is a fundamental assumption used to rule out extreme deviations on the lower side, ensuring that typical passage times do not fall substantially below their expected value.

It is known from \cite{kesten1993} that this assumption holds with {$\overline{\chi} = 1/2$} in certain settings, providing a justification for its reasonableness. This assumption plays a crucial role in ensuring that the passage time fluctuations remain within a controlled range and do not lead to unexpected short paths that would violate the standard fluctuation theory.

\subsubsection*{Lower Bound for Lower Tail Deviation (Assumption~\ref{ass: lower bound for lower tail deviations})}
Unlike the previous assumption, which provides an upper bound on the probability of lower tail deviations, this assumption establishes a lower bound. It ensures that the probability of a substantial downward deviation in passage time is at least stretched-exponential in $K$. The existence of such a bound is crucial for understanding the distribution's lower tail behavior and highlights that, while rare, significant downward deviations are still possible with a small but non-negligible probability. Currently, there is no known case verifying this assumption with $\underline{\chi}>0$ rigorously (see \cite{newmanpiza1995} for lower bound for a relevant exponent).

\subsubsection*{Directional Curvature (Assumptions~\ref{assum: finite curvature} and \ref{ass: positive curvature})}
To understand these assumptions, recall the limit shape 
$
\mathbb{B}_d := \{\mathbf{x}\in \mathbb{R}^d : \mu(\mathbf{x}) \leq 1\}. 
$ Since $\mathbb{B}_d$ is the unit ball of the norm $\mu$, in particular, $\mathbb{B}_d$ is convex, compact, and has nonempty interior. Geometrically, Assumption~\ref{assum: finite curvature} ensures that one can put a small paraboloid that in contained entirely in $\mathbb{B}_d$ at $\mathbf{u}$ (so that the boundary of $\mathbb{B}_d$ is regular at $\mathbf{u}$), while Assumption~\ref{ass: positive curvature} says that one can put a paraboloid at $\mathbf{u}$ that ``inscribes'' $\mathbb{B}_d$ (so that the boundary of $\mathbb{B}_d$ is curved at $\mathbf{u}$). These curvature assumptions were first introduced by Newman \cite{newmancurvature} to study the geometry of geodesics (we remark that the definition in \cite{newmancurvature} is slightly different from ours).

Note that by convexity and compactness, one can show that there is always a direction that satisfies Assumption~\ref{assum: finite curvature}. However, there is no known example such that both Assumptions~\ref{assum: finite curvature} and \ref{ass: positive curvature} are satisfied at the same direction (although it is believed that this should hold for all directions for a large class of distributions, according to simulations \cite{almdeijfen2015}).

\subsection{A heuristic argument for the main results}
\label{sec: heuristic}
To build intuition, suppose that \(T(\mathbf{0}, N\mathbf{x}) - N\mu(\mathbf{x})\) typically fluctuates on the order of \(N^\chi\) and  that the rate function for upper tail large deviations is given by
\begin{equation*}
I(\zeta):= I_{\mathbf{x}}(\zeta) \;:=\; \lim_{N \to \infty} -N^{-d} \,\log\, \mathbb{P}\Bigl(T(\mathbf{0}, N \mathbf{x}) \;>\; \bigl(\mu(\mathbf{x}) + \zeta\bigr)\,N\Bigr),
\quad \zeta>0.
\end{equation*}
If the weight distribution of  $\tau_e$ has a continuous density with support  $[0,b]$ for some $b>0$, it is known that \(I(\zeta)\) exists \cite{BasuGangulySly}. Fix $t > 0$ and consider the event
\[
\{T(\mathbf{0}, N\mathbf{x}) > N\,\mu(\mathbf{x}) + t\,N^\chi\}.
\]
We will see that the behavior of the rate function \(I(\zeta)\) for small $\zeta$ governs probabilities of deviations on the \(N^\chi\)-scale. 
 First we rewrite the event as
\[
\bigl\{T(\mathbf{0}, N\mathbf{x}) > N\mu(\mathbf{x}) + N \cdot (t N^{-(1-\chi)})\bigr\}.
\]
If \(I(\zeta)\) behaves like \(\zeta^\alpha\) for some \(\alpha > 0\) as \(\zeta \downarrow 0\), then
\aln{\label{eq: heuristic derivation MDP}
&\mathbb{P}\bigl(T(\mathbf{0}, N\mathbf{x}) > N\mu(\mathbf{x}) + tN\cdot N^{-(1-\chi)}\bigr) \notag\\
&\approx \exp\Bigl(-N^d I(tN^{-(1-\chi)})\bigr)\Bigr) \notag\\
&= \exp\Bigl(-\Omega\bigl(N^d (\,tN^{-(1-\chi)})^\alpha\bigr)\Bigr)= \exp\Bigl(-\Omega\bigl(t^\alpha\,N^{d - \alpha(1-\chi)}\bigr)\Bigr).
}
For this to remain bounded away from both \(0\) and \(1\), one must have $$\text{ \(d = \alpha(1-\chi)\), \quad i.e.,\quad \(\alpha = \frac{d}{1-\chi}\).}$$
From this scaling argument with $t=N^{a-\chi}$, we also deduce
\[
\mathbb{P}\bigl(T(\mathbf{0}, N\mathbf{x}) > N\,\mu(\mathbf{x}) + N^a\bigr)
=
\exp\Bigl(-N^{\frac{d(a-\chi)}{\,1-\chi\,}+o(1)}\Bigr)
\quad
\text{for } \chi < a < 1.
\]

Similarly, we can consider a scaling argument for the lower tail moderate deviations as follows. Suppose that for some $\beta > 0$, we have
\[
\mathbb{P}\bigl(T(\mathbf{0}, N\mathbf{x}) < N\mu(\mathbf{x}) - \zeta N\bigr) \approx \exp\bigl(-\Omega(\zeta^\beta) N\bigr),
\]
as $\zeta \downarrow 0$. Then, one expects
\begin{align}
\mathbb{P}\bigl(T(\mathbf{0}, N\mathbf{x}) < N\mu(\mathbf{x}) - t N^{\chi}\bigr)
&= \mathbb{P}\bigl(T(\mathbf{0}, N\mathbf{x}) < N\mu(\mathbf{x}) + tN \cdot N^{-(1-\chi)}\bigr) \notag\\
&\approx \exp\Bigl(-N \,\Omega\bigl( (tN^{-(1-\chi)})^\beta\bigr)\Bigr) \notag\\
\label{eq: lower_tail_heuristic}
&= \exp\Bigl(-\Omega\bigl(t^\beta N^{1 - \beta(1-\chi)}\bigr)\Bigr).
\end{align}
For this probability to remain bounded away from both $0$ and $1$, we require
\[
1 = \beta(1-\chi), \quad \text{i.e.,} \quad \beta = \frac{1}{1-\chi}.
\]
By applying this scaling argument with $t = N^{a-\chi}$, we also deduce that
\[
\mathbb{P}\bigl(T(\mathbf{0}, N\mathbf{x}) < N\mu(\mathbf{x}) - N^a\bigr)
=
\exp\Bigl(-N^{\frac{a-\chi}{1-\chi} + o(1)}\Bigr),
\quad \text{for } \chi < a < 1.
\]
These heuristics suggest the exponents governing moderate deviations are fully determined by  the rate functions $I(\zeta)$ and its lower-tail counterpart. We note that, although the preceding heuristic offers valuable intuition, it lacks sufficient rigor and is not employed in the actual proof (see Section~\ref{sec: idea for upper bound}).

\subsection{A conjecture on the limiting distribution} The above heuristic argument also gives us some insight on the tail behavior of the limiting distribution of $T(\mathbf{0}, N\mathbf{x})$ if it exists. Suppose that \(T(\mathbf{0}, N\mathbf{x}) - N\,\mu(\mathbf{x})\) typically fluctuates on the order of \(N^\chi\), and the normalized passage time
\[
\frac{T(\mathbf{0},N\mathbf{x})-N\mu(\mathbf{x})}{\E\Bigl[\Bigl|T(\mathbf{0},N\mathbf{x})-N\mu(\mathbf{x})\Bigr|\Bigr]}
\]
converges in law to some random variable $X$ as $N\to\infty$. Our heuristic considerations, in particular \eqref{eq: heuristic derivation MDP}, suggest that the upper tail of \(X\) behaves as
\begin{equation}\label{eq:tail_conj}
\mathbb{P}(X>t)=\exp\Bigl(-t^{\frac{d}{\,1-\chi}+o(1)}\Bigr),\qquad t\to\infty.
\end{equation}
On the other hand, \eqref{eq: lower_tail_heuristic} leads to
\[
\mathbb{P}(X<-t)=\exp\Bigl(-t^{\frac{1}{1-\chi}+o(1)}\Bigr),\qquad t\to\infty.
\]
Thus, the tail behavior of \(X\) is asymmetric, with the upper tail decaying as \(\exp\bigl(-t^{\frac{d}{1-\chi}+o(1)}\bigr)\) and the lower tail decaying as \(\exp\bigl(-t^{\frac{1}{1-\chi}+o(1)}\bigr)\), up to some \(o(1)\) corrections in the exponents.

When $d = 2$, it is widely believed that fluctuation exponent is given by \(\chi=\tfrac13\). In particular, we obtain that as $t\to\infty$,
\[
\mathbb{P}(X>t)=\exp\Bigl(-t^{3+o(1)}\Bigr)
\quad\text{and}\quad
\mathbb{P}(X<-t)=\exp\Bigl(-t^{3/2+o(1)}\Bigr).
\]
For comparison, recall that the Tracy--Widom \(F_2\) distribution, which describes the fluctuations of the largest eigenvalue in the Gaussian unitary ensembles (GUE), satisfies (see, e.g., \cite{TracyWidom1994})
\al{
1-F_2(t) \sim \exp\Bigl(-t^{3/2 + o(1)}\Bigr), \quad  F_2(-t) \sim \exp\Bigl(-t^{3+o(1)}\Bigr) \quad \text{as \(t\to \infty\).}
}

Notably, the tail behaviors are reversed between the two settings. In the Tracy--Widom $F_2$ framework, one considers maximization problems, such as the largest eigenvalue of a random matrix or last-passage percolation. In contrast, FPP involves a minimization problem. As a result, the roles of the tails are interchanged.

\begin{rem}
    Note that when the edge weights are unbounded, the speed for the upper tail large deviations can be different from $N^d$; see \cite{CoscoNakajima}. In this case, if we carry out the same heuristic argument, we will see that the tail behavior of the limiting distribution could be different from \eqref{eq:tail_conj}, and this suggests that the limiting distribution could possibly be different from the Tracy--Widom law. {\CO It is in fact expected in the physics literature that when $d=2$, there is a phase transition (from Tracy--Widom and $\chi = 1/3$ to different behaviors) when the tails become heavier than power law with exponent $5$. See \cite{Gueudre_2015} for related research in physics. } Since the situation appears to be more complicated in the unbounded case, we do not intend to conjecture anything.
\end{rem}

\subsection{Related work}
\label{sec: related_work}
In this section, we provide a brief historical overview of research in FPP and discuss related work relevant to our main results.
\subsubsection*{Fluctuation estimates}
As mentioned before, one central problem in FPP is to understand the fluctuations $T(\mathbf{0},N\mathbf{u}) - N\mu(\mathbf{u})$. In order to analyze it, one usually divides it into two parts:
\[
T(\mathbf{0},N\mathbf{u}) - N\mu(\mathbf{u}) = (T(\mathbf{0},N\mathbf{u}) - \E T(\mathbf{0},N\mathbf{u})) + (\E T(\mathbf{0},N\mathbf{u}) - N\mu(\mathbf{u})).
\]
The first term, $T(\mathbf{0},N\mathbf{u}) - \E T(\mathbf{0},N\mathbf{u})$, is the so-called random fluctuations. It can be understood by studying the variance of $T(\mathbf{0},N\mathbf{u})$. The second term, $\E T(\mathbf{0},N\mathbf{u}) - N\mu(\mathbf{u})$, is coined ``nonrandom fluctuations''. It is expected that
\[
T(\mathbf{0},N\mathbf{u}) - \E T(\mathbf{0},N\mathbf{u}) \sim N^\chi, \quad \E T(\mathbf{0},N\mathbf{u}) - N\mu(\mathbf{u}) \sim N^{\gamma}
\]
for some dimensional-dependent exponents $\chi, \gamma \geq 0$. It is also expected that $\chi = \gamma$ for any $d\geq 2$, $\chi = 1/3$ when $d = 2$, and $\chi$ is nonincreasing in $d$. However, none of these is verified rigorously. The relationship between these two exponents has been investigated in \cite{auffingerdamronhanson2015}.

Kesten~\cite{kesten1993} was the first who gave an upper bound for the variance. He showed that $\mathrm{Var}(T(\mathbf{0}, N\mathbf{u})) \le CN$ for some constant \(C\).  Subsequent work by Benjamini--Kalai--Schramm~\cite{benjamini2003}, Benaim--Rossignol~\cite{BenaimRossignol2008}, and Damron--Hanson--Sosoe \cite{Damron2015} refined Kesten's result: under a near optimal moment assumption, the variance grows sublinearly, namely,
\[
\mathrm{Var}\bigl(T(\mathbf{0}, N\mathbf{u})\bigr) \le \frac{CN}{\log N}.
\]
For lower bound, Newman--Piza~\cite{newmanpiza1995} proved that when $d = 2$, for a large class of distributions, one has
$\mathrm{Var}\bigl(T(\mathbf{0}, N\mathbf{u})\bigr) \ge c\log N$, 
which rules out fluctuations that are too small at large scales. Under an additional directional positive curvature assumption, they further improved the bound to $cN^{1/4}$. {Finally, we would like to mention that recently Bates--Chatterjee \cite{bateschatterjee} and Damron--Hanson--Houdr\'e--Xu \cite{DHHX} give a lower bound $\sqrt{\log{N}}$ for the fluctuations of the passage time in two dimensions, which is stronger than the variance lower bound by Newman and Piza.}

As for nonrandom fluctuations, it was first shown by Kesten \cite{kesten1993} that $\mathbb{E} T(\mathbf{0}, N\mathbf{u}) - N\mu(\mathbf{u}) \leq CN^{5/6}\log{N}$. Alexander~\cite{Alexander} improved the upper bound to $CN^{1/2}\log{N}$, and it was further improved to $C(N\log{N})^{1/2}$ by Tessera \cite{Tessera}. Damron--Kubota \cite{damronkubota} showed the same bound under a low moment assumption. On the other hand, the second author of this paper demonstrated that the nonrandom fluctuations diverge in a suitable sense in any dimension \(d\ge 2\): there exists a sequence $\mathbf{x}_n\in \Z^d$ such that $|\mathbf{x}_n|\to \infty$ and $\E|T(\mathbf{0},\mathbf{x}_n)-\mu(\mathbf{x}_n)|\to \infty$~\cite{nakajima2019divergence}.

\subsubsection*{Wandering exponent and the scaling relation}
The wandering exponent $\xi$ is closely related to the fluctuation exponent. Roughly speaking, it governs the maximal distance from any geodesic in $\mathbb{G}(\mathbf{x}, \mathbf{y})$ to the line segment from $\mathbf{x}$ to $\mathbf{y}$.  It is expected the wandering and fluctuation exponents satisfy  the \emph{scaling relation}, 
\al{
\chi = 2\xi - 1,\text{ or equivalently }\xi=\frac{\chi+1}{2}.
} It was originally discovered in the Kardar--Parisi--Zhang (KPZ) equation \cite{kpz1986}. Later, Chatterjee \cite{chatterjee2013} proved the scaling relation rigorously (and the proof was simplified by \cite{Auffinger2014}), assuming the existence of these exponents in a strong sense.

\subsubsection*{Large deviations in FPP}

The study of large deviations in FPP began with Kesten~\cite{aspects}. In the lower tail regime, a subadditive argument shows that for sufficiently small $\zeta>0$, the limit
\aln{\label{eq: lower tail LDP}
J(\zeta) := \lim_{N\to\infty}\frac{1}{N}\log \P\Big(T(\mathbf{0},N\mathbf{e}_1)<N\mu(\mathbf{e}_1)- \zeta N\Big)
}
    exists and is strictly negative (see also \cite{verges2024largedeviationprinciplespeed} for large deviations in terms of geodesics, which is closely related to lower tail large deviations). In contrast, for the upper tail, under the boundedness of the weight distribution, Kesten established that
\[
-\infty <\varliminf_{N\to\infty}\frac{1}{N^d}\log \P\Big(T(\mathbf{0},N\mathbf{e}_1)>N\mu(\mathbf{e}_1)+\zeta N\Big)
\le \varlimsup_{N\to\infty}\frac{1}{N^d}\log \P\Big(T(\mathbf{0},N\mathbf{e}_1)>N\mu(\mathbf{e}_1)+\zeta N\Big)<0.
\]
These estimates reveal that the lower and upper tails are strongly asymmetric (see \cite{ChowZhang} for heuristic insights). When the two upper tail limits coincide, the common value is identified as the rate function.

Subsequent contributions have extended this framework in several directions. Chow--Zhang \cite{ChowZhang} examined box-to-box passage times, while Cranston--Gauthier--Mountford \cite{CranstonMountfordGore} derived necessary and sufficient conditions---specifically, a volumic $N^d$ scaling---for the validity of the upper tail estimates.  Moreover, Basu--Ganguly--Sly \cite{BasuGangulySly} proved the existence of the rate function for the upper tail under additional assumptions on the weight distribution. Cosco--Nakajima \cite{CoscoNakajima} further analyzed the rate function in the setting of light-tailed distributions (including the exponential distribution, which is also known as the Eden growth model), where the rate function is characterized by the so-called $p$-capacity.

\subsubsection*{Rotationally Invariant FPP and tail estimates.}
Recent work by Basu--Sidoravicius--Sly \cite{basu2023} investigates two-dimensional rotationally invariant FPP models, establishing novel tail bounds and scaling relations. In particular, they demonstrate an “improved concentration” phenomenon for the tail of the first-passage time. A variant of this behavior will emerge later in our proof (see Propositions~\ref{prop: key prop for upper bound} and \ref{prop: key prop for upper bound2}). Furthermore, we iterate this improved concentration to obtain the correct exponent in the moderate deviation regime.

\subsubsection*{Connections to last-passage percolation (LPP) and the KPZ universality class}
FPP is closely related to LPP, with the key distinction that LPP focuses on maximizing (directed) paths, whereas FPP considers minimizing paths. In the (1+1)-dimensional LPP with exponential or geometric distributions, integrable structures provide explicit passage-time distributions \cite{Johansson2000}. Under appropriate rescaling, these fluctuations converge to the Tracy--Widom law for GUE \cite{TracyWidom1994}.

These exactly solvable LPP models belong to the Kardar--Parisi--Zhang (KPZ) universality class \cite{kpz1986, Corwin2012, ferrari_integrable_probability, MatetskiQuastelRemenik2021}, characterized by universal exponents common to KPZ-type growth processes \cite{Borodin2015, Quastel2011}. However, FPP appears to lack integrable structures, leaving open the rigorous derivation of asymptotic distributions or exact exponents. Nevertheless, exact results from LPP serve as valuable guides for conjectures and heuristics in FPP, reinforcing the belief that FPP also belongs to the KPZ class, sharing its universal scaling and distributional properties. Ongoing research in FPP continues to draw upon insights from LPP and related models, uncovering deep connections beyond integrable systems.

We remark that our results share certain similarities with the recent study \cite{GangulyHegde2023} by {\CO Ganguly--Hegde} of two-dimensional  LPP for general weights under some assumptions, which establishes optimal tail bounds. However, there are also fundamental differences. First, their optimal tail estimates are strictly stronger than moderate deviations. Second, in FPP, paths are undirected, which pose additional technical challenges compared to the directed paths in LPP. Third, whereas \cite{GangulyHegde2023} restricts attention to $(1,1)$-directed paths in two dimensions, our framework accommodates higher-dimensional settings and a broader range of directional constraints. Fourth, \cite{GangulyHegde2023} consistently assumes both positive and finite curvature, and for lower tail estimates \cite[Theorem 3]{GangulyHegde2023}, they further impose the full concentration condition, a variant of  Assumption~\ref{cond: full concentration}-(2).  
Finally, their approach to the upper tail of LPP depends on the exponent governing lower tail deviations---analogous to $N^a$ in Theorem~\ref{thm: upper bound for  lower tail MD}  in our case---being sufficiently small, a requirement that we do not impose. {\CO To establish the improved concentration for upper-tail, they rely on a property known as the geodesic watermelon \cite{BasuGangulyHammondHegde2022}, which involves constructing many disjoint geodesics in the plane. However, extending such a construction to higher dimensions is not straightforward. Instead, we introduce a new slab argument with modified scaling schemes.}

\subsection{Organization of the paper}

The remainder of this paper is organized as follows. 

In Section~\ref{sec:upper-bound}, we prove Theorem~\ref{thm: key prop for upper bound}, which establishes an upper bound for the upper-tail moderate deviations, highlighting the key challenges in the proof and employing an induction scheme based on Proposition~\ref{prop: key prop for upper bound}. Sections~\ref{sec:upper-bound-1} and~\ref{sec:upper-bound-2} complete the proofs of Theorem~\ref{thm: key prop for upper bound}~(1) and (2), respectively. Section~\ref{sec:refinement} refines Assumption~\ref{assum: initial concentration} and contains the proofs of Theorems~\ref{thm: refinement of condition} (Section~\ref{sec:refinement-1}) and~\ref{thm: bound for chiu} (Section~\ref{sec:refinement-2}). In Section~\ref{sec:upper bound lower tail}, we establish Theorem~\ref{thm: upper bound for lower tail MD} by a similar but simpler argument to that in Section~\ref{sec:upper-bound}.

We prove Theorem~\ref{thm: key prop for lower bound} in Section~\ref{sec: lower_bound_upper_tail}, while in Section~\ref{sec:final-proof} we prove Theorem~\ref{thm: lower bound for lower tail MD}. These two theorems can be proved by using more straightforward arguments.

\subsubsection*{Bound on Edge Weights}

To simplify the proofs, we impose the following additional condition:
\begin{equation} \label{ass: weight distribution} \tau_e \leq \frac{1}{4d^2} \quad \text{a.s}. \end{equation}
This normalization by  $1/(4d^2)$ is chosen for notational convenience. Since any rescaling of edge weights leads to a corresponding linear rescaling of passage times, this assumption does not change the system’s qualitative behavior. Note that \eqref{ass: weight distribution} implies the following:
\begin{equation}\label{ass: weight distribution2}
     T_A(\mathbf{x},\mathbf{y})\leq {\rm dist}_A(\mathbf{x},\mathbf{y})/(4d)^2 \quad \text{for all $\mathbf{x}, \mathbf{y}\in A$,}
\end{equation}
where ${\rm dist}_A$ is the graph distance restricted to $A.$

    \section{Upper bound for upper tail moderate deviations}\label{sec:upper-bound}
    \subsection{Idea of the proof}\label{sec: idea for upper bound}

Our proof proceeds by an induction scheme that progressively improves the exponent $\theta_0$ appearing in Theorem~\ref{thm: key prop for upper bound}. More precisely, we establish in Proposition~\ref{prop: key prop for upper bound} that, under suitable assumptions, if the probability of a large deviation event initially decays as
\[
\mathbb{P} \Big(T_{\mathrm{Cyl}_{\mathbf{0}, \mathbf{v}}(\R, N^{\frac{a+1}{2}}) }(\mathbf{0}, N\mathbf{v}) > N\mu(\mathbf{v})+ N^{a} \Big) \leq  \exp\left(-N^{\theta (a-\overline{\chi})}\right),
\]
then the exponent $\theta$ can be improved recursively through the function
\[
\overline{f}_{\overline{\chi}}(x) := \frac{x}{1-\overline{\chi}} \wedge \frac{d}{1-\overline{\chi}}.
\]
Since $\overline{f}_{\overline{\chi}}(x) > x$ for all $x \in (0, d)$, this process leads to an eventual strengthening of the deviation estimates.

The core of our approach is an adaptation of \emph{Kesten's slab argument} (see e.g.\ \cite[Section~5]{aspects}), traditionally used to study fluctuations and large deviations in FPP. However, our setting introduces two key difficulties:
\begin{enumerate}
    \item \textbf{Moderate Deviations:} Unlike previous applications, which mainly focus on large deviation regimes, we need to handle moderate deviations at multiple scales simultaneously.
    \item \textbf{Reduction to Face-to-Face Passage Times:} To effectively decompose the passage time across different regions, we need to control passage times between specific geometric subsets, particularly directional slabs and cylinders.
\end{enumerate}

To address the first difficulty, we follow the approach of \cite{CoscoNakajima} together with multi-scale analysis, where it is shown that if the overall passage time is large, then there must exist many localized regions with anomalously high passage times. These regions correspond to \emph{bad vertices}, where we will define in  Definition~\ref{def: bad points}. The second difficulty is handled by restricting our analysis to passage times within appropriately defined tilted directional cylinders.

\subsection{Induction scheme}
Let us define the function $\overline{f}_{\overline{\chi}}(x):=\frac{x}{1-\overline{\chi}}\wedge \frac{d}{1-\overline{\chi}}$. Since $\overline{\chi}\in (0,1),$ we have $\overline{f}_{\overline{\chi}}(x)>x$ for any $x\in (0,d)$. The following proposition plays a key role to execute the induction.
\begin{prop}\label{prop: key prop for upper bound}
 Assume  Assumptions~\ref{assum: initial concentration} and \ref{assum: finite curvature} with $\overline{\chi}\in (0,1)$, $\theta_0>0$, and $c_0>0$.    For any $\varepsilon>0 $, there exists $c_1>0$ such that, if $N\in\N$ is large enough, then for any  $a\in [\overline{\chi}+\varepsilon,1]$ and for any $\mathbf{v}\in H_{\mathbf{u}}$ with $|\mathbf{v}-\mathbf{u}|<N^{\frac{\overline{\chi}+\varepsilon-1+c_1}{2}} $,
 \al{
\mathbb{P} \Big(T_{\mathrm{Cyl}_{\mathbf{0}, \mathbf{v}}([-2N,2N], N^{\frac{a+1}{2}}) }(\mathbf{0}, N\mathbf{v})> N\mu(\mathbf{v})+ N^{a}\Big)\leq  \exp{\left(-N^{(1-\varepsilon)\overline{f}_{\overline{\chi}+\varepsilon}(\theta_0) (a-\overline{\chi}-\varepsilon)}\right)}.
}
\end{prop}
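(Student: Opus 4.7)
The plan is to prove Proposition~\ref{prop: key prop for upper bound} by a Kesten-type multi-scale slab argument that converts the scale-$K$ concentration in Assumption~\ref{assum: initial concentration} into a quantitatively stronger scale-$N$ estimate. The strategy hinges on four ingredients: subadditive concatenation inside the tilted cylinder, independence of passage times across disjoint sub-cylinders, a dyadic classification of local excesses, and the curvature bound to control time-constant drift from small tilts of the local direction.

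The first step is to pick an intermediate scale $K=N^{\gamma}$ with $\gamma$ close to $(1-a)/(1-\overline{\chi}-\varepsilon)$ and to place $M\simeq N/K$ collinear waypoints $\mathbf{x}_i:=iK\mathbf{v}$ on the axis of the big cylinder $\mathrm{Cyl}_{\mathbf{0},\mathbf{v}}(\mathbb{R},N^{(a+1)/2})$. Letting $T_K^{(i)}$ denote the passage time within the corresponding sub-cylinder $\mathrm{Cyl}_{\mathbf{x}_i,\mathbf{v}}(\mathbb{R},K^{(b+1)/2})$ (for an auxiliary width matched to the deviation level $b$ to be analysed), concatenation yields the deterministic inequality $T_{\mathrm{Cyl}}(\mathbf{0},N\mathbf{v})\le\sum_{i=0}^{M-1}T_K^{(i)}$, and the $T_K^{(i)}$'s are independent because the sub-cylinders are disjoint. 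Hence the event $\{T_{\mathrm{Cyl}}>N\mu(\mathbf{v})+N^a\}$ is contained in $\bigl\{\sum_i(T_K^{(i)}-K\mu(\mathbf{v}))>N^a\bigr\}$, which avoids a costly union bound over waypoint choices.

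Next comes the dyadic decomposition. Partition $(\overline{\chi}+\varepsilon,1]$ into $J=O(1/\varepsilon_0)$ levels $b_j:=\overline{\chi}+\varepsilon+j\varepsilon_0$ and let $n_j:=\#\{i : T_K^{(i)}-K\mu(\mathbf{v})\in(K^{b_j},K^{b_{j+1}}]\}$. Pigeonhole forces, for some $j^\star$, $n_{j^\star}K^{b_{j^\star+1}}\gtrsim N^a/J$. Invoking Assumption~\ref{assum: initial concentration} at scale $K$, with Assumption~\ref{assum: finite curvature} absorbing the $O(K^{a-1})$ change in $\mu$ produced by sub-direction tilts within the cylinder, each sub-piece is $b_{j^\star}$-bad with probability at most $\exp(-K^{\theta_0(b_{j^\star}-\overline{\chi}-\varepsilon)})$. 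Union-bounding over the $\binom{M}{n_{j^\star}}$ locations of bad sub-cylinders and using independence gives a probability bound whose exponent, after substituting $K=N^\gamma$, $M=N^{1-\gamma}$, $n_{j^\star}\simeq N^a/K^{b_{j^\star}}$, is of order $N^{a+\gamma(\theta_0(b_{j^\star}-\overline{\chi}-\varepsilon)-b_{j^\star})}$; the entropic prefactor $\binom{M}{n_{j^\star}}$ is absorbable because $\gamma\theta_0(b_{j^\star}-\overline{\chi}-\varepsilon)>0$ dominates $\log M$. Adversarial minimisation over $b_{j^\star}\in[\overline{\chi}+\varepsilon,1]$ combined with the critical choice $\gamma=(1-a)/(1-\overline{\chi}-\varepsilon)$ yields the announced rate $(1-\varepsilon)\overline{f}_{\overline{\chi}+\varepsilon}(\theta_0)(a-\overline{\chi}-\varepsilon)$; the cap $d/(1-\overline{\chi}-\varepsilon)$ in $\overline{f}$ arises precisely when the optimising spike size saturates the bounded-weights constraint $T_K^{(i)}\le K/(4d^2)$.

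The main obstacle is twofold. First, \emph{tilt propagation}: the hypothesis $|\mathbf{v}-\mathbf{u}|<N^{(\overline{\chi}+\varepsilon-1+c_1)/2}$ must propagate to scale $K$, i.e., imply $|\mathbf{v}-\mathbf{u}|<K^{(\overline{\chi}+1+c_0)/2}$ needed to invoke Assumption~\ref{assum: initial concentration} at scale $K$. Comparing exponents with $K=N^\gamma$ at the optimising $\gamma$ fixes how small $c_1$ must be, and in particular this dictates that the admissible window for $\mathbf{v}$ in the proposition is strictly narrower than the input window at scale $K$. Second, the dyadic mesh $\varepsilon_0$ must be chosen delicately: small enough that both the pigeonhole factor $J$ and the $\varepsilon_0$-loss in the probability exponent fit inside the $(1-\varepsilon)$ slack, yet large enough that $K^{\theta_0(b_{j^\star}-\overline{\chi}-\varepsilon)}$ dominates $\log\binom{M}{n_{j^\star}}$ uniformly in $j^\star$. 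Arranging both conditions simultaneously and uniformly for all $a\in[\overline{\chi}+\varepsilon,1]$, in particular near the boundary $a\to\overline{\chi}+\varepsilon$ where the deviation becomes comparable to the typical fluctuation, is the delicate technical heart of the argument.
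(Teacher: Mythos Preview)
Your overall architecture (intermediate scale $K=N^\gamma$, dyadic classification of local excesses, Chernoff over independent sub-regions) matches the paper's in spirit, but the one-dimensional decomposition along collinear waypoints $\mathbf{x}_i=iK\mathbf{v}$ cannot deliver the claimed exponent. Carrying out your own computation with $\gamma=(1-a)/(1-\overline{\chi}-\varepsilon)$ and $n_{j}\simeq N^a/K^{b}$, the exponent $a-\gamma b+\gamma\theta_0(b-\overline{\chi}-\varepsilon)$ is linear in $b$; minimising over $b\in[\overline{\chi}+\varepsilon,\,a/\gamma]$ gives
\[
\min\Bigl\{\tfrac{1}{1-\overline{\chi}-\varepsilon},\ \tfrac{\theta_0}{1-\overline{\chi}-\varepsilon}\Bigr\}(a-\overline{\chi}-\varepsilon),
\]
not $\min\{d,\theta_0\}/(1-\overline{\chi}-\varepsilon)\cdot(a-\overline{\chi}-\varepsilon)$. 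As soon as $\theta_0>1$ (which is exactly what happens after the first iteration of the induction scheme), your bound is strictly weaker than the proposition asserts. Your explanation that the cap $d/(1-\overline{\chi}-\varepsilon)$ comes from the bounded-weights constraint is incorrect: boundedness only caps $b\le1$, and at $b=1$ your exponent does not produce the factor $d$.

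The missing idea is that the factor $d$ comes from the $(d-1)$ transverse directions of the cylinder, not from any longitudinal feature. The paper does not concatenate along a single line; it considers entire faces $L_m,R_m$ of transverse width $\sim N^{(a+m/M)/2}$ and, via Lemmas~\ref{prop: bad slabs} and~\ref{prop: many bad boxes}, shows that if the restricted passage time is large then a positive fraction of \emph{all} the $\sim N^{(d-1)(a+m/M)/2}$ parallel linear interpolations between successive faces must be bad. This multiplies the count of bad mesoscopic boxes by $(N^{(a+m/M)/2}/K^{(b+1)/2})^{d-1}$, and it is precisely this factor that upgrades the endpoint value at $b=\overline{\chi}+\varepsilon$ from $1/(1-\overline{\chi}-\varepsilon)$ to $d/(1-\overline{\chi}-\varepsilon)$. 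The additional multi-scale structure in $m$ (the nested slabs at distances $N^{m/M}$ from the endpoints) is what makes the face-to-face reduction work starting from the single points $\mathbf{0}$ and $N\mathbf{v}$. To repair your argument you would need to replace the single axial chain by a full $(d-1)$-dimensional grid of parallel sub-cylinders and argue, as in Lemma~\ref{prop: bad slabs}, that a large restricted passage time forces many of them to be bad simultaneously.
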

 Assume  Assumptions~\ref{assum: initial concentration} and \ref{assum: finite curvature} with $\overline{\chi}\in (0,1)$,  $\theta_0>0$, and $c_0>0$ until we complete the proof of Proposition~\ref{prop: key prop for upper bound}. Let $\varepsilon>0$. Without loss of generality, we assume  that $\varepsilon$ is small enough so that 
 \aln{\label{cond: vareps condition}
0< \varepsilon<(8d+(1-\overline{\chi})^{-1} +c_0^{-1}+ \theta_0^{-1})^{-2d}.
 }
 We set $M:= L:=\lceil 4\varepsilon^{-4}\rceil$ and $c_1: =\varepsilon^5$. Let $a\in [\overline{\chi}+\varepsilon,1]$ and  $\mathbf{v}\in H_{\mathbf{u}}$ with $|\mathbf{v}-\mathbf{u}|<N^{\frac{\overline{\chi}+\varepsilon-1+c_1}{2}}$. If $a \in [\overline{\chi}+\varepsilon,\overline{\chi}+\varepsilon+\varepsilon^{3/2}]$, then  for $N$ large enough, since $\varepsilon$ is small enough, 
 \al{\mathbb{P} \Big(T_{\mathrm{Cyl}_{\mathbf{0}, \mathbf{v}}([-2N,2N], N^{\frac{a+1}{2}}) }(\mathbf{0}, N\mathbf{v})> N\mu(\mathbf{v})+ N^{a}\Big)&\leq  \exp{(-N^{\theta_0 (a-\overline{\chi})})}\\
 &\leq  \exp{(-N^{\theta_0 \varepsilon})}\\
 &\leq \exp{\Big(-N^{\frac{\theta_0}{1-\overline{\chi}-\varepsilon} \varepsilon^{3/2}}\Big)}\leq  \exp{\Big(-N^{\frac{\theta_0}{1-\overline{\chi}-\varepsilon}(a-\overline{\chi}-\varepsilon)}\Big)},
 }
 which gives us the conclusion of Proposition~\ref{prop: key prop for upper bound}. From now on, we always assume $a\geq \overline{\chi}+\varepsilon+\varepsilon^{3/2}$. 
 Before going to the proof, we prepare some notations and lemmas.  

The following was proved in \cite[Theorem 5.2 and Proposition 5.8]{aspects}, which will also be frequently used in later sections. 
\begin{lem}[Lower tail large deviation estimates]\label{lem: Theorem 5.2 and Proposition 5.8}
    Suppose $\mathbb{P}(\tau_e=0)<p_c(d)$. There exists $c>0$ such that for any $\mathbf{x}\in \R^d$ with $|\mathbf{x}|\geq 1$,
\aln{   \label{eq: Kesten Propositin 5.8}
\mathbb{P}(\exists \gamma:\mathbf{0}\to \mathbf{x},\,T(\gamma)<c|\mathbf{x}|)\leq e^{-c|\mathbf{x}|}.
}
Moreover, for any $\varepsilon>0$, there exists $c'=c'(\varepsilon)>0$ such that for any $\mathbf{x} \in \R^d$  with $|\mathbf{x}|\geq 1$,
\aln{
\mathbb{P}(T(\mathbf{0},\mathbf{x})<\mu(\mathbf{x}) - \varepsilon |\mathbf{x}|)\leq e^{-c'|\mathbf{x}|}.
}
\end{lem}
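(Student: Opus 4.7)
Both assertions rest on the same structural input: the hypothesis $\mathbb{P}(\tau_e=0)<p_c(d)$ and right-continuity of the CDF yield a threshold $\delta>0$ so small that $p:=\mathbb{P}(\tau_e\le \delta)<p_c(d)$, so that the collection of edges with $\tau_e\le\delta$ forms a subcritical Bernoulli bond percolation (call these edges $\delta$-\emph{open}). The deterministic inequality
\[
T(\gamma)\;\ge\;\delta\cdot\#\{e\in\gamma:\tau_e>\delta\}
\]
then translates the event $\{T(\gamma)<c|\mathbf{x}|\}$ into the requirement that most edges of $\gamma$ are $\delta$-open.

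For the first inequality, my plan is a union-bound/Chernoff argument over self-avoiding paths. For a fixed self-avoiding $\gamma:\mathbf{0}\to\mathbf{x}$ of lattice length $n\ge|\mathbf{x}|$, the event $\{T(\gamma)<c|\mathbf{x}|\}$ forces at least $(1-c/\delta)n$ edges of $\gamma$ to be $\delta$-open; the Chernoff bound for the i.i.d.\ Bernoullis $\mathbf{1}_{\tau_e\le\delta}$ then gives probability at most $e^{-r(c,\delta)n}$ with $r(c,\delta)\to\log(1/p)>0$ as $c\downarrow 0$. Using the crude bound $(2d)^n$ on the number of self-avoiding paths of length $n$ from $\mathbf{0}$ and choosing $c$ small enough that $r(c,\delta)>\log(2d)$, a sum over $n\ge|\mathbf{x}|$ produces the desired $e^{-c|\mathbf{x}|}$.

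For the second inequality, I would combine subadditivity with block concentration. Kingman's subadditive ergodic theorem gives $\mu(\mathbf{x})\le\mathbb{E} T(\mathbf{0},\mathbf{x})/|\mathbf{x}|+o(1)$, so it suffices to show one-sided linear-scale concentration of $T(\mathbf{0},\mathbf{x})$ below its mean. First I would fix a mesoscopic scale $K=K(\varepsilon)$ large enough that $\mathbb{E} T(\mathbf{0},K\mathbf{x}/|\mathbf{x}|)/K$ is within $\varepsilon/3$ of $\mu(\mathbf{x}/|\mathbf{x}|)$, partition a cylindrical corridor along $[\mathbf{0},\mathbf{x}]$ into $\asymp|\mathbf{x}|/K$ blocks, and apply the first inequality inside each block at scale $K$ to obtain block-wise lower bounds that each hold with probability $1-e^{-cK}$. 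Any geodesic witnessing $T(\mathbf{0},\mathbf{x})<\mu(\mathbf{x})|\mathbf{x}|-\varepsilon|\mathbf{x}|$ must violate these lower bounds in a positive density of blocks; decoupling along a finite-range-of-dependence sub-family of blocks and applying a binomial Chernoff then yields exponential decay at rate proportional to $|\mathbf{x}|$.

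The main obstacle is this second step: martingale and Efron--Stein bounds, which are what the bounded-weights assumption most directly supplies, only give $\sqrt{|\mathbf{x}|}$-scale deviations, so one genuinely needs a renormalization/block argument to reach linear scale. The delicate points are to carefully compare block-wise passage times to the \emph{directional} time constant $\mu(\mathbf{x})$ (not just to a universal constant $c$), and to handle the geometric interactions between blocks so that decoupling applies; this is essentially the route followed in Kesten's \cite{aspects}.
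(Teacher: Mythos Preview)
The paper does not give its own proof of this lemma; it simply cites Kesten \cite{aspects} (Theorem~5.2 and Proposition~5.8). So there is nothing to compare against except Kesten's original arguments.

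Your sketch for the second inequality is on the right track and, as you acknowledge, is essentially Kesten's block/renormalization scheme. But your argument for the \emph{first} inequality has a genuine gap. You need the Chernoff rate $r(c,\delta)$ to exceed $\log(2d)$ so that the union bound over the at most $(2d)^n$ self-avoiding paths of length $n$ converges. Since $r(c,\delta)\to\log(1/p)$ as $c\downarrow 0$ with $p=\mathbb{P}(\tau_e\le\delta)$ fixed, this would require $p<1/(2d)$. But the only hypothesis is $p<p_c(d)$, and $p_c(d)>1/(2d)$ for every $d\ge 2$ (indeed $p_c(2)=\tfrac12$). Concretely, take $d=2$ and a weight distribution with $\mathbb{P}(\tau_e=0)=0.4$: then $p\ge 0.4$ regardless of how small you take $\delta$, and $\log(1/p)\le\log(2.5)<\log 4=\log(2d)$, so the union bound diverges. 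Replacing $2d$ by the connective constant does not rescue the argument, since $p_c(d)>1/\kappa(d)$ as well.

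Kesten's actual proof of Proposition~5.8 avoids this by invoking the exponential decay of cluster radii in subcritical Bernoulli percolation (Menshikov / Aizenman--Barsky) rather than a naive path count. The idea is that a path $\gamma:\mathbf{0}\to\mathbf{x}$ with $T(\gamma)<c|\mathbf{x}|$ contains at most $c|\mathbf{x}|/\delta$ $\delta$-closed edges, hence traverses at most $c|\mathbf{x}|/\delta+1$ distinct $\delta$-open clusters whose combined diameters must be of order $|\mathbf{x}|$; the subcritical exponential tail on cluster diameters then gives a large-deviation bound on this sum. This replacement of ``count all paths'' by ``count open clusters along the path and use subcritical decay'' is the missing ingredient.
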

 \begin{Def}\label{def: bad points}
        Given $b\geq \overline{\chi}+\varepsilon$, $K\geq 1$, and $\mathbf{v} \in \R^d \setminus \{0\}$, we say that $\mathbf{x}\in \R^d$ 
is $(b, K)_{\mathbf{v}}$-bad if there exist $\mathbf{y} \in  \mathrm{Cyl}_{\mathbf{x}, \mathbf{v}}([-K,2K], K^{\frac{b+1}{2}})$ and $\mathbf{y'}\in \mathrm{Cyl}_{\mathbf{y}, \mathbf{v}}(\{K\}, K^{\frac{\overline{\chi}+\varepsilon+1}{2}})$ such that
\al{
    T_{\mathrm{Cyl}_{\mathbf{x}, \mathbf{v}}([-4K, 4K], 4 K^{\frac{b+1}{2}})}(\mathbf{y,y'})\geq  K\mu(\mathbf{v})+ 2K^b.
}
\end{Def}
Roughly speaking, $\mathbf{x}$ is $(b, K)_{\mathbf{v}}$-bad if there is a mesoscopic neighborhood near $\mathbf{x}$ in which the passage time from one side to the other can be large.
\begin{lem}
      Let $\varepsilon > 0$. If $K$ is large enough depending on $\varepsilon$, then for any $\mathbf{v}\in H_{\mathbf{u}}$ with $|\mathbf{v}-\mathbf{u}|<K^{\frac{\overline{\chi}+\varepsilon-1+\varepsilon^5}{2}} $ and $b\in [\overline{\chi}+\varepsilon + L^{-1},1-L^{-1}]$,  we have for any $\mathbf{x}\in\R^d$,
\aln{\label{eq: bad probability}
\mathbb{P}(\textnormal{$\mathbf{x}$ is $(b, K)_{\mathbf{v}}$-bad})\leq K^{4d} e^{- K^{\theta_0 (b-\overline{\chi})}}.
}
\end{lem}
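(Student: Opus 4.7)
The plan is to union-bound over the witnessing pairs $(\mathbf{y},\mathbf{y}')$ in Definition~\ref{def: bad points} and reduce each per-pair probability to an application of Assumption~\ref{assum: initial concentration} along the tilted direction $\mathbf{w}:=(\mathbf{y}'-\mathbf{y})/K$. A simple volume count gives at most $|\mathrm{Cyl}_{\mathbf{x},\mathbf{v}}([-K,2K], K^{(b+1)/2}) \cap \Z^d|\cdot |\mathrm{Cyl}_{\mathbf{y},\mathbf{v}}(\{K\}, K^{(\overline\chi+\varepsilon+1)/2}) \cap \Z^d| \leq K^{2d}$ such pairs, comfortably within the budget $K^{4d}$, so it suffices to prove $\mathbb{P}(T_{\mathrm{Cyl}_{\mathbf{x},\mathbf{v}}([-4K,4K], 4K^{(b+1)/2})}(\mathbf{y},\mathbf{y}') \geq K\mu(\mathbf{v}) + 2K^b) \leq e^{-K^{\theta_0(b-\overline\chi)}}$ for each fixed pair.

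For such a pair I set $\mathbf{w} := (\mathbf{y}'-\mathbf{y})/K$. Since $\mathbf{v}\in H_\mathbf{u}$ and the transverse offset $\mathbf{y}'-\mathbf{y}-K\mathbf{v}$ lies in $\mathrm{span}\{\tilde\mathbf{u}_2,\dots,\tilde\mathbf{u}_d\}$, we have $\mathbf{w}\in H_\mathbf{u}$, and the bounds $|\mathbf{v}-\mathbf{u}|\leq K^{(\overline\chi+\varepsilon-1+\varepsilon^5)/2}$ and $|\mathbf{y}'-\mathbf{y}-K\mathbf{v}| \leq \sqrt{d-1}\, K^{(\overline\chi+\varepsilon+1)/2}$, combined with $\varepsilon+\varepsilon^5<c_0$ from \eqref{cond: vareps condition}, yield $|K\mathbf{w}-K\mathbf{u}|<K^{(\overline\chi+1+c_0)/2}$. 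So Assumption~\ref{assum: initial concentration} applied to $\mathbf{w}$ at exponent $b$ gives
\[
\mathbb{P}\bigl(T_{\mathrm{Cyl}_\mathbf{w}(\R, K^{(b+1)/2})}(\mathbf{0}, K\mathbf{w}) > K\mu(\mathbf{w}) + K^b\bigr) \leq e^{-K^{\theta_0(b-\overline\chi)}}.
\]
Invoking Assumption~\ref{assum: finite curvature} twice (with $\mathbf{u}^\perp=\mathbf{v}-\mathbf{u}$ and $\mathbf{u}^\perp=\mathbf{w}-\mathbf{u}$) together with the fact that $H_\mathbf{u}$ supports $\mu(\mathbf{u})\mathbb{B}_d$ at $\mathbf{u}$ (so $\mu(\mathbf{v}),\mu(\mathbf{w})\geq\mu(\mathbf{u})$), I get $K|\mu(\mathbf{w})-\mu(\mathbf{v})|\leq CK^{\overline\chi+\varepsilon+\varepsilon^5}\leq K^b$, using $\varepsilon^5 < L^{-1}\leq b-\overline\chi-\varepsilon$. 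Hence $K\mu(\mathbf{w})+K^b\leq K\mu(\mathbf{v})+2K^b$. A direct lattice-geometry check yields the containment $\mathrm{Cyl}_{\mathbf{y},\mathbf{w}}([-K,2K], K^{(b+1)/2})\subset \mathrm{Cyl}_{\mathbf{x},\mathbf{v}}([-4K,4K], 4K^{(b+1)/2})$: the $\mathbf{w}$-$\mathbf{v}$ tilt contributes at most $2K \cdot|\mathbf{w}-\mathbf{v}|\leq 2K^{(\overline\chi+\varepsilon+1)/2}\leq 2K^{(b+1)/2}$ to the transverse width, while the longitudinal component stays within $[-2K,4K]$. Consequently, whenever a geodesic of $T_{\mathrm{Cyl}_\mathbf{w}(\R, K^{(b+1)/2})}(\mathbf{0}, K\mathbf{w})$ keeps its $\mathbf{w}$-coordinate in $[-K,2K]$, its $\mathbf{y}$-translate is a valid path in the big cyl with passage time at most $K\mu(\mathbf{v})+2K^b$.

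The main technical obstacle is the geodesic-confinement step: I must rule out that some geodesic of $T_{\mathrm{Cyl}_\mathbf{w}(\R, K^{(b+1)/2})}(\mathbf{0}, K\mathbf{w})$ exits the $\mathbf{w}$-range $[-K, 2K]$ while still having passage time at most $K\mu(\mathbf{w})+K^b$. If such a geodesic visits $\mathbf{z}$ with $\mathbf{w}$-coordinate $\zeta\notin[-K,2K]$, then by symmetry and subadditivity of the norm $\mu$ we have $\mu(\mathbf{z})+\mu(K\mathbf{w}-\mathbf{z})\geq 3K\mu(\mathbf{w})-CK^{(b+1)/2}$, so the lower-tail bound of Lemma~\ref{lem: Theorem 5.2 and Proposition 5.8} (applied with a parameter chosen as a small fraction of $\mu(\mathbf{u})$), together with a union bound over the $O(K^d)$ candidate escape points $\mathbf{z}$, shows that this scenario has probability at most $e^{-cK}$ for some $c>0$. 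Combining with the estimate from Assumption~\ref{assum: initial concentration} gives $\mathbb{P}(\text{single-pair bad}) \leq e^{-K^{\theta_0(b-\overline\chi)}}+e^{-cK}$, and a final union bound over the $K^{2d}$ pairs absorbs the subdominant wandering contribution into the polynomial prefactor $K^{4d}$, producing the claimed bound $K^{4d}e^{-K^{\theta_0(b-\overline\chi)}}$.
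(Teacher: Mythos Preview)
Your proof is correct and follows essentially the same route as the paper: union bound over the at most $K^{2d}$ witnessing pairs $(\mathbf{y},\mathbf{y}')$, apply Assumption~\ref{assum: initial concentration} along the tilted direction $(\mathbf{y}'-\mathbf{y})/K\in H_{\mathbf{u}}$, use Assumption~\ref{assum: finite curvature} to replace $K\mu(\mathbf{w})$ by $K\mu(\mathbf{v})$ at cost $O(K^{\overline\chi+\varepsilon+\varepsilon^5})\le K^b$, and close the gap between the bounded-range cylinder and $\mathrm{Cyl}_{\mathbf{w}}(\R,\cdot)$ via the large-deviation confinement of Lemma~\ref{lem: Theorem 5.2 and Proposition 5.8}. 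The only cosmetic differences are that the paper keeps the cylinder axis as $\mathbf{v}$ rather than switching to $\mathbf{w}$ (which is harmless since the two cylinders differ transversally by $o(K^{(b+1)/2})$ on $y_1\in[-2K,2K]$), and it invokes the first part of Lemma~\ref{lem: Theorem 5.2 and Proposition 5.8} directly for confinement rather than your face-by-face lower-tail union bound.
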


\begin{proof}
By definition, a vertex \(\mathbf{x} \in \mathbb{Z}^d\) is \((b,K)_{\mathbf{v}}\)-bad if there exist  
\(\mathbf{y} \in \mathrm{Cyl}_{\mathbf{x}, \mathbf{v}}([-K,2K], K^{\frac{b+1}{2}})\)  
and  
\(\mathbf{y'} \in \mathrm{Cyl}_{\mathbf{y}, \mathbf{v}}(\{K\}, K^{\frac{\overline{\chi}+\varepsilon+1}{2}})\)  
such that,  
\[
T_{\mathrm{Cyl}_{\mathbf{x}, \mathbf{v}}([-4K,4K],4K^{\frac{b+1}{2}})}(\mathbf{y},\mathbf{y'})\geq K\mu(\mathbf{v})+2K^b.
\]
Write
\begin{align*}
K\mu(\mathbf{v})-\mu({\mathbf{y}}'-{\mathbf{y}}) = \big[K\mu(\mathbf{v})-\mu(K\mathbf{u})\big]+\big[\mu(K\mathbf{u})-\mu({\mathbf{y}}'-{\mathbf{y}})\big].
\end{align*}
Recall that $\mathbf{v} \in H_\mathbf{u}$. Hence, by  Assumption~\ref{assum: finite curvature}, we have 
\[
|K\mu(\mathbf{v})-\mu(K\mathbf{u})| = K|\mu(\mathbf{v}) - \mu(\mathbf{u})| \leq C_1 K|\mathbf{v} - \mathbf{u}|^2 \leq C_1 K^{\overline{\chi} + \varepsilon + \varepsilon^5}
\]
for some constant $C_1 > 0$. For the second term, observe that $\mathbf{y}' - \mathbf{y} \in \Cyl_{\mathbf{0}, \mathbf{v}}(\{K\}, K^{\frac{\overline{\chi}+\varepsilon+1}{2}})\subset \Cyl_{\mathbf{0}, \mathbf{u}}(\{K\}, K^{\frac{\overline{\chi}+\varepsilon+\varepsilon^5+1}{2}}+ K^{\frac{\overline{\chi}+\varepsilon+1}{2}})$, so we can again use Assumption~\ref{assum: finite curvature}, together with triangle inequality, to bound
\[
|\mu(K\mathbf{u})-\mu({\mathbf{y}}'-{\mathbf{y}})| \leq C_2K^{\overline{\chi} + \varepsilon + \varepsilon^5}
\]
for some constant $C_2>0$. Thus, we deduce that
\[
\big|K\mu(\mathbf{v})-\mu({\mathbf{y}}'-{\mathbf{y}})\big|\le C' K^{{\overline{\chi}+\varepsilon+\varepsilon^5}}\le K^b -1 ,
\]
for a suitable constant \(C'>0\) and for \(K\) large enough, since $b\geq \overline{\chi}+\varepsilon+L^{-1}$ and $L\leq 4\varepsilon^{-4}+1$. 

Now, applying Assumption~\ref{assum: initial concentration} with \(a=b\), due to \eqref{ass: weight distribution} and $$\mathrm{Cyl}_{{\mathbf{y}}, \mathbf{v}}([-2K,2K],K^{\frac{b+1}{2}})\subset \mathrm{Cyl}_{\mathbf{x}, \mathbf{v}}([-4K,4K],4K^{\frac{b+1}{2}}),$$ we obtain\footnote{In the string of inequalities, since $\mathbf{y}$ and $\mathbf{y}'$ may not be in $\mathbb{Z}^d$, after shifting, the passage times might have different distributions, but they are close to each other because of the boundedness assumption \eqref{ass: weight distribution}. (This is the reason we include the $+1$, because we need to compensate this minor correction.)}
\al{
&\mathbb{P}\Big(T_{\mathrm{Cyl}_{\mathbf{x}, \mathbf{v}}([-4K,4K],4K^{\frac{b+1}{2}})}(\mathbf{y},\mathbf{y'})\geq K\mu(\mathbf{v})+2K^b\Big) \\
&\leq \mathbb{P}\Big(T_{\mathrm{Cyl}_{{\mathbf{y}}, \mathbf{v}}([-2K,2K],K^{\frac{b+1}{2}})}({\mathbf{y}},{\mathbf{y}}')\geq \mu({\mathbf{y}}'-{\mathbf{y}})+K^b+1\Big)\\
&\leq \mathbb{P}\Big(T_{\mathrm{Cyl}_{\mathbf{0}, \mathbf{v}}([-2K,2K],K^{\frac{b+1}{2}})}(\mathbf{0},{\mathbf{y}}'-{\mathbf{y}})\geq \mu({\mathbf{y}}'-{\mathbf{y}})+K^b\Big)\leq  e^{-K^{\theta_0(b-\overline{\chi})}}.
}
A union bound over all possible pairs of \(({\lfloor \mathbf{y}}\rfloor,{\lfloor \mathbf{y}}'\rfloor)\), whose total number is at most of order \(K^{2d}\), gives
\[
\mathbb{P}(\text{\(\mathbf{x}\) is \((b, K)_{\mathbf{v}}\)-bad})\leq K^{4d} e^{-K^{\theta_0(b-\overline{\chi})}}.
\]
This completes the proof.
\end{proof}

\begin{figure}[t]
    \centering
    \includegraphics[width=0.7\linewidth]{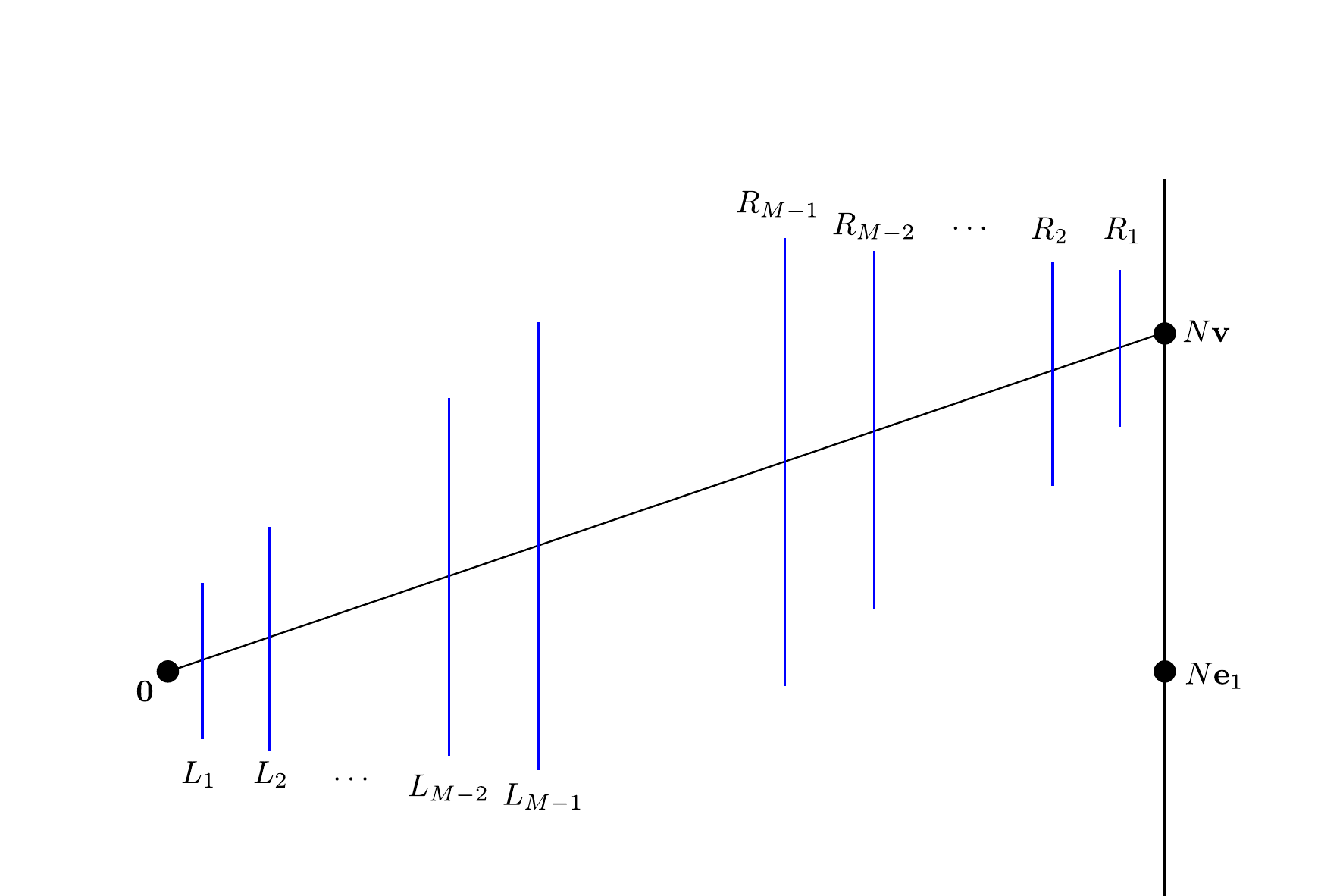}    
    \caption{Depiction of the sets $L_m$ and $R_m$. $L_m$ goes from left to right as $m$ increases, while $R_m$ goes from right to left, but both $L_m$ and $R_m$ are ``moving along'' the direction $\mathbf{v}$ as $m$ changes, and their lengths increase as $m$ increases. The distance between $L_{M-1}$ and $R_{M-1}$ is $\Delta_{M-1} = N - 2N^{\frac{M-1}{M}}$.}
    \label{fig:LmRm}
\end{figure}
For brevity of notation, we introduce
\al{
\mathrm{Cyl}_{ N,a;\mathbf{v}}:= \mathrm{Cyl}_{\mathbf{0}, \mathbf{v}}\left([-2N,2N], N^{\frac{a+1}{2}}\right).
}
Given \(m \in \Iintv{0,M-1}\), we define
\begin{align*}
L_m &:= \{\lfloor \mathbf{x}\rfloor:~ \mathbf{x}\in \mathrm{Cyl}_{\mathbf{0}, \mathbf{v}}(\{N^{ \frac{m}{M}}\}, N^{\frac{a}{2}+\frac{m}{2M}-\varepsilon^2})\}, \\
R_m &:=\{\lfloor \mathbf{x}\rfloor:~ \mathbf{x}\in  \mathrm{Cyl}_{\mathbf{0}, \mathbf{v}}(\{N-N^{ \frac{m}{M}}\}, N^{\frac{a}{2}+\frac{m}{2M}-\varepsilon^2})\}.  
\end{align*}
(See Figure~\ref{fig:LmRm} for a depiction of $L_m$ and $R_m$.) The term $\varepsilon^2$ is technical and its significance will become clear later (see \eqref{eq: next point constraint}). Given $m\in \Iintv{1,M-1}$, set 
\aln{\label{Def: Delta}
\Delta_m:= 
\begin{cases}
   N^{\frac{m+1}{M}}-N^{\frac{m}{M}} & \text{if $m\leq M-2$,}\\
   N- 2N^{\frac{M-1}{M}} & \text{if $m=M-1$.}
\end{cases}
}

\begin{lem}\label{prop: bad slabs}
For all sufficiently large $N$ depending on $\varepsilon$, the following holds: If
\[
T_{\mathrm{Cyl}_{ N,a;\mathbf{v}}}(\mathbf{0}, N\mathbf{v}) > N\mu(\mathbf{v}) +  N^{a},
\]
then there exist $m \in \Iintv{1,M-1}$ with  $m\geq   Ma-2$ and hyperplanes $(\mathcal{L}, \mathcal{R})$ from the following choices:
\begin{enumerate}
    \item $m \leq M-2$, $\mathcal{L} = L_m$, $\mathcal{R} = L_{m+1}$;
    \item $m \leq M-2$, $\mathcal{L} = R_m$, $\mathcal{R} = R_{m+1}$;
    \item $m = M-1$, $\mathcal{L} = L_{M-1}$, $\mathcal{R} = R_{M-1}$,
\end{enumerate}
and there exists $A \subset \mathcal{L}$ with
$|A| \geq (1 - 2^{M(m-2M)}) |\mathcal{L}|$ such that
\[
\left| \left\{ \mathbf{y} \in  \mathcal{R} : T_{\mathrm{Cyl}_{ N,a;\mathbf{v}}}(A,\mathbf{y}) \leq \Delta_{m} \mu(\mathbf{v}) +  (2M)^{-1} N^{a} \right\} \right| \leq  (1 - 2^{M(m+1-2M)}) |\mathcal{R}|.
\]
\end{lem}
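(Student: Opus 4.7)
The argument proceeds by contradiction. Suppose the conclusion fails; then for every $m \in \Iintv{1,M-1}$ with $m \geq Ma-2$ and every valid slab pair $(\mathcal{L}, \mathcal{R})$ from the three listed types, every $A \subset \mathcal{L}$ with $|A| \geq (1 - 2^{M(m-2M)})|\mathcal{L}|$ satisfies
\[
\bigl|\{\mathbf{y} \in \mathcal{R} : T_{\mathrm{Cyl}_{N,a;\mathbf{v}}}(A,\mathbf{y}) \leq \Delta_m \mu(\mathbf{v}) + (2M)^{-1} N^a\}\bigr| > (1 - 2^{M(m+1-2M)})|\mathcal{R}|.
\]
Setting $m_0 := \lceil Ma \rceil - 2$, which lies in $\Iintv{1,M-2}$ for $a \in [\overline{\chi}+\varepsilon,1]$ and $M$ large, the plan is to use this negation to build two chains starting from $A^L_{m_0} := L_{m_0}$ and $A^R_{m_0} := R_{m_0}$ by setting, for $m = m_0, \dots, M-2$,
\[
A^L_{m+1} := \bigl\{\mathbf{y} \in L_{m+1} : T_{\mathrm{Cyl}_{N,a;\mathbf{v}}}(A^L_m, \mathbf{y}) \leq \Delta_m \mu(\mathbf{v}) + (2M)^{-1} N^a\bigr\},
\]
and $A^R_{m+1}$ defined analogously with $R$ in place of $L$. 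Applying the negation at each step with pairs of types~(1) and~(2) preserves the size thresholds $|A^L_m|, |A^R_m| > (1 - 2^{M(m-2M)})|L_m|, (1 - 2^{M(m-2M)})|R_m|$ along the induction. A final application with the crossing pair $(L_{M-1}, R_{M-1})$ and $A = A^L_{M-1}$ yields $\tilde A^R_{M-1} \subset R_{M-1}$ with $|\tilde A^R_{M-1}| > (1 - 2^{-M^2})|R_{M-1}|$.

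Since both $A^R_{M-1}$ and $\tilde A^R_{M-1}$ occupy more than a $(1 - 2^{-M})$-fraction of $R_{M-1}$, their intersection is nonempty. Picking $\mathbf{y}^* \in A^R_{M-1} \cap \tilde A^R_{M-1}$ and tracing both chains backward (at each step extracting a witness realizing the reachability bound) produces $\mathbf{x}^L_{m_0} \in L_{m_0}$ and $\mathbf{z}^R_{m_0} \in R_{m_0}$. By the triangle inequality, symmetry of $T_{\mathrm{Cyl}_{N,a;\mathbf{v}}}$, and the telescoping identity $2\sum_{m=m_0}^{M-2}\Delta_m + \Delta_{M-1} = N - 2N^{m_0/M}$,
\[
T_{\mathrm{Cyl}_{N,a;\mathbf{v}}}(\mathbf{x}^L_{m_0}, \mathbf{z}^R_{m_0}) \leq \mu(\mathbf{v})\bigl(N - 2N^{m_0/M}\bigr) + \bigl(2M - 2m_0 - 1\bigr)(2M)^{-1} N^a.
\]
The initial and final legs are bounded deterministically via $\tau_e \leq 1/(4d^2)$ on a direct lattice path: since $\mathbf{x}^L_{m_0}$ and $\mathbf{z}^R_{m_0}$ have $\ell^1$-distance $O(N^{m_0/M})$ from $\mathbf{0}$ and $N\mathbf{v}$ respectively (the inequality $\varepsilon^2 > 1/M$ ensures the $\mathbf{v}$-length $N^{m_0/M}$ dominates the transverse width $N^{a/2 + m_0/(2M) - \varepsilon^2}$ of $L_{m_0}$ and $R_{m_0}$), each leg contributes at most $N^{m_0/M}/(2d)$. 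Using $m_0 \leq Ma - 1$, one obtains
\[
T_{\mathrm{Cyl}_{N,a;\mathbf{v}}}(\mathbf{0}, N\mathbf{v}) \leq \mu(\mathbf{v}) N + \bigl(1 - a + 3/(2M)\bigr) N^a + O(N^{a - 1/M}),
\]
which, since $a \geq \overline{\chi} + \varepsilon \gg 1/M$ and $N^{a - 1/M} = o(N^a)$, is strictly less than $\mu(\mathbf{v}) N + N^a$ for all sufficiently large $N$. This contradicts the hypothesis and completes the proof.

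The main obstacle is the combinatorial bookkeeping on the thresholds $\eta_m := 2^{M(m-2M)}$: one must verify along the $\sim 2M$-step chain that the iteratively defined $A^L_m$, $A^R_m$, and $\tilde A^R_{M-1}$ continue to exceed the slab-dependent size threshold required by the negation, and that the terminal intersection $A^R_{M-1} \cap \tilde A^R_{M-1}$, of size at least $(1 - \eta_{M-1} - \eta_M)|R_{M-1}|$, remains nonempty. The very rapid decay $\eta_m \leq 2^{-M-1}$ is what allows these thresholds to be simultaneously met and explains the specific form of the exponent $M(m-2M)$ in the statement. A secondary subtlety lies in the choice $m_0 = \lceil Ma\rceil - 2$, which is tuned so that both the chain excess $(1 - a + 3/(2M))N^a$ and the deterministic cost $O(N^{m_0/M})$ of the initial and final legs fit inside the $N^a$ budget.
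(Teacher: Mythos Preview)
Your argument is correct and follows essentially the same contrapositive strategy as the paper's proof: assume the negation, propagate large ``reachable'' subsets along the slabs $L_m$ and $R_m$ via the negated hypothesis, and conclude by producing a path from $\mathbf{0}$ to $N\mathbf{v}$ with passage time below $N\mu(\mathbf{v})+N^a$. The only structural difference is bookkeeping: the paper defines $L'_m:=\{\mathbf{x}\in L_m: T_{\mathrm{Cyl}_{N,a;\mathbf{v}}}(\mathbf{0},\mathbf{x})\le N^{m/M}\mu(\mathbf{v})+(2M)^{-1}mN^a\}$ directly in terms of cumulative time from the origin, which avoids your traceback step and absorbs the deterministic leg bounds into the base case $L'_m=L_m$ for $m<Ma-1$; you instead start the chain at $m_0=\lceil Ma\rceil-2$, define $A^L_{m+1}$ as the set reachable from $A^L_m$, and then trace witnesses back at the end. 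Both routes yield the same contradiction with the same arithmetic, and your final estimate $(1-a+3/(2M))N^a+O(N^{a-1/M})<N^a$ is equivalent to the paper's.
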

\begin{proof}
We prove the contrapositive. Suppose that none of the hyperplanes \((\mathcal{L}, \mathcal{R})\) listed in the lemma satisfies the large-overlap condition, i.e., for every \(m \in\Iintv{1,M-1}\)  with $m\geq  Ma-2$ and for every \(A \subset \mathcal{L}\) with \(|A| \geq (1 - 2^{M(m-2M)}) |\mathcal{L}|\), we have
\begin{align}\label{ass: no good slabs}
    \left| \left\{ \mathbf{y} \in  \mathcal{R} : T_{\mathrm{Cyl}_{ N,a;\mathbf{v}}}(A,\mathbf{y}) \leq \Delta_m \mu(\mathbf{v}) + (2M)^{-1} N^{a} \right\} \right| > (1 - 2^{M(m+1-2M)}) |\mathcal{R}|.
\end{align}

We now show that under this assumption, the total passage time \(T_{\mathrm{Cyl}_{ N,a;\mathbf{v}}}(\mathbf{0}, N\mathbf{v})\) cannot exceed \(N\mu(\mathbf{v}) + N^{a}\).  For \(m \in \Iintv{1,M-1}\), define
\[
L_m' := \left\{\mathbf{x} \in L_m : T_{\mathrm{Cyl}_{ N,a;\mathbf{v}}}(\mathbf{0},\mathbf{x}) \leq N^{ \frac{m}{M}} \mu(\mathbf{v})  + (2M)^{-1} m N^{a} \right\}.
\]
We will show that \(|L_m'| \geq (1 - 2^{M(m-2M)}) |L_m|\) for all \( m \in \Iintv{1, M - 1}\). If $N$ is sufficiently large and if $m < Ma - 1$, then for any $\mathbf{x} \in L_m$, \(T_{\mathrm{Cyl}_{ N,a;\mathbf{v}}}(\mathbf{0}, \mathbf{x}) \leq N^{\frac{m}{M}} < (2M)^{-1} m N^a\), so trivially we have $|L_m'| = |L_m|$.  For induction, suppose that \(|L_m'| \geq (1 - 2^{M(m-2M)}) |L_m|\) for some $m\geq Ma-2$.
 By \eqref{ass: no good slabs}, we have \begin{align}
\label{eq: trivially}
    \left| \left\{ \mathbf{y} \in  L_{m+1} : T_{\mathrm{Cyl}_{ N,a;\mathbf{v}}}(L_m',\mathbf{y}) \leq \Delta_m \mu(\mathbf{v}) + (2M)^{-1} N^{a} \right\} \right| > (1 - 2^{M(m+1-2M)}) |L_{m+1}|.
\end{align}
Next, note that
if $\mathbf{y}\in  L_{m+1}$ satisfies  $T_{\mathrm{Cyl}_{ N,a;\mathbf{v}}}(L_m',\mathbf{y}) \leq \Delta_m \mu(\mathbf{v}) + (2M)^{-1} N^{a}$, then by definition there exists \(\mathbf{x} \in L_m'\) such that
\[
T_{\mathrm{Cyl}_{ N,a;\mathbf{v}}}(\mathbf{x},\mathbf{y}) \leq \Delta_m \mu(\mathbf{v}) + (2M)^{-1} N^{a}.
\]
This implies
\al{
T_{\mathrm{Cyl}_{ N,a;\mathbf{v}}}(\mathbf{0},\mathbf{y}) &\leq T_{\mathrm{Cyl}_{ N,a;\mathbf{v}}}(\mathbf{0},\mathbf{x}) + T_{\mathrm{Cyl}_{ N,a;\mathbf{v}}}(\mathbf{x},\mathbf{y})\\
&\leq N^{ \frac{m}{M}} \mu(\mathbf{v})  + (2M)^{-1} m N^{a} + (N^{ \frac{m+1}{M}} - N^{ \frac{m}{M}})\mu(\mathbf{v}) + (2M)^{-1} N^{a}\\
&\leq  N^{ \frac{m+1}{M}} \mu(\mathbf{v}) + (2M)^{-1} (m+1)N^{a}.
}
That is, $\mathbf{y} \in L_{m+1}'$. Therefore, we obtain \(|L_{m+1}'| \geq (1 - 2^{M(m+1-2M)}) |L_{m+1}|\). By induction, \(|L_m'| \geq (1 - 2^{M(m-2M)}) |L_m|\) for all \( m \in \Iintv{1, M - 1}\).

Similarly, by symmetry, we have \(|R_m'| \geq (1 - 2^{M(m-2M)}) |R_m|\), where
\[
R_m' := \left\{\mathbf{x} \in R_m : T_{\mathrm{Cyl}_{ N,a;\mathbf{v}}}(\mathbf{x},N\mathbf{v}) \leq N^{ \frac{m}{M}} \mu(\mathbf{v})  + (2M)^{-1} m N^{a} \right\}.
\]

Finally, using \eqref{ass: no good slabs} again, we conclude that
\begin{align*}
    \left| \left\{ \mathbf{y} \in  R_{M-1} : T_{\mathrm{Cyl}_{ N,a;\mathbf{v}}}(L_{M-1}',\mathbf{y}) \leq (N - 2N^{\frac{M-1}{M}})\mu(\mathbf{v}) + N^{a} \right\} \right| > (1 - 2^{-M^2}) |R_{M-1}|.
\end{align*}
In particular, there exist \(\mathbf{x} \in L_{M-1}'\) and \(\mathbf{y} \in R_{M-1}'\) such that
\[
T_{\mathrm{Cyl}_{ N,a;\mathbf{v}}}(\mathbf{x},\mathbf{y}) \leq (N - 2N^{\frac{M-1}{M}})\mu(\mathbf{v}) + (2M)^{-1}N^{a}.
\]
Therefore,
\begin{align*}
   T_{\mathrm{Cyl}_{ N,a;\mathbf{v}}}(\mathbf{0},N\mathbf{v}) &\leq T_{\mathrm{Cyl}_{ N,a;\mathbf{v}}}(\mathbf{0},\mathbf{x}) + T_{\mathrm{Cyl}_{ N,a;\mathbf{v}}}(\mathbf{x},\mathbf{y}) + T_{\mathrm{Cyl}_{ N,a;\mathbf{v}}}(\mathbf{y,}N\mathbf{v}) \\
    &\leq N^{ \frac{M-1}{M}} \mu(\mathbf{v}) +(2M)^{-1} (M-1)N^{a} + (N - 2N^{\frac{M-1}{M}})\mu(\mathbf{v}) + (2M)^{-1} N^{a} \\
    &\qquad + N^{ \frac{M-1}{M}} \mu(\mathbf{v}) + (2M)^{-1} (M-1)N^{a} \\
    &\leq N\mu(\mathbf{v}) + N^{a},
\end{align*}
which contradicts the assumption. Hence, we proved the contrapositive.
\end{proof}
 \begin{figure}[t]
    \centering
    \includegraphics[width=0.7\linewidth]{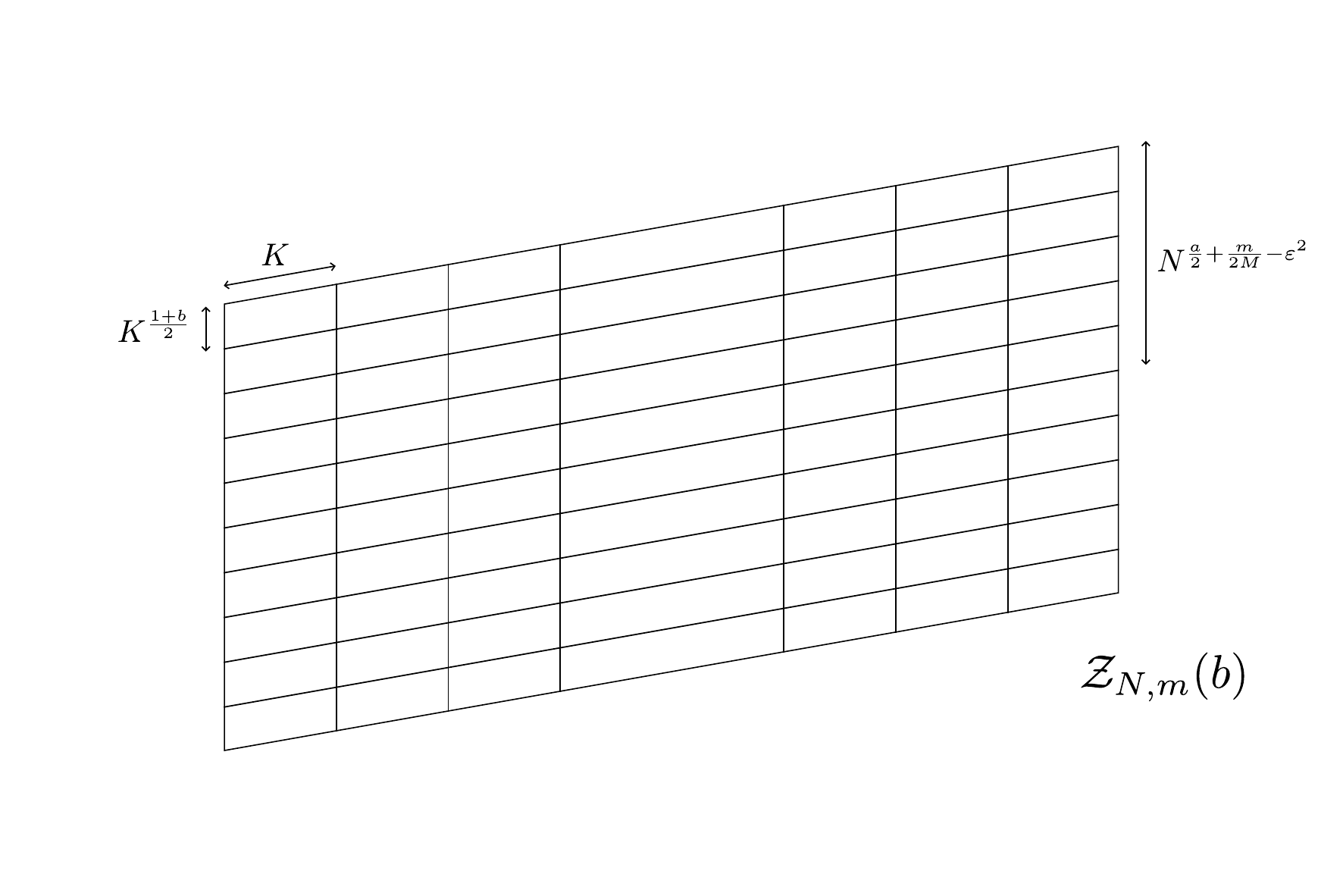}    
    \caption{Depiction of the set $\mathcal{Z}_{N,m}(b)$ for $m\leq M-2$. Each intersection is a point in $\mathcal{Z}_{N,m}(b)$.}
    \label{fig:Znmb}
\end{figure}
Given $a>\overline{\chi}$, $L\in\N$ and $m \in \Iintv{0,M-1}$ with $M\in\N$, we define $J_m := \lfloor \Delta_m N^{-\frac{((m/M)-a)_+}{1-{\overline{\chi}}-\varepsilon} - \varepsilon^3 } \rfloor $ and $K_{m} := \Delta_m/J_m$. Note that  
\aln{\label{eq: condition for K}
N^{\frac{((m/M)-a)_+}{1-{\overline{\chi}}-\varepsilon} + \varepsilon^3 }\leq K_m  \leq 2N^{\frac{((m/M)-a)_+}{1-{\overline{\chi}}-\varepsilon} + \varepsilon^3 }.}
 Given $b\in (0,1)$ and $m\in \Iintv{0,M-2}$, set
\al{
\mathcal{Z}_{N,m}(b)& :=\left\{ y_1 \mathbf{v} + \sum_{i=2}^d y_i \tilde{\mathbf{u}}_i :~
\begin{array}{c}
y_1\in ([N^{ \frac{m}{M}},N^{ \frac{m+1}{M}}]\cup [N-N^{ \frac{m+1}{M}},N-N^{ \frac{m}{M}}])\cap K\Z,\\
y_i \in [-N^{\frac{a}{2}+\frac{m}{2M}-\varepsilon^2},N^{\frac{a}{2}+\frac{m}{2M}-\varepsilon^2}]\cap ( K^{\frac{1+b}{2}} \Z),\,i\geq 2 
\end{array}
\right\},\\
\mathcal{Z}_{N,M-1}(b) &:=\left\{ y_1 \mathbf{v} + \sum_{i=2}^d y_i \tilde{\mathbf{u}}_i  :~
\begin{array}{c}
y_1\in [N^{ \frac{M-1}{M}},N-N^{ \frac{M-1}{M}}]\cap K\Z,\\
y_i \in [-N^{\frac{a}{2}+\frac{M-1}{2M}-\varepsilon^2},N^{\frac{a}{2}+\frac{M-1}{2M}-\varepsilon^2}]\cap ( K^{\frac{1+b}{2}} \Z),\,i\geq 2
\end{array}
\right\}.
}
(See Figure~\ref{fig:Znmb} for a depiction of the set $\mathcal{Z}_{N,m}(b)$.) For simplicity of notation, we introduce
 \al{
\Iintv{s,t}_L:= \left[s + L^{-1},  t -L^{-1}\right] \cap L^{-1} \mathbb{Z}.
 }

The following lemma ensures that if the passage time is large, then we can find bad regions  in the sense that the most face-to-face passage times are large. 
\begin{lem}\label{prop: many bad boxes}
Let $\varepsilon$ satisfy \eqref{cond: vareps condition}, and we set $M:=L :=
\lceil 4\varepsilon^{-4}\rceil\in \N$.  If $N$ large enough, then for any $ a\in [\overline{\chi}+\varepsilon+\varepsilon^{3/2},1]$, $m \in \Iintv{1,M-1}$ with $Ma  \leq  m+ 2$,  and $\mathbf{v}\in H_{\mathbf{u}}$ with $|\mathbf{v}-\mathbf{u}|<N^{\frac{\overline{\chi}+\varepsilon-1+\varepsilon^5}{2}}$, the following holds. Suppose one of the three hold: 
\begin{enumerate}
    \item $m \leq M-2$, $\mathcal L= L_{m}$ and $\mathcal R=L_{m+1}$,
    \item $m \leq M-2$, $\mathcal L= R_{m}$ and $\mathcal R=R_{m+1}$, or
    \item $m =M-1 $, $\mathcal L= L_{M-1}$ and $\mathcal R=R_{M-1}$.
\end{enumerate}
Suppose further that  there exists $\mathcal{A} \subset  \mathcal L$ with  $|\mathcal{A}|\geq (1-2^{M(m-2M)})|\mathcal L|$ such that
\begin{equation}
\label{eq: assumption1}
    \left|\left\{\mathbf{y} \in  \mathcal R:~ T_{\mathrm{Cyl}_{ N,a;\mathbf{v}}}(\mathcal{A},\mathbf{y}) \leq  (N^{ \frac{m+1}{M}}-N^{ \frac{m}{M}})\mu(\mathbf{v}) + (2M)^{-1} N^{a}\right\}\right|\leq (1-2^{M(m+1-2M)})|\mathcal R|.
\end{equation}
Then, there exists $b\in \Iintv{\overline{\chi}+\varepsilon,(1\wedge \frac{M a(1-\overline{\chi}-\varepsilon)}{m(1-(Ma/m))})}_L$ such that
\al{
&|\{ \mathbf{z} \in \mathcal{Z}_{N,m}(b):\,\text{ $\mathbf{z}$ is $(b,K)_{\mathbf{v}}$-bad}\}| \geq  N^{-d\varepsilon^2} \left(\frac{N^{a}}{K_m^b}\right)\left(\frac{N^{\frac{a}{2}+\frac{m}{2M}}}{K_m^{\frac{b+1}{2}}}\right)^{d-1}.
}
\end{lem}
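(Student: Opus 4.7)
I would proceed by contrapositive extraction. From the failure of \eqref{eq: assumption1}, first isolate a set $\mathcal B \subseteq \mathcal R$ of size $> 2^{M(m+1-2M)}|\mathcal R|$ on which $T_{\mathrm{Cyl}_{N,a;\mathbf v}}(\mathcal A,\mathbf y) > \Delta_m \mu(\mathbf v) + (2M)^{-1} N^a$, and for each $\mathbf y \in \mathcal B$ fix a restricted geodesic $\gamma_{\mathbf y}$ from some $\mathbf x_{\mathbf y} \in \mathcal A$. Slicing $\gamma_{\mathbf y}$ by $J_m$ hyperplanes orthogonal to $\mathbf v$ at spacing $K_m$ and writing $e_{\mathbf y, j}$ for the slice excess, Assumption~\ref{assum: finite curvature} applied to each slice's entry-exit displacement (to relate $\mu$ of that displacement to $K_m\mu(\mathbf v)$) gives $\sum_j e_{\mathbf y, j} \ge (4M)^{-1}\,N^a$.

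Next, the parameter relations yield $J_m K_m^{\overline\chi+\varepsilon} \asymp N^{a + 1/M - \varepsilon^3(1-\overline\chi-\varepsilon)} = o(N^a)$, since $1/M \le \varepsilon^4/4 \ll \varepsilon^3(1-\overline\chi-\varepsilon)$ by \eqref{cond: vareps condition}. Hence slices with excess below $K_m^{\overline\chi+\varepsilon}$ contribute negligibly, and a dyadic decomposition over $b \in \Iintv{\overline\chi+\varepsilon,\, 1 \wedge \tfrac{Ma(1-\overline\chi-\varepsilon)}{m-Ma}}_L$ (the upper endpoint being the deterministic ceiling on slice excess coming from $\tau_e \le 1/(4d^2)$ together with the slice dimensions), combined with two pigeonholes -- first on $b$ per $\mathbf y$, then on $\mathbf y$ -- produces a common scale $b^*$ and a subfamily $\mathcal B_{b^*} \subseteq \mathcal B$ of size $\ge L^{-1}|\mathcal B|$ such that each $\mathbf y \in \mathcal B_{b^*}$ has at least $(4ML)^{-1} N^a / K_m^{b^*}$ slices $j$ with excess in $[K_m^{b^*}, 2K_m^{b^*}]$.

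For each such bad pair $(\mathbf y, j)$, I associate the grid point $\mathbf z_{\mathbf y, j} \in \mathcal Z_{N,m}(b^*)$ obtained by rounding the $\gamma_{\mathbf y}$-entry of the slice to the nearest lattice site. Choosing the entry and exit points of $\gamma_{\mathbf y}$ in the slice as the pair $(\mathbf y^\sharp, \mathbf y'^\sharp)$ in Definition~\ref{def: bad points}, and invoking Assumption~\ref{assum: finite curvature} again to replace $\mu(\mathbf y'^\sharp - \mathbf y^\sharp)$ by $K_m \mu(\mathbf v)$ up to $o(K_m^{b^*})$ error, certifies $\mathbf z_{\mathbf y, j}$ as $(b^*, K_m)_{\mathbf v}$-bad.

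The main obstacle is to turn the $\ge L^{-1}|\mathcal B| \cdot (4ML)^{-1} N^a / K_m^{b^*}$ multiplicity-counted pairs into a lower bound on \emph{distinct} bad grid points. My plan is to partition $\mathcal R$ into transverse super-cells of side $K_m^{(b^*+1)/2}$; a capacity bound shows that at least $2^{-M^2}L^{-1}(N^{(a+m/M)/2-\varepsilon^2}/K_m^{(b^*+1)/2})^{d-1}$ super-cells contain a member of $\mathcal B_{b^*}$, and picking one representative $\mathbf y_C$ per populated super-cell gives endpoints pairwise separated transversely by $\ge K_m^{(b^*+1)/2}$. This endpoint separation propagates across the slicing levels: restricted geodesics from distinct super-cells of $\mathcal R$ occupy distinct transverse grid cells of $\mathcal Z_{N,m}(b^*)$ along all but an $o(J_m)$-fraction of slices, a deterministic consequence of the cross-sectional volume constraint inside $\mathrm{Cyl}_{N,a;\mathbf v}$. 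Collecting over populated super-cells and their bad slices gives at least
\[
N^{-d\varepsilon^2}\,\frac{N^a}{K_m^{b^*}}\,\Bigl(\frac{N^{a/2 + m/(2M)}}{K_m^{(b^*+1)/2}}\Bigr)^{d-1}
\]
distinct bad grid points, after absorbing $2^{-M^2}$, $L^{-1}$, and boundary corrections into the $N^{-d\varepsilon^2}$ slack.
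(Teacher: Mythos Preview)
Your approach diverges from the paper's at the first move, and the divergence creates a genuine gap. You slice a restricted geodesic $\gamma_{\mathbf y}$ by hyperplanes and propose its entry/exit points $(\mathbf y^\sharp,\mathbf y'^\sharp)$ as the witnesses in Definition~\ref{def: bad points}. But that definition demands $\mathbf y'^\sharp\in\mathrm{Cyl}_{\mathbf y^\sharp,\mathbf v}(\{K\},K^{(\overline\chi+\varepsilon+1)/2})$, i.e.\ the two witnesses must lie within transverse distance $K_m^{(\overline\chi+\varepsilon+1)/2}$ of one another. A geodesic segment spanning $\mathbf v$-length $K_m$ is under no such constraint---it can wander transversely by order $K_m$---and only Assumptions~\ref{assum: initial concentration} and~\ref{assum: finite curvature} are available here, so there is no positive-curvature input to force confinement. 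Assumption~\ref{assum: finite curvature} cannot fix this: it bounds $\mu(\mathbf y'^\sharp-\mathbf y^\sharp)-K_m\mu(\mathbf u)$ from \emph{above}, which is the wrong direction for controlling the transverse displacement. Hence your certification of $\mathbf z_{\mathbf y,j}$ as $(b^*,K_m)_{\mathbf v}$-bad does not go through.

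Your final counting step has a second gap. The assertion that geodesics from transversely separated endpoints in $\mathcal R$ ``occupy distinct transverse grid cells along all but an $o(J_m)$-fraction of slices'' is not a volume argument: nothing prevents several such geodesics from coalescing near $\mathcal L$ and separating only close to $\mathcal R$. The paper sidesteps both problems by abandoning geodesics in favor of \emph{deterministic linear interpolations}: for each $\mathbf x\in\mathcal L$ it sets $\mathbf x_K(i)=\mathbf x'+\tfrac{i}{J}(\mathbf y'-\mathbf x')$ toward a dilated image $\mathbf y'\in\mathcal R$, so that $\mathbf x_K(i+1)\in\mathrm{Cyl}_{\mathbf x_K(i),\mathbf v}(\{K\},K^{(\overline\chi+\varepsilon+1)/2})$ holds by construction (this is exactly \eqref{eq: next point constraint}), and then runs the dyadic decomposition on the point-to-point times $T(\mathbf x_K(i),\mathbf x_K(i+1))$. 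Because the interpolation rays are straight and evenly spread, the multiplicity bound is immediate: each grid cell of $\mathcal Z_{N,m}(b)$ meets at most $M K^{(b+1)(d-1)/2}$ pairs $(i,\mathbf x)$, with no appeal to geodesic geometry.
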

\begin{proof}
 First, we consider $m\in \Iintv{1,M-2}$. Without loss of generality, we assume (1), i.e., $\mathcal L=L_{m}$ and $\mathcal{R}=L_{m+1}$, and \eqref{eq: assumption1} hold.   We write $J=J_m$ and $K= K_{m}.$

Given $\mathbf{x}\in \mathcal{L}$, we consider the following linear interpolation: We let $\mathbf{x}'\in \mathrm{Cyl}_{\mathbf{0}, \mathbf{v}}(\{N^{ \frac{m}{M}}\}, N^{\frac{a}{2}+\frac{m}{2M}-\varepsilon^2}])$  such that $\mathbf{x} = \lfloor \mathbf{x}'\rfloor$ with a deterministic rule breaking ties, and set 
$$\mathbf{y}' := N^{ \frac{m+1}{M}}\mathbf{v}+N^{\frac{1}{2M}}(\mathbf{x}'- N^{ \frac{m}{M}}\mathbf{v})\in \mathrm{Cyl}_{\mathbf{0}, \mathbf{v}}(\{N^{ \frac{m+1}{M}}\}, N^{\frac{a}{2}+\frac{m+1}{2M}-\varepsilon^2}). $$ 
 For $i\in \Iintv{0,J}$, set 
$$\mathbf{x}_K(i):= \mathbf{x}' + \frac{i}{J} (\mathbf{y'-x'}).$$ 
Note that $\mathbf{x}_K(0) =\mathbf{x}'$ and $\mathbf{x}_K(J) = \mathbf{y}'$. 
 We further define
\al{
\mathcal{L}^{\rm bad} &:=\left\{ \mathbf{x} \in \mathcal{L}:~
T_{\mathrm{Cyl}_{ N,a;\mathbf{v}}}(\mathbf{x}_K(0),\mathbf{x}_K(J)) > \mu(\mathbf{v})(N^{ \frac{m+1}{M}}-N^{ \frac{m}{M}}) + (4M)^{-1} N^{a}
\right\}.
}
If $\mathbf{x}\in \mathcal{L}\setminus \mathcal{L}^{\rm bad}$, since $N^{\frac{1}{M}}\ll N^{a}$ by $M\geq 4\varepsilon^{-4}$, then for any $\mathbf{z} \in  \mathcal{R}$ with $|\mathbf{z} -\mathbf{x}_K(J)|_\infty\leq 2d N^{\frac{1}{2M}}$, by \eqref{ass: weight distribution}, we have
\aln{
\label{eq: contradicts}
T_{\mathrm{Cyl}_{ N,a;\mathbf{v}}}(\mathbf{x},\mathbf{z})&\leq T_{\mathrm{Cyl}_{ N,a;\mathbf{v}}}(\mathbf{x}_K(0),\mathbf{x}_K(J))+T_{\mathrm{Cyl}_{ N,a;\mathbf{v}}}(\mathbf{x}_K(J),\mathbf{z})\notag\\
&\leq \mu(\mathbf{v})(N^{ \frac{m+1}{M}}-N^{ \frac{m}{M}}) + (4M)^{-1} N^{a}+N^{\frac{1}{M}}\\
&\leq \mu(\mathbf{v})(N^{ \frac{m+1}{M}}-N^{ \frac{m}{M}}) + (2M)^{-1} N^{a}.\notag
}

We claim that we must have
\aln{\label{eq: bad number inequality}
&|\mathcal{L}^{\rm bad}|\geq 2^{-(2M)^2} |\mathcal L|.
}
To this end, let $\mathcal{A}\subset \mathcal{L}$ with $|\mathcal{A}|\geq (1-2^{M(m-2M)})|\mathcal L|$ satisfy \eqref{eq: assumption1}. We define
$$\mathcal{B}:= \{\mathbf{z} \in  \mathcal{R}:~\exists\mathbf{x} \in  (\mathcal{L} \setminus \mathcal{L}^{\rm bad})\cap \mathcal{A},\,|\mathbf{z}-\mathbf{x}_{K}(J)|_\infty \leq 2d N^{\frac{1}{2M}}\}.$$
By \eqref{eq: contradicts},  we have
$$\mathcal{B}\subset \mathcal{B}' : = \left\{\mathbf{z} \in  \mathcal R:~ T(\mathcal{A},\mathbf{z}) \leq  (N^{ \frac{m+1}{M}}-N^{ \frac{m}{M}})\mu(\mathbf{v}) + (2M)^{-1} N^{a}\right\}.$$
Hence, we have $|\mathcal{B}|\leq |\mathcal{B}'| \leq (1-2^{M(m+1-2M)})|\mathcal{R}|$ by \eqref{eq: assumption1}, which implies $|\mathcal{R}\setminus \mathcal{B}|\geq 2^{M(m+1-2M)}|\mathcal{R}|$.
 Note that for any $\mathbf{z} \in \mathcal{R}$ there exists $\mathbf{x} \in  \mathcal{L}$ such that $|\mathbf{z}-\mathbf{x}_{K}(J)|_\infty \leq 2dN^{\frac{1}{2M}}$. Moreover, if, in addition, $\mathbf{z} \in  \mathcal{R}\setminus \mathcal{B}$, then $\mathbf{x} \in  \mathcal{L}^{\rm bad} \cup (\mathcal L \cap \mathcal{A}^c)$. Hence, together with 
 $|\mathcal{R} \cap (\mathbf{y}+ [-2dN^{\frac{1}{2M}},2dN^{\frac{1}{2M}}]^d)|\leq (2d)^{2d} N^{\frac{d-1}{2M}}$ uniformly for $\mathbf{y}\in \R^d$, since $M\geq (2d)^{2d}$, we obtain
 $$|\mathcal{R}\setminus \mathcal{B}|\leq    |\mathcal{L}^{\rm bad} \cup (\mathcal L \cap \mathcal{A}^c)|\times M N^{\frac{d-1}{2M}}\leq MN^{\frac{d-1}{2M}} (|\mathcal{L}^{\rm bad}|+|\mathcal{L}\cap  \mathcal{A}^c|) .$$
Since  $|\mathcal{L}\cap  \mathcal{A}^c|\leq 2^{M(m-2M)} |\mathcal{L}|$ by $|\mathcal{A}|\geq (1-2^{M(m-2M)})|\mathcal L|$,  we conclude
\al{
|\mathcal{L}^{\rm bad}|& \geq  M^{-1} N^{-\frac{d-1}{2M}} |\mathcal{R}\setminus \mathcal{B}| - 2^{M(m-2M)} |\mathcal{L}| \\
&\geq  M^{-1} N^{-\frac{d-1}{2M}} 2^{M(m+1-2M)}|\mathcal{R}|  - 2^{M(m-2M)} |\mathcal{L}|,
}
which is, in particular,  bigger than $2^{-(2M)^2} |\mathcal L|$ due to the simple bound $|\mathcal{R}| \geq (2d)^{-2d} N^{\frac{(d-1)}{2M}} |\mathcal{L}|
$ for $N$ large enough, and $M$ is large enough so that $2^M \geq M (2d)^{2d+1}$.

 Consider
$$b_*(\mathbf{x}_K(i)) := \sup\left\{b\in  [\overline{\chi}+\varepsilon ,\infty) \cap L^{-1} \mathbb{Z}:~T_{\mathrm{Cyl}_{ N,a;\mathbf{v}}}(\mathbf{x}_K(i),\mathbf{x}_K(i+1))\geq \mu(\mathbf{v}) K +2 K^b\right\}, $$
where we let $b_*(\mathbf{x}_K(i)) :=\overline{\chi}+\varepsilon$ if the set above is empty.
Note that we always have $b_*(\mathbf{x}_K(i))\leq 1$ since $T(\mathbf{x}_K(i),\mathbf{x}_K(i+1)) \leq K$ due to \eqref{ass: weight distribution}. Hence, since $L\geq 4\varepsilon^{-4}$, for any $\mathbf{x}\in \mathcal{L}$,  we have
\al{
T_{\mathrm{Cyl}_{ N,a;\mathbf{v}}}(\mathbf{x}_K(i),\mathbf{x}_K(i+1))
&\leq \mu(\mathbf{v}) K + 2K^{b_*(\mathbf{x}_K(i))+L^{-1}}\\
&\leq  \mu(\mathbf{v}) K + 2K^{\overline{\chi}+\varepsilon +\varepsilon^4}  + \sum_{b \in \Iintv{\overline{\chi}+\varepsilon, b_*(\mathbf{x}_K(i)) }_L}2  K^{b +\varepsilon^4}.
}
Thus, since $J =  \Delta_m /K \leq N^{ \frac{m+1}{M}} K^{-1}$, we have
\al{
&T_{\mathrm{Cyl}_{ N,a;\mathbf{v}}}(\mathbf{x}_K(0),\mathbf{x}_K(J)) \leq  \sum_{i\in \Iintv{0,J-1}} T_{\mathrm{Cyl}_{ N,a;\mathbf{v}}}(\mathbf{x}_K(i),\mathbf{x}_K(i+1))\\
& \leq \mu(\mathbf{v}) (N^{ \frac{m+1}{M}}-N^{ \frac{m}{M}}) +2  N^{ \frac{m+1}{M}} K^{ -(1-\overline{\chi}-\varepsilon-\varepsilon^4)} + \sum_{i\in \Iintv{0,J-1}}\sum_{b \in \Iintv{\overline{\chi}+\varepsilon, b_*(\mathbf{x}_K(i)) }_L}2  K^{b +\varepsilon^4}.
}
Hence, interchanging the sums, this can be further bounded from above by 
\begin{align}
&\mu(\mathbf{v}) (N^{ \frac{m+1}{M}}-N^{ \frac{m}{M}}) + 2  N^{ \frac{m+1}{M}} K^{ -(1-\overline{\chi}-\varepsilon-\varepsilon^4)} \nonumber\\
\label{eq: after_interchange}
&+ \sum_{b \in [\overline{\chi} +\varepsilon+ L^{-1}, \infty) \cap L^{-1}\Z} 2 K^{b +\varepsilon^4} |\{i \in \Iintv{0,J-1}:~b\leq b_*(\mathbf{x}_K(i))-L^{-1} \}|.
\end{align}
We define
\aln{\label{def: bstar}
b_*^{\rm max} :=  \min\left\{1,\frac{M a (1-\overline{\chi}-\varepsilon)}{m(1-Ma/m)_+}\right\},
}
where we also set $b_*^{\rm max}:=1$ if $Ma\geq m$. Note that $K^{\frac{M a (1-\overline{\chi}-\varepsilon)}{m(1-Ma/m)}} \gg  N^{a}$ by \eqref{eq: condition for K}, and  $\frac{M a (1-\overline{\chi}-\varepsilon)}{m(1-Ma/m)} \geq  a \geq \overline{\chi}+\varepsilon+\varepsilon^{3/2}$. Therefore, $b_*^{\max} \geq  \overline{\chi}+\varepsilon + L^{-1}$ for $L= \lceil 4\varepsilon^{-4}\rceil$.
We will prove that, if $\mathbf{x}\in \mathcal L^{\rm bad}$, i.e.,
 $$ \mu(\mathbf{v}) (N^{ \frac{m+1}{M}}-N^{ \frac{m}{M}}) + (4M)^{-1} N^{a}\leq T_{\mathrm{Cyl}_{ N,a;\mathbf{v}}}(\mathbf{x}_K(0),\mathbf{x}_K(J)),$$ then we have
\begin{equation}
\label{eq: bad_ineq}
\begin{split}
&\mu(\mathbf{v}) (N^{ \frac{m+1}{M}}-N^{ \frac{m}{M}})+(4M)^{-1} N^{a} \\
&\leq  \mu(\mathbf{v}) (N^{ \frac{m+1}{M}}-N^{ \frac{m}{M}}) +  N^{ \frac{m+1}{M}} K^{ -(1-\overline{\chi}-\varepsilon-\varepsilon^4)}  \\
&+ \sum_{b\in  \Iintv{\overline{\chi}+\varepsilon, b_*^{\rm max} }_L}  K^{b +\varepsilon^4}|\{i\in \Iintv{0,J-1}:~b\leq b_*(\mathbf{x}_K(i))-L^{-1} \}|.
\end{split}
\end{equation}
To show \eqref{eq: bad_ineq}, we consider two cases: either (a) all $b_*(\mathbf{x}_K(i)) \leq b_*^{\rm max}$, or (b) there is $i$ such that $b_*(\mathbf{x}_K(i)) > b_*^{\rm max}$. In case (a), it is clear that we can restrict the sum in \eqref{eq: after_interchange} to $b\in \Iintv{\overline{\chi}+\varepsilon, b_*^{\rm max} }_L$, and hence we have \eqref{eq: bad_ineq}. In case (b), there exists $i_0$ such that, $b_*(\mathbf{x}_K(i_0)) > b_*^{\rm max}$. If $1 \leq \frac{M a (1-\overline{\chi}-\varepsilon)}{m(1-Ma/m)}$, then $b_*^{\rm max} = 1$, and we are back to the first case, since we always have $b_*(\mathbf{x}_K(i)) \leq 1= b_*^{\rm max}$ for all $i$. So we can focus on the case that $1 > \frac{M a (1-\overline{\chi}-\varepsilon)}{m(1-Ma/m)}$, i.e., $b_*^{\rm max} = \frac{M a (1-\overline{\chi}-\varepsilon)}{m(1-Ma/m)}$. In this case, we then have
\[
N^{a} \leq K^{b_*^{\rm max}} \leq \sum_{b\in  \Iintv{\overline{\chi}+\varepsilon, b_*^{\rm max} }_L}  K^{b +\varepsilon^4}|\{i\in \Iintv{0,J-1}:~b\leq b_*(\mathbf{x}_K(i))-L^{-1} \}|.
\]
The second inequality holds since the set on the right side is nonempty. Thus, \eqref{eq: bad_ineq} follows easily.

We note that for $N$ large enough,
$$K^{\frac{\overline{\chi}+\varepsilon+1}{2}} \geq  J^{-1} N^{\frac{a}{2}+\frac{m+1}{2M}-\varepsilon^2}.$$
Indeed,  since $1\leq K \leq N^{\frac{(m/M)-a}{1-\overline{\chi}-\varepsilon}+O(\varepsilon^3)}$ and $\varepsilon$ is small enough,  we estimate
\[
K^{\frac{\overline{\chi}+\varepsilon+1}{2}} = K\, K^{\frac{\overline{\chi}+\varepsilon-1}{2}} \geq K\, N^{\frac{a-(m/M)}{2} - O(\varepsilon^3)}\gg K\,  N^{\frac{a}{2}-\frac{m+1}{2M}-\varepsilon^2} \geq J^{-1} N^{\frac{a}{2}+\frac{m+1}{2M}-\varepsilon^2} .
\]
Hence, for $i\in \Iintv{0,J-1}$, we have
\aln{\label{eq: next point constraint}
\mathbf{x}_K(i+1) \in {\rm Cyl}_{\mathbf{x}_K(i),\mathbf{v}}( \{K\} ,J^{-1} N^{\frac{a}{2}+\frac{m+1}{2M}-\varepsilon^2}) \subset {\rm Cyl}_{\mathbf{x}_K(i),\mathbf{v}}( \{K\} ,K^{\frac{\overline{\chi}+\varepsilon+1}{2}}).
}

Note that for any $b\in \Iintv{\overline{\chi}+\varepsilon,b_*^{\rm max} }_L$, if  $ b\leq b_*(\mathbf{x}_K(i))-L^{-1}$, then letting 
$\mathbf{z}\in \mathcal{Z}_{N,m}(b)$ with $\mathbf{x}_K(i)\in {\rm Cyl}_{\mathbf{z},\mathbf{v}}([-K,2K],K^{\frac{b+1}{2}})$, $\mathbf{z}$ is $(b,K)_{\mathbf{v}}$-bad. {\CO  Indeed, by \eqref{eq: next point constraint}, setting $\mathbf y=\mathbf{x}_K(i)\in {\rm Cyl}_{\mathbf{z},\mathbf{v}}([-K,2K],K^{\frac{b+1}{2}}) $ and $\mathbf y'=\mathbf{x}_K(i+1)\in {\rm Cyl}_{\mathbf{x}_K(i),\mathbf{v}}( \{K\} ,K^{\frac{\overline{\chi}+\varepsilon+1}{2}})$, by $ b\leq b_*(\mathbf{x}_K(i))-L^{-1}$, we have
\al{
    T_{\mathrm{Cyl}_{\mathbf{z}, \mathbf{v}}([-4K, 4K], 4 K^{\frac{b+1}{2}})}(\mathbf{y,y'})\geq T_{\mathrm{Cyl}_{ N,a;\mathbf{v}}}(\mathbf{y,y'})\geq  K\mu(\mathbf{v})+ 2K^b.
}} 

On the other hand, for any $b\in \Iintv{\overline{\chi}+\varepsilon,b_*^{\rm max} }_L,$ and for any $\mathbf{w}\in \R^d$, recalling that $M$ is large enough that $M\geq (2d)^{2d}$, one has
$$|\{(i,\mathbf{x})\in \Iintv{0, J-1}\times \mathcal{L}:~\mathbf{x}_K(i)\in {\rm Cyl}_{\mathbf{w},\mathbf{v}}([-K,2K],K^{\frac{b+1}{2}})\}|\leq M K^{\frac{(b+1)(d-1)}{2}}.$$ 
Thus, we have
\al{
&\sum_{\mathbf{x}\in \mathcal{L}^{\rm bad}} |\{i\in \Iintv{0,J-1}:~b\leq b_*(\mathbf{x}_K(i))-L^{-1} \}| \\
&\leq \sum_{\mathbf{x}\in \mathcal{L}^{\rm bad}}\; \sum_{i=0}^{J-1}\,  \sum_{\mathbf{z} \in\mathcal{Z}_{N,m}(b)} \mathbf{1}\{\text{$\mathbf{z}$ is $(b,K)_{\mathbf{v}}$-bad},\quad \mathbf{x}_K(i)\in {\rm Cyl}_{\mathbf{z},\mathbf{v}}([-K,2K],K^{\frac{b+1}{2}})\} \\
&= \sum_{\mathbf{z} \in\mathcal{Z}_{N,m}(b)}\;\sum_{\mathbf{x}\in \mathcal{L}^{\rm bad}}\; \sum_{i=0}^{J-1} \mathbf{1}\{\text{$\mathbf{z}$ is $(b,K)_{\mathbf{v}}$-bad},\quad \mathbf{x}_K(i)\in {\rm Cyl}_{\mathbf{z},\mathbf{v}}([-K,2K],K^{\frac{b+1}{2}})\} \\
&\leq M K^{\frac{(d-1)(b+1)}{2}}|\{\mathbf{z} \in\mathcal{Z}_{N,m}(b):~\text{$\mathbf{z}$ is $(b,K)_{\mathbf{v}}$-bad}\}|.
}
Summing both sides of \eqref{eq: bad_ineq} over all $\mathbf{x}\in \mathcal{L}^{\rm bad}$, together with the previous inequality, we have 
\begin{align}
&\sum_{\mathbf{x}\in \mathcal{L}^{\rm bad}} \Big(\mu(\mathbf{v}) (N^{ \frac{m+1}{M}}-N^{ \frac{m}{M}}) + (4M)^{-1} N^{a} \Big)\label{eq: middle step impor lem}\\
&\leq \sum_{\mathbf{x}\in \mathcal{L}^{\rm bad}} (\mu(\mathbf{v}) (N^{ \frac{m+1}{M}}-N^{ \frac{m}{M}})  + N^{ \frac{m+1}{M}} K^{ -(1-\overline{\chi}-\varepsilon-\varepsilon^4)}) \notag\\
&\qquad + M \sum_{b\in  \Iintv{\overline{\chi} +\varepsilon, b_*^{\rm max} }_L}  K^{b +\varepsilon^4} \times  K^{\frac{(d-1)(b+1)}{2}}|\{\mathbf{z} \in\mathcal{Z}_{N,m}(b):~\text{$\mathbf{z}$ is $(b,K)_{\mathbf{v}}$-bad}\}|.\notag
\end{align}
Next we prove 
\aln{\label{eq: comparison the fluctuations}
N^{ \frac{m+1}{M}} K^{ -(1-\overline{\chi}-\varepsilon-\varepsilon^4)} <  (8M)^{-1} N^{a}.
}
 Indeed, since $\overline{\chi}+\varepsilon+\varepsilon^{3/2}\leq a\leq (m+2)/M$, $K \geq N^{\frac{((m/M)-a)_+}{1-{\overline{\chi}}-\varepsilon}+\varepsilon^3}$, and $\varepsilon$ is small enough, we estimate
\al{
K^{-(1-\overline{\chi}-\varepsilon-\varepsilon^4)} &\leq  N^{-(\frac{((m/M)-a)_+}{1-{\overline{\chi}}-\varepsilon}+\varepsilon^3)(1-\overline{\chi}-\varepsilon-\varepsilon^4)}\\
&\leq N^{-(\frac{((m/M)-a)_+}{1-\overline{\chi}-\varepsilon})(1-\overline{\chi}-\varepsilon)-\varepsilon^3(1-\overline{\chi}-\varepsilon)+2\varepsilon^4} \leq N^{-\frac{m}{M}+a-\varepsilon^4},
}
which, by $M\geq 4\varepsilon^{-4}$, implies that for $N$ large enough depending on $\varepsilon$, 
\al{
N^{ \frac{m+1}{M}} K^{ -(1-\overline{\chi}-\varepsilon-\varepsilon^4)}&\leq N^{a+M^{-1} -\varepsilon^4} <  (8M)^{-1} N^{a}\,.
}
Plugging \eqref{eq: comparison the fluctuations} into \eqref{eq: middle step impor lem}, we have
\al{
&M \sum_{b\in  \Iintv{\overline{\chi} +\varepsilon, b_*^{\rm max} }_L}  K^{b +\varepsilon^4}\times  K^{\frac{(d-1)(b+1)}{2}}|\{\mathbf{z} \in\mathcal{Z}_{N,m}(b):~\text{$\mathbf{z}$ is $(b,K)_{\mathbf{v}}$-bad}\}|\\
&\geq (8M)^{-1} \, |\mathcal{L}^{\rm bad}|\, N^{a} \\
&\geq 2^{-(4M)^2}\, |\mathcal{L}| \, N^{a}\geq (2d)^{-2d}\,2^{-(4M)^2}\, N^{(d-1)\Big(\frac{a}{2}+\frac{m}{2M}-\varepsilon^2\Big)} N^{a}\,,
}
where we have used \eqref{eq: bad number inequality} and the simple bound $|\mathcal{L}|\geq (2d)^{-2d} N^{(d-1)\Big(\frac{a}{2}+\frac{m}{2M}-\varepsilon^2\Big)}$. Thus, by the pigeonhole principle, there exists $b\in  \Iintv{\overline{\chi} +\varepsilon, b_*^{\rm max} }_L$ such that
\al{
&|\{\mathbf{z} \in \mathcal{Z}_{N,m}(b):~\text{$\mathbf{z}$ is $(b,K)_{\mathbf{v}}$-bad}\}|\\
&\geq  (2d)^{-2d}\,2^{-(4M)^2}\, M^{-1}\,L^{-1}\,N^{(d-1)\Big(\frac{a}{2}+\frac{m}{2M}-\varepsilon^2\Big)}\, N^{a} \,  K^{-b-\varepsilon^4}\,  K^{-\frac{(d-1)(b+1)}{2}},
}
which implies the desired conclusion for the case $m\leq M-2$, using $K\leq N^2$, as long as $N$ is large enough.

Finally, we consider $m=M-1$. Since the argument is similar to the previous one, we only mention the changes. Given $\mathbf{x}\in \mathcal{L}$, we consider the following linear interpolation: We let $\mathbf{x}'\in \mathrm{Cyl}_{\mathbf{0}, \mathbf{v}}(\{N^{ \frac{M-1}{M}}\}, N^{\frac{a}{2}+\frac{M-1}{2M}-\varepsilon^2}])$  such that $\mathbf{x} = \lfloor \mathbf{x}'\rfloor$ with a deterministic rule breaking ties, and set 
$$\mathbf{y}' :=(N- N^{ \frac{M-1}{M}})\mathbf{v}+(\mathbf{x}'- N^{ \frac{M-1}{M}}\mathbf{v})\in \mathrm{Cyl}_{\mathbf{0}, \mathbf{v}}(\{N-N^{ \frac{M-1}{M}}\}, N^{\frac{a}{2}+\frac{M-1}{2M}-\varepsilon^2}]).$$ 
Observe that the vector   $\mathbf{y}'-\mathbf{x}'$ is parallel  to $\mathbf{v}$. For $i\in \Iintv{0,J_{M-1}}$, set 
$$\mathbf{x}_K(i):= \mathbf{x}' + \frac{i}{J_{M-1}} (\mathbf{y'-x'}).$$
We introduce
\al{
   \mathcal{L}^{\rm bad} &:=\left\{ \mathbf{x} \in \mathcal{L}:~
    T(\mathbf{x}',\mathbf{y}') > \mu(\mathbf{v})(N-2N^{\frac{M-1}{M}}) + (4M)^{-1} N^{a}
\right\}.
}
As before, we can prove that
$
\left|\mathcal{L}^{\rm bad} \right|\geq 2^{-(2M)^2} |\mathcal L|,
$
 otherwise it contradicts the assumption. Consider
$$b_*(\mathbf{x}_K(i)) := \inf\left\{b\in  [\overline{\chi}+\varepsilon,\infty) \cap L^{-1} \Z:~T_{\mathrm{Cyl}_{ N,a;\mathbf{v}}}(\mathbf{x}_K(i),\mathbf{x}_K(i+1))\leq \mu(\mathbf{v}) K + 2K^{b}\right\}.$$ 
Following exactly  the same argument before, we have
\al{
&|\{\mathbf{z} \in \mathcal{Z}_{N,M-1}(b):~\text{$\mathbf{z}$ is $(b,K)_{\mathbf{v}}$-bad}\}| \\
&\geq  2^{-(8M)^2}  N^{(d-1)\Big(\frac{a}{2}+\frac{M-1}{2M}-\varepsilon^2\Big)}\, N^{a} \,  K^{-b-\varepsilon^4}\,  K^{-\frac{(d-1)(b+1)}{2}}.
}
\end{proof}

\begin{proof}[Proof of Proposition~\ref{prop: key prop for upper bound}]
Suppose that the event \{$T_{\mathrm{Cyl}_{\mathbf{0}, \mathbf{v}}(\R, N^{\frac{a+1}{2}}) }(\mathbf{0}, N\mathbf{v})> N\mu(\mathbf{v})+ N^{a}\}$ occurs. Recall $b_*^{\rm max}$ from \eqref{def: bstar} and   $M=L=\lceil 4 \varepsilon^{-4}\rceil$. By  Lemma~\ref{prop: bad slabs} and Lemma~\ref{prop: many bad boxes}, there exist $m\in \Iintv{1,M-1}$ with $Ma\leq m+2$ and  $b\in \Iintv{\overline{\chi} + \varepsilon,b^{\rm max}_*}_L$ such that for $N$ large enough,
\al{
&|\{ \mathbf{z} \in \mathcal{Z}_{N,m}(b):\,\text{ $\mathbf{z}$ is $(b,K)_{\mathbf{v}}$-bad}\}| \geq  \left(\frac{N^{a-d\varepsilon^2}}{K_m^b}\right)\left(\frac{N^{\frac{a}{2}+\frac{m}{2M}}}{K_m^{\frac{b+1}{2}}}\right)^{d-1}.
}
    We write $K:=K_m$, $a_m := (1-L^{-1})\wedge (Ma/m)$, and $\overline{\chi}':=\overline{\chi}+\varepsilon$. (Here, for $a_m$, we include $(1-L^{-1})$ to ensure that  $\frac{a_m(1-\overline{\chi}')}{1-a_m}$ remains finite.) By $Ma\leq m+2$, we note that $|(Ma/m)- a_m|\leq O(\varepsilon^4)$ and $b^{\rm max}_*\leq \frac{a_m(1-\overline{\chi}')}{1-a_m}$. 
For any $b\in\Iintv{\overline{\chi}' ,\frac{a_m(1-\overline{\chi}')}{1-a_m}}_L$,  considering the decomposition:
\al{
\mathcal{Z}_{N,m}(b) = \bigcup_{(y_i)_{i=1}^d \in \Iintv{0,7}^d} \Big(y_1\mathbf{v} +\sum_{i=2}^d y_i \tilde{\mathbf{u}}_i\Big) + 8 \mathcal{Z}_{N,m}(b)  ,
}
we have
    \aln{\label{eq: MDP extact pegionhole}
    & \mathbb{P}\left(|\{ \mathbf{z} \in \mathcal{Z}_{N,m}(b):\,\text{ $\mathbf{z}$ is $(b,K)_{\mathbf{v}}$-bad}\}|\geq  \left(\frac{N^{a-d\varepsilon^2}}{K^b}\right)\left(\frac{N^{\frac{a}{2}+\frac{m}{2M}}}{ K^{\frac{b+1}{2}}}\right)^{d-1}\right)\notag\\
    & \leq \sum_{(y_i)_{i=1}^d\in \Iintv{0,7}^d} \mathbb{P}\left(\begin{array}{c}
    \Big|\Big\{ \mathbf{z} \in \Big(y_1\mathbf{v} +\sum_{i=2}^d y_i \tilde{\mathbf{u}}_i \Big)+ 8 \mathcal{Z}_{N,m}(b)  :\,\text{ $\mathbf{z}$ is $(b,K)_{\mathbf{v}}$-bad}\Big\}\Big|\\
    \geq  8^{-d} \left(\frac{N^{a-d\varepsilon^2}}{ K^{b}}\right)\left(\frac{N^{\frac{a}{2}+\frac{m}{2M}}}{ K^{\frac{b+1}{2}}}\right)^{d-1}
    \end{array}\right)\\\notag
        &\leq 8^d
            \mathbb{P}\left(\Big|\Big\{ \mathbf{z} \in 8\mathcal{Z}_{N,m}(b):\,\text{ $\mathbf{z}$ is $(b,K)_{\mathbf{v}}$-bad}\Big\}\Big|\geq  8^{-d} \left(\frac{N^{a-d\varepsilon^2}}{ K^{b}}\right)\left(\frac{N^{\frac{a}{2}+\frac{m}{2M}}}{ K^{\frac{b+1}{2}}}\right)^{d-1}\right),\notag
       }
    where, for {\CO $c > 0$ and $E\subset \mathbb{R}$, $c E:=\{cx: x\in E\}$,} and we have applied the pigeonhole principle together with a union bound in the second line.
 Since the random variables $$(\mathbf{1}\{\text{$\mathbf{z}$ is $(b,K)_{\mathbf{v}}$-bad}\})_{\mathbf{z}\in 8\mathcal{Z}_{N,m}(b)}$$ are independent, rewriting $|\{ \mathbf{z} \in 8\mathcal{Z}_{N,m}(b):\,\text{ $\mathbf{z}$ is $(b,K)_{\mathbf{v}}$-bad}\}|=\sum_{\mathbf{z}\in  8\mathcal{Z}_{N,m}(b)} \mathbf{1}\{\text{$\mathbf{z}$ is $(b,K)_{\mathbf{v}}$-bad}\}$, by \eqref{eq: bad probability} with $|\mathbf{v}-\mathbf{u}|<N^{\frac{\overline{\chi}+\varepsilon-1+\varepsilon^5}{2}}\leq K^{\frac{\overline{\chi}+\varepsilon-1+\varepsilon^5}{2}}$ (since $1\leq K\leq N$), \eqref{eq: condition for K},  and Chernoff's inequality, this is further bounded from above by
    \al{
   & \exp{\Big(-c_0(N^{\frac{a}{2}+\frac{m}{2M}}/K^{\frac{b+1}{2}})^{d-1}\times (N^{a-d\varepsilon^2}/ K^b) \times K^{\theta_0 (b-\overline{\chi})} \Big)}\notag\\
    &\leq \exp{\Big(-c_1 N^{(d-1)\left(\frac{m(a_m+1)}{2M}-\frac{m(1-a_m)(b+1)}{2M(1-\overline{\chi}')}\right)+\left(\frac{m a_m}{M}-\frac{mb(1-a_m)}{M(1-\overline{\chi}')}\right) +\frac{m(1-a_m)\theta_0(b-\overline{\chi}' )}{M(1-\overline{\chi}')}-2d\varepsilon^2}\Big)},\label{eq: box probab}
    }
    for some $c_0,c_1>0$ independent of $N,a,b$. 
    
    Note that the following expression
    \al{
    &(d-1)\left(\frac{m(a_m+1)}{2M}-\frac{m(1-a_m)(b+1)}{2M(1-\overline{\chi}')}\right)+\left(\frac{m a_m}{M}-\frac{mb(1-a_m)}{M(1-\overline{\chi}')}\right) +\frac{m(1-a_m)\theta_0(b-\overline{\chi}')}{M(1-\overline{\chi}')}
    }
    is linear in $b$, and in particular it attains the minimum at $b= \overline{\chi}'$ or $b=\frac{a_m(1-\overline{\chi}')}{1-a_m}$   over $[\overline{\chi}' , \frac{a_m(1-\overline{\chi}')}{1-a_m}]$.     When $b=\overline{\chi}'$, then
    \al{
    &{(d-1)}\left(\frac{m(a_m+1)}{2M}-\frac{m(1-a_m)(\overline{\chi}'+1)}{2M(1-\overline{\chi}')}\right)+\Big(\frac{m a_m}{M}-\frac{m(1-a_m)}{M(1-\overline{\chi}')}\overline{\chi}\Big)\\
    &= \frac{m(d-1)}{M(1-\overline{\chi}')}\left(a_m-\overline{\chi}'\right)+\frac{m(a_m-\overline{\chi}')}{M(1-\overline{\chi}')}\\
    &=\frac{md}{M(1-\overline{\chi}')}(a_m-\overline{\chi}') \geq  \frac{d}{1-\overline{\chi}'}(a-\overline{\chi}')-\varepsilon^2.
    }
When $b= \frac{a_m(1-\overline{\chi}')}{1-a_m}$, then
    \al{
    &(d-1)\left(\frac{m(a_m+1)}{2M}-\frac{m(1-a_m)(b+1)}{2M(1-\overline{\chi}')}\right)+\left(\frac{m a_m}{M}-\frac{m(1-a_m)}{M(1-\overline{\chi}')}b\right) +\frac{m(1-a_m)}{M(1-\overline{\chi}')}\theta_0(b-\overline{\chi}')\\
    &\geq\frac{m(1-a_m)}{M(1-\overline{\chi}')}\theta_0\Big(\frac{a_m(1-\overline{\chi}')}{1-a_m} -\overline{\chi}'\Big)\\
    &=  \frac{m\theta_0}{M(1-\overline{\chi}')}(a_m-\overline{\chi}') \geq \frac{\theta_0}{1-\overline{\chi}}(a-\overline{\chi}')-\varepsilon^2.
    } 
Therefore, for $N$ large enough depending on $\varepsilon$, using $M\geq  4\varepsilon^{-4}$, $a-\overline{\chi}'\geq \varepsilon^{3/2}$ and \eqref{cond: vareps condition}, we have
\al{
c_1 N^{(d-1)\left(\frac{m(a_m+1)}{2M}-\frac{m(1-a_m)(b+1)}{2M(1-\overline{\chi}')}\right)+\left(\frac{m a_m}{M}-\frac{mb(1-a_m)}{M(1-\overline{\chi}')}\right) +\frac{m(1-a_m)\theta_0(b-\overline{\chi}')}{M(1-\overline{\chi}')}-2d\varepsilon^2} > N^{(1-\varepsilon)\Big(\frac{\theta_0}{1-\overline{\chi}'}\wedge \frac{d}{1-\overline{\chi}'}\Big)(a-\overline{\chi}')}.
}
This proves the proposition.
\end{proof}

\subsection{Proof of Theorem~\ref{thm: key prop for upper bound}-(1)}\label{sec:upper-bound-1}
We take $a\in (\overline{\chi},1)$ and $\varepsilon\in (0,a-\overline{\chi})$. We let $R\in\mathbb{N}$ be such that
    \begin{equation}
        \label{eq: choice of R 1.1}
    \frac{\theta_0(1-(\varepsilon^2/R))^R}{(1-\overline{\chi})^{R}} > \frac{d}{1-\overline{\chi}},
        \end{equation}
    where such an $R$ exists due to $\overline{\chi}>0$. Recalling the function $\overline{f}_{\overline{\chi}}(x):=\frac{x}{1-\overline{\chi}}\wedge \frac{d}{1-\overline{\chi}}$, we define 
    $$\theta_k:=  (1-(\varepsilon^2/R)) \overline{f}_{\overline{\chi}}(\theta_{k-1}),$$ for $k\geq 1$. Then, by induction, for any $k\in \Iintv{1,R}$,
    $$\theta_k \geq (1-(\varepsilon^2/R))^k (\overline{f}_{\overline{\chi}}\circ\cdots\circ \overline{f}_{\overline{\chi}})(\theta_{0})\geq  (1-(\varepsilon^2/R))^k \Big( \frac{d}{1-\overline{\chi}} \wedge \frac{\theta_0}{(1-\overline{\chi})^k}\Big),$$
where the above composition is done $k$ times. Using \eqref{eq: choice of R 1.1} and the inequality $(1-(\varepsilon^2/R))^R\geq 1-\varepsilon^2$, we have
$$\theta_R\geq (1-\varepsilon^2) \frac{d}{1-\overline{\chi}}.$$
Inductively, we define 
$$\varepsilon_k:=\Big(8d+\Big(1-\overline{\chi}_{k-1}\Big)^{-1} + c_{k-1}^{-1}+\theta_{k-1}^{-1}+ (\varepsilon^2/R)^{-1}\Big)^{-2d},\quad c_k := \varepsilon_k^{5},\quad \overline{\chi}_k := \overline{\chi} + \sum_{i=1}^k \varepsilon_i .$$
It is straightforward to check that $\overline{\chi}_k < \overline{\chi} + \varepsilon$ for all $k\in \Iintv{1,R}$ due to $\varepsilon_i<\varepsilon/R$. Also, since $\overline{f}_{a}(x)$ is increasing both in $x$ and in $a$, we have
$$\theta_k \leq (1-\varepsilon_k) \overline{f}_{\overline{\chi}_{k-1}+\varepsilon_k}(\theta_{k-1}).$$
Then, we  apply Proposition~\ref{prop: key prop for upper bound} for $R$ many times, with $\overline{\chi}_k$, $\theta_k$ and $c_k$ in place of $\overline{\chi}$, $\theta_0$, and $c_0$ in the $k$-th step, and obtain a sequence of large integers $N_1 < N_2 < \cdots < N_R$, such that Assumption~\ref{assum: initial concentration} holds with $\overline{\chi}_k$ and $\theta_k$ for any $N\geq N_k$ and $\mathbf{v}\in H_\mathbf{u}$ with $|\mathbf{v-u}| \leq N^{\frac{\overline{\chi}_k-1+c_k}{2}}$. In particular, for any $N\geq N_R$, for any $a \in [\overline{\chi}_R, 1]$, and for any $\mathbf{v}\in H_\mathbf{u}$ with $|\mathbf{v-u}| \leq N^{\frac{\overline{\chi}_R-1+c_R}{2}}$, one has
\[
\mathbb{P} \Big(T_{{\rm Cyl}_{\mathbf{v}}\Big(\R,N^{\frac{a+1}{2}}\Big)}(\mathbf{0}, N\mathbf{v})> N\mu(\mathbf{v})+ N^{a}\Big)\leq  e^{-  N^{\theta_R (a-\overline{\chi}_R)}} \leq e^{-N^{\frac{d(1-\varepsilon)}{1-\overline{\chi}}(a - \overline{\chi})}}.
\]
This proves Theorem~\ref{thm: key prop for upper bound}-(1).

    \subsection{Proof of Theorem~\ref{thm: key prop for upper bound}-(2)}\label{sec:upper-bound-2}
Assume that $\zeta\in (0,1/2)$. The proof is similar to the previous one, although we do not need the induction scheme in the proof of Theorem~\ref{thm: key prop for upper bound}-(1). As before, we prepare some notations and lemmas. Let $c'>0$ be chosen later.    Given \(m \in \Iintv{0,M-1}\), we define
\begin{align*}
\widehat{L}_m &= \mathrm{Cyl}_{\mathbf{0},\mathbf{u}}(\{\zeta^{  \frac{M+1-m}{M}} N\}, \zeta^{\frac{2M-m}{2M}+\varepsilon^2} N), \\
\widehat{R}_m &=\mathrm{Cyl}_{\mathbf{0},\mathbf{u}}(\{N-\zeta^{  \frac{M+1-m}{M}} N\}, \zeta^{\frac{2M-m}{2M}+\varepsilon^2} N).  
\end{align*} 
Given $m\in \Iintv{1,M-1}$,  
$$\widehat{\Delta}_m:= 
\begin{cases}
   \zeta^{\frac{M-m}{M}}N- \zeta^{\frac{M+1-m}{M}}N & \text{if \(m\leq M-2\),}\\
   N- 2\zeta^{\frac{1}{M}}N & \text{if \(m=M-1\).}
\end{cases}$$
Given $a>\overline{\chi}$, $L\in\N$ and $m \in \Iintv{0,M-1}$ with $M\in\N$, we define $\widehat{J}_m = \lfloor \widehat{\Delta}_m \zeta^{\frac{(m/M)-1}{1-{\overline{\chi}}-\varepsilon} - \varepsilon^3 } \rfloor $ and 
$$\widehat{K}_{m} := \widehat{\Delta}_m/\widehat{J}_m\in [\zeta^{-\frac{(m/M)-1}{1-{\overline{\chi}}-\varepsilon} - \varepsilon^3 } ,2\zeta^{-\frac{(m/M)-1}{1-{\overline{\chi}}-\varepsilon} - \varepsilon^3 } ].$$
For simplicity of notation, in the proof, we first consider the deviation $\zeta^{{1-\varepsilon^2}} N$, and later on we will replace it by $\zeta N$.
\begin{lem}\label{prop: bad slabs2}
Fix an integer $M \geq 2$. Then, for all sufficiently large $N$, the following holds: If
$
T(\mathbf{0}, N\mathbf{u}) > N \mu + \zeta^{{1-\varepsilon^2}} N,
$
then there exist $ m \in \Iintv{0,M-1}$ with $m\geq Ma - 1$ and hyperplanes $(\mathcal{L}, \mathcal{R})$ from the following choices:
\begin{enumerate}
    \item $m \leq M-2$, $\mathcal{L} = \widehat{L}_m$, $\mathcal{R} = \widehat{L}_{m+1}$,
    \item $m \leq M-2$, $\mathcal{L} = \widehat{R}_m$, $\mathcal{R} = \widehat{R}_{m+1}$,
    \item $m = M-1$, $\mathcal{L} = \widehat{L}_{M-1}$, $\mathcal{R} = \widehat{R}_{M-1}$,
\end{enumerate}
and there exists $A \subset \mathcal{L}$ with
$
|A| \geq (1 - 2^{M(m-2M)}) |\mathcal{L}|
$ such that
\[
\left| \left\{ \mathbf{y} \in  \mathcal{R} : T(A,\mathbf{y}) \leq \widehat{\Delta}_m \mu(\mathbf{u}) + (2M)^{-1} \zeta^{{1-\varepsilon^2}} N \right\} \right| \leq (1 - 2^{M(m+1-2M)}) |\mathcal{R}|.
\]
\end{lem}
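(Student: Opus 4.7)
The plan is to mirror the proof of Lemma~\ref{prop: bad slabs} almost verbatim, replacing the power-of-$N$ moderate-deviation scale by the large-deviation scale $\zeta^{1-\varepsilon^2}N$ and the cylinder-restricted passage times by the unrestricted $T$. I argue by contrapositive: assuming the negation of the stated conclusion holds for every admissible $(m,\mathcal L,\mathcal R)$ from (1)--(3) with $m\geq Ma-1$ and every $A\subset \mathcal L$ with $|A|\geq (1-2^{M(m-2M)})|\mathcal L|$, I will derive $T(\mathbf{0},N\mathbf{u})\leq N\mu(\mathbf{u})+\zeta^{1-\varepsilon^2}N$, contradicting the hypothesis.

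The key objects are the inductively built sets
\[
\widehat{L}_m' := \bigl\{\mathbf{x}\in \widehat{L}_m : T(\mathbf{0},\mathbf{x}) \leq \zeta^{(M+1-m)/M}N\,\mu(\mathbf{u}) + (2M)^{-1} m\,\zeta^{1-\varepsilon^2}N\bigr\},
\]
together with the symmetric $\widehat{R}_m'$ using $T(\mathbf{x},N\mathbf{u})$ and the ``time remaining'' $\zeta^{(M+1-m)/M}N\mu(\mathbf{u})+(2M)^{-1}m\zeta^{1-\varepsilon^2}N$. I will show by induction on $m$ that $|\widehat{L}_m'|\geq (1-2^{M(m-2M)})|\widehat{L}_m|$. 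The base case handles those $m$ with $m < Ma-1$: by the assumed weight bound $\tau_e\leq 1/(4d^2)$, any point $\mathbf{x}\in \widehat{L}_m$ satisfies $T(\mathbf{0},\mathbf{x})\leq |\mathbf{x}|_1/(4d^2) = O(\zeta^{(M+1-m)/M}N)$, which is below $(2M)^{-1}m\zeta^{1-\varepsilon^2}N$ once $N$ is large enough, so $\widehat{L}_m'=\widehat{L}_m$ trivially.

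For the induction step, I apply the contrapositive assumption to the slab pair $(\widehat{L}_m,\widehat{L}_{m+1})$ with $A=\widehat{L}_m'$. This produces more than $(1-2^{M(m+1-2M)})|\widehat{L}_{m+1}|$ endpoints $\mathbf{y}\in \widehat{L}_{m+1}$ admitting some $\mathbf{x}\in \widehat{L}_m'$ with $T(\mathbf{x},\mathbf{y})\leq \widehat{\Delta}_m\,\mu(\mathbf{u}) + (2M)^{-1}\zeta^{1-\varepsilon^2}N$. Combining via subadditivity and using the telescoping identity
\[
\zeta^{(M+1-m)/M}N + \widehat{\Delta}_m \;=\; \zeta^{(M-m)/M}N
\]
shows each such $\mathbf{y}$ lies in $\widehat{L}_{m+1}'$, completing the induction. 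The symmetric argument, swapping the roles of $\mathbf{0}$ and $N\mathbf{u}$ and of $\widehat{L}_m,\widehat{R}_m$, yields $|\widehat{R}_m'|\geq (1-2^{M(m-2M)})|\widehat{R}_m|$.

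Finally, applying the contrapositive hypothesis once more to the pair $(\widehat{L}_{M-1},\widehat{R}_{M-1})$ with $A=\widehat{L}_{M-1}'$ produces $\mathbf{x}\in \widehat{L}_{M-1}'$ and $\mathbf{y}\in \widehat{R}_{M-1}'$ satisfying $T(\mathbf{x},\mathbf{y})\leq (N-2\zeta^{1/M}N)\mu(\mathbf{u})+(2M)^{-1}\zeta^{1-\varepsilon^2}N$; the triangle inequality then gives
\[
T(\mathbf{0},N\mathbf{u}) \leq T(\mathbf{0},\mathbf{x})+T(\mathbf{x},\mathbf{y})+T(\mathbf{y},N\mathbf{u}) \leq N\mu(\mathbf{u})+\zeta^{1-\varepsilon^2}N,
\]
the desired contradiction. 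The main technical obstacle is ensuring the telescoping of the $\zeta$-geometric slab positions cooperates with the inductive error budget $(2M)^{-1}m\zeta^{1-\varepsilon^2}N$, and that the $\zeta^{\varepsilon^2}$ inflation in the width of each $\widehat{L}_m$ (playing the role of $N^{-\varepsilon^2}$ in Lemma~\ref{prop: bad slabs}) is chosen large enough to absorb the overlap losses when concatenating paths across consecutive slabs, so that the inductive density threshold $(1-2^{M(m-2M)})$ is preserved.
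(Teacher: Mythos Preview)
Your approach is correct and is precisely what the paper intends; indeed, the paper writes ``The proofs of the lemmas above are exactly the same as before, so we omit them,'' referring back to Lemma~\ref{prop: bad slabs}. One caveat worth flagging: the final displayed inequality in the stated lemma almost certainly contains a sign typo---the ``$>$'' should read ``$\leq$'' to match both Lemma~\ref{prop: bad slabs} and the hypothesis of Lemma~\ref{prop: many bad boxes2} (which consumes this conclusion). Your contrapositive argument, which assumes the good-endpoint set exceeds $(1-2^{M(m+1-2M)})|\mathcal R|$ for \emph{every} admissible $(m,\mathcal L,\mathcal R)$ and every large $A$, is the negation of the $\leq$ version; so despite writing ``negation of the stated conclusion,'' you are in fact proving the intended statement rather than the one literally printed.
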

We set
\al{
\widehat{\mathcal{Z}}_{N,m}(b) &:=
\left\{\Big\lfloor y_1 \mathbf{u} + \sum_{i=2}^d y_i \tilde{\mathbf{u}}_i\Big \rfloor :
\begin{array}{c}
y_1\in ([\zeta^{\frac{M+1-m}{M}} N,\zeta^{\frac{M-m}{M}} N]\cup [N-\zeta^{\frac{m+1}{M}} N,N-\zeta^{\frac{m}{M}} N])\cap \widehat{K}_m\Z,\\
y_i \in [-\zeta^{\frac{2M-m}{2M}+\varepsilon^2} N,\zeta^{\frac{2M-m}{2M}+\varepsilon^2} N]\cap ( \widehat{K}_m^{\frac{1+b}{2}} \Z),\,i\geq 2
\end{array}
\right\},\\
\widehat{\mathcal{Z}}_{N,M-1}(b) &:=
\left\{\Big\lfloor y_1 \mathbf{u} + \sum_{i=2}^d y_i \tilde{\mathbf{u}}_i\Big \rfloor :
\begin{array}{c}
y_1\in [\zeta^{\frac{1}{M}} N,N-\zeta^{\frac{1}{M}} N]\cap \widehat{K}_{M-1} \Z,\\
y_i \in [-\zeta^{\frac{1}{2M}+\varepsilon^2} N,\zeta^{\frac{1}{2M}+\varepsilon^2} N]\cap ( \widehat{K}_{M-1}^{\frac{1+b}{2}} \Z),\,i\geq 2
\end{array}
\right\}.
}
\begin{lem}\label{prop: many bad boxes2}
Let $\varepsilon>0$, and set \(M:=L:= \lceil 4 \varepsilon^{-4}\rceil \in \mathbb{N}\).   For $N$ large enough, for any $m \in \Iintv{1,M-1}$ with $m\geq Ma - 1$, the following holds: 
Suppose one of the three holds: 
\begin{enumerate}
    \item $m \leq M-2$, $\mathcal L= \widehat{L}_{m}$ and $\mathcal R=\widehat{L}_{m+1}$,
    \item $m \leq M-2$, $\mathcal L= \widehat{R}_{m}$ and $\mathcal R=\widehat{R}_{m+1}$,
    \item $m =M-1 $, $\mathcal L= \widehat{L}_{M-1}$ and $\mathcal R=\widehat{R}_{M-1}$.
\end{enumerate}
Suppose further that  there exists $A \subset  \mathcal L$ with  $|A|\geq (1-2^{M(m-2M)})|\mathcal L|$ such that
\begin{equation}
\label{eq: assumption}
    \left|\left\{\mathbf{y} \in  \mathcal R:~ T(A,\mathbf{y}) \leq \widehat{\Delta}_m \mu(\mathbf{u}) + (2M)^{-1} \zeta^{{1-\varepsilon^2}} N\right\}\right|\leq (1-2^{M(m+1-2M)})|\mathcal R|.
\end{equation}
Then, there exists $b\in \Iintv{\overline{\chi}+\varepsilon ,1}_L$ such that
\al{
&|\{ \mathbf{z} \in \widehat{\mathcal{Z}}_{N,m}(b):\,\text{ $\mathbf{z}$ is $(b,K)_{\mathbf{u}}$-bad}\}| \geq  \left(\frac{\zeta^{1+d\varepsilon^2} N}{\widehat{K}_m^b}\right)\left(\frac{\zeta^{\frac{2M-m}{2M}} N}{\widehat{K}_m^{\frac{b+1}{2}}}\right)^{d-1}.
}
\end{lem}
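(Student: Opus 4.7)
The plan is to mirror the proof of Lemma~\ref{prop: many bad boxes}, with the slab system $(N^{m/M})_m$ replaced throughout by the $\zeta$-scaled system $(\zeta^{(M+1-m)/M}N)_m$ and the allotted budget $N^a$ replaced by $\zeta^{1-\varepsilon^2}N$. Without loss of generality I would treat case (1); cases (2) and (3) are handled by symmetric interpolation as in the endgame of Lemma~\ref{prop: many bad boxes}. For each $\mathbf{x} \in \mathcal{L}$, lift to a representative $\mathbf{x}' \in \mathrm{Cyl}_{\mathbf{0},\mathbf{u}}(\{\zeta^{(M+1-m)/M}N\}, \zeta^{(2M-m)/(2M)+\varepsilon^2}N)$ with $\lfloor \mathbf{x}' \rfloor = \mathbf{x}$, set
\[
\mathbf{y}' := \zeta^{(M-m)/M}N\,\mathbf{u} + \bigl(\mathbf{x}' - \zeta^{(M+1-m)/M}N\,\mathbf{u}\bigr),
\]
and form the interpolants $\mathbf{x}_{\widehat{K}_m}(i) := \mathbf{x}' + (i/\widehat{J}_m)(\mathbf{y}' - \mathbf{x}')$ for $i \in \Iintv{0,\widehat{J}_m}$.

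The first step is to identify
\[
\mathcal{L}^{\mathrm{bad}} := \Bigl\{ \mathbf{x} \in \mathcal{L} : T(\mathbf{x}_{\widehat{K}_m}(0), \mathbf{x}_{\widehat{K}_m}(\widehat{J}_m)) > \widehat{\Delta}_m\, \mu(\mathbf{u}) + (4M)^{-1}\, \zeta^{1-\varepsilon^2}\, N \Bigr\}
\]
and show $|\mathcal{L}^{\mathrm{bad}}| \geq 2^{-(2M)^2}|\mathcal{L}|$. This follows by contradiction, exactly as in \eqref{eq: contradicts}--\eqref{eq: bad number inequality}: if $\mathbf{x} \notin \mathcal{L}^{\mathrm{bad}}$, then for every $\mathbf{z} \in \mathcal{R}$ within an $O(d)$-neighborhood of $\mathbf{x}_{\widehat{K}_m}(\widehat{J}_m)$, the triangle inequality together with \eqref{ass: weight distribution} yields $T(\mathbf{x},\mathbf{z}) \leq \widehat{\Delta}_m\mu(\mathbf{u}) + (2M)^{-1}\zeta^{1-\varepsilon^2}N$, so if $\mathcal{L}^{\mathrm{bad}}$ were too small this would contradict the hypothesis \eqref{eq: assumption}.

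The second step decomposes the excess of $\mathbf{x} \in \mathcal{L}^{\mathrm{bad}}$ along interpolation edges across dyadic-like scales. Define
\[
b_*(\mathbf{x}_{\widehat{K}_m}(i)) := \sup\Bigl\{b \in [\overline{\chi}+\varepsilon, \infty) \cap L^{-1}\mathbb{Z} : T(\mathbf{x}_{\widehat{K}_m}(i), \mathbf{x}_{\widehat{K}_m}(i+1)) \geq \mu(\mathbf{u})\widehat{K}_m + 2\widehat{K}_m^b \Bigr\},
\]
telescope the step excess across $b \in \Iintv{\overline{\chi}+\varepsilon, 1}_L$, and use that consecutive interpolants have transverse displacement at most $\widehat{J}_m^{-1}\zeta^{(2M-m)/(2M)+\varepsilon^2}N$, which is bounded by $\widehat{K}_m^{(\overline{\chi}+\varepsilon+1)/2}$ in direct analogy with \eqref{eq: next point constraint}. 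Every index $i$ witnessing $b \leq b_*(\mathbf{x}_{\widehat{K}_m}(i)) - L^{-1}$ is then covered by some $(b,\widehat{K}_m)_{\mathbf{u}}$-bad vertex in $\widehat{\mathcal{Z}}_{N,m}(b)$, with each vertex counted at most $M\widehat{K}_m^{(d-1)(b+1)/2}$ times across pairs $(i,\mathbf{x})$.

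The main obstacle is to verify the analogue of \eqref{eq: comparison the fluctuations}, namely
\[
\widehat{\Delta}_m\, \widehat{K}_m^{-(1-\overline{\chi}-\varepsilon-\varepsilon^4)} \leq (8M)^{-1}\, \zeta^{1-\varepsilon^2}\, N,
\]
so that the initial-tier error at $b=\overline{\chi}+\varepsilon$ is absorbed into half of the allotted slack. Substituting $\widehat{\Delta}_m \leq \zeta^{(M-m)/M}N$ and the stated lower bound on $\widehat{K}_m$ reduces this to a direct $\zeta$-exponent comparison that becomes favourable precisely under the range constraint $m \geq Ma - 1$, paralleling the moderate-deviation computation with $N$-exponents replaced by $\zeta$-exponents. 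Once this bound is in hand, a pigeonhole over $b \in \Iintv{\overline{\chi}+\varepsilon, 1}_L$ combined with the slab volume bound $|\mathcal{L}| \geq c_d(\zeta^{(2M-m)/(2M)}N)^{d-1}$ yields the claimed count of $(b,\widehat{K}_m)_{\mathbf{u}}$-bad vertices in $\widehat{\mathcal{Z}}_{N,m}(b)$.
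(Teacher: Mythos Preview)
Your plan is exactly the one the paper has in mind: it omits the proof entirely, writing that the argument is ``exactly the same as before,'' i.e.\ a direct transcription of the proof of Lemma~\ref{prop: many bad boxes} with the $\zeta$-rescaled slabs $(\widehat L_m,\widehat R_m,\widehat\Delta_m,\widehat K_m)$ in place of $(L_m,R_m,\Delta_m,K_m)$ and the budget $\zeta^{1-\varepsilon^2}N$ in place of $N^a$.

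One correction is needed in your interpolation. Your formula keeps the transverse component of $\mathbf{x}'$ unchanged, so the image $\{\mathbf{x}_{\widehat K_m}(\widehat J_m):\mathbf{x}\in\mathcal L\}$ only occupies the thinner slab of height $\zeta^{(2M-m)/(2M)+\varepsilon^2}N$ inside $\mathcal R$, which has the larger height $\zeta^{(2M-m-1)/(2M)+\varepsilon^2}N$. The covering step behind \eqref{eq: bad number inequality} (``for any $\mathbf z\in\mathcal R$ there exists $\mathbf x\in\mathcal L$ with $|\mathbf z-\mathbf x_K(J)|_\infty\le\ldots$'') then fails, and the counting that transfers $|\mathcal R\setminus\mathcal B|$ back to $|\mathcal L^{\rm bad}|$ collapses because $|\mathcal L|\ll|\mathcal R|$. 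The fix is the direct analogue of the $N^{1/(2M)}$ dilation in Lemma~\ref{prop: many bad boxes}: set
\[
\mathbf y':=\zeta^{\frac{M-m}{M}}N\,\mathbf u+\zeta^{-\frac{1}{2M}}\bigl(\mathbf x'-\zeta^{\frac{M+1-m}{M}}N\,\mathbf u\bigr),
\]
so that the image covers $\mathcal R$, and replace your ``$O(d)$-neighborhood'' by a $2d\,\zeta^{-1/(2M)}$-neighborhood (the connection cost $O(\zeta^{-1/(2M)})$ is harmless against $(2M)^{-1}\zeta^{1-\varepsilon^2}N$ for $N$ large). With this adjustment your outline goes through verbatim.
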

The proofs of the lemmas above are exactly the same as before, so we omit them.
\begin{proof}[Proof of Theorem~\ref{thm: key prop for upper bound}-(2)]
Let $\varepsilon>0$ be small enough. Suppose that the event $\{T(\mathbf{0}, N\mathbf{u})> N\mu(\mathbf{u})+\zeta^{{1-\varepsilon^2}} N\}$ occurs. By  Lemma~\ref{prop: bad slabs2} and Lemma~\ref{prop: many bad boxes2}, there exist $m\in \Iintv{1,M-1}$ with $m\geq Ma-2$ and  $b\in \Iintv{\overline{\chi} +\varepsilon,1}_L$ such that for $N$ large enough
\al{
&|\{ \mathbf{z} \in \widehat{\mathcal{Z}}_{N,m}(b):\,\text{ $\mathbf{z}$ is $(b,K)_{\mathbf{u}}$-bad}\}| \geq  \left(\frac{\zeta^{1+d\varepsilon^2} N}{\widehat{K}_m^b}\right)\left(\frac{\zeta^{\frac{2M-m}{2M}}N}{\widehat{K}_m^{\frac{b+1}{2}}}\right)^{d-1}.
} For any $b\in[\overline{\chi}+\varepsilon + L^{-1},1-L^{-1}]$, by a union bound, we have
    \aln{
    & \mathbb{P}\left(|\{ \mathbf{z} \in \widehat{\mathcal{Z}}_{N,m}(b):\,\text{ $\mathbf{z}$ is $(b,K)_{\mathbf{u}}$-bad}\}|\geq  \left(\frac{\zeta^{1+d\varepsilon^2} N}{\widehat{K}_m^b}\right)\left(\frac{\zeta^{\frac{2M-m}{2M}}N}{\widehat{K}_m^{\frac{b+1}{2}}}\right)^{d-1}\right)\notag\\
    &\leq 8^d \mathbb{P}\left(\sum_{\mathbf{z}\in  8\widehat{\mathcal{Z}}_{N,m}(b)} \mathbf{1}\{\text{$\mathbf{z}$ is $(b,K)_{\mathbf{u}}$-bad}\}\geq 8^{-d} \left(\frac{\zeta^{1+d\varepsilon^2} N}{ \widehat{K}_m^b}\right)\left(\frac{\zeta^{\frac{2M-m}{2M}}N}{ \widehat{K}_m^{\frac{b+1}{2}}}\right)^{(d-1)}\right).
\notag
    }
Using independence and the Chernoff inequality,   this is further bounded from above by
    \al{
   & \exp{\Big(-c_0\Big(\frac{\zeta^{1+d\varepsilon^2} N}{ \widehat{K}_m^b}\Big)\times \Big(\frac{\zeta^{\frac{2M-m}{2M}}N}{ \widehat{K}_m^{\frac{b+1}{2}}}\Big)^{(d-1)} \times \widehat{K}_m^{\frac{d(1-\varepsilon)}{1-\overline{\chi}} (b-\overline{\chi} )} \Big)}\notag\\
    &\leq \exp{\Big(-c_1
\zeta^{1 + \frac{(2M - m)(d-1)}{2M} + \frac{m}{M(1 - \overline{\chi})}\left(b + \frac{(b + 1)(d - 1)}{2}\right) - \frac{d m (b - \overline{\chi})}{M(1 - \overline{\chi})^2}+O(\varepsilon) } N^d \Big)},
    }
    for some $c_0,c_1>0$. Note that the following expression
    \al{
    &1  + \frac{(2M - m)(d-1)}{2M} + \frac{m}{M(1 - \overline{\chi})}\left(b + \frac{(b + 1)(d - 1)}{2}\right) - \frac{d m (b - \overline{\chi})}{M(1 - \overline{\chi})^2}
    }
    attains the minimum at $b= \overline{\chi}$ or $b=1$ over $[\overline{\chi} , 1]$.   
    When $b=\overline{\chi}$, then
    \al{
    &1  + \frac{(2M - m)(d-1)}{2M} + \frac{m}{M(1 - \overline{\chi})}\left(b + \frac{(b + 1)(d - 1)}{2}\right) - \frac{d m (b - \overline{\chi})}{M(1 - \overline{\chi})^2}\\
    &= 1  + \frac{(2M - m)(d-1)}{2M} + \frac{m}{M(1 - \overline{\chi})}\left(\overline{\chi} + \frac{(\overline{\chi} + 1)(d - 1)}{2}\right)\\
    &\leq 1  + \frac{d-1}{2}+ \frac{1}{(1 - \overline{\chi})}\left(\overline{\chi} + \frac{(\overline{\chi} + 1)(d - 1)}{2}\right) =  \frac{d}{1-\overline{\chi}}.
    }
    When $b= 1$, then
    \al{
    &1  + \frac{(2M - m)(d-1)}{2M} + \frac{m}{M(1 - \overline{\chi})}\left(b + \frac{(b + 1)(d - 1)}{2}\right) - \frac{d m (b - \overline{\chi})}{M(1 - \overline{\chi})^2}\\
    & =1  + \frac{(2M - m)(d-1)}{2M} \leq \frac{d}{1-\overline{\chi}}.
    }
By a union bound over $b$, replacing $\zeta^{{1-\varepsilon^2}}N$ by $\zeta N$, letting $N\to\infty$ and then adjusting $\varepsilon$ if needed,  we finish the proof.
\end{proof}

\section{Refined analysis of the initial concentration condition}\label{sec:refinement}
In this section, we refine Assumption~\ref{assum: initial concentration} and establish Theorem~\ref{thm: refinement of condition} and Theorem~\ref{thm: bound for chiu}. The proof shares some similarity with the one of Theorem~\ref{thm: key prop for upper bound} but is simpler.

\subsection{Proof of Theorem~\ref{thm: refinement of condition}}\label{sec:refinement-1}

Let $\varepsilon>0$, and let $\mathbf{v}\in H_{\mathbf{u}}$ with $|\mathbf{u}-\mathbf{v}|<N^{\frac{\overline{\chi}_u -1 + \varepsilon+\varepsilon^5}{2}}$. Recall $\tilde{\mathbf{u}}_i$ from \eqref{def: utilde}, and $\Delta_m$ from \eqref{Def: Delta}. Define
\al{
\overline{\mathcal{Z}}_{N} &:=\left\{\mathbf{y} :~
\mathbf{y}= \sum_{i=2}^d y_i\tilde{\mathbf{u}}_i\in \R^d,\,y_i \in [-1,1]\cap N^{-\varepsilon^7} \Z\right\}.
}
Given $\mathbf{x}\in \overline{\mathcal{Z}}_{N}$ and $m\in \Iintv{0,M-1}$, we define $$\overline{\mathbf{x}}_{m}:= N^{\frac{m}{M}}\mathbf{v} + N^{\frac{m(1+\overline{\chi}_u +\varepsilon- \varepsilon^2)}{2M}} \mathbf{x}.$$ For $m\in \Iintv{M,2M-1}$, we define 
$$\overline{\mathbf{x}}_{m} := (N-N^{\frac{2M-1-m}{M}})\mathbf{v} + N^{\frac{(2M-1-m)(1+\overline{\chi}_u +\varepsilon- \varepsilon^2)}{2M}} \mathbf{x}.$$
See Figure~\ref{fig:xm} for a depiction. 

\begin{figure}[t]
    \centering
   \includegraphics[width=0.7\linewidth]{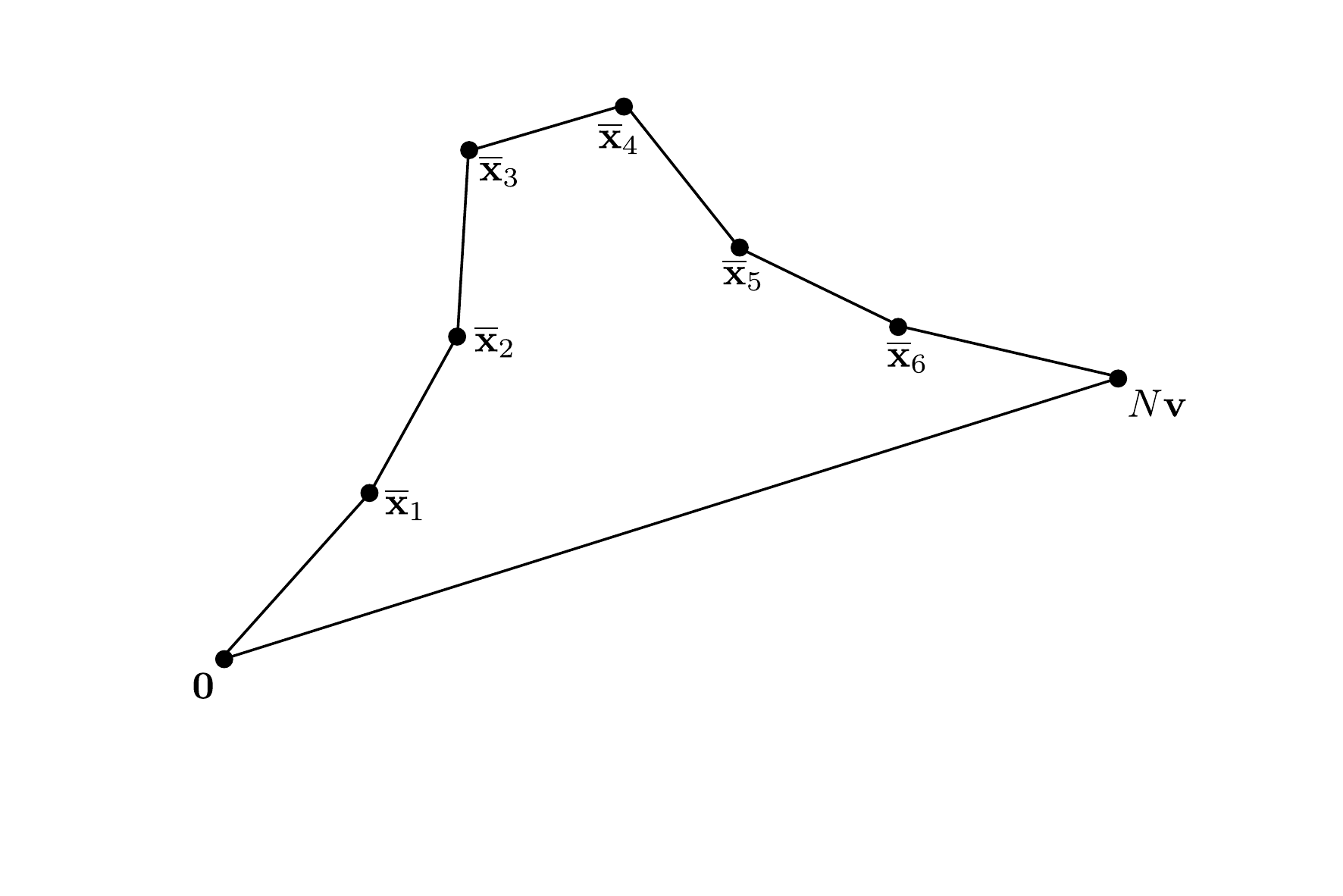} 
\caption{A depiction of the points $(\overline{\mathbf{x}}_m)$ with $M = 4$. $\overline{\mathbf{x}}_1, \ldots, \overline{\mathbf{x}}_{M-1}$ go from $\mathbf{0}$ towards $N\mathbf{v}$ (and ``drifted'' by the direction $\mathbf{x}$), while $\overline{\mathbf{x}}_{2M-1}, \ldots, \overline{\mathbf{x}}_{M}$ go from $N\mathbf{v}$ back to $\mathbf{0}$ (also ``drifted'' by the direction $\mathbf{x}$).}
    \label{fig:xm}
\end{figure}

For $m\in\Iintv{0,M-1}$, we say that $\mathbf{x}\in \overline{\mathcal{Z}}_{N}$ is $m$-black if 
\al{
T_{{\rm Cyl}_{\overline{\mathbf{x}}_{m},\overline{\mathbf{x}}_{m+1}-\overline{\mathbf{x}}_{m}}([-2\Delta_m,2\Delta_m],|\overline{\mathbf{x}}_{m+1}-\overline{\mathbf{x}}_{m}|^{\frac{1+\overline{\chi}_u+(\varepsilon/2)}{2}})}(\overline{\mathbf{x}}_{m},\overline{\mathbf{x}}_{m+1})\geq \Delta_m \mu(\mathbf{v})+ (4M)^{-1} N^{\overline{\chi}_u +\varepsilon}.
} 
For $m\geq M$, we say that $x\in \overline{\mathcal{Z}}_{N}$ is $m$-black if 
\al{
&T_{{\rm Cyl}_{\overline{\mathbf{x}}_{m+1},\overline{\mathbf{x}}_{m+1}-\overline{\mathbf{x}}_{m}}([-2\Delta_m,2\Delta_m], |\overline{\mathbf{x}}_{m+1}-\overline{\mathbf{x}}_{m}|^{\frac{1+\overline{\chi}_u+(\varepsilon/2)}{2}})}(\overline{\mathbf{x}}_{m},\overline{\mathbf{x}}_{m+1})\geq \Delta_{2M-2-m}\, \mu(\mathbf{v})+ (4M)^{-1} N^{\overline{\chi}_u + \varepsilon}.
}  
Finally, set
\al{
\overline{\mathcal{Z}}^{\rm black}_{N,m}& :=\left\{\mathbf{x}\in  \overline{\mathcal{Z}}_{N}:~\mathbf{x}\text{ is $m$-black}
\right\}.
}

\begin{lem}\label{prop: many bad boxes 2}
Let $\varepsilon\in (0,(8d+(1-\overline{\chi}_u)^{-1})^{-2d})$ and $M:=\lceil 4 \varepsilon^{-4}\rceil$.   If  $N$ is large enough, on the event 
\begin{equation}
\label{eq: event_bad_boxes_2}\{T_{\mathrm{Cyl}_{\mathbf{0}, \mathbf{v}}([-2N,2N], N^{\frac{\overline{\chi}_u+1+\varepsilon}{2}}) }(\mathbf{0}, N\mathbf{v}) > N\mu(\mathbf{v})+N^{\overline{\chi}_u + \varepsilon}\},
\end{equation}
there exists $m\in \Iintv{\varepsilon^2 M , 2M-\varepsilon^2 M}$ such that
\al{
&|\overline{\mathcal{Z}}^{\rm black}_{N,m}| \geq  N^{\varepsilon^{8}}.
}
\end{lem}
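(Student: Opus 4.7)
The plan is to argue by contrapositive. Assume that for every $m \in \Iintv{\varepsilon^2 M, 2M - \varepsilon^2 M}$ one has $|\overline{\mathcal{Z}}^{\rm black}_{N,m}| < N^{\varepsilon^8}$; I will then construct a path from $\mathbf{0}$ to $N\mathbf{v}$ lying inside the cylinder $\mathrm{Cyl}_{\mathbf{0}, \mathbf{v}}(\R, N^{(\overline{\chi}_u + 1 + \varepsilon)/2})$ whose passage time is at most $N\mu(\mathbf{v}) + N^{\overline{\chi}_u + \varepsilon}$, contradicting event~\eqref{eq: event_bad_boxes_2}. The structure mirrors the proof of Lemma~\ref{prop: many bad boxes} but is much simpler: instead of extracting a pair of hyperplanes with many good crossings, I only need to extract a single good skeleton $\mathbf{x}^* \in \overline{\mathcal{Z}}_N$.

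First I note that $|\overline{\mathcal{Z}}_N|=\Theta(N^{(d-1)\varepsilon^7})$, while the set of $\mathbf{x}\in\overline{\mathcal{Z}}_N$ that are $m$-black for at least one admissible $m$ has cardinality at most $2M\cdot N^{\varepsilon^8}=O(\varepsilon^{-4}N^{\varepsilon^8})$. Since $\varepsilon^8<\varepsilon^7\leq(d-1)\varepsilon^7$, for $N$ large enough a pigeonhole yields a choice of $\mathbf{x}^*\in\overline{\mathcal{Z}}_N$ that is not $m$-black for any admissible $m$. I then concatenate segments along $\mathbf{0},\overline{\mathbf{x}^*}_0,\ldots,\overline{\mathbf{x}^*}_{2M-1},N\mathbf{v}$: non-$m$-blackness produces a path in the tilted cylinder ${\rm Cyl}_{\overline{\mathbf{x}^*}_m,\mathbf{v}_m}(\mathbb{R},|\mathbf{v}_m|^{(1+\overline{\chi}_u+\varepsilon/2)/2})$ with $\mathbf{v}_m:=\overline{\mathbf{x}^*}_{m+1}-\overline{\mathbf{x}^*}_m$ and passage time at most $\Delta_m\mu(\mathbf{v})+(4M)^{-1}N^{\overline{\chi}_u+\varepsilon}$ (with $\Delta_{2M-2-m}$ when $m\geq M$), while the $O(\varepsilon^2 M)$ outside-window segments and the two endpoint segments are handled trivially via \eqref{ass: weight distribution}, with total cost $O(\varepsilon^{-2}N^{\varepsilon^2})$. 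Telescoping $\sum_m\Delta_m\leq N$, the concatenated path has total passage time at most
\begin{equation*}
N\mu(\mathbf{v})+\tfrac{1}{2}N^{\overline{\chi}_u+\varepsilon}+O(\varepsilon^{-2}N^{\varepsilon^2}),
\end{equation*}
which is strictly below $N\mu(\mathbf{v})+N^{\overline{\chi}_u+\varepsilon}$ once $2\varepsilon^2<\overline{\chi}_u+\varepsilon$ and $N$ is large.

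The main obstacle is verifying that this concatenated path really lies in the prescribed large cylinder. A point of the $m$-th small cylinder, written as $\overline{\mathbf{x}^*}_m+y_1\mathbf{v}_m+\sum_{i\geq 2}y_i\tilde{\mathbf{u}}_i$, has its $\tilde{\mathbf{u}}_i$-component ($i\geq 2$) bounded by the drift $|\tilde{\mathbf{u}}_i\cdot\overline{\mathbf{x}^*}_m|\leq N^{(1+\overline{\chi}_u+\varepsilon-\varepsilon^2)/2}$, the transverse radius $|y_i|\leq|\mathbf{v}_m|^{(1+\overline{\chi}_u+\varepsilon/2)/2}$, and the axial contribution $|y_1|\cdot|\tilde{\mathbf{u}}_i\cdot\mathbf{v}_m|$. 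The first two sit comfortably below $N^{(1+\overline{\chi}_u+\varepsilon)/2}$. The third is the delicate term: the closeness $|\mathbf{v}-\mathbf{u}|<N^{(\overline{\chi}_u-1+\varepsilon+\varepsilon^5)/2}$ combined with the explicit form of $\mathbf{v}_m$ yields $|\tilde{\mathbf{u}}_i\cdot\mathbf{v}_m|\leq N^{(1+\overline{\chi}_u+\varepsilon-\varepsilon^2)/2}$, so $|y_1|=O(1)$ along the geodesic suffices. This latter bound on $y_1$ follows from Lemma~\ref{lem: Theorem 5.2 and Proposition 5.8}: any excursion with $|y_1|\gg 1$ would force the geodesic to traverse Euclidean distance $\gtrsim|\mathbf{v}_m|$ and incur passage time $\gtrsim|\mathbf{v}_m|$, exceeding the non-blackness budget $\Delta_m\mu(\mathbf{v})+(4M)^{-1}N^{\overline{\chi}_u+\varepsilon}$. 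The careful bookkeeping of drift scales encoded in the exponent $\tfrac{m(1+\overline{\chi}_u+\varepsilon-\varepsilon^2)}{2M}$ in the definition of $\overline{\mathbf{x}}_m$, tailored precisely so that each contribution stays one power of $N^{\varepsilon^2/2}$ below the target, is the most delicate aspect of the argument.
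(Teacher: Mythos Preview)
Your approach is exactly the paper's: pigeonhole to find an $\mathbf{x}^*\in\overline{\mathcal{Z}}_N$ that is not $m$-black for any admissible $m$, concatenate along $\overline{\mathbf{x}^*}_0,\ldots,\overline{\mathbf{x}^*}_{2M-1}$, bound the $O(\varepsilon^2 M)$ boundary segments trivially via~\eqref{ass: weight distribution}, and telescope $\sum\Delta_m\le N$ to contradict~\eqref{eq: event_bad_boxes_2}. The counting $|\overline{\mathcal{Z}}_N|=\Theta(N^{(d-1)\varepsilon^7})\gg 2M\cdot N^{\varepsilon^8}$ and the arithmetic all match the paper.

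One caveat on what you call the ``main obstacle'': your resolution via Lemma~\ref{lem: Theorem 5.2 and Proposition 5.8} does not work as written, because that lemma is a probability estimate while the present statement is a deterministic implication on the event~\eqref{eq: event_bad_boxes_2}; on that event alone you cannot rule out a small-cylinder geodesic with $|y_1|\gg 1$. The paper glosses over the containment entirely (it simply writes $T_{\mathrm{Cyl}_{\mathbf{0},\mathbf{v}}(\mathbb{R},N^{(\overline{\chi}_u+1+\varepsilon)/2})}(\overline{\mathbf{x}}_m,\overline{\mathbf{x}}_{m+1})\le\Delta_m\mu(\mathbf{v})+(4M)^{-1}N^{\overline{\chi}_u+\varepsilon}$ as if immediate from non-$m$-blackness), so the gap is shared. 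A clean fix is to replace $\mathbb{R}$ by a bounded axial interval in the definition of $m$-black, after which the containment is the purely geometric check you outline; the appeal to Lemma~\ref{lem: Theorem 5.2 and Proposition 5.8} then moves to the probability bound on $m$-blackness in the proof of Theorem~\ref{thm: refinement of condition}, where it contributes a harmless $e^{-\Omega(\Delta_m)}$ correction (exactly as in the derivation of~\eqref{eq: bad probability}).
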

\begin{proof}
Suppose that \eqref{eq: event_bad_boxes_2} occurs, and fix $\mathbf{x}\in \overline{\mathcal{Z}}_{N}$. First note that if $m\in \Iintv{0, \varepsilon^2M - 1}$ or $m\in \Iintv{2M - \varepsilon^2M + 1, 2M-2}$, by \eqref{ass: weight distribution2}, one has
\[
T_{\mathrm{Cyl}_{\mathbf{0}, \mathbf{v}}([-2N,2N], N^{\frac{\overline{\chi}_u+1+\varepsilon}{2}})}(\overline{\mathbf{x}}_m, \overline{\mathbf{x}}_{m+1}) \leq N^{\varepsilon^2} + N^{\frac{\varepsilon^2(1+\overline{\chi}_u + \varepsilon - \varepsilon^2)}{2}} \leq N^{2\varepsilon^2}.
\]
Therefore, if $\mathbf{x}$ is not $m$-black for any $m\in \Iintv{\varepsilon^2 M , 2M-\varepsilon^2 M}$, then
\al{
&T_{\mathrm{Cyl}_{\mathbf{0}, \mathbf{v}}([-2\Delta_m,2\Delta_m], N^{\frac{\overline{\chi}_u+1+\varepsilon}{2}}) }(\mathbf{0}, N\mathbf{v})\\
&\leq \sum_{m\in \Iintv{\varepsilon^2 M , 2M-\varepsilon^2 M}}T_{\mathrm{Cyl}_{\mathbf{0}, \mathbf{v}}([-2\Delta_m,2\Delta_m], N^{\frac{\overline{\chi}_u+1+\varepsilon}{2}}) }(\overline{\mathbf{x}}_{m},\overline{\mathbf{x}}_{m+1}) \\
&\qquad\qquad + \sum_{m\in  \Iintv{0 , 2M-2}\setminus  \Iintv{\varepsilon^2 M , 2M-\varepsilon^2 M}} T_{\mathrm{Cyl}_{\mathbf{0}, \mathbf{v}}([-2\Delta_m,2\Delta_m], N^{\frac{\overline{\chi}_u+1+\varepsilon}{2}}) }(\overline{\mathbf{x}}_{m},\overline{\mathbf{x}}_{m+1})  \\
&\leq \sum_{m\in \Iintv{\varepsilon^2 M , M-2}}((N^{\frac{m+1}{M}}-N^{\frac{m}{M}})\mu(\mathbf{v})+ (4M)^{-1} N^{\overline{\chi}_u +\varepsilon})+  (N-2N^{\frac{M-1}{M}})\mu(\mathbf{v})+ (4M)^{-1} N^{\overline{\chi}_u + \varepsilon }\\
&\qquad\qquad+ \sum_{m\in \Iintv{M,2M-\varepsilon^2 M}}((N^{\frac{2M-1-m}{M}}-N^{\frac{2M-2-m}{M}})\mu(\mathbf{v})+ (4M)^{-1} N^{\overline{\chi}_u + \varepsilon})  + 4 \varepsilon^2 M N^{2\varepsilon^2}\\
&< N\mu(\mathbf{v})+ N^{\overline{\chi}_u +\varepsilon},
}
contradicting the occurrence of \eqref{eq: event_bad_boxes_2}. Hence, if \eqref{eq: event_bad_boxes_2} occurs, then for any $\mathbf{x}\in \overline{\mathcal{Z}}_{N}$, there exists $m\in \Iintv{\varepsilon^2 M , 2M-\varepsilon^2 M}$ such that  $\mathbf{x}$ is $m$-black. The lemma then follows from the pigeonhole principle and the fact that $|\overline{\mathcal{Z}}_{N}|\geq N^{\varepsilon^7}$ for $N$ large enough. 
\end{proof}

\begin{proof}[Proof of Theorem~\ref{thm: refinement of condition}]
Let $\varepsilon>0$. Without loss of generality, we can assume that $\overline{\chi}_u<1$, since the claim is trivial if $\overline{\chi}_u=1$ for any  $\varepsilon>0$. We take  $\mathbf{v} \in H_{\mathbf{u}}$ with $|\mathbf{u-v}|<N^{\frac{\overline{\chi}_u-1+\varepsilon+\varepsilon^5}{2}}$. Suppose that the event \eqref{eq: event_bad_boxes_2} occurs. By  Lemma~\ref{prop: many bad boxes 2}, there exists $m\in \Iintv{\varepsilon^2 M,2M-\varepsilon^2 M}$ such that 
$| \overline{\mathcal{Z}}_{N,m}^{\rm black}|\geq  N^{\varepsilon^{8}}.$ By symmetry, without loss of generality, we may assume $m\in  \Iintv{\varepsilon^2 M,M-1}$. 

We first consider $m\leq M-2$, where in this case $\Delta_m = N^{\frac{m+1}{M}}-N^{\frac{m}{M}}$. We claim that if $N$ is sufficiently large, then
\begin{equation} \label{eq:mu_ineq}
\Delta_m \mu(\mathbf{v}) + (4M)^{-1} N^{\overline{\chi}_u + \varepsilon}
\geq \mu(\overline{\mathbf{x}}_{m+1} - \overline{\mathbf{x}}_{m}) + |\overline{\mathbf{x}}_{m+1} - \overline{\mathbf{x}}_{m}|^{\overline{\chi}_u + (\varepsilon/2)}.
\end{equation}
By the definition of  $\overline{\mathbf{x}}_{m}$, we have the bound $$|(\overline{\mathbf{x}}_{m+1} - \overline{\mathbf{x}}_{m}) - \Delta_m \mathbf{v}| \leq 2d N^{\frac{(m+1)(\overline{\chi}_u + 1 +\varepsilon -\varepsilon^2)}{2M}}.$$ Hence, together with $| \mathbf{v}-\mathbf{u}|\leq  N^{\frac{\overline{\chi}_u-1+\varepsilon + \varepsilon^5}{2}}$, $v\in H_\mathbf{u}$ and $\Delta_m\leq N^{1-M^{-1}}\leq N^{1-(\varepsilon^{4}/8)}$, we have
\al{
|(\overline{\mathbf{x}}_{m+1} - \overline{\mathbf{x}}_{m}) - \Delta_m \mathbf{u}|&\leq |(\overline{\mathbf{x}}_{m+1} - \overline{\mathbf{x}}_{m}) - \Delta_m \mathbf{v}|+ |\Delta_m \mathbf{v}-\Delta_m \mathbf{u}| \\
&\leq 2 \Delta_m N^{\frac{\overline{\chi}_u-1+\varepsilon + \varepsilon^5}{2}} \leq N^{\frac{\overline{\chi}_u+1+\varepsilon - \varepsilon^5}{2}}.
}
Moreover, Assumption~\ref{assum: finite curvature} gives
\al{
|\mu(\overline{\mathbf{x}}_{m+1} - \overline{\mathbf{x}}_{m}) - \mu(\Delta_m \mathbf{v})|& \leq |\mu(\overline{\mathbf{x}}_{m+1} - \overline{\mathbf{x}}_{m}) - \mu(\Delta_m \mathbf{u})|+|\mu(\Delta_m \mathbf{u})- \mu(\Delta_m \mathbf{v})|\\
& \leq 2 C N^{{\overline{\chi}_u+\varepsilon - \varepsilon^5}}\leq (8M)^{-1} N^{{\overline{\chi}_u+\varepsilon}}.
}
Together with $|\overline{\mathbf{x}}_{m+1} - \overline{\mathbf{x}}_{m}|\leq N$, we obtain \eqref{eq:mu_ineq}.

Hence, using Markov's inequality, \eqref{lem: Theorem 5.2 and Proposition 5.8}, and the definition of $\overline{\chi}_u$, and recalling that $M=\lceil 4\varepsilon^{-4}\rceil$, for $\varepsilon>0$ small enough and $N$ large enough, we have
\aln{
&\mathbb{P}(\text{$\mathbf{x}\in \overline{\mathcal{Z}}_{N}$ is $m$-black})\notag\\
&\leq \mathbb{P}(T_{{\rm Cyl}_{\mathbf{0},\overline{\mathbf{x}}_{m+1}-\overline{\mathbf{x}}_{m}}([-2\Delta_m,2\Delta_m],|\overline{\mathbf{x}}_{m+1}-\overline{\mathbf{x}}_{m}|^{\frac{1+\overline{\chi}_u+(\varepsilon/2)}{2}})}(\mathbf{0},\overline{\mathbf{x}}_{m+1}-\overline{\mathbf{x}}_{m})\geq  \mu(\overline{\mathbf{x}}_{m+1}-\overline{\mathbf{x}}_{m})+  \Delta_m^{\overline{\chi}_u +(\varepsilon/2)})\notag\\
&\leq  \Delta_m^{-\varepsilon/4}\leq  N^{-\varepsilon^6}.\notag
}

Next, we consider $m=M-1$. In this case, since $\overline{\mathbf{x}}_{M}-\overline{\mathbf{x}}_{M-1} = \Delta_{M-1}\mathbf{v}$, similar to above we have
\aln{
&\mathbb{P}(\text{$\mathbf{x}\in \overline{\mathcal{Z}}_{N}$ is $(M-1)$-black})\notag\\
&\leq \mathbb{P}(T_{{\rm Cyl}_{\mathbf{0},\overline{\mathbf{x}}_{M}-\overline{\mathbf{x}}_{M-1}}([-2\Delta_m,2\Delta_m],|\overline{\mathbf{x}}_{M}-\overline{\mathbf{x}}_{M-1}|^{\frac{1+\overline{\chi}_u+(\varepsilon/2)}{2}})}(\mathbf{0},\overline{\mathbf{x}}_{M}-\overline{\mathbf{x}}_{M-1})\geq  \mu(\overline{\mathbf{x}}_{M}-\overline{\mathbf{x}}_{M-1})+  \Delta_{M-1}^{\overline{\chi}_u +(\varepsilon/2)})\notag\\
&\leq  \Delta_{M-1}^{-\varepsilon/4}\leq  N^{-\varepsilon^6}.\notag
}

For any $m\in \Iintv{\varepsilon^2 M,M-1}$, since the random variables $$(\mathbf{1}\{\text{$\mathbf{x}$ is $m$-black}\})_{\mathbf{x}\in \overline{\mathcal{Z}}_{N}}$$  are independent (because the cylinders in the definition of $m$-black are disjoint), using a Chernoff bound, 
    \aln{\label{eq: union bound2}
  \mathbb{P}(| \overline{\mathcal{Z}}_{N,m}^{\rm black}|\geq  N^{\varepsilon^{8}}) &\leq  |\overline{\mathcal{Z}}_{N}|^{ N^{\varepsilon^{8}}} (N^{-\varepsilon^6})^{ N^{\varepsilon^{8}}}\leq e^{-N^{\varepsilon^{8}}}
}
    for $N$ large enough, where we have used $|\overline{\mathcal{Z}}_{N}|= O(N^{d\varepsilon^7})$ and the choice of $\varepsilon$ from \eqref{cond: vareps condition}.

Putting things together with a union bound over $m\in \Iintv{\varepsilon^2 M,M-\varepsilon^2 M}$, taking $\theta_0= \varepsilon^{9}$ and $c_0=\varepsilon^5$, we prove the theorem.
\end{proof}
\subsection{Proof of Theorem~\ref{thm: bound for chiu}}\label{sec:refinement-2}
 Let $\delta\in (0,1/5)$ be a fixed small constant. We set
\[
\alpha := \frac{4}{5} +\delta
,\,J := \lfloor N^{1-\alpha}\rfloor,\,K := N/J, \quad \overline{\chi} := 1 - \frac{\alpha}{2} + 2\delta.
\]
Note that $\overline{\chi}\leq \frac{3}{5} + 2\delta$. 
By the Cauchy--Schwarz inequality and the fact that $\E\Big[T_{{\rm Cyl}_{\mathbf{0},\mathbf{v}}^{\mathbf{v}}(\mathbb{R},N^{\frac{\overline{\chi}+1}{2}})}(\mathbf{0}, N\mathbf{v})^2\Big]= O(N^2)$, it suffices to show that there exists $c>0$ such that for any $\mathbf{v}\in S^{d-1},$
\aln{\label{eq: prob_bound_chi_u}
\mathbb{P}\Bigl(T_{{\rm Cyl}_{\mathbf{0},\mathbf{v}}^{\mathbf{v}}(\mathbb{R},N^{\frac{\overline{\chi}+1}{2}})}(\mathbf{0}, N\mathbf{v}) > N\mu(\mathbf{v}) + N^{\overline{\chi}}\Bigr)\leq e^{-N^c}.
}
Indeed, if \eqref{eq: prob_bound_chi_u} holds, then using \eqref{ass: weight distribution}, for some constant $C>0$,
\begin{align*}
    &\E (T_{{\rm Cyl}_{\mathbf{0},\mathbf{v}}^{\mathbf{v}}(\mathbb{R},N^{\frac{\overline{\chi}+1}{2}})}(\mathbf{0}, N\mathbf{v}) - N\mu(\mathbf{v}))_+ \\
    \leq&\; CNe^{-\frac{N^c}{2}} + \E [(T_{{\rm Cyl}_{\mathbf{0},\mathbf{v}}^{\mathbf{v}}(\mathbb{R},N^{\frac{\overline{\chi}+1}{2}})}(\mathbf{0}, N\mathbf{v}) - N\mu(\mathbf{v}))_+ \mathbf{1}_{\{(T_{{\rm Cyl}_{\mathbf{0},\mathbf{v}}^{\mathbf{v}}(\mathbb{R},N^{\frac{\overline{\chi}+1}{2}})}(\mathbf{0}, N\mathbf{v}) - N\mu(\mathbf{v}))_+ \leq N^{\overline{\chi}}\}}]\\
    \leq &\; CNe^{-\frac{N^c}{2}} + N^{\overline{\chi}},
\end{align*}
which will imply that $\overline{\chi}_u \leq \overline{\chi}$. Letting $\delta \to 0$, we obtain  $\overline{\chi}_u \le 3/5$.

Thus, it remains to show \eqref{eq: prob_bound_chi_u}. We define
\[
T_i := T_{{\rm Cyl}^{\mathbf{v}}_{iK\mathbf{v},\mathbf{v}}(\mathbb{R},N^{\alpha+\delta})}(iK\mathbf{v}, (i+1)K\mathbf{v}).
\]
Then, by a standard concentration result (see, e.g., \cite[Theorem 3.10]{50years}), there exists a constant $c=c(\delta)>0$ such that
\[
\mathbb{P}\Bigl(T_i > K\mu(\mathbf{v}) + K^{\frac{1}{2} + \delta}\Bigr) \leq e^{-N^{c}}.
\]
Since
\[
T_{{\rm Cyl}^{\mathbf{v}}_{\mathbf{0},\mathbf{v}}(\mathbb{R},N^{\frac{\overline{\chi}+1}{2}})}(\mathbf{0}, N\mathbf{v}) \leq \sum_{i=0}^{J-1} T_i,
\]
we deduce that
\al{
&\mathbb{P}\Bigl(T_{{\rm Cyl}^{\mathbf{v}}_{\mathbf{0},\mathbf{v}}(\mathbb{R},N^{\frac{\overline{\chi}+1}{2}})}(\mathbf{0}, N\mathbf{v}) > N\mu(\mathbf{v}) + N^{\overline{\chi}}\Bigr)\\
&\leq \mathbb{P}\Bigl(\exists\, i\in\Iintv{0,J-1}:\; T_i > J^{-1}\Bigl(N\mu(\mathbf{v}) + N^{\overline{\chi}}\Bigr)\Bigr).
}
A straightforward computation with $\alpha\leq (4/5)+\delta$ shows that, for the chosen parameters,
\[
J^{-1}\Bigl(N\mu(\mathbf{v}) + N^{\overline{\chi}}\Bigr) \ge K\mu(\mathbf{v}) + K^{\frac{1}{2} + \delta}.
\]
Thus, by a union bound we obtain
\[
\mathbb{P}\Bigl(\exists\, i\in\Iintv{0,J-1}:\; T_i > K\mu(\mathbf{v}) + K^{\frac{1}{2} + \delta}\Bigr) \le J\, e^{-N^c}.
\]
which decays stretched-exponentially in $N$. This completes the proof.
\section{Upper bound for lower tail moderate deviations}\label{sec:upper bound lower tail}
We will prove the lower tail moderate deviation in this section. The proof follows an idea similar to that of the proof of Theorem~\ref{thm: key prop for upper bound}.
\subsection{Proof of Theorem~\ref{thm: upper bound for  lower tail MD}-(1)}
Let us define the function 
\al{
\underline{f}^{\overline{\chi}}(x) := \min\left\{\frac{x}{1-{\overline{\chi}}}, \frac{1}{1-{\overline{\chi}}}\right\}.
}
The following plays a key role for the inductive scheme.
\begin{prop}\label{prop: key prop for upper bound2}
 Assume Assumption~\ref{ass: lower tail concentration} with ${\overline{\chi}}\in (0,1)$, $\theta_0>0$.    For any $\varepsilon>0 $,  if $N\in\N$ is large enough, then for any  $a\in [{\overline{\chi}}+\varepsilon,1]$ and any $\mathbf{v}\in H_{\mathbf{u}}$,
 \al{
\mathbb{P} \Big(T(\mathbf{0}, N\mathbf{v})< N\mu(\mathbf{u})- N^{a}\Big)\leq  \exp{\left(-N^{(1-\varepsilon)\underline{f}^{{\overline{\chi}}+\varepsilon}(\theta_0) (a-{\overline{\chi}}-\varepsilon)}\right)}.
}
\end{prop}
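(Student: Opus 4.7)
The plan follows the multi-scale slab argument of Proposition~\ref{prop: key prop for upper bound}, but in a substantially simpler form: since the lower-tail event $\{T(\mathbf{0},N\mathbf{v})<N\mu(\mathbf{u})-N^{a}\}$ decomposes along a single axial direction, only a one-dimensional grid of ``bad slabs'' is required, rather than the $d$-dimensional grid of bad boxes used in the upper-tail proof. This absence of ``extra dimensions'' is precisely what caps $\underline{f}^{\overline{\chi}+\varepsilon}$ at $1/(1-\overline{\chi}-\varepsilon)$ instead of $d/(1-\overline{\chi}-\varepsilon)$, and as a result no induction over $\theta_0$ is needed within the proposition itself---one pass through the pigeonhole argument suffices.

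Concretely, I would choose a scale $K=N^{\alpha}$ with $\alpha\in(0,1]$ to be optimized, set $J:=\lfloor N/K\rfloor$, and slice $[\mathbf{0},N\mathbf{v}]$ by the hyperplanes $H_{i}:=iK\mathbf{v}+\mathrm{span}\{\tilde{\mathbf{u}}_{2},\dots,\tilde{\mathbf{u}}_{d}\}$ parallel to the tangent plane at $\mathbf{u}$. For any geodesic from $\mathbf{0}$ to $N\mathbf{v}$ with first crossings $\mathbf{y}_{i}\in H_{i}$, minimality gives the reverse-subadditive bound $T(\mathbf{0},N\mathbf{v})\ge\sum_{i=0}^{J-1}T(\mathbf{y}_{i},\mathbf{y}_{i+1})$, while the difference $\mathbf{y}_{i+1}-\mathbf{y}_{i}=K\mathbf{v}'_{i}$ satisfies $\mathbf{v}'_{i}\in H_{\mathbf{u}}$ (since $\mathbf{v}\in H_{\mathbf{u}}$ and the transverse increments lie in $\mathrm{span}\{\tilde{\mathbf{u}}_{2},\dots,\tilde{\mathbf{u}}_{d}\}$). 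Hence Assumption~\ref{ass: lower tail concentration} applies to each factor: for any fixed choice of endpoints, $\mathbb{P}(T(\mathbf{y}_{i},\mathbf{y}_{i+1})<K\mu(\mathbf{u})-K^{b})\le e^{-K^{\theta_{0}(b-\overline{\chi})}}$ for $b\in[\overline{\chi},1]$. On the deviation event, the per-slab drops $D_{i}:=(K\mu(\mathbf{u})-T(\mathbf{y}_{i},\mathbf{y}_{i+1}))_{+}$ sum to more than $N^{a}$.

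A pigeonhole over the discretization $b\in\Iintv{\overline{\chi}+\varepsilon,1}_{L}$ (with $L\sim\varepsilon^{-1}$) then extracts some $b$ with $n_{b}:=|\{i:D_{i}\ge K^{b}\}|\ge N^{a}/(LK^{b})$; since distinct slabs involve disjoint edges, the events $\{D_{i}\ge K^{b}\}$ are conditionally independent given $(\mathbf{y}_{i})$, so a Chernoff estimate yields, for each $b$ and each fixed $(\mathbf{y}_{i})$, a probability bound $\exp(-\tfrac{1}{2}N^{a}K^{\theta_{0}(b-\overline{\chi})-b})$. Union-bounding over the $O(\varepsilon^{-1})$ scales and over the candidate endpoint cells---restricted by an a priori wandering estimate derived from Lemma~\ref{lem: Theorem 5.2 and Proposition 5.8} combined with the event-level upper bound on $T$, so that each $\mathbf{y}_{i}$ lies in a transverse cylinder of radius $O(N^{(1+\overline{\chi})/2+\varepsilon})$ and the combinatorial factor is at most $\exp(CN^{1-\alpha}\log N)$---gives an overall bound of shape $\exp(-N^{E(b,\alpha)}+O(N^{1-\alpha}\log N))$ with $E(b,\alpha):=a+\alpha(\theta_{0}(b-\overline{\chi})-b)$. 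The critical choice $\alpha\approx(1-a)/(1-\overline{\chi}-\varepsilon)$ and $b\approx\overline{\chi}+\varepsilon$ makes $E(b,\alpha)-(1-\alpha)$ equal to the small positive quantity $(1-a)\theta_{0}\varepsilon/(1-\overline{\chi}-\varepsilon)$, which is just enough for the main term to dominate the combinatorial factor, and yields the claimed exponent $(1-\varepsilon)\underline{f}^{\overline{\chi}+\varepsilon}(\theta_{0})(a-\overline{\chi}-\varepsilon)$.

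The main obstacle is precisely this narrow margin at the optimal parameters: the concentration exponent $N^{E(b,\alpha)}$ exceeds the combinatorial cell-count factor only by a second-order amount proportional to $\theta_{0}\varepsilon$, so one must carefully track how this slack survives the union bound and not be killed by the $\log N$ prefactor from counting candidate endpoints. A secondary difficulty is the wandering estimate itself: unlike the upper-tail case, the event $\{T<N\mu-N^{a}\}$ does not automatically force the geodesic to concentrate near the segment from $\mathbf{0}$ to $N\mathbf{v}$, so the transverse cylinder bound used to control the combinatorial factor must be derived by combining Lemma~\ref{lem: Theorem 5.2 and Proposition 5.8} with a triangle-inequality argument exploiting the hypothesized upper bound on the total passage time.
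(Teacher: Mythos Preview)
Your overall plan—slicing by hyperplanes parallel to $H_{\mathbf{u}}$ at scale $K\approx N^{(1-a)/(1-\overline{\chi}-\varepsilon)}$, running a pigeonhole over $b$, and counting ``bad slabs''—is exactly the paper's strategy. The paper, however, works directly with \emph{face-to-face} times $T(F_i,F_{i+1})$, where $F_i:=(iK\mathbf{u}+H_{\mathbf{u}})\cap[-N^2,N^2]^d$, rather than tracking the geodesic's random crossing points $\mathbf{y}_i$. This simplification buys two things. First, the union bound over endpoint pairs is absorbed once into the marginal estimate $\P(T(F_i,F_{i+1})<K\mu(\mathbf{u})-K^b)\le N^{8d}e^{-K^{\theta_0(b-\overline{\chi})}}$, so the total combinatorial overhead is only $J^{n_b}(N^{8d})^{n_b}$ rather than your $\exp(O(J\log N))$; the resulting margin $K^{\theta_0(b-\overline{\chi})}\gg\log N$ is then comfortable, not narrow. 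Second, no transversal wandering estimate is needed—only the crude confinement to $[-N^2,N^2]^d$ from Lemma~\ref{lem: Theorem 5.2 and Proposition 5.8}. Your claimed radius $O(N^{(1+\overline{\chi})/2+\varepsilon})$ cannot be obtained from that lemma alone: without Assumption~\ref{ass: positive curvature} (which Proposition~\ref{prop: key prop for upper bound2} does not make), the geodesic may wander transversally on scale $O(N)$. This is harmless for the paper's argument because only a polynomial-in-$N$ bound on $|F_i|$ is used.

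Your conditional-independence step is also incorrect as stated: the unrestricted passage times $T(\mathbf{y}_i,\mathbf{y}_{i+1})$ depend on all edges, not only those in the $i$-th slab, so they are not independent even after fixing the $\mathbf{y}_i$ (and the $\mathbf{y}_i$ are themselves functions of the full configuration, so conditioning on them is delicate). The clean way to close this—in either your formulation or the paper's—is to observe that on the deviation event the geodesic itself decomposes into edge-disjoint segments $\gamma_i$ with $T(\gamma_i)<K\mu(\mathbf{u})-K^b$ for each bad index $i$, so the face-to-face events $\{T(F_i,F_{i+1})<K\mu(\mathbf{u})-K^b\}$ occur \emph{disjointly}; one then invokes the van den Berg--Kesten/Reimer inequality rather than independence. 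The paper writes ``by a Chernoff bound'' without spelling this out, but the same justification is needed there.
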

\begin{proof}
Let $\varepsilon>0$. Without loss of generality, we may assume $\varepsilon<(8d+(1-\overline{\chi})^{-1}+\theta_0^{-1})^{-2d}$. If $a\in [{\overline{\chi}} +\varepsilon,{\overline{\chi}} +\varepsilon+\varepsilon^{3/2}]$, Assumption~\ref{ass: lower tail concentration} directly implies the proposition, similar to the proof of Proposition~\ref{prop: key prop for upper bound} (see the discussion right below the statement of Proposition~\ref{prop: key prop for upper bound}). Hence, assume  $a\geq {\overline{\chi}} +\varepsilon+\varepsilon^{3/2}$ and  define
\[
J := \lfloor N^{1-\frac{1-a}{1-{\overline{\chi}}-\varepsilon} - \varepsilon^3}\rfloor \quad \text{and} \quad K := \frac{N}{J}\geq N^{\frac{1-a}{1-{\overline{\chi}}-\varepsilon}+\varepsilon^3}.
\]
We also let $L:=M:=\lceil 4\varepsilon^{-4}\rceil$. Define
\al{
F_i : =( iK \mathbf{u}+ H_{\mathbf{u}}) \cap [-N^2,N^2]^d.
}
By a union bound over possible pairs $(\mathbf{x},\mathbf{y})\in F_i\times F_{i+1}$ (the number of such pairs is polynomial in $N$), the condition~\eqref{Assum:initial concentration for lower tail} implies that  for any $b\in [{\overline{\chi}},1]$, for $K$ large enough,
    \aln{\label{eq: conclusion of assumption}
    \mathbb{P}(T(F_i,F_{i+1})<K\mu(\mathbf{u})- K^b) \leq N^{8d} e^{ -K^{\theta_0(b-{\overline{\chi}})}}.
    }
    We have
    \aln{\label{eq: exit bot}
  &  \mathbb{P}(T(\mathbf{0}, N\mathbf{v})<N\mu(\mathbf{u}) - N^a)\notag \\
  & \leq \mathbb{P}(T_{[-N^2,N^2]^{d}}(\mathbf{0}, N\mathbf{v})<N\mu(\mathbf{u} )- N^a) + \mathbb{P}(\exists \gamma\not\subset[-N^2,N^2]^{d}, \mathbf{0}\in \gamma, \,T(\gamma)<N\mu(\mathbf{u} )- N^a).
    }
    Note that the second term in \eqref{eq: exit bot} is at most $e^{-\Omega(N^2)}$ by \eqref{eq: Kesten Propositin 5.8}, which is negligible.
    
    Let 
    $$b_\sharp^{\max} :=\min\left\{1, \frac{1-{\overline{\chi}}-\varepsilon}{1-a}a\right\}.$$ 
    We will prove that if 
    $$T_{[-N^2,N^2]^{d}}(\mathbf{0}, N\mathbf{v})<N\mu(\mathbf{u}) - N^a,$$
    then there exists $b\in \Iintv{{\overline{\chi}}+\varepsilon,b_\sharp^{\rm max}}_L$ such that 
    \aln{\label{eq: middle goal}
    |\{i\in \Iintv{0,J-1}:~T(F_i,F_{i+1})<K\mu(\mathbf{u}) - K^{b}\}|\geq N^a K^{-b-2\varepsilon^4}.
    }
    Indeed, if there exists $i_0\in \Iintv{0,J-1}$ such that $T(F_{i_0},F_{i_0+1})<K\mu(\mathbf{u}) - K^{b_\sharp^{\rm max}}$, then we must have $b_\sharp^{\rm max} = \frac{1-{\overline{\chi}}-\varepsilon}{1-a}a$. Otherwise, if $b_\sharp^{\rm max} = 1$, then by \eqref{ass: weight distribution}, 
    \[
    T(F_i,F_{i+1})<K\mu(\mathbf{u}) - K^{b_\sharp^{\rm max}} = K\mu(\mathbf{u}) - K<0,
    \]
    which is impossible. In this case, \eqref{eq: middle goal} trivially holds for any $b\in \Iintv{b_\sharp^{\rm max}-\varepsilon^4,b_\sharp^{\rm max}}_L$, since $N^a K^{-b-2\varepsilon^4}<1.$ We suppose that for any $i \in \Iintv{0,J-1}$, $T(F_i,F_{i+1})\geq K\mu(\mathbf{u}) - K^{b_\sharp^{\rm max}}$. Then, by $N=KJ$, using a similar argument leading to \eqref{eq: bad_ineq}, we have 
    \al{
    N\mu(\mathbf{u}) - N^a&>T_{[-N^2,N^2]^{d}}(\mathbf{0}, N\mathbf{v})\\
    &\geq \sum_{i\in\Iintv{0,J-1}} T(F_i,F_{i+1})\\
    &\geq N \mu(\mathbf{u}) - J K^{{\overline{\chi}} +\varepsilon + \varepsilon^4}\\
    &\qquad-\sum_{b\in \Iintv{{\overline{\chi}}+\varepsilon, b_\sharp^{\rm max}}_L} K^{b+\varepsilon^4} |\{i\in \Iintv{0,J-1}:~T(F_i,F_{i+1})<K\mu(\mathbf{u}) - K^{b}\}|.
    }
    By the pigeonhole principle and $J K^{{\overline{\chi}} +\varepsilon + \varepsilon^4} = N K^{-(1-{\overline{\chi}} -\varepsilon) + \varepsilon^4} \ll N^a$, there exists  $b\in \Iintv{{\overline{\chi}}+\varepsilon, b_\sharp^{\rm max}}_L $ such that
    \begin{equation}
    \label{eq: some_eq_in_4.1}
    K^{b+\varepsilon^4} |\{i\in \Iintv{0,J-1}:~T(F_i,F_{i+1})<K\mu(\mathbf{u}) - K^{b}\}|\geq \frac{1}{2L}N^a,
    \end{equation}
    which proves \eqref{eq: middle goal} for $N$ large enough depending on $\varepsilon$. 
    
    Therefore, we arrive at
    \al{
    &\mathbb{P}(T_{[-N^2,N^2]^{d}}(\mathbf{0}, N\mathbf{v})<N\mu(\mathbf{u} )- N^a)\\
    &\leq \sum_{b\in \Iintv{{\overline{\chi}}+\varepsilon, b_\sharp^{\rm max}}_L }\P\Big(  |\{i\in \Iintv{0,J-1}:~T(F_i,F_{i+1})<K\mu(\mathbf{u}) - K^{b}\}|\geq N^a K^{-b-2\varepsilon^4}\Big).
    }
    By a Chernoff bound and \eqref{eq: conclusion of assumption}, this is further bounded from above by 
    \aln{
    &\sup_{b\in \Big[{\overline{\chi}}+\varepsilon, b_\sharp^{\rm max}\Big]}  L \left(N^{8d} e^{-K^{\theta_0(b-{\overline{\chi}})}}\right)^{N^a K^{-b-2\varepsilon^4}} J^{N^a K^{-b-2\varepsilon^4}}\notag\\
    &\leq \sup_{b\in\Big[{\overline{\chi}}+\varepsilon, \frac{1-{\overline{\chi}}-\varepsilon}{1-a}a\Big]} \exp{\Big( -N^{\theta_0\frac{(b-\overline{\chi}-\varepsilon)(1-a)}{1-\overline{\chi}-\varepsilon}+a - b\frac{1-a}{1-{\overline{\chi}}-\varepsilon} - \varepsilon^2} \Big)},\label{eq: numba}
    }
where the terms involving $\varepsilon^3$ and $\varepsilon^4$ are absorbed into $\varepsilon^2$. By a simple computation, we have 
    \al{
   & \inf\left\{\theta_0\frac{(b-\overline{\chi}-\varepsilon)(1-a)}{1-\overline{\chi}-\varepsilon}+a - b\frac{1-a}{1-{\overline{\chi}}-\varepsilon}:~ b\in\Big[{\overline{\chi}}+\varepsilon, \frac{1-{\overline{\chi}}-\varepsilon}{1-a}a\Big]\right\}\\
   &= \min\left\{ \frac{\theta_0(a-{\overline{\chi}}-\varepsilon)}{1-\overline{\chi}-\varepsilon},\frac{a-{\overline{\chi}}-\varepsilon}{1-{\overline{\chi}}-\varepsilon}\right\},
    }
since the above infimum  is attained at the endpoints $b={\overline{\chi}}+\varepsilon$ or $b= \frac{1-{\overline{\chi}-\varepsilon}}{1-a}a$.    Hence,  recalling that $a\geq  \overline{\chi}+\varepsilon +\varepsilon^{3/2}$, \eqref{eq: numba} is further bounded from above by 
    \al{
    \exp{\Big( -N^{(1-\varepsilon)\underline{f}^{{\overline{\chi}}+\varepsilon}(\theta_0)(a-{\overline{\chi}}-\varepsilon)}\Big)}. 
    }
    This proves the proposition.
    \end{proof}
    \begin{proof}[Proof of Theorem~\ref{thm: upper bound for  lower tail MD}-(1)]
We take $a\in ({\overline{\chi}},1)$ and $\varepsilon\in (0,a-{\overline{\chi}})$. We let $R\in\mathbb{N}$ be such that
    \begin{equation}
        \label{eq: choice of R}
    \frac{\theta_0(1-(\varepsilon^2/R))^R}{(1-{\overline{\chi}})^{R}} > \frac{1}{1-{\overline{\chi}}},
        \end{equation}
    where such a $R$ exists due to $\overline{\chi}>0$. Recalling the function $\underline{f}^{\overline{\chi}}(x):=\frac{x}{1-\overline{\chi}}\wedge \frac{1}{1-\overline{\chi}}$, we define 
    $$\theta_k:=  (1-(\varepsilon^2/R)) \underline{f}^{\overline{\chi}}(\theta_{k-1}),$$ for $k\geq 1$.  Inductively, we define 
$$\varepsilon_k:=(2d + (1-\overline{\chi}_{k-1})^{-1} +\theta_{k-1}^{-1}+ (\varepsilon^2/R)^{-1} )^{-2d},\quad  \overline{\chi}_k := \overline{\chi} + \sum_{i=1}^k \varepsilon_i .$$
Then, as in the proof of Theorem~\ref{thm: key prop for upper bound}-(1), we have
$$\theta_R\geq (1-\varepsilon^2) \frac{1}{1-\overline{\chi}},\quad \theta_k \leq (1-\varepsilon_k) \underline{f}^{\overline{\chi}_{k-1}+\varepsilon_k}(\theta_{k-1}).$$
Then, we apply Proposition~\ref{prop: key prop for upper bound2} to obtain integers $N_1 < N_2 < \cdots < N_R$ with $\overline{\chi}_k$ and $\theta_k$ in place of $\overline{\chi}$ and $\theta_0$ in the $k$-th step, ensuring that Assumption~\ref{ass: lower tail concentration}  holds with $\overline{\chi}_k$ and $\theta_k$ for any $N\geq N_k$. Arguing similarly to the proof of Theorem~\ref{thm: key prop for upper bound}-(1), we obtain Theorem~\ref{thm: upper bound for  lower tail MD}-(1).
    \end{proof}
\subsection{Proof of Theorem~\ref{thm: upper bound for  lower tail MD}-(2)}
Let  $\zeta>0$. For simplicity of notation, we first consider the deviation $\zeta^{{1-\varepsilon^2}} N$, and replace it by $\zeta N$ afterwards as before. Set
\[
\widehat{J} = \lfloor \zeta^{\frac{1}{1-{\overline{\chi}}-\varepsilon} - \varepsilon^3} N\rfloor \quad \text{and} \quad \widehat{K} := \frac{N}{\widehat{J}}\geq \zeta^{-\frac{1}{1-{\overline{\chi}}-\varepsilon}-\varepsilon^3}.
\]
We define
\al{
\widehat{F}_i : = i\widehat{K} \mathbf{u}+ H_{\mathbf{u}}.
}
Let $A > 4d$ be a large constant to be determined later. Given $\mathbf{x}\in \widehat{K}\Z^d$, we say that $\mathbf{x}$ is $b$-dark if there exists $
 \mathbf{y}$ with $|\mathbf{y}-\mathbf{x}|_\infty\leq 2\widehat{K}$ such that either:
\al{
T_{\mathbf{x}+[-A\widehat{K},A\widehat{K}]^d}(\mathbf{y},\mathbf{y}+\widehat{K} H_{\mathbf{u}})<\widehat{K}\mu(\mathbf{u})- 2\widehat{K}^b \quad\text{or}\quad \exists \gamma'\in \mathbb{G}(\mathbf{y},\mathbf{y}+\widehat{K} H_{\mathbf{u}})\not\subset \mathbf{x}+[-A\widehat{K},A\widehat{K}]^d.
}

\begin{lem}\label{lem: dark probab}
For any $\varepsilon>0$ and $b\in \Iintv{\overline{\chi}+\varepsilon,1}_L$ with $L:=\lceil 4\varepsilon^{-4}\rceil$, if $\widehat{K}$ is large enough, then  for any $\mathbf{x}\in \R^d$, 
    \al{
    \mathbb{P}(\textnormal{$\mathbf{x}$ is $b$-dark})\leq \widehat{K}^{4d} e^{-\widehat{K}^{(1-\varepsilon^4)\frac{b-\overline{\chi}}{1-\overline{\chi}}}}.
    }
\end{lem}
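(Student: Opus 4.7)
The plan is to decompose the event that $\mathbf{x}$ is $b$-dark into its two defining sub-events, control each separately for a fixed witness $\mathbf{y}$, and then union-bound over $\mathbf{y}$. Since $\mathbf{y}$ ranges over at most $(5\widehat{K})^d$ integer points in the $\ell^\infty$-cube of radius $2\widehat{K}$ around $\mathbf{x}$, this union bound together with the other polynomial losses below are easily absorbed into the $\widehat{K}^{4d}$ prefactor of the claim.

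For the restricted passage-time sub-event $\{T_{\mathbf{x}+[-A\widehat{K},A\widehat{K}]^d}(\mathbf{y},\mathbf{y}+\widehat{K} H_{\mathbf{u}}) < \widehat{K}\mu(\mathbf{u}) - 2\widehat{K}^b\}$, I use that the restricted passage time dominates the unrestricted one, so the event forces the existence of a lattice point $\mathbf{z} \in (\mathbf{y}+\widehat{K} H_{\mathbf{u}}) \cap (\mathbf{x}+[-A\widehat{K},A\widehat{K}]^d)$ with $T(\mathbf{y},\mathbf{z}) < \widehat{K}\mu(\mathbf{u}) - \widehat{K}^b$. Writing $\mathbf{z}-\mathbf{y} = \widehat{K}\mathbf{v}$ for some $\mathbf{v}\in H_{\mathbf{u}}$ with $|\mathbf{v}|=O(A)$, translation invariance together with the already-proved Theorem~\ref{thm: upper bound for  lower tail MD}-(1) (applied with $N=\widehat{K}$, $a=b\in\Iintv{\overline{\chi}+\varepsilon,1}_L$, and a concentration parameter $\ll\varepsilon^4$) bounds the probability of this single-$\mathbf{z}$ event by $\exp(-\widehat{K}^{(1-\varepsilon^4/2)(b-\overline{\chi})/(1-\overline{\chi})})$; a further union bound over the $O(\widehat{K}^{d-1})$ candidate points $\mathbf{z}$ is swallowed by this stretched-exponential decay.

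For the geodesic-exit sub-event, I compare the typical passage time to the shifted hyperplane with Kesten's long-path estimate, Lemma~\ref{lem: Theorem 5.2 and Proposition 5.8}. Since $\mathbf{u}\in H_{\mathbf{u}}$, one has $T(\mathbf{y},\mathbf{y}+\widehat{K} H_{\mathbf{u}}) \leq T(\mathbf{y},\mathbf{y}+\widehat{K}\mathbf{u}) \leq 2\widehat{K}\mu(\mathbf{u})$ outside an event of probability $e^{-c\widehat{K}}$ (a standard upper-tail bound using the boundedness assumption \eqref{ass: weight distribution}); meanwhile, summing $\mathbb{P}(\exists\gamma:\mathbf{y}\to\mathbf{w},\ T(\gamma)<c|\mathbf{w}-\mathbf{y}|)\leq e^{-c|\mathbf{w}-\mathbf{y}|}$ over all vertices $\mathbf{w}$ at $\ell^\infty$-distance at least $(A-2)\widehat{K}$ from $\mathbf{y}$ shows that, outside an event of probability $e^{-c'\widehat{K}}$, every path from $\mathbf{y}$ reaching such a $\mathbf{w}$ has passage time at least $c''(A-2)\widehat{K}$. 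Fixing $A$ once and for all large enough that $c''(A-2)>2\mu(\mathbf{u})$ then forces every geodesic in $\mathbb{G}(\mathbf{y},\mathbf{y}+\widehat{K} H_{\mathbf{u}})$ to stay inside the box, outside an event of probability $e^{-c'''\widehat{K}}$, which is negligible compared to the bound from the first sub-event. Combining the two estimates and union-bounding over $\mathbf{y}$ gives the lemma. I do not foresee any serious obstacle: both ingredients (the moderate-deviation bound on a single passage time, and the linear-speed localization of geodesics) are already available; the only bookkeeping to watch is the calibration of the $\varepsilon^4$-loss so that Theorem~\ref{thm: upper bound for  lower tail MD}-(1) delivers precisely the displayed exponent $(1-\varepsilon^4)(b-\chi)/(1-\chi)$ of the lemma.
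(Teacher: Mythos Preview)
Your proposal is correct and follows essentially the same approach as the paper: decompose the $b$-dark event into its two defining sub-events, bound the restricted-passage-time sub-event for each candidate endpoint via Theorem~\ref{thm: upper bound for  lower tail MD}-(1), bound the geodesic-exit sub-event via Lemma~\ref{lem: Theorem 5.2 and Proposition 5.8} (after fixing $A$ large), and finish with a polynomial union bound over $\mathbf{y}$ and the endpoint $\mathbf{z}$. Your treatment of the geodesic-exit part is in fact more detailed than the paper's, which simply invokes Lemma~\ref{lem: Theorem 5.2 and Proposition 5.8} in one line.
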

\begin{proof}
    Let $\mathbf{y}$ with $|\mathbf{y}-\mathbf{x}|_\infty\leq 2\widehat{K}$. For the points close to $\mathbf{y}$, we have
    \al{
    &\mathbb{P}(T_{\mathbf{x}+[-A\widehat{K},A\widehat{K}]^d}(\mathbf{y},\mathbf{y}+\widehat{K} H_{\mathbf{u}})<\widehat{K}\mu(\mathbf{u})- 2\widehat{K}^b)\\
    &\leq \sum_{\mathbf{y}'}\mathbb{P}(T(\mathbf{y},\mathbf{y}')<\widehat{K}\mu(\mathbf{u})- \widehat{K}^b)\leq (2A\widehat{K}+1)^d e^{-\widehat{K}^{(1-\varepsilon^4)\frac{b-\overline{\chi}}{1-\overline{\chi}}}},
    }
    where $\mathbf{y}'$ runs over $\{ \lfloor \mathbf{z}\rfloor:~\mathbf{z}\in  (\mathbf{x}+([-A\widehat{K},A\widehat{K}]^d\cap \widehat{K} H_{\mathbf{u}}))\}$, and we have used Theorem~\ref{thm: upper bound for  lower tail MD}-(1) in the last inequality. For the second condition, by Lemma~\ref{lem: Theorem 5.2 and Proposition 5.8}, if $A$ is taken to be sufficiently large, we have
    \begin{equation}
    \label{eq: geo1}
    \mathbb{P}(\exists \gamma'\in \mathbb{G}(\mathbf{y},\mathbf{y}+\widehat{K} H_{\mathbf{u}})\not\subset \mathbf{x}+[-A\widehat{K},A\widehat{K}]^d)\leq e^{-c \widehat{K}},
    \end{equation}
    for some $c>0$. A union bound over $\mathbf{y}$ (the number of such $\mathbf{y}$ is polynomial in $K$) finishes the proof.
\end{proof}
We take $\gamma\in \mathbb{G}(\mathbf{0}, N\mathbf{u})$ with a deterministic rule breaking ties. We set $$\Lambda_{\widehat{K}}(\gamma):= \left\{\mathbf{x}\in \Z^d:~\gamma\cap (\widehat{K} \mathbf{x}+[0,\widehat{K})^d)\neq\emptyset\right\}.$$
Note that $ \Lambda_{\widehat{K}}(\gamma)$ is  a connected set in $\Z^d$ and 
$$\widehat{K} |\Lambda_{\widehat{K}}(\gamma)|\leq \sum_{\mathbf{x}\in \Z^d} \sum_{\mathbf{y}\in \gamma}\mathbf{1}_{\mathbf{y}\in (\widehat{K}\mathbf{x}+[-\widehat{K},2\widehat{K})^d)} = \sum_{\mathbf{y}\in \gamma} \sum_{\mathbf{x}\in \Z^d} \mathbf{1}_{\mathbf{y}\in (\widehat{K}\mathbf{x}+[-\widehat{K},2\widehat{K})^d)}\leq  3^d |\gamma|,$$ where $|\gamma|$ is the number of vertices in $\gamma$. Moreover, by \eqref{eq: Kesten Propositin 5.8}, by increasing $A$ if necessary, we have
\begin{equation}
\label{eq: geo2}
\mathbb{P}(\exists \gamma\in \mathbb{G}(\mathbf{0}, N\mathbf{u}),\,|\gamma|\geq AN)\leq e^{-N}.
\end{equation}
We will fix the constant $A$ such that both \eqref{eq: geo1} and \eqref{eq: geo2} hold. Hence, since $\widehat{K}^{-1} N= \widehat{J}$, we have
\begin{equation}
\label{eq: confinement}
\begin{split}
    &\mathbb{P}(T(\mathbf{0}, N\mathbf{u})<N\mu(\mathbf{u}) - \zeta^{{1-\varepsilon^2}} N)\\
&\leq \mathbb{P}(T(\mathbf{0}, N\mathbf{u})<N\mu(\mathbf{u}) - \zeta^{{1-\varepsilon^2}} N,\,|\Lambda_{\widehat{K}}(\gamma)|\leq 3^d A \widehat{J}) + e^{-N}.
\end{split}
\end{equation}
Write $\gamma = (\mathbf{x}_i)_{i=1}^{\ell}$ as a sequence in $\Z^d$. Given $j\in \Iintv{0,\widehat{J}-1}$, we set
\begin{figure}[t]
    \centering
   \includegraphics[width=0.7\linewidth]{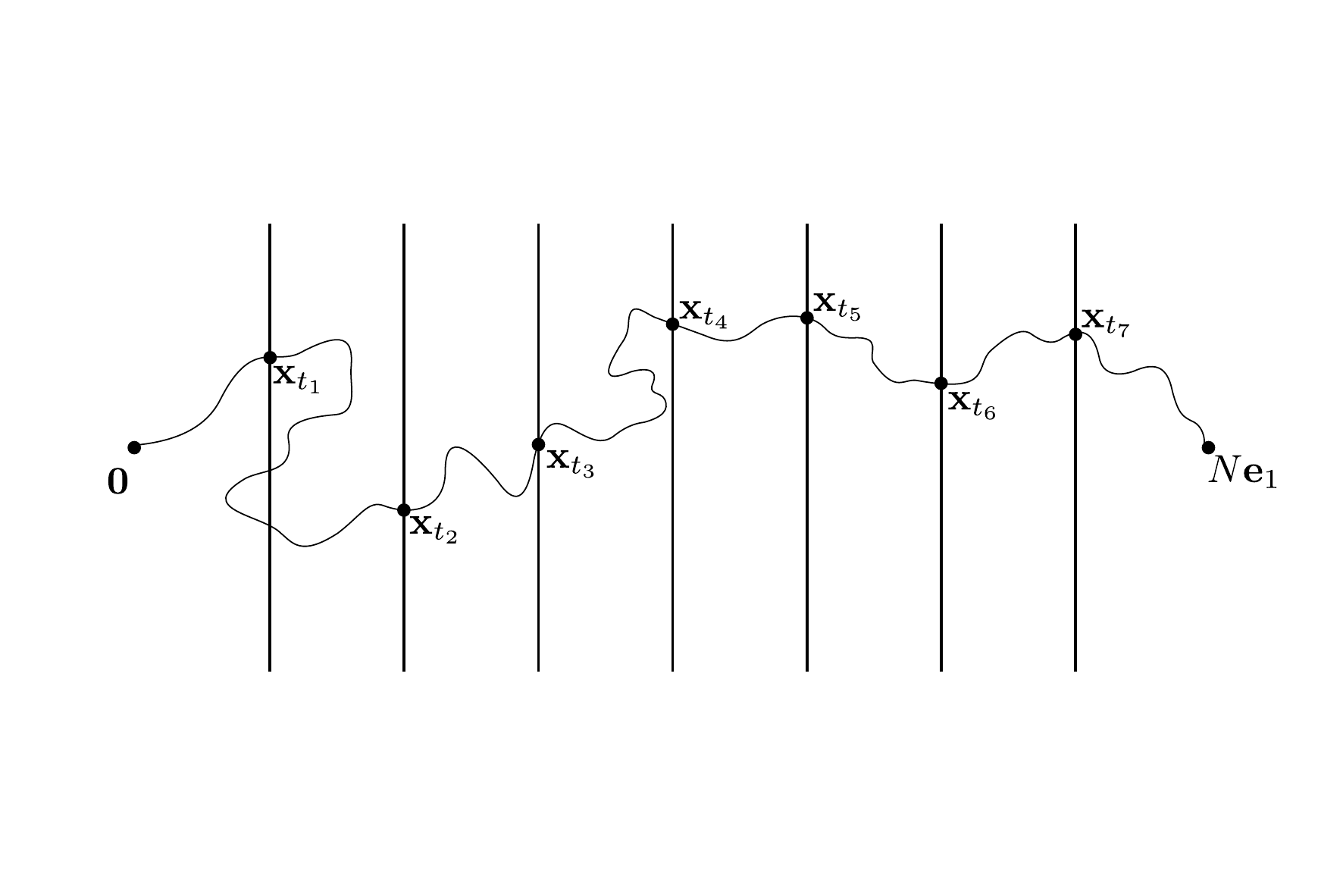} 
    \caption{The path from $\mathbf{0}$ to $N\mathbf{e}_1$ is a geodesic. It intersects the faces $j\widehat{K}H_{\mathbf{e}_1}$, $j=0, 1, \ldots, \widehat{J} - 1$. $\mathbf{x}_{t_j}$ is the first time the path touches $j\widehat{K}H_{\mathbf{e}_1}$, considering the path starting from $\mathbf{0}$.}
    \label{fig:xtj}
\end{figure}
\al{
t_j := \inf\{i\in \Iintv{1,\ell}:~ \exists \mathbf{y}\in j \widehat{K} H_{\mathbf{u}},\,\mathbf{x}_i=\lfloor \mathbf{y}\rfloor  \}.
}
(See also Figure~\ref{fig:xtj}.) If 
    $$T(\mathbf{0}, N\mathbf{u})<N\mu(\mathbf{u}) - \zeta^{{1-\varepsilon^2}} N,$$
    then since $T(\mathbf{0}, N\mathbf{u})\geq \sum_{j\in \Iintv{0,\widehat{J}-1}}T(\mathbf{x}_{t_j},\mathbf{x}_{t_{j}}+\widehat{K}H_{\mathbf{u}})$ and $\zeta^{{1-\varepsilon^2}} N \gg \widehat{J} \widehat{K}^{\overline{\chi}+\varepsilon+\varepsilon^4}$, by a similar argument leading to \eqref{eq: some_eq_in_4.1}, there exists $b\in \Iintv{{\overline{\chi}}+\varepsilon,1}_L$ such that 
    \aln{\label{eq: middle goal2}
    |\{q\in \Iintv{0,\widehat{J}-1}:~T(\mathbf{x}_{t_j},\mathbf{x}_{t_{j}}+\widehat{K}H_{\mathbf{u}})<\widehat{K}\mu(\mathbf{u}) - 2\widehat{K}^{b}\}|\geq \zeta^{{1-\varepsilon^2}} \widehat{K}^{-b-\varepsilon^4} N.
    }
    This implies
    \al{
     |\{\mathbf{x}\in \Lambda_{\widehat{K}}(\gamma):~\text{$\widehat{K}\mathbf{x}$ is $b$-dark}\}|\geq\zeta^{{1-\varepsilon^2}} \widehat{K}^{-b-\varepsilon^4} N.
    }
    Hence,  letting ${\rm Conn}(R):= \{U\subset \Z^d:~\mathbf{0}\in U,\,|U|\leq R,\,\text{$U$ is connected in $\Z^d$}\}$, we have
    \al{
     &\mathbb{P}(T(\mathbf{0}, N\mathbf{u})<N\mu(\mathbf{u} )- \zeta^{{1-\varepsilon^2}} N,\,|\Lambda_{\widehat{K}}(\gamma)|\leq 3^d A \widehat{J})\\
    &\leq \sum_{b\in \Iintv{{\overline{\chi}}+\varepsilon,1}_L}\mathbb{P}( |\{\mathbf{x}\in \Lambda_{\widehat{K}}(\gamma):~\text{$\widehat{K}\mathbf{x}$ is $b$-dark}\}|\geq \zeta^{{1-\varepsilon^2}} \widehat{K}^{-b-\varepsilon^4} N,\,|\Lambda_{\widehat{K}}(\gamma)|\leq 3^d A \widehat{J}) \\
    &\leq \sum_{b\in \Iintv{{\overline{\chi}}+\varepsilon,1}_L}\sum_{\mathbf{y}\in \Iintv{-2A,2A}^d} \sum_{U\in {\rm Conn}(3^d A  \widehat{J})}\mathbb{P}( |\{\mathbf{x}\in U\cap (\widehat{K}\mathbf{y}+2A\widehat{K} \mathbb{Z}^d):\,\text{$\widehat{K}\mathbf{x}$ is $b$-dark}\}|\geq (5A)^{-d} \zeta^{{1-\varepsilon^2}} \widehat{K}^{-b-\varepsilon^4} N ).
    }
    Therefore, since $|{\rm Conn}(R)|\leq e^{C'R}$ for some $C'=C'(d)>0$ by \cite[(4.24)]{grimmett2010percolation}, and 
    $$|\{\mathbf{x}\in U\cap (\widehat{K}\mathbf{y}+2A\widehat{K} \mathbb{Z}^d):\,\text{$\widehat{K}\mathbf{x}$ is $b$-dark}\}|= \sum_{\mathbf{x}\in U\cap (\widehat{K}\mathbf{y}+2A\widehat{K} \mathbb{Z}^d)}\mathbf{1}\{\text{$\widehat{K}\mathbf{x}$ is $b$-dark}\}$$ forms an i.i.d.\ sum,   by the Chernoff inequality together with Lemma~\ref{lem: dark probab},  this is further bounded from above by
    \al{
    \sup_{b\in[{\overline{\chi}}+\varepsilon, 1]} (5A)^d\,L\,  e^{ 3^d C' A \widehat{J}} \, \Big(\widehat{K}^{4d} e^{-\widehat{K}^{(1-\varepsilon^4)\frac{b-\overline{\chi}}{1-\overline{\chi}}}}\Big)^{(10A)^{-d} \zeta^{{1-\varepsilon^2}} N\,\widehat{K}^{-b-\varepsilon^4}}\leq \exp{\Big( -\zeta^{\frac{1}{1-{\overline{\chi}}} +  O(\varepsilon)}\,N \Big)},
    }
where we evaluated the supremum at the endpoints $b=\overline{\chi}+\varepsilon$ or $b=1$ to show that for $\varepsilon$ small enough, 
\al{
\widehat{J} =\frac{N}{\widehat{K}}\ll \big(\widehat{K}^{\frac{b-\overline{\chi}}{1-\overline{\chi}}} \big) ({\zeta N\,\widehat{K}^{-b}}).  
}
Finally, replacing $\zeta^{{1-\varepsilon^2}} N$ by $\zeta N$, together with \eqref{eq: confinement} proves the theorem.

\section{Lower bound for upper tail moderate deviations}
\label{sec: lower_bound_upper_tail}
Throughout this section, we assume Assumptions~\ref{assum: finite curvature}, \ref{cond: full concentration} and \ref{ass: positive curvature} with $\chi\in [0,1)$.  To simplify the notation, we primarily focus on the case that $\mathbf{u} = \mathbf{e}_1$. We will outline how the general case can be proved in Section~\ref{section: lower bound general direction}.
\subsection{Sketch of the proof}\label{sec:key-lower-bound}
Since the proof is quite different from those in the previous sections, before going into the details, we first give a sketch of the proof of Theorem~\ref{thm: key prop for lower bound}-(2). (Theorem~\ref{thm: key prop for lower bound}-(1) can be proved in a similar manner.) 

Let $\tilde{K} = \zeta^{-\frac{1}{1-\chi}}$, where $\zeta > 0$ is small. We put faces at $\mathbf{0}$, $\widetilde{K}\mathbf{e}_1$, $2\widetilde{K}\mathbf{e}_1$, and so on, up to $N\mathbf{e}_1$. Note that the passage time from $\mathbf{0}$ to $N\mathbf{e}_1$ is bounded below by the sum of the passage times between these faces. By using the curvature assumption (Assumption~\ref{ass: positive curvature}), we know that any geodesic between two faces cannot deviate too much, in the sense that it is more or less ``horizontal'', and lies within a box of height $\widetilde{K}^{\frac{\chi+1}{2}}$. In particular, with a reasonable probability the starting point and the ending point of a geodesic cannot be too far away in the vertical distance. This is handled in Lemma~\ref{lem: S probab} (this corresponds to the event $B_K'$ in the proof). On the other hand, if the starting point and the ending point are close in the vertical distance, then by Assumption~\ref{cond: full concentration}, with a reasonable probability, the passage time is at least $\widetilde{K}\mu(\mathbf{e}_1) + K^{\chi - \varepsilon}$ (see Lemma~\ref{lem:modified} and also the event $A_K'$ in the proof of Lemma~\ref{lem: S probab}).

On the other hand, by Assumption~\ref{ass: positive curvature} again, we can show that with high probability, if $T(\mathbf{0}, N\mathbf{e}_1) \geq N\mu(\mathbf{e}_1) + \zeta N$, then any geodesic from $\mathbf{0}$ to $N\mathbf{e}_1$ lies within $\mathbb{R} \times [-A\sqrt{\zeta}N, A\sqrt{\zeta}N]^{d-1}$ for some large constant $A$ (see Lemma~\ref{lem:exit-geodesics}).

Now, we subdivide the box $[0, N]\times [-A\sqrt{\zeta}N, A\sqrt{\zeta}N]^{d-1}$ into boxes of width $\widetilde{K}$ and height $\widetilde{K}^{\frac{\chi+1}{2}}$. If the passage time from the left side to the right side of each box is at least $\widetilde{K}\mu(\mathbf{e}_1) + K^{\chi - \varepsilon}$, then the total face-to-face passage times will be approximately
\[
\frac{N}{\widetilde{K}} \left(\widetilde{K}\mu(\mathbf{e}_1) + \widetilde{K}^{\chi - \varepsilon}\right) = N\mu(\mathbf{e}_1) + N\widetilde{K}^{\chi - 1 - \varepsilon} \approx N\mu(\mathbf{e}_1) + \zeta N.
\]
Therefore, in order to bound the rate function, it suffices to lower bound the probability that all boxes have passage time at least $\widetilde{K}\mu(\mathbf{e}_1) + K^{\chi - \varepsilon}$. But since this happens with a reasonable probability, say $c$, on each of the boxes, by the FKG inequality, the required probability is at least
\[
c^{\frac{N\, (A\sqrt{\zeta}N)^{d-1}}{\widetilde{K}\, (\widetilde{K}^{\frac{\chi+1}{2}})^{d-1}}} = \exp\left(-c' N^d \zeta^{\frac{d}{1-\chi}}\right),
\]
giving the desired conclusion.

\subsection{Deviation estimates of geodesics and passage times}
We first give some preliminary estimates on geodesics and passage times, which will be useful in the proof of Theorem~\ref{thm: key prop for lower bound}.
\begin{lem}\label{lem: exit geodesics with a}
Assume Assumptions~\ref{cond: full concentration} and \ref{ass: positive curvature}. There exists $A>1$ such that for any $a\in (\chi,1)$, there exists $c=c(a)>0$ such that for $N$ large enough,
\al{
\mathbb P(\exists \gamma:\mathbf{0}\to N\mathbf{u},\,\gamma\not\subset \mathrm{Cyl}_{\mathbf{0},\mathbf{u}}(\R, A N^{\frac{a+1}{2}}),\,T(\gamma)\leq \mu(\mathbf{u}) N + N^a)\leq e^{-N^c}.
}
\end{lem}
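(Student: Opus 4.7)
The plan is to bound the event of interest by a union bound over candidate ``detour vertices.'' If a path $\gamma:\mathbf{0}\to N\mathbf{u}$ with $T(\gamma)\le\mu(\mathbf{u})N+N^a$ exits $\mathrm{Cyl}_{\mathbf{0},\mathbf{u}}(\R,AN^{(a+1)/2})$, then writing any $\mathbf{v}\in\R^d$ as $\mathbf{v}=y_1\mathbf{u}+\mathbf{v}^\perp$ with $\mathbf{v}^\perp\in\mathrm{span}\{\tilde{\mathbf{u}}_2,\dots,\tilde{\mathbf{u}}_d\}$, $\gamma$ must visit some lattice vertex $\mathbf{v}$ with $|\mathbf{v}^\perp|\ge AN^{(a+1)/2}$ and
\begin{equation*}
T(\gamma)\ge T(\mathbf{0},\mathbf{v})+T(\mathbf{v},N\mathbf{u}).
\end{equation*}
First I confine to a polynomial-size box: by Lemma~\ref{lem: Theorem 5.2 and Proposition 5.8}, for $C_0$ large enough any path reaching outside $\Lambda_N:=[-C_0N,C_0N]^d$ contains a vertex $\mathbf{x}$ with $|\mathbf{x}|\ge C_0N$, and then $T(\mathbf{0},\mathbf{x})\ge cC_0 N>\mu(\mathbf{u})N+N^a$ with probability at least $1-e^{-cC_0 N}$. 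Thus it suffices to bound $\mathbb{P}(T(\mathbf{0},\mathbf{v})+T(\mathbf{v},N\mathbf{u})\le\mu(\mathbf{u})N+N^a)$ stretched-exponentially for each of the $O(N^d)$ candidates $\mathbf{v}\in\Lambda_N$ with $|\mathbf{v}^\perp|\ge AN^{(a+1)/2}$.

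For each candidate I split into two regimes through a small constant $\delta>0$. In the long-detour regime $|\mathbf{v}^\perp|>\delta N$, both $|\mathbf{v}|$ and $|N\mathbf{u}-\mathbf{v}|$ exceed $\delta N$, so two applications of Lemma~\ref{lem: Theorem 5.2 and Proposition 5.8} yield $T(\mathbf{0},\mathbf{v})+T(\mathbf{v},N\mathbf{u})\ge 2c\delta N$ with probability $1-2e^{-c\delta N}$; choosing $\delta$ with $2c\delta>\mu(\mathbf{u})+1$ defeats $\mu(\mathbf{u})N+N^a$ for large $N$. In the curvature regime $AN^{(a+1)/2}\le|\mathbf{v}^\perp|\le\delta N$, Assumption~\ref{cond: full concentration}(2) applied to each piece, together with $\mathbb{E}T(\mathbf{0},\mathbf{w})\ge\mu(\mathbf{w})$ (Fekete's lemma), gives
\begin{equation*}
T(\mathbf{0},\mathbf{v})+T(\mathbf{v},N\mathbf{u})\ge\mu(\mathbf{v})+\mu(N\mathbf{u}-\mathbf{v})-2N^{\chi+\epsilon}
\end{equation*}
with probability at least $1-2\exp(-(AN^{(a+1)/2})^\theta)$. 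Taking a support functional $\mathbf{n}$ of $\mu$ at $\mathbf{u}$, so that $\langle\mathbf{u},\mathbf{n}\rangle=\mu(\mathbf{u})$ and $\langle\mathbf{v}^\perp,\mathbf{n}\rangle=0$, the deterministic excess decomposes as
\begin{equation*}
\mu(\mathbf{v})+\mu(N\mathbf{u}-\mathbf{v})-N\mu(\mathbf{u})=\bigl[\mu(\mathbf{v})-y_1\mu(\mathbf{u})\bigr]+\bigl[\mu(N\mathbf{u}-\mathbf{v})-(N-y_1)\mu(\mathbf{u})\bigr],
\end{equation*}
with each bracket nonnegative. When $y_1>0$ and $|\mathbf{v}^\perp|/y_1$ is sufficiently small, 1-homogeneity combined with Assumption~\ref{ass: positive curvature} applied to $\mathbf{u}+\mathbf{v}^\perp/y_1$ lower-bounds the first bracket by $|\mathbf{v}^\perp|^2/(Cy_1)$; the symmetric bound holds for the second bracket when $N-y_1>0$ and $|\mathbf{v}^\perp|/(N-y_1)$ is small. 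A case split on whether $y_1\in(0,N)$, $y_1\le0$, or $y_1\ge N$ (in the last two cases $|y_1|\le C_0N$, so $N-y_1$ or $y_1$ is comparable to $N$) shows that at least one curvature estimate is effective in each regime and produces excess at least $c_2A^2N^a$ for some $c_2>0$. Choosing $A$ with $c_2A^2>3$ and $\epsilon<a-\chi$ then beats the $N^a+2N^{\chi+\epsilon}$ correction.

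The hard part will be making the curvature bound uniform over all admissible $(y_1,\mathbf{v}^\perp)$: Assumption~\ref{ass: positive curvature} only yields the quadratic excess when $|\mathbf{v}^\perp|/y_1$ and $|\mathbf{v}^\perp|/(N-y_1)$ are both bounded by a small constant, which is exactly what forces the cutoff $|\mathbf{v}^\perp|\le\delta N$ and the separate Kesten-based treatment of the long-detour regime. Near the endpoints $y_1\approx 0$ or $y_1\approx N$ only one of the two quadratic contributions survives; for $y_1$ outside $[0,N]$ one still needs to check that the remaining curvature bound gives $\gtrsim|\mathbf{v}^\perp|^2/N$ using $|y_1|\le C_0N$. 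Once these cases are assembled, a union bound over the $O(N^d)$ vertices $\mathbf{v}\in\Lambda_N$ produces the desired $e^{-N^c}$ bound with $c=c(a)>0$ depending only on $a-\chi$, the curvature constant from Assumption~\ref{ass: positive curvature}, and the concentration exponent $\theta$ from Assumption~\ref{cond: full concentration}.
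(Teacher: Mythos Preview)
Your approach is essentially the paper's: union-bound over detour vertices, then bound $T(\mathbf{0},\mathbf{v})+T(\mathbf{v},N\mathbf{u})$ from below via curvature (Assumption~\ref{ass: positive curvature}) plus concentration (Assumption~\ref{cond: full concentration}). The paper streamlines this slightly by taking the union only over the outer boundary of the finite cylinder $\mathrm{Cyl}_{\mathbf{0},\mathbf{u}}([-AN,AN],AN^{(a+1)/2})$, handling the caps $|y_1|\approx AN$ with Lemma~\ref{lem: Theorem 5.2 and Proposition 5.8} and the lateral boundary directly with curvature---but the substance is the same.

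There is, however, an internal inconsistency in your two-regime split. You introduce $\delta$ as ``a small constant'' so that the curvature bound applies when $|\mathbf{v}^\perp|/y_1$ is small, yet in the long-detour regime you require $2c\delta>\mu(\mathbf{u})+1$, which forces $\delta$ to be large: the constant $c$ from the first part of Lemma~\ref{lem: Theorem 5.2 and Proposition 5.8} is fixed by the edge-weight law and is typically much smaller than $\mu(\mathbf{u})$. These two constraints on $\delta$ are incompatible. The resolution is that your premise about curvature is off: Assumption~\ref{ass: positive curvature} is stated for \emph{all} $\mathbf{u}^\perp\in\mathrm{span}\{\tilde{\mathbf{u}}_2,\dots,\tilde{\mathbf{u}}_d\}$, not only small ones, so the bound $\mu(y_1\mathbf{u}+\mathbf{v}^\perp)-y_1\mu(\mathbf{u})\ge|\mathbf{v}^\perp|^2/(Cy_1)$ holds irrespective of the size of $|\mathbf{v}^\perp|/y_1$. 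This is exactly how the paper applies it, writing $\mu(m\mathbf{u}+\ell\hat{\mathbf{e}})-\mu(m\mathbf{u})\ge\ell^2/(C|m|)$ with no restriction on $\ell/m$. Once you drop the split---or equivalently take $\delta$ large, since the curvature regime then needs no smallness hypothesis---your argument goes through and coincides with the paper's.
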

\begin{proof}
    By a union bound, the left side is bounded above by
    \begin{align}
    \sum_{\mathbf{z}}\,\mathbb{P}\left(T(\mathbf{0},\mathbf{z}) + T(\mathbf{z}, N\mathbf{u}) \;\le\; \mu(\mathbf{u}) N + N^a\right),\label{eq: second_term_cyl}
    \end{align}
    where the sum in \eqref{eq: second_term_cyl} is taken over all
 points    $\mathbf{z}$  in the outer boundary   of 
   $ \mathrm{Cyl}_{\mathbf{0},\mathbf{u}}([-AN,AN], A N^{\frac{a+1}{2}}) \,\cap\, \Z^d$   in  $\Z^d.$
    To bound \eqref{eq: second_term_cyl}, fix $\mathbf{z} = m\mathbf{u} + \ell\hat{\mathbf{e}}$, where $\hat{\mathbf{e}}\in {\rm span}\{\tilde{\mathbf{u}}_2,\ldots \tilde{\mathbf{u}}_d\}$ is a unit vector, so that either $|m|>AN$ or $|\ell|> A N^{\frac{a+1}{2}}$ holds.
     If $|m|>AN$, then, since $\mu(\mathbf{u})\leq 1$ by \eqref{ass: weight distribution}, using \eqref{eq: Kesten Propositin 5.8}, if $A$ is taken to be sufficiently large,    there exists $c > 0$ such that
\[
\mathbb{P}(T(\mathbf{0},\mathbf{z})+T(\mathbf{z},N\mathbf{u}) \leq \mu(\mathbf{u}) N + N^a)   \leq  \mathbb{P}(T(\mathbf{0},\mathbf{z}) \leq 2N) \leq e^{-cN}.
\]
     Next, we consider the case that $|\ell|>A N^{\frac{a+1}{2}}$ but $|m|\leq AN$.  By Assumption~\ref{ass: positive curvature}, if $A$ is large enough, then when $m\neq 0$,
    \begin{align*}
    \mu(m\mathbf{u} + \ell\hat{\mathbf{e}}) -\mu(m\mathbf{u})
    &= |m|(\mu(\mathbf{u} + (\ell/m)\hat{\mathbf{e}}) - \mu(\mathbf{u}))\\
    &\ge \frac{|m|}{C}\left(\frac{\ell}{m}\right)^{2}
    \ge 2N^a.
    \end{align*}
    We also have the same inequality for $m = 0$. Similarly we also have $\mu((N-m)\mathbf{u} - \ell \hat{\mathbf{e}}) - \mu((N-m)\mathbf{u}) \ge 2N^a$. Thus,
\begin{align*}
    \mu(\mathbf{z}) + \mu(N\mathbf{u} -\mathbf{z}) - \mu(N\mathbf{u}) &= \mu(m\mathbf{u} + \ell\hat{\mathbf{e}}) - \mu(m\mathbf{u}) + \mu((N-m)\mathbf{u} - \ell \hat{\mathbf{e}}) - \mu((N-m)\mathbf{u})\\
    &\ge 4N^a.
\end{align*}
    Then, by Assumption~\ref{cond: full concentration}, there exists $c'=c'(a)>0$ such that
\al{
    \mathbb{P}(T(\mathbf{0},\mathbf{z}) + T(\mathbf{z},N\mathbf{u})\leq \mu(\mathbf{u}) N + N^a)
    & \leq \mathbb{P}(T(\mathbf{0},\mathbf{z}) + T(\mathbf{z},N\mathbf{u})\leq \mu(\mathbf{z}) + \mu(N\mathbf{u} -\mathbf{z}) - 2N^a)\\
    &\leq \mathbb{P}(T(\mathbf{0},\mathbf{z}) \leq \mu(\mathbf{z}) - N^a) + \mathbb{P}(T(\mathbf{z},N\mathbf{u})\leq  \mu(N\mathbf{u} -\mathbf{z}) - N^a)\\
    &\leq e^{-N^{c'}}.
}
    (Here, we applied Assumption~\ref{cond: full concentration}-(2) to Assumption~\ref{cond: full concentration}-(1), so that we can replace the expectation by $\mu$ and still have the probability bound.) Summing over all relevant $\mathbf{z}$ (polynomially many) and decreasing $c'$ slightly, we are done.
\end{proof}

\begin{lem}\label{lem:exit-geodesics}
Assume Assumption~\ref{ass: positive curvature}. Then there exists a constant \(A>1\) such that for every \(\zeta>0\) there exists a constant \(c=c(\zeta)>0\) satisfying, for all sufficiently large \(N\in\mathbb{N}\),
\[
\mathbb{P}\Bigl(\exists\,\gamma:\mathbf{0}\to N\mathbf{u}, \, \gamma\not\subset \mathrm{Cyl}_{\mathbf{0},\mathbf{u}}(\mathbb{R},A\sqrt{\zeta}\,N) \textnormal{ and } T(\gamma)\le (\mu(\mathbf{u})+\zeta)N\Bigr)\le e^{-cN}\,.
\]
\end{lem}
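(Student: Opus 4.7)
The plan is to follow the same strategy as the proof of Lemma~\ref{lem: exit geodesics with a}, now at the linear (large-deviation) scale: replace $N^a$ by $\zeta N$ and the cylinder width $N^{(a+1)/2}$ by $\sqrt{\zeta}\,N$, and extract a quadratic-in-$\ell$ gain of order $\zeta N$ from Assumption~\ref{ass: positive curvature}.

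First, I would reduce to a bounded region. By the first display of Lemma~\ref{lem: Theorem 5.2 and Proposition 5.8}, for a suitably large constant $\widetilde{A}$ (which may depend on $\zeta$, but which the normalization $\mu(\mathbf{u})\le 1/(4d)$ keeps uniformly bounded on any compact range of $\zeta$, the complement being vacuous because the cylinder then contains any plausible geodesic), every path $\gamma:\mathbf{0}\to N\mathbf{u}$ with $T(\gamma)\le(\mu(\mathbf{u})+\zeta)N$ is contained in $[-\widetilde{A}N,\widetilde{A}N]^d$ except on an event of probability at most $e^{-cN}$. On the complementary event a union bound reduces matters to showing
\[
\mathbb{P}\bigl(T(\mathbf{0},\mathbf{z})+T(\mathbf{z},N\mathbf{u})\le(\mu(\mathbf{u})+\zeta)N\bigr)\le e^{-c(\zeta)N}
\]
for each of the polynomially many lattice points $\mathbf{z}=m\mathbf{u}+\ell\hat{\mathbf{e}}$ on the lateral boundary of the truncated cylinder, where $\hat{\mathbf{e}}\in\mathrm{span}\{\tilde{\mathbf{u}}_2,\ldots,\tilde{\mathbf{u}}_d\}$ is a unit vector, $|\ell|=A\sqrt{\zeta}\,N$ and $|m|\le\widetilde{A}N$.

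The heart of the argument is the curvature lower bound
\[
\mu(\mathbf{z})+\mu(N\mathbf{u}-\mathbf{z})\;\ge\;N\mu(\mathbf{u})+4\zeta N,
\]
valid for $A$ chosen sufficiently large (but universal). When $m\in(0,N)$, two applications of the estimate $\mu(m\mathbf{u}+\ell\hat{\mathbf{e}})\ge m\mu(\mathbf{u})+\ell^2/(Cm)$ (scaling of the norm $\mu$ combined with Assumption~\ref{ass: positive curvature}), together with the elementary inequality $\tfrac{1}{m}+\tfrac{1}{N-m}\ge\tfrac{4}{N}$, give a gain of $4A^2\zeta N/C$, so $A\ge\sqrt{C}$ suffices. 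The boundary case $m=0$ (and $m=N$) is handled by applying the same estimate only to the long segment $N\mathbf{u}-\mathbf{z}$. For $m\notin[0,N]$ I would use the analogue of the above with $|m|$ in place of $m$ (the span is linear, so Assumption~\ref{ass: positive curvature} applies with $\pm\hat{\mathbf{e}}$), extracting both a quadratic contribution $\ell^2/(C(|m|+N))$ and an extra linear contribution $(|m|-N)\mu(\mathbf{u})$ from the stretched detour; together these exceed $4\zeta N$ once $A$ is taken sufficiently large relative to $C$ and $\widetilde{A}$.

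Once this $\mu$-gap is in hand, the event in question forces either $T(\mathbf{0},\mathbf{z})\le\mu(\mathbf{z})-2\zeta N$ or $T(\mathbf{z},N\mathbf{u})\le\mu(N\mathbf{u}-\mathbf{z})-2\zeta N$, each of which is bounded by $e^{-c(\zeta)|\mathbf{z}|}\le e^{-c'(\zeta)N}$ by the second display of Lemma~\ref{lem: Theorem 5.2 and Proposition 5.8} applied with $\varepsilon$ of order $\zeta/\widetilde{A}$; a final union bound over the polynomially many $\mathbf{z}$ closes the argument. The step I expect to require the most care is the $m\notin[0,N]$ regime: the quadratic curvature gain there is only of order $A^2\zeta N/(|m|+N)$, which degrades with $|m|$, so one must balance it against the linear detour cost $(|m|-N)\mu(\mathbf{u})$ and verify that the constant $A$ can indeed be chosen independently of $\zeta$.
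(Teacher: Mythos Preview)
Your proposal is correct and follows essentially the same strategy as the paper's proof: establish the $\mu$-gap $\mu(\mathbf{z}) + \mu(N\mathbf{u}-\mathbf{z}) > (\mu(\mathbf{u})+2\zeta)N$ via Assumption~\ref{ass: positive curvature}, then invoke the lower-tail large-deviation bound (the paper cites~\eqref{eq: lower tail LDP} where you use the second display of Lemma~\ref{lem: Theorem 5.2 and Proposition 5.8}, which is equivalent), and finish with a union bound. Your concern about the $m\notin[0,N]$ regime is legitimate but resolvable; the paper sidesteps it by truncating the cylinder at $|m|=AN$ and handling the caps directly with~\eqref{eq: Kesten Propositin 5.8} as in the proof of Lemma~\ref{lem: exit geodesics with a}, which avoids the balancing act you describe.
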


\begin{proof}
The proof follows the same general strategy as in Lemma~\ref{lem: exit geodesics with a}, with the modification that the deviation cost is now given by \(\zeta N\).  By Assumption~\ref{ass: positive curvature}, there exists a constant \(A>1\)  such that for any \(\mathbf{z}\in \R^d\setminus 
\mathrm{Cyl}_{\mathbf{0},\mathbf{u}}([-AN,AN],A\sqrt{\zeta}\,N),\)   
one has
\[
\mu(\mathbf{z}) + \mu(N\mathbf{u}- \mathbf{z}) > (\mu(\mathbf{u})+2\zeta)N\,.
\]
An application of the lower tail large deviation \eqref{eq: lower tail LDP} shows that for each such \(\mathbf{z}\) one has
\[
\mathbb{P}\Bigl(T(\mathbf{0},\mathbf{z})+T(\mathbf{z},N\mathbf{u}) \le (\mu(\mathbf{u})+\zeta)N\Bigr)\le \mathbb{P}\Bigl(T(\mathbf{0},\mathbf{z})+T(\mathbf{z},N\mathbf{u}) \le \mu(\mathbf{z}) + \mu(N\mathbf{u}- \mathbf{z}) -\zeta N\Bigr) \le e^{-c N}\,,
\]
for some constants \(c>0\) depending on $\zeta$. A union bound over the (polynomially many) possible choices of \(\mathbf{z}\) finishes the proof.
\end{proof}

\begin{lem}\label{lem:modified}
Assume Assumption~\ref{cond: full concentration} holds. Then for any $\epsilon'>0$,  for all $N\ge 1$ large enough and for any $\mathbf{v}\in H_{\mathbf{u}}$, 
\[
\mathbb{P}\Bigl(T(\mathbf{0},N \mathbf{v})>N\mu(\mathbf{u})+N^{\chi-\epsilon'}\Bigr)\ge N^{-2}.
\]
\end{lem}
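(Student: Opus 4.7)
The plan is a short reduction followed by a truncated Paley--Zygmund argument applied to the centered passage time. Since $H_\mathbf{u}$ is tangent to the limit shape $\mu(\mathbf{u})\mathbb{B}_d$ at $\mathbf{u}$, every $\mathbf{v}\in H_\mathbf{u}$ lies outside the interior of $\mu(\mathbf{u})\mathbb{B}_d$, so $\mu(\mathbf{v})\ge\mu(\mathbf{u})$. Combined with the subadditivity bound $\E T(\mathbf{0},N\mathbf{v})\ge\mu(\lfloor N\mathbf{v}\rfloor)$ (which follows from Fekete's lemma applied to the lattice point $\lfloor N\mathbf{v}\rfloor$) and the Lipschitz property of the norm $\mu$, this gives $\E T(\mathbf{0},N\mathbf{v})\ge N\mu(\mathbf{u})-O(1)$. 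It suffices (by monotonicity of the event in $\epsilon'$) to handle $\epsilon'$ small enough that $N^{\chi-\epsilon'}$ dominates the $O(1)$ error, so it is enough to prove $\mathbb{P}(X>2N^{\chi-\epsilon'})\ge N^{-2}$, where $X:=T(\mathbf{0},N\mathbf{v})-\E T(\mathbf{0},N\mathbf{v})$.

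Fix $\delta:=\epsilon'/20$. By Assumption~\ref{cond: full concentration}-(1) we have $\mathrm{Var}(X)\ge N^{2(\chi-\delta)}$ for $N$ large; by Assumption~\ref{cond: full concentration}-(2) together with the deterministic bound $|X|\le O(N)$ from \eqref{ass: weight distribution}, the event $\{|X|>N^{\chi+\delta}\}$ contributes only $O(N^k e^{-N^\theta})$ to any polynomial moment of $X$. Define the truncation $Y:=X\,\mathbf{1}_{\{|X|\le N^{\chi+\delta}\}}$; then $|\E Y|=o(1)$ and $\mathrm{Var}(Y)\ge\tfrac12 N^{2(\chi-\delta)}$. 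Since $|Y|\le N^{\chi+\delta}$, the inequality $\E Y_\pm^2\le N^{\chi+\delta}\E Y_\pm$ yields
\[
\tfrac12 N^{2(\chi-\delta)}\;\le\;\mathrm{Var}(Y)\;\le\;\E Y_+^2+\E Y_-^2\;\le\;N^{\chi+\delta}\bigl(2\E Y_+-\E Y\bigr),
\]
which, combined with $\E Y\approx 0$, forces $\E Y_+\ge c\,N^{\chi-3\delta}$. Paley--Zygmund on the nonnegative variable $Y_+$ then gives
\[
\mathbb{P}\bigl(Y_+>\tfrac12\E Y_+\bigr)\;\ge\;\tfrac14\frac{(\E Y_+)^2}{\E Y_+^2}\;\ge\;c'\,N^{-8\delta}.
\]
Since $Y_+>0$ forces $X=Y>0$, we conclude $\mathbb{P}(X>\tfrac12\E Y_+)\ge c'N^{-8\delta}$, and for $\delta=\epsilon'/20$ both $\tfrac12\E Y_+\ge 2N^{\chi-\epsilon'}$ and $c'N^{-8\delta}\ge N^{-2}$ hold for $N$ large, completing the proof.

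The only delicate step is the lower bound $\E Y_+\ge cN^{\chi-3\delta}$: the raw variance estimate only controls the sum $\E Y_+^2+\E Y_-^2$ and a priori does not prevent the variance from being carried entirely by the lower tail. The truncation resolves this by converting the second-moment bound into a first-moment bound via $\E Y_\pm^2\le N^{\chi+\delta}\E Y_\pm$; the mean-zero constraint $\E Y\approx 0$ then couples $\E Y_+$ and $\E Y_-$ and forces both to be of order $N^{\chi-3\delta}$. The remaining estimates are routine.
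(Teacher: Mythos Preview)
Your proof is correct and follows essentially the same approach as the paper: both use the variance lower bound from Assumption~\ref{cond: full concentration}-(1) together with the concentration from Assumption~\ref{cond: full concentration}-(2) to force $\E[(T-\E T)_+]\ge cN^{\chi-O(\epsilon')}$, then convert this first-moment bound into a probability lower bound. The only cosmetic difference is packaging: the paper first extracts $\mathbb{P}(|X|\ge N^{\chi-\epsilon'})$ from a three-piece variance decomposition, passes to $\E|X|=2\E X_+$, and then splits $\E X_+$ by a Markov-type argument, whereas you truncate first and invoke Paley--Zygmund on $Y_+$; the underlying second-moment mechanism is identical.
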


\begin{proof}
Without loss of generality, we may assume $\epsilon' < 1$. 
We decompose the variance into three terms:
\[
\begin{split}
&\mathbb{E}\Bigl[\Bigl(T(\mathbf{0}, N\mathbf{v})-\mathbb{E} T(\mathbf{0}, N\mathbf{v})\Bigr)^2\Bigr]\\
&=\mathbb{E}\Bigl[\Bigl(T(\mathbf{0}, N\mathbf{v})-\mathbb{E} T(\mathbf{0}, N\mathbf{v})\Bigr)^2\,\mathbf{1}_{\{|T(\mathbf{0}, N\mathbf{v})-\mathbb{E} T(\mathbf{0}, N\mathbf{v})|\le N^{\chi-\epsilon'}\}}\Bigr] \\
&\quad+\mathbb{E}\Bigl[\Bigl(T(\mathbf{0}, N\mathbf{v})-\mathbb{E} T(\mathbf{0}, N\mathbf{v})\Bigr)^2\,\mathbf{1}_{\{N^{\chi-\epsilon'}< |T(\mathbf{0}, N\mathbf{v})-\mathbb{E} T(\mathbf{0}, N\mathbf{v})|\le N^{\chi+\frac{\epsilon'}{2}}\}}\Bigr] \\
&\quad+\mathbb{E}\Bigl[\Bigl(T(\mathbf{0}, N\mathbf{v})-\mathbb{E} T(\mathbf{0}, N\mathbf{v})\Bigr)^2\,\mathbf{1}_{\{|T(\mathbf{0}, N\mathbf{v})-\mathbb{E} T(\mathbf{0}, N\mathbf{v})|> N^{\chi+\frac{\epsilon'}{2}}\}}\Bigr].
\end{split}
\]
On the event \(\{|T(\mathbf{0}, N\mathbf{v})-\mathbb{E} T(\mathbf{0}, N\mathbf{v})|\le N^{\chi-\epsilon'}\}\) the squared deviation is at most \(N^{2(\chi-\epsilon')}\), while on the event
\(\{N^{\chi-\epsilon'}\le| T(\mathbf{0}, N\mathbf{v})-\mathbb{E} T(\mathbf{0}, N\mathbf{v})|\le N^{\chi+\frac{\epsilon'}{2}}\}\) it is at most \(N^{2(\chi+\frac{\epsilon'}{2})}\). Finally, by Assumption~\ref{cond: full concentration}, there exists \(\theta=\theta(\epsilon')>0\) such that
\[
\mathbb{P}\Bigl(|T(\mathbf{0}, N\mathbf{v})-\mathbb{E} T(\mathbf{0}, N\mathbf{v})|> N^{\chi+\frac{\epsilon'}{2}}\Bigr)\le e^{-N^\theta},
\]
so that, by the Cauchy--Schwarz inequality and \eqref{ass: weight distribution}, the contribution of the last term is at most \(4N^2 e^{-N^\theta/2}\leq N^{2(\chi-\epsilon')}\). Thus, by Assumption~\ref{cond: full concentration},  we have
\[
\begin{split}
N^{2\chi-\epsilon'} &\leq\mathbb{E}\Bigl[\Bigl(T(\mathbf{0}, N\mathbf{v})-\mathbb{E} T(\mathbf{0}, N\mathbf{v})\Bigr)^2\Bigr]\\
&\le 2 N^{2(\chi-\epsilon')}
+N^{2(\chi+\frac{\epsilon'}{2})}\,\mathbb{P}\Bigl(N^{\chi-\epsilon'}\le |T(\mathbf{0}, N\mathbf{v})-\mathbb{E} T(\mathbf{0}, N\mathbf{v})|\le N^{\chi+\frac{\epsilon'}{2}}\Bigr).
\end{split}
\]
Subtracting \(N^{2(\chi-\epsilon')}\) from both sides and rearranging yields
\[
\mathbb{P}\Bigl(|T(\mathbf{0}, N\mathbf{v})-\mathbb{E} T(\mathbf{0}, N\mathbf{v})|\ge N^{\chi-\epsilon'}\Bigr)
\ge \frac{N^{2\chi-\epsilon'}-2N^{2(\chi-\epsilon')}}{N^{2(\chi+\frac{\epsilon'}{2})}}\geq 2^{-1} N^{-2\epsilon'}.
\]
In particular, for $N$ large enough, we have
\[
\mathbb{E}\Bigl[|T(\mathbf{0}, N\mathbf{v})-\mathbb{E} T(\mathbf{0}, N\mathbf{v})|\Bigr]
\ge N^{\chi-\epsilon'} \mathbb{P}\Bigl(|T(\mathbf{0}, N\mathbf{v})-\mathbb{E} T(\mathbf{0}, N\mathbf{v})|\ge N^{\chi-\epsilon'}\Bigr)\geq 2^{-1} N^{\chi-3\epsilon'}.
\]
Using the fact that any mean-zero random variable $X$ has $\E |X| = 2\E X_+$, we have
\al{
4^{-1} N^{\chi-3\epsilon'}&\leq \mathbb{E}\Bigl[(T(\mathbf{0}, N\mathbf{v})-\mathbb{E} T(\mathbf{0}, N\mathbf{v}))_+\Bigr]\\
&=\mathbb{E}\Bigl[(T(\mathbf{0}, N\mathbf{v})-\mathbb{E} T(\mathbf{0}, N\mathbf{v}))_+ \mathbf{1}_{\{T(\mathbf{0}, N\mathbf{v})-\mathbb{E} T(\mathbf{0}, N\mathbf{v})\leq N^{\chi-4\epsilon'}\}}\Bigr] \\
&\qquad +\mathbb{E}\Bigl[(T(\mathbf{0}, N\mathbf{v})-\mathbb{E} T(\mathbf{0}, N\mathbf{v})) \mathbf{1}_{\{T(\mathbf{0}, N\mathbf{v})-\mathbb{E} T(\mathbf{0}, N\mathbf{v})\geq N^{\chi+\epsilon'}\}}\Bigr] \\
&\qquad\qquad + \mathbb{E}\Bigl[(T(\mathbf{0}, N\mathbf{v})-\mathbb{E} T(\mathbf{0}, N\mathbf{v})) \mathbf{1}_{\{N^{\chi-4\epsilon'}<T(\mathbf{0}, N\mathbf{v})-\mathbb{E} T(\mathbf{0}, N\mathbf{v})< N^{\chi+\epsilon'}\}}\Bigr] \\
&\leq N^{\chi-4\epsilon'}+ \mathbb{E}\Bigl[(T(\mathbf{0}, N\mathbf{v})-\mathbb{E} T(\mathbf{0}, N\mathbf{v})) \mathbf{1}_{\{T(\mathbf{0}, N\mathbf{v})-\mathbb{E} T(\mathbf{0}, N\mathbf{v})\geq N^{\chi+\epsilon'}\}}\Bigr] \\
&\qquad + N^{\chi+\epsilon'}\mathbb{P}\Bigl(T(\mathbf{0}, N\mathbf{v})-\mathbb{E} T(\mathbf{0}, N\mathbf{v})\ge N^{\chi-4\epsilon'}\Bigr). 
}
The second term (the expectation) on the right-hand side decays super polynomially due to Assumption~\ref{cond: full concentration} together with the Cauchy--Schwarz inequality as before. Finally, using $\E[T(\mathbf{0}, N\mathbf{v})]\geq N \mu(\mathbf{v})-1\geq N \mu(\mathbf{u})-1$, we conclude that
\al{
\mathbb{P}\Bigl(T(\mathbf{0}, N\mathbf{v})-\mu(N\mathbf{u})\ge N^{\chi-5\epsilon'}\Bigr) &\geq \mathbb{P}\Bigl(T(\mathbf{0}, N\mathbf{v})-\mathbb{E} T(\mathbf{0}, N\mathbf{v})\ge N^{\chi-4\epsilon'}\Bigr)\geq 8^{-1} N^{-3\epsilon'},
}
which gives the desired conclusion by replacing $5\epsilon'$ by $\epsilon'$
\end{proof}

Given $K\in\N$, $\alpha\in \R$, and $\mathbf{x}\in  \R^d$, define
\al{
S^\alpha_K(\mathbf{x}):=\mathbf{x}+ \{0\}\times [-K^{\frac{\chi+1}{2}(1-\alpha)},K^{\frac{\chi+1}{2}(1-\alpha)}]^{d-1}.
}
(See Figure~\ref{fig:L_i} for a depiction of $S^\alpha_K(\mathbf{x})$ in two dimensions.)

\begin{lem}
\label{lem: lower_bound_lemma_2}
For any $\epsilon>0$, for all $K\geq 1$ sufficiently large and for any $\hat{\mathbf{y}}\in  \{K\}\times \Z^{d-1}$, we have
    \[
    \mathbb{P}\Bigl(T\bigl(S^{3\epsilon}_K(\mathbf{0}),  S^{3\epsilon}_K(\hat{\mathbf{y}})\bigr)> K\mu(\mathbf{e}_1) + K^{\chi(1-\epsilon)}\Bigr)\geq \frac{1}{2}K^{-2}\,.
    \]
\end{lem}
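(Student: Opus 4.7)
The proof will adapt the variance-and-concentration strategy of Lemma~\ref{lem:modified} to the face-to-face passage time. First, set $\mathbf{v} := \hat{\mathbf{y}}/K$; since $\hat{\mathbf{y}} \in \{K\} \times \mathbb{Z}^{d-1}$ has first coordinate $K$, we have $\mathbf{v} \in H_{\mathbf{e}_1}$. Lemma~\ref{lem:modified} applied with a parameter $\delta$ chosen smaller than $\chi\epsilon$ yields the point-to-point bound
\[
\mathbb{P}\bigl(T(\mathbf{0}, \hat{\mathbf{y}}) > K\mu(\mathbf{e}_1) + K^{\chi-\delta}\bigr) \ge K^{-2}.
\]
The goal is to upgrade this to the face-to-face time $T := T(S^{3\epsilon}_K(\mathbf{0}), S^{3\epsilon}_K(\hat{\mathbf{y}}))$, losing only a factor of $\tfrac{1}{2}$ in probability and replacing the exponent $\chi-\delta$ by $\chi(1-\epsilon)$.

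The approach is to replay the three-ingredient argument from the proof of Lemma~\ref{lem:modified} with $T$ in place of $T(\mathbf{0}, N\mathbf{v})$. I would establish: (i) a uniform concentration estimate $\mathbb{P}(|T - \mathbb{E} T| > K^{\chi+\delta/2}) \le e^{-K^{\theta}}$, obtained from Assumption~\ref{cond: full concentration}-(2) by a polynomial union bound over all pairs $(\mathbf{x}, \mathbf{y}) \in S^{3\epsilon}_K(\mathbf{0}) \times S^{3\epsilon}_K(\hat{\mathbf{y}})$; (ii) a mean lower bound $\mathbb{E} T \ge K\mu(\mathbf{e}_1) - O(K^\chi)$, which follows by combining Assumption~\ref{cond: full concentration}-(1) with the directional curvature, using that every such pair satisfies $(\mathbf{y}-\mathbf{x})/K \in H_{\mathbf{e}_1}$ (so that $\mu(\mathbf{y}-\mathbf{x}) \ge K\mu(\mathbf{e}_1)$) and then propagating this via (i); and (iii) a variance lower bound $\mathrm{Var}(T) \ge K^{2\chi - \delta''}$. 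Given (i)--(iii), the Paley--Zygmund-type computation carried out in Lemma~\ref{lem:modified} adapts verbatim to show $\mathbb{P}(T > \mathbb{E} T + K^{\chi-\delta'}) \ge \tfrac{1}{2}K^{-2}$ for suitable parameters, and combining with (ii) yields the conclusion.

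The main obstacle is (iii), the variance lower bound for the face-to-face passage time. Writing $T = T(\mathbf{0}, \hat{\mathbf{y}}) - \Delta$ with $\Delta \ge 0$ and using $\mathrm{Var}(T(\mathbf{0}, \hat{\mathbf{y}})) \ge K^{2\chi - \delta''}$ from Assumption~\ref{cond: full concentration}-(1), a Cauchy--Schwarz bound on the covariance term reduces (iii) to showing $\mathrm{Var}(\Delta) = o(K^{2\chi})$. The naive triangle-inequality bound $\Delta \le \sup_{\mathbf{x} \in S} T(\mathbf{0},\mathbf{x}) + \sup_{\mathbf{y} \in S'} T(\mathbf{y},\hat{\mathbf{y}})$ yields only $\mathrm{Var}(\Delta) = O(K^{(\chi+1)(1-3\epsilon)})$, which can exceed $K^{2\chi}$ when $\epsilon > 0$ is small and $\chi \in [0,1)$ is small. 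The delicate refinement---and the reason for the specific shrinking factor $\alpha = 3\epsilon$ in the definition of $S^{3\epsilon}_K$---is that the faces have diameter $K^{\xi(1-3\epsilon)}$ with $\xi = (\chi+1)/2$, slightly below the conjectured transverse scale $K^{\xi}$. The positive curvature (Assumption~\ref{ass: positive curvature}) localizes the optimal face-to-face pair $(\mathbf{x}^*, \mathbf{y}^*)$ near $(\mathbf{0}, \hat{\mathbf{y}})$, because a transversal shift of magnitude $r$ increases $\mu(\mathbf{y}-\mathbf{x})$ by $\Omega(r^2/K)$, and this deterministic cost dominates the typical stochastic saving for suitable $r$. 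Turning this localization statement into a rigorous variance bound on $\Delta$ is the technical crux of the proof.
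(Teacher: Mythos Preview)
Your proposal has a genuine gap at step (iii), the variance lower bound $\mathrm{Var}(T) \ge K^{2\chi-\delta''}$ for the face-to-face time. You yourself note that the naive triangle-inequality bound on $\Delta$ gives only $\mathrm{Var}(\Delta)=O(K^{(\chi+1)(1-3\epsilon)})$, which dominates $K^{2\chi}$ whenever $\chi<(1-3\epsilon)/(1+3\epsilon)$ --- in particular for the conjectural value $\chi=1/3$ in $d=2$. Your suggested fix, using positive curvature to localize the optimizing pair $(\mathbf{x}^*,\mathbf{y}^*)$, is plausible heuristically but you do not carry it out; establishing that $\mathrm{Var}(\Delta)=o(K^{2\chi})$ from localization alone would still require controlling the fluctuations of $T(\mathbf{0},\hat{\mathbf{y}})-T(\mathbf{x}^*,\mathbf{y}^*)$ for a \emph{random} pair $(\mathbf{x}^*,\mathbf{y}^*)$, and nothing in Assumptions~\ref{cond: full concentration} or~\ref{ass: positive curvature} gives this directly. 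So the proposal stops short of an actual proof.

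The paper avoids the variance problem for the face-to-face time altogether by an ``extension trick''. It applies Lemma~\ref{lem:modified} not to $T(\mathbf{0},\hat{\mathbf{y}})$ but to the longer segment $T(-K^{1-2\epsilon}\mathbf{e}_1,\ \hat{\mathbf{y}}+K^{1-2\epsilon}\mathbf{e}_1)$, obtaining an event $A_K$ of probability $\ge K^{-2}$ on which this extended passage time exceeds $(K+2K^{1-2\epsilon})\mu(\mathbf{e}_1)+3K^{\chi(1-\epsilon)}$. Separately, for every $\mathbf{x}\in S_K^{3\epsilon}(\mathbf{0})$ one has $|\mu(\mathbf{x}+K^{1-2\epsilon}\mathbf{e}_1)-K^{1-2\epsilon}\mu(\mathbf{e}_1)|\le C|\mathbf{x}|^2/K^{1-2\epsilon}\le \tfrac12 K^{\chi(1-\epsilon)}$ by finite curvature and the choice $3\epsilon$ in the face width, so the \emph{upper-tail} concentration of Assumption~\ref{cond: full concentration} and a union bound give an event $B_K$ with $\mathbb{P}(B_K^c)\le \tfrac12 K^{-2}$ on which $T(-K^{1-2\epsilon}\mathbf{e}_1,\mathbf{x})\le K^{1-2\epsilon}\mu(\mathbf{e}_1)+K^{\chi(1-\epsilon)}$ uniformly in $\mathbf{x}$ (and symmetrically on the right). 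On $A_K\cap B_K$ the triangle inequality $T(\mathbf{x},\mathbf{y})\ge T(-K^{1-2\epsilon}\mathbf{e}_1,\hat{\mathbf{y}}+K^{1-2\epsilon}\mathbf{e}_1)-T(-K^{1-2\epsilon}\mathbf{e}_1,\mathbf{x})-T(\mathbf{y},\hat{\mathbf{y}}+K^{1-2\epsilon}\mathbf{e}_1)$ forces the face-to-face time to be large. The point is that only upper-tail concentration of point-to-point times is needed for the correction, and that holds with probability far exceeding $1-K^{-2}$; no variance lower bound for $T$ itself is ever required.
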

\begin{proof}
Fix $\epsilon > 0$ small. Define
$$A_K:=\Bigl\{T\bigl(-K^{1-2\epsilon}\mathbf{e}_1,\hat{\mathbf{y}}+K^{1-2\epsilon}\mathbf{e}_1\bigr)> \bigl(K+2K^{1-2\epsilon}\bigr)\mu(\mathbf{e}_1)+ 3K^{\chi(1-\epsilon)}\Bigr\}\,.$$
By Lemma~\ref{lem:modified}, since $\mu(\hat{\mathbf{y}})\geq K \mu(\mathbf{e}_1)$ by symmetry, for all $K$ sufficiently large we have
$$\mathbb{P}(A_K)\ge K^{-2}\,.$$

We claim that  for sufficiently large $K$ and for any $\mathbf{x} \in S_K^{3\epsilon}(\mathbf{0})$ one has
\[
\mathbb{P}\Bigl(T\bigl(-K^{1-2\epsilon}\mathbf{e}_1, \mathbf{x}\bigr)\leq K^{1-2\epsilon}\mu(\mathbf{e}_1)+K^{\chi(1-\epsilon)}\Bigr) \geq 1 - \frac{1}{4}K^{-2}\,.
\]
First, by Assumption~\ref{cond: full concentration} and a union bound, for $K$ sufficiently large, one has
\begin{equation}
\label{eq: event_B_prob_mod}
\mathbb{P}\Bigl(\forall \mathbf{x} \in S_K^{3\epsilon}(\mathbf{0}),\,T\bigl(-K^{1-2\epsilon}\mathbf{e}_1, \mathbf{x}\bigr) \leq \mu\bigl(\mathbf{x} + K^{1-2\epsilon}\mathbf{e}_1\bigr) + \bigl|\mathbf{x} + K^{1-2\epsilon}\mathbf{e}_1\bigr|^{\chi(1+\epsilon)}\Bigr) \geq 1 - \frac{1}{4}K^{-2}\,.
\end{equation}
Next, by Assumption~\ref{assum: finite curvature}, whenever $K$ is sufficiently large, for $\mathbf{x} \in S_{K}^{3\epsilon}(\mathbf{0})$ we have
\begin{align}
|    \mu\bigl(\mathbf{x} + K^{1-2\epsilon}\mathbf{e}_1\bigr) - K^{1-2\epsilon} \mu(\mathbf{e}_1)|
    &= K^{1-2\epsilon} \Bigl|\mu\Bigl(\mathbf{e}_1 + \frac{\mathbf{x}}{K^{1-2\epsilon}}\Bigr) - \mu(\mathbf{e}_1)\Bigr|\nonumber\\[1mm]
    &\leq C \frac{|\mathbf{x}|^2}{K^{1-2\epsilon}}\nonumber\\[1mm]
    \label{eq: curvature_bound_B_mod}
    &\leq C(d-1) \frac{K^{(\chi + 1)(1-3\epsilon)}}{K^{1-2\epsilon}} \leq \frac{1}{2} K^{\chi(1 - \epsilon)}\,.
\end{align}
Also, using the fact that $\chi \leq 1$ (so that $(\chi + 1) / 2\leq 1$), we have
\begin{equation}
\label{eq: x_norm_mod}
\bigl|\mathbf{x} + K^{1-2\epsilon}\mathbf{e}_1\bigr|^{\chi(1+\epsilon)} \leq d^{\chi(1+\epsilon)} K^{\chi (1+\epsilon)(1-2\epsilon)} \leq \frac{1}{2} K^{\chi(1-\epsilon)},
\end{equation}
if $K$ is sufficiently large. Combining \eqref{eq: event_B_prob_mod}, \eqref{eq: curvature_bound_B_mod} and \eqref{eq: x_norm_mod}, we have for all large $K$ that
\[
\mathbb{P}\Bigl(\forall \mathbf{x} \in S_K^{3\epsilon}(\mathbf{0}),\,T\bigl(-K^{1-2\epsilon}\mathbf{e}_1, \mathbf{x}\bigr) \leq K^{1-2\epsilon}\mu(\mathbf{e}_1)+K^{\chi(1-\epsilon)}\Bigr) \geq 1 - \frac{1}{4}K^{-2}\,,
\]
and this proves the claim.

By symmetry, for $K$ sufficiently large and for any $\hat{\mathbf{y}}\in  \{K\}\times \Z^{d-1}$, we also have
\[
\mathbb{P}\Bigl(\forall \mathbf{y} \in  S_K^{3\epsilon}(\hat{\mathbf{y}}),\,T\bigl(\mathbf{y},\hat{\mathbf{y}} + K^{1-2\epsilon}\mathbf{e}_1\bigr)\leq K^{1-2\epsilon}\mu(\mathbf{e}_1)+K^{\chi(1-\epsilon)}\Bigr) \geq 1 - \frac{1}{4}K^{-2}\,.
\]

For any $\hat{\mathbf{y}}\in  \{K\}\times \Z^{d-1}$, define the event
\[
B_K :=
\left\{
\begin{array}{l}
    \forall \mathbf{x} \in S_K^{3\epsilon}(\mathbf{0}),\, T\bigl(-K^{1-2\epsilon}\mathbf{e}_1,\mathbf{x}\bigr)\leq K^{1-2\epsilon}\mu(\mathbf{e}_1)+K^{\chi(1-\epsilon)},\\[1mm]
  \forall \mathbf{y} \in  S_K^{3\epsilon}(\hat{\mathbf{y}}),\,  T\bigl(\mathbf{y},\hat{\mathbf{y}} + K^{1-2\epsilon}\mathbf{e}_1\bigr)\leq K^{1-2\epsilon}\mu(\mathbf{e}_1)+K^{\chi(1-\epsilon)}
  \end{array}
\right\}\,.
\]
By the claim, we can choose $K$ sufficiently large so that
\[
\mathbb{P}(A_K\cap B_K) \geq \mathbb{P}(A_K) -\mathbb{P}(B_K^c)\geq  K^{-2} - \frac{K^{-2}}{4}-\frac{K^{-2}}{4}= \frac{K^{-2}}{2}\,.
\]
On the event \(A_K\cap B_K\), by triangle inequality, for any \(\mathbf{x} \in S^{3\epsilon}_K(\mathbf{0})\) and \(\mathbf{y} \in  S^{3\epsilon}_K(\hat{\mathbf{y}})\) we have
\begin{align*}
    T(\mathbf{x},\mathbf{y})&\geq T\bigl(-K^{1-2\epsilon}\mathbf{e}_1,\hat{\mathbf{y}}+K^{1-2\epsilon}\mathbf{e}_1\bigr)
    -T\bigl(-K^{1-2\epsilon}\mathbf{e}_1, \mathbf{x}\bigr)
    -T\bigl(\mathbf{y},\hat{\mathbf{y}}+K^{1-2\epsilon}\mathbf{e}_1\bigr)\\[1mm]
    &> \Bigl(K+2K^{1-2\epsilon}\Bigr)\mu(\mathbf{e}_1)+ 3K^{\chi(1-\epsilon)}
    -2\Bigl(N^{1-2\epsilon}\mu(\mathbf{e}_1)+K^{\chi(1-\epsilon)}\Bigr)\\[1mm]
    &\geq K\mu(\mathbf{e}_1)+ K^{\chi(1-\epsilon)}\,.
\end{align*}
In other words,
\[
\mathbb{P}\Bigl(T\bigl(S^{3\epsilon}_K(\mathbf{0}),  S^{3\epsilon}_K(\hat{\mathbf{y}})\bigr)> K\mu(\mathbf{e}_1) + K^{\chi(1-\epsilon)}\Bigr) \geq \prob(A_K\cap B_K) \geq \frac{1}{2K^2}.
\]
This completes the proof of the lemma.
\end{proof}

\begin{lem}\label{lem: S probab}
    For any $\epsilon>0$, for all $K$ sufficiently large, we have
    \[
    \mathbb{P}\Bigl(T\bigl(S^{3\epsilon}_{K}(\mathbf{0}),  \{K\}\times \Z^{d-1}\bigr)> K\mu(\mathbf{e}_1) + K^{\chi(1-\epsilon)}\Bigr)\geq e^{-K^{6d\epsilon}}.
    \]
\end{lem}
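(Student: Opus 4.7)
The plan is to bootstrap Lemma~\ref{lem: lower_bound_lemma_2} from the face-to-face setting to the face-to-hyperplane setting by tiling a carefully chosen ``close'' portion of the hyperplane $\{K\}\times\mathbb{Z}^{d-1}$ with translates of $S_K^{3\epsilon}(\cdot)$, applying the FKG inequality across the tiling, and then handling the complementary ``far'' portion by a direct curvature-plus-concentration argument that does not rely on FKG.

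Concretely, I will set $\delta:=2\epsilon$ and define
\[
\mathcal{Y}_{\mathrm{close}}\;:=\;\{K\}\times\bigl([-K^{(1+\chi)/2+\delta},K^{(1+\chi)/2+\delta}]^{d-1}\cap\mathbb{Z}^{d-1}\bigr).
\]
I then choose deterministic points $\hat{\mathbf{y}}_1,\dots,\hat{\mathbf{y}}_M\in\{K\}\times\mathbb{Z}^{d-1}$ with
\[
M\;\le\;K^{(d-1)(\delta+3\epsilon(1+\chi)/2)}\;\le\;K^{5(d-1)\epsilon}
\]
and $\bigcup_{i=1}^M S_K^{3\epsilon}(\hat{\mathbf{y}}_i)\supseteq\mathcal{Y}_{\mathrm{close}}$.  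The events $A_i:=\{T(S_K^{3\epsilon}(\mathbf{0}),S_K^{3\epsilon}(\hat{\mathbf{y}}_i))>K\mu(\mathbf{e}_1)+K^{\chi(1-\epsilon)}\}$ are increasing in the edge weights, so Lemma~\ref{lem: lower_bound_lemma_2} combined with the FKG inequality yields
\[
\mathbb{P}\Bigl(\bigcap_{i=1}^M A_i\Bigr)\;\ge\;(2K^2)^{-M}\;\ge\;\exp\bigl(-K^{(5d-4)\epsilon+o(1)}\bigr)\;\ge\;\tfrac{1}{2}\,e^{-K^{6d\epsilon}}
\]
for $K$ large.  Since $\bigcup_iS_K^{3\epsilon}(\hat{\mathbf{y}}_i)\supseteq \mathcal{Y}_{\mathrm{close}}$, on this event every $\hat{\mathbf{y}}\in\mathcal{Y}_{\mathrm{close}}$ satisfies $T(S_K^{3\epsilon}(\mathbf{0}),\hat{\mathbf{y}})\ge T(S_K^{3\epsilon}(\mathbf{0}),S_K^{3\epsilon}(\hat{\mathbf{y}}_i))>K\mu(\mathbf{e}_1)+K^{\chi(1-\epsilon)}$.

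For the far region $\{\hat{\mathbf{y}}=(K,\hat{\mathbf{y}}_\perp):|\hat{\mathbf{y}}_\perp|>K^{(1+\chi)/2+\delta}\}$ I argue deterministically and then union-bound.  Any $\mathbf{x}\in S_K^{3\epsilon}(\mathbf{0})$ has $|\mathbf{x}_\perp|\le\sqrt{d-1}\,K^{(1+\chi)(1-3\epsilon)/2}\ll|\hat{\mathbf{y}}_\perp|$, so $|\hat{\mathbf{y}}_\perp-\mathbf{x}_\perp|\ge|\hat{\mathbf{y}}_\perp|/2$ and Assumption~\ref{ass: positive curvature} gives $\mu(\hat{\mathbf{y}}-\mathbf{x})-K\mu(\mathbf{e}_1)\ge K^{\chi+2\delta}/(4C)$.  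When in addition $|\hat{\mathbf{y}}_\perp|\le RK$ for a large constant $R$, Assumption~\ref{cond: full concentration} (both parts, applied with $\epsilon''=\delta/2$) yields $T(\mathbf{x},\hat{\mathbf{y}})\ge\mu(\hat{\mathbf{y}}-\mathbf{x})-2|\hat{\mathbf{y}}-\mathbf{x}|^{\chi+\delta/2}\ge K\mu(\mathbf{e}_1)+K^{\chi(1-\epsilon)}$ outside a set of probability $\le e^{-K^\theta}$, while for $|\hat{\mathbf{y}}_\perp|>RK$ Lemma~\ref{lem: Theorem 5.2 and Proposition 5.8} gives the stronger estimate $e^{-c|\hat{\mathbf{y}}_\perp|}$.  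A union bound over the polynomially many $(\mathbf{x},\hat{\mathbf{y}})$ in the intermediate range and the summable tail in the far range produces $\mathbb{P}(E_{\mathrm{far}}^c)\le e^{-K^{\theta'}}$ for some $\theta'>0$ independent of $\epsilon$.  Combining via $\mathbb{P}(E_{\mathrm{close}}\cap E_{\mathrm{far}})\ge\mathbb{P}(E_{\mathrm{close}})-\mathbb{P}(E_{\mathrm{far}}^c)\ge e^{-K^{6d\epsilon}}$ closes the proof whenever $\epsilon<\theta'/(6d)$; the range $\epsilon\ge\theta'/(6d)$ reduces to this case because the event is monotone increasing in $\epsilon$ (both the threshold $K^{\chi(1-\epsilon)}$ and the starting set $S_K^{3\epsilon}(\mathbf{0})$ shrink with $\epsilon$), while the target $e^{-K^{6d\epsilon}}$ weakens.

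The main obstacle is the simultaneous calibration of $\delta$ against two competing pressures: the tile count $M\sim K^{(d-1)(\delta+\epsilon)}$ must remain below $K^{6d\epsilon}/\log K$ so that the FKG product still delivers $e^{-K^{6d\epsilon}}$, while at the same time the curvature bonus $K^{\chi+2\delta}$ must dominate the natural fluctuation scale $K^{\chi+\delta/2}$ from Assumption~\ref{cond: full concentration} by a margin large enough to absorb $K^{\chi(1-\epsilon)}$.  The choice $\delta=2\epsilon$ meets both constraints with the stated constant $6d$; pushing below this constant would require a joint optimization of $\delta$ and the width exponent $\tfrac{\chi+1}{2}(1-3\epsilon)$ of $S_K^{3\epsilon}(\cdot)$.
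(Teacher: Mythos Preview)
Your proof is correct and follows essentially the same approach as the paper: tile a ``close'' portion of the target hyperplane by translates of $S_K^{3\epsilon}$ and apply FKG together with Lemma~\ref{lem: lower_bound_lemma_2}, then handle the ``far'' portion via positive curvature plus the concentration from Assumption~\ref{cond: full concentration} (with Lemma~\ref{lem: Theorem 5.2 and Proposition 5.8} for the very far tail). The one difference is in how the two pieces are combined: the paper notes that both the close event $A'_K$ and the far event $B'_K$ are increasing and applies FKG again, $\mathbb{P}(A'_K\cap B'_K)\ge \mathbb{P}(A'_K)\,\mathbb{P}(B'_K)\ge \tfrac12\,\mathbb{P}(A'_K)$, which delivers the bound for every $\epsilon>0$ directly and sidesteps your subtraction $\mathbb{P}(E_{\mathrm{close}})-\mathbb{P}(E_{\mathrm{far}}^c)$ and the subsequent monotonicity-in-$\epsilon$ reduction.
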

\begin{proof}
We consider two events:
    \al{
    A'_K &:=\{T\bigl(S^{3\epsilon}_{K}(\mathbf{0}),S^{-\epsilon}_{K}(K\mathbf{e}_1)\bigr)> K\mu(\mathbf{e}_1) + K^{\chi(1-\epsilon)}\}, \\
    B'_K& := \{T\bigl(S^{3\epsilon}_{K}(\mathbf{0}),(\{K\}\times \Z^{d-1})\setminus S^{-\epsilon}_{K}(K\mathbf{e}_1)\bigr)> K\mu(\mathbf{e}_1) + K^{\chi(1-\epsilon)}\}.
    }
    so that $A'_K\cap B'_K$ implies the event inside the probability in the statement of the lemma. Since $\mathbb{P}(A'_K \cap B'_K)\geq \mathbb{P}(A'_K)\mathbb{P}(B'_K)$ by the FKG inequality, it suffices to show
    \aln{
    &\mathbb{P}(A'_K)\geq 2 e^{-K^{6d\epsilon}}\,,\label{eq: estimate A'}\\
    &\mathbb{P}(B'_K)\geq \frac{1}{2}\,.  \label{eq: estimate B'}
    }

We first estimate $\mathbb{P}(A'_K)$. Since $\chi\leq 1$, for $K$ large enough, we have
\[
S^{-\epsilon}_{K}(K\mathbf{e}_1)\subset \bigcup_{\mathbf{i}\in \{0\}\times\Iintv{- K^{5\epsilon}, K^{5\epsilon}}^{d-1}}  S^{3\epsilon}_{K}(K\mathbf{e}_1 + \mathbf{i} K^{\frac{\chi+1}{2}(1-4\epsilon)}).
\]
Hence, by the FKG inequality and Lemma~\ref{lem: lower_bound_lemma_2}, if $K$ is large enough, we have
\al{
\mathbb{P}(A_K'))&\geq \prod_{\mathbf{i}\in \{0\}\times\Iintv{- K^{5\epsilon}, K^{5\epsilon}}^{d-1}} \mathbb{P}(T\bigl(S^{3\epsilon}_{K}(\mathbf{0}),S^{3\epsilon}_{K}(K\mathbf{e}_1 + \mathbf{i} K^{\frac{\chi+1}{2}(1-4\epsilon)})\bigr)> K\mu(\mathbf{e}_1) + K^{\chi(1-\epsilon)})\\
&\geq \Big(\frac{1}{2}K^{-2}\Big)^{|\Iintv{- K^{5\epsilon}, K^{5\epsilon}}^{d-1}|}\geq 2 e^{-K^{6d\epsilon}}.
}

Next, we estimate $\mathbb{P}(B'_K)$. By a union bound,
\al{
  \mathbb{P}((B_K')^c)&= \mathbb{P}(\exists \mathbf{x}\in S^{3\epsilon}_{K}(\mathbf{0}) , \exists \mathbf{y}\in (\{K\}\times \Z^{d-1})\setminus S^{-\epsilon}_{K}(K\mathbf{e}_1),\,T\bigl(\mathbf{x},\mathbf{y}\bigr)\leq K\mu(\mathbf{e}_1) + K^{\chi(1-\epsilon)})\\
  &\leq \sum_{\mathbf{x}\in S^{3\epsilon}_{K}(\mathbf{0})} \sum_{\mathbf{y}\in (\{K\}\times \Z^{d-1})\setminus S^{-\epsilon}_{K}(\K\mathbf{e}_1)}\mathbb{P}(T\bigl(\mathbf{x},\mathbf{y}\bigr)\leq K\mu(\mathbf{e}_1) + K^{\chi(1-\epsilon)})\,.
}
We fix $\mathbf{x}\in S^{3\epsilon}_{K}(\mathbf{0})$. Then, for $\zeta$ small enough, 
\al{
&\sum_{\mathbf{y}\in (\{K\}\times \Z^{d-1})\setminus S^{-\epsilon}_{K}(K\mathbf{e}_1)}\mathbb{P}(T\bigl(\mathbf{x},\mathbf{y}\bigr)\leq K\mu(\mathbf{e}_1) + K^{\chi(1-\epsilon)}))\\
&\leq \sum_{\substack{\mathbf{y}\in \{K\}\times \Z^{d-1}:\\
|\mathbf{y}-K\mathbf{e}_1|_\infty \in \Iintv{\frac{1}{2}K^{\frac{\chi+1}{2} (1+\epsilon)},K^2}}}\mathbb{P}(T\bigl(\mathbf{x},\mathbf{y}\bigr)\leq  K\mu(\mathbf{e}_1) + K^{\chi(1-\epsilon)})\\
&\qquad + \sum_{k= K^{2}}^\infty \sum_{\substack{\mathbf{y} \in \{K\}\times \Z^{d-1}:\\
|\mathbf{y}-K\mathbf{e}_1|_\infty = k}}\mathbb{P}(T\bigl(\mathbf{x},\mathbf{y}\bigr)\leq K\mu(\mathbf{e}_1) + K^{\chi(1-\epsilon)}).
}
By Assumption~\ref{ass: positive curvature},  there exists $c'=c'(\epsilon)>0$ such that  $\mu(\mathbf{x}-\mathbf{y})\geq K\mu(\mathbf{e}_1) + \widetilde{K}^{\chi(1+c')}$ for any $\mathbf{y}\in \{K\}\times \Z^{d-1}$ satisfying $|\mathbf{y}|_\infty \geq \frac{1}{2} K^{\frac{\chi+1}{2} (1+\epsilon)}$. Hence, using Assumption~\ref{cond: full concentration}, the first term on the right-hand side is bounded from above by $\widetilde{K}^{2d} e^{-\widetilde{K}^c}$ for some $c>0$. Using \eqref{eq: Kesten Propositin 5.8}, the second term on the right-hand side is bounded from above by 
\al{
&\sum_{k= K^{2}}^\infty e^{-c' k} \leq K^{-2d}.
} 
Summing over $\mathbf{x}$ gives the desired bound.
\end{proof}

\subsection{Proof of Theorem~\ref{thm: key prop for lower bound}-(2)}\label{sec:key-lower-bound-1}
For simplicity, we will assume $\mathbf{u}=\mathbf{e}_1$. We take $A>1$ as in Lemma~\ref{lem:exit-geodesics}. We also take
\[	\widetilde{K} := \widetilde{K}(\zeta) :=\lfloor \zeta^{-\frac{1}{1-\chi}}\rfloor.
\]
We will only consider the case that $\zeta > 0$ being small (so that $\tilde{K}$ is a large integer).
\begin{lem}\label{cor: Li estimate}
Let \(\zeta > 0\) be small. For \(i = 0, 1, \ldots, \lfloor N/\widetilde{K} \rfloor - 1\), define
\[
L_i := \{i\widetilde{K}\} \times [-A\sqrt{\zeta} N, A\sqrt{\zeta}N]^{d-1}\,.
\]
      For any $\varepsilon>0$, for all $\zeta$ sufficiently small depending on $\varepsilon$, if $N$ is large enough, then  we have for any $i\in \Iintv{0,N/\widetilde{K} - 1}$, 
    \al{
   & \mathbb{P}\Bigl(T\bigl(L_i,  \{(i+1)\widetilde{K}\}\times \Z^{d-1})\bigr)> \widetilde{K}\mu(\mathbf{e}_1) + \widetilde{K}^{\chi(1-\epsilon)}\Bigr)\geq e^{-(4AN)^{d-1}\zeta^{\frac{d-1-9d\varepsilon}{1-\chi}}}\,.
    }
 \end{lem}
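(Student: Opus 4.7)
The plan is to tile the face $L_i$ by translates of the smaller faces $S^{3\varepsilon}_{\widetilde K}(\cdot)$, apply Lemma~\ref{lem: S probab} on each tile, and combine the resulting bounds via the FKG inequality. Concretely, I choose a finite set
\[
\mathcal{X} \subset i\widetilde{K}\mathbf{e}_1 + \{0\}\times 2\widetilde K^{\frac{\chi+1}{2}(1-3\varepsilon)}\mathbb Z^{d-1}
\]
such that $L_i \subset \bigcup_{\mathbf{x}\in\mathcal{X}} S^{3\varepsilon}_{\widetilde K}(\mathbf{x})$. A straightforward volume count shows that one can take
\[
|\mathcal{X}| \le \Bigl(\tfrac{2A\sqrt{\zeta}N}{\widetilde K^{\frac{\chi+1}{2}(1-3\varepsilon)}}\Bigr)^{d-1},
\]
provided $\zeta$ is small enough that $\widetilde K^{\frac{\chi+1}{2}(1-3\varepsilon)} \ll A\sqrt{\zeta}N$.

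For each $\mathbf{x}\in \mathcal{X}$, translation invariance of the edge-weight distribution together with Lemma~\ref{lem: S probab} (applied after translating $\mathbf{x}$ to the origin) yields $\mathbb{P}(E_\mathbf{x}) \ge e^{-\widetilde K^{6d\varepsilon}}$ for the event
\[
E_\mathbf{x} := \bigl\{T\bigl(S^{3\varepsilon}_{\widetilde K}(\mathbf{x}),\, \{(i+1)\widetilde K\}\times \mathbb Z^{d-1}\bigr) > \widetilde K\mu(\mathbf{e}_1) + \widetilde K^{\chi(1-\varepsilon)}\bigr\}.
\]
Since restricted passage times are coordinate-wise non-decreasing in the edge weights $(\tau_e)$, every $E_\mathbf{x}$ is an increasing event, and the FKG inequality for increasing events on product measures gives
\[
\mathbb{P}\Bigl(\bigcap_{\mathbf{x}\in\mathcal X} E_\mathbf{x}\Bigr) \ge \prod_{\mathbf{x}\in \mathcal X}\mathbb P(E_\mathbf{x}) \ge e^{-|\mathcal X|\widetilde K^{6d\varepsilon}}.
\]
Because $L_i \subset \bigcup_{\mathbf{x}\in \mathcal X} S^{3\varepsilon}_{\widetilde K}(\mathbf{x})$, any path from $L_i$ to $\{(i+1)\widetilde K\}\times \mathbb Z^{d-1}$ must start in one of the tiles, so on $\bigcap_\mathbf{x} E_\mathbf{x}$ its passage time exceeds $\widetilde K\mu(\mathbf{e}_1)+\widetilde K^{\chi(1-\varepsilon)}$. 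Hence $\bigcap_\mathbf{x} E_\mathbf{x}$ is contained in the event whose probability we must bound from below.

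It remains to verify the exponent. Substituting $\widetilde K \asymp \zeta^{-1/(1-\chi)}$ and simplifying,
\[
|\mathcal X|\widetilde K^{6d\varepsilon} \le (2A)^{d-1} N^{d-1}\,\zeta^{\frac{(d-1)-\frac{3\varepsilon(\chi+1)(d-1)}{2}-6d\varepsilon}{1-\chi}}.
\]
The exponent of $\zeta$ here is at least $\frac{d-1-9d\varepsilon}{1-\chi}$ provided $2d \ge (\chi+1)(d-1)$, which holds strictly for any $d\ge 2$ and $\chi<1$ since $(\chi+1)(d-1)<2(d-1)<2d$. Combined with $(2A)^{d-1}\le (4A)^{d-1}$, this gives exactly the claimed lower bound. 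I see no real obstacle: the argument is a routine combination of Lemma~\ref{lem: S probab}, FKG for increasing events, and a bit of exponent bookkeeping, with the key observation that the gap $9d\varepsilon-6d\varepsilon=3d\varepsilon$ in the target exponent comfortably accommodates the $\frac{3\varepsilon(\chi+1)(d-1)}{2}$ inflation produced by the tile count.
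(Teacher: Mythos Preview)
Your proof is correct and follows essentially the same approach as the paper: both tile $L_i$ by the translates $\tilde L_0(\mathbf{i}) = S^{3\varepsilon}_{\widetilde K}(2\mathbf{i}\,\widetilde K^{\frac{\chi+1}{2}(1-3\varepsilon)})$, apply Lemma~\ref{lem: S probab} on each tile, and combine via FKG. Your exponent bookkeeping, including the verification that $2d>(\chi+1)(d-1)$ absorbs the $\frac{3\varepsilon(\chi+1)(d-1)}{2}$ term into the $9d\varepsilon$ slack, matches the paper's computation.
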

\begin{proof}

By translation invariance, we may assume $i=0.$ For
$
\mathbf{i} \in \Z^{d-1}\,,
$ we  
write
\begin{align*}
\tilde{L}_0(\mathbf{i}) &:= \Bigl\{0\Bigr\} \times \Bigl(2\mathbf{i}  \widetilde{K}^{\frac{\chi+1}{2}(1-3\varepsilon)}
+\Bigl[- \widetilde{K}^{\frac{\chi+1}{2}(1-3\varepsilon)},  \widetilde{K}^{\frac{\chi+1}{2}(1-3\varepsilon)}\Bigr]^{d-1}\Bigr).
\end{align*}
By the FKG inequality, \begin{align}
 & \mathbb{P}\Bigl(T(L_0, \{\widetilde{K}\}\times \Z^{d-1}) \geq \widetilde{K}\mu(\mathbf{e}_1) + \widetilde{K}^{\chi(1-\varepsilon)}\Bigr)\nonumber\\[1mm]
&\ge \mathbb{P}\Bigl(\forall\, \mathbf{i}\in \Iintv{-\frac{A\sqrt{\zeta}N}{ \widetilde{K}^{\frac{\chi+1}{2}(1-3\varepsilon)}},\,\frac{A\sqrt{\zeta}N}{ \widetilde{K}^{\frac{\chi+1}{2}(1-3\varepsilon)}}}^{d-1},\; T\bigl(\tilde{L}_0(\mathbf{i}), \{\widetilde{K}\}\times \Z^{d-1}\bigr) \geq \widetilde{K}\mu(\mathbf{e}_1) + \widetilde{K}^{\chi(1-\varepsilon)}\Bigr)\nonumber\\[1mm]
\label{eq: FKG_last_line_mod}
&\ge \prod_{\mathbf{i}\in \Iintv{-\frac{A\sqrt{\zeta}N}{ \widetilde{K}^{\frac{\chi+1}{2}(1-3\varepsilon)}},\,\frac{A\sqrt{\zeta}N}{ \widetilde{K}^{\frac{\chi+1}{2}(1-3\varepsilon)}}}^{d-1}}
\mathbb{P}\Bigl(T\bigl(\tilde{L}_0(\mathbf{i}),\{\widetilde{K}\}\times \Z^{d-1}\bigr) \geq \widetilde{K}\mu(\mathbf{e}_1) + \widetilde{K}^{\chi(1-\varepsilon)}\Bigr)\,.
\end{align}
Recall that \(\widetilde{K}=\lfloor\zeta^{-\frac{1}{1-\chi}}\rfloor\). By Lemma~\ref{lem: S probab}, \eqref{eq: FKG_last_line_mod} is further bounded from below by 
\al{
\prod_{\mathbf{i}\in \Iintv{-\frac{A\sqrt{\zeta}N}{ \widetilde{K}^{\frac{\chi+1}{2}(1-3\varepsilon)}},\,\frac{A\sqrt{\zeta}N}{ \widetilde{K}^{\frac{\chi+1}{2}(1-3\varepsilon)}}}^{d-1}}
\Bigl(e^{-\widetilde{K}^{6d\varepsilon}}\Bigr)
&\geq e^{-(4A\sqrt{\zeta}N)^{d-1}\,\widetilde{K}^{6d\varepsilon - \frac{(\chi+1)(d-1)(1-3\varepsilon)}{2}}}\\
&\geq e^{-(4AN)^{d-1}\zeta^{\frac{d-1-9d\varepsilon}{1-\chi}}}.
}
\end{proof}

\begin{figure}[t]
    \centering
    \includegraphics[width=0.7\linewidth]{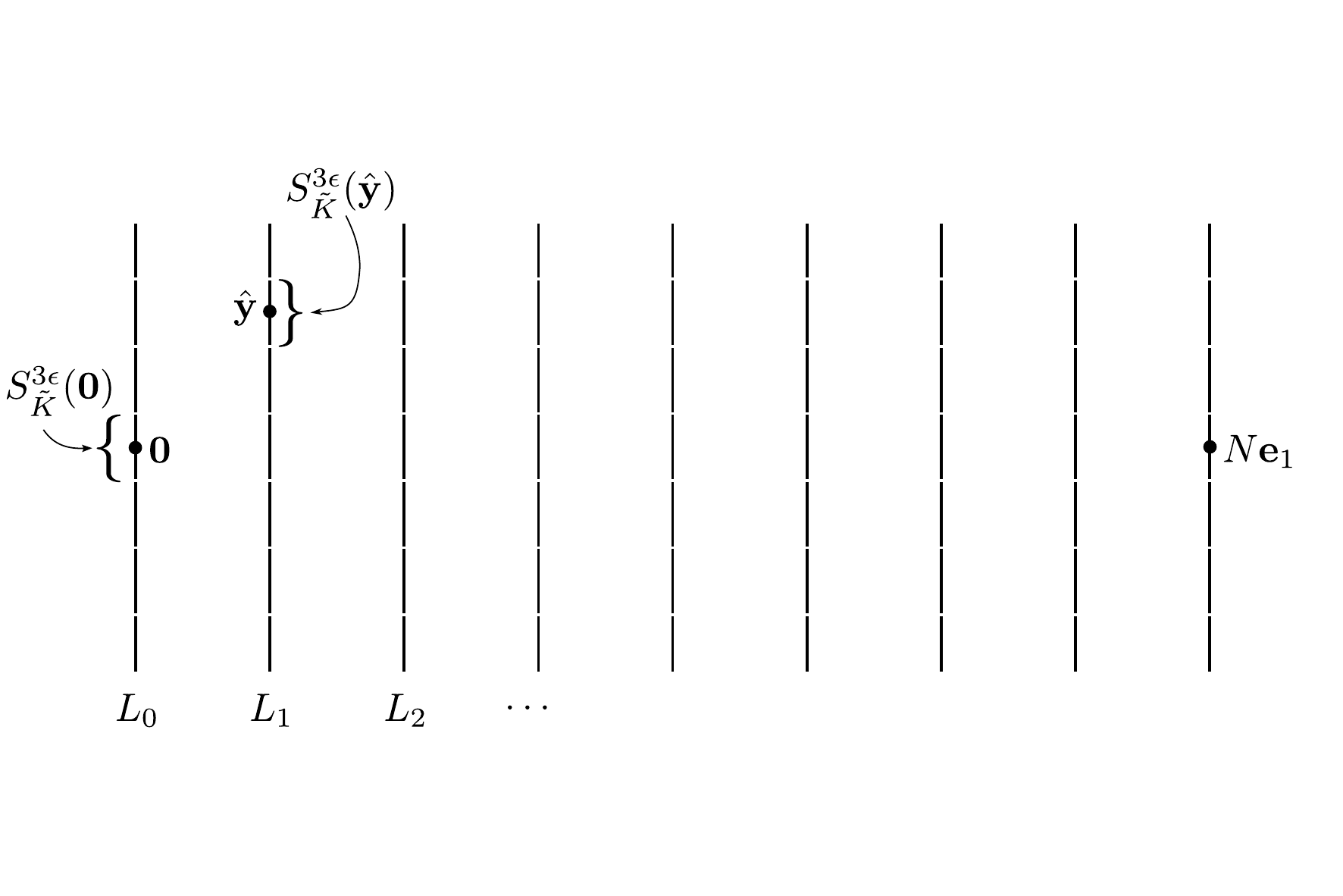}
    \caption{Depiction of the sets $L_i$ and $S^{3\epsilon}_{\widetilde{K}}(\mathbf{x})$. Each dashed line is $L_i$, and each line segment in each $L_i$ is some $S^{3\epsilon}_{\widetilde{K}}(\mathbf{x})$. By definition, $S^{3\epsilon}_{\widetilde{K}}(\mathbf{x})$ is a $(d-1)$-dimensional box of side length ${\widetilde{K}}^{\frac{\chi+1}{2}(1-\epsilon)}$ centered at $\mathbf{x}$. In order to bound the passage time from $L_0$ to $\{\widetilde{K}\} \times \mathbb{Z}^{d-1}$, it suffices to control the passage time between those $S^{3\epsilon}_{\widetilde{K}}(\hat{\mathbf{x}})$ in $L_0$ (which corresponds to $\tilde{L}_0(\mathbf{i})$ in the proof of Lemma~\ref{cor: Li estimate}) and those $S^{3\epsilon}_{\widetilde{K}}(\mathbf{\hat{\mathbf{y}}})$ in $L_1$, and this is handled in Lemma~\ref{lem: lower_bound_lemma_2}.}
    \label{fig:L_i}
\end{figure}

\begin{proof}[Proof of Theorem~\ref{thm: key prop for lower bound}-(2)]
    We are now ready to prove Theorem~\ref{thm: key prop for lower bound}-(2). Consider the events
\[
E_{N, i} := \Bigl\{T(L_{i-1}, \{i\widetilde{K}\}\times \Z^{d-1}) \geq \widetilde{K}\mu(\mathbf{e}_1) + \widetilde{K}^{\chi(1-\varepsilon)}\Bigr\} \quad \text{for } i = 1, 2, \ldots, \left\lceil{N}/{\widetilde{K}}\right\rceil - 1,
\]
and
\[
F_N := \Bigl\{\text{any path } \gamma:\mathbf{0}\to  N\mathbf{e}_1 \text{ with } \gamma\not\subseteq \R\times [-A\sqrt{\zeta} N, A\sqrt{\zeta}N]^{d-1} \text{ satisfies } T(\gamma) > (\mu(\mathbf{e}_1) + \zeta)N\Bigr\}\,.
\]
Suppose that the event
\[
\Bigl(\bigcap_{i=1}^{\lceil N/\widetilde{K}\rceil-1} E_{N, i}\Bigr) \cap F_N
\]
occurs. Let \(\gamma_N \in \mathbb{G}(\mathbf{0},N\mathbf{e}_1)\) be an optimal path. If \(\gamma_N \not\subseteq \R\times [-A\sqrt{\zeta} N, A\sqrt{\zeta}N]^{d-1}\), then the occurrence of \(F_N\) implies that
\[
T(\mathbf{0}, N\mathbf{e}_1) \ge (\mu(\mathbf{e}_1) + \zeta^{1+\frac{\epsilon}{1-\chi}})N\,.
\]
Otherwise, if all \(\gamma_N\subseteq \R\times [-A\sqrt{\zeta} N, A\sqrt{\zeta}N]^{d-1}\), then for \(N\) large the event \(\bigcap_{i=1}^{\lceil N/\widetilde{K}\rceil-1} E_{N, i}\) implies
\begin{align*}
T(\mathbf{0}, N\mathbf{e}_1) &\ge \sum_{i=1}^{\lceil N/\widetilde{K}\rceil - 1} T(L_{i-1},  \{i\widetilde{K}\}\times \Z^{d-1})\\[1mm]
&\ge (\lceil N/\widetilde{K}\rceil  -1)\Bigl(\widetilde{K}\mu(\mathbf{e}_1) + \widetilde{K}^{\chi(1-\epsilon)}\Bigr) \\
&\ge N\mu(\mathbf{e}_1) + \lceil N/\widetilde{K}\rceil\,\widetilde{K}^{\chi(1-\epsilon)}  -\Bigl(\widetilde{K}\mu(\mathbf{e}_1) + \widetilde{K}^{\chi(1-\epsilon)}\Bigr)\\
&\ge N\mu(\mathbf{e}_1) + \zeta^{\frac{1-\chi(1-\epsilon)}{1-\chi}}N  -\Bigl(\widetilde{K}\mu(\mathbf{e}_1) + \widetilde{K}^{\chi(1-\epsilon)}\Bigr)\ge  N\mu(\mathbf{e}_1) + \zeta^{1 + \frac{\epsilon}{1-\chi}}N\,,
\end{align*}
where we have used $\chi<1$ and $N\gg \widetilde{K}$ in the last line. Thus, by the FKG inequality and translation invariance,
\[
\mathbb{P}\Bigl(T(\mathbf{0}, N\mathbf{e}_1) \geq N\mu(\mathbf{e}_1) + \zeta^{1 + \frac{\epsilon}{1-\chi}}N\Bigr)
\ge \mathbb{P}(E_{N, 1})^{\lceil N/\widetilde{K}\rceil}\,\mathbb{P}(F_N)\,.
\]
By Lemma~\ref{lem:exit-geodesics}, for fixed \(\zeta>0\) one has \(\mathbb{P}(F_N)\ge 1/2\) for all \(N\) large. On the other hand, by Lemma~\ref{cor: Li estimate}, for $N$ large enough depending on $\zeta$, we estimate
\al{
 \mathbb{P}(E_{N, 1})^{\lceil N/\widetilde{K}\rceil}
 &\geq e^{-(\lceil N/\widetilde{K}\rceil)(4AN)^{d-1}\zeta^{\frac{d-1-9d\epsilon}{1-\chi}}}\\
 &\geq e^{- 4^d  A^d  N^{d} \zeta^{\frac{d-9d\epsilon}{1-\chi}}}.
}

Replacing \(\zeta^{1+\frac{\epsilon}{1-\chi}}\) by \(\zeta\),  one obtains
\[
\mathbb{P}\Bigl(T(\mathbf{0}, N\mathbf{e}_1) \ge N\mu(\mathbf{e}_1) + \zeta N\Bigr)
\ge \frac{1}{2}\exp\Bigl(-4^d A^d \,N^d\,\zeta^{\left(\frac{d-9d\epsilon}{1-\chi}\right)
\left(1+ \frac{\epsilon}{1-\chi}\right)^{-1}}\Bigr)\,.
\]
Since \(\epsilon>0\) was arbitrary, taking logarithms on both sides, dividing both sides by $N$ and taking $N\to\infty$ finishes the proof.
\end{proof}

\subsection{Proof of Theorem~\ref{thm: key prop for lower bound}-(1)}\label{sec:key-lower-bound-2}
Now we give a lower bound for the upper tail moderate deviation
\[
\mathbb{P}\Bigl(T(\mathbf{0},N\mathbf{e}_1)-\mu(\mathbf{e}_1) N > N^a\Bigr)
\]
instead of the rate function. The proof is essentially the same as that of Theorem~\ref{thm: key prop for lower bound}-(2) and requires only the following modifications: replace every occurrence of \(\zeta\) by \(N^{a-1}\), set \(\widetilde{K} = \lfloor N^{\frac{1-a}{1-\chi}}\rfloor\), and apply Lemma~\ref{lem: exit geodesics with a} instead of Lemma~\ref{lem:exit-geodesics}. Then, given any $\varepsilon > 0$, there exists a constant \(c_1>0\) (possibly depending on \(\epsilon\)) and for all large \(N\),
\[
\mathbb{P}\Bigl(T(\mathbf{0}, N\mathbf{e}_1) \ge N\mu(\mathbf{e}_1) + N^a\Bigr)
\ge \exp\Bigl(-c_1\,N^{d+(a-1)\left(\frac{d-6\epsilon}{1-\chi}\right)
\left(1+ \frac{\epsilon}{1-\chi}\right)^{-1}}\Bigr)\,.
\]
Since $d+(a-1)\left(\frac{d}{1-\chi}\right) = \frac{d(a-\chi)}{1-\chi}$, this implies the desired result by adjusting \(\epsilon>0\) appropriately.\qed

\subsection{Remark on general directions for lower bound}\label{section: lower bound general direction}
The lower bound established above for the direction \(\mathbf{e}_1\) extends to an arbitrary direction \(\mathbf{u}\in S^{d-1}\). We will focus on the rate function, since the upper tail probability can be handled in a similar way.

In the general case one also partitions the region from \(\mathbf{0}\) to \(N\mathbf{u}\) into mesoscopic blocks. For a base point \(\mathbf{x}\in\R^d\), an interval \(I\subset\R\), and a radius \(h>0\) (to be chosen appropriately), recall the definition of the tilted cylinder $\mathrm{Cyl}_{\mathbf{x},\mathbf{u}}(I,h)$. Let $\widetilde{K}:=\widetilde{K}(\zeta):=\lfloor \zeta^{-\frac{1}{1-\chi}}\rfloor $ as before. Subdivide the segment \(I = [0,N]\) into \(\lceil N/\widetilde{K}\rceil\) blocks of length \(\widetilde{K}\) in the \(\mathbf{u}\)-direction. In each block, we replace each $S_{\widetilde{K}}^{3\varepsilon}(\mathbf{x})$ by a tilted cylinder
$
\mathrm{Cyl}_{\mathbf{x},\mathbf{u}}([0,\widetilde{K}],\,\widetilde{K}^{\frac{\chi+1}{2}(1-3\epsilon)}).
$ 
Essentially the same argument as before shows that
\[
\mathbb{P}\Bigl(T(\mathbf{0}, N\mathbf{u})\ge N\mu(\mathbf{u})+\zeta N\Bigr)
\ge \exp\Bigl(-\,N^d\,\zeta^{\gamma}\Bigr)
\]
for some constant \(\gamma\geq \frac{d}{1-\chi}(1-o(1))\) as $\zeta \to 0$.

\section{Lower bound for lower tail moderate deviations}\label{sec:final-proof}
Finally, we consider lower bound for the lower tail moderate deviations. The proof is very straightforward and the results follow directly from the FKG inequality.
\subsection{Proof of Theorem~\ref{thm: lower bound for  lower tail MD}-(1)}
Let 
\[
\widetilde{J}_* := \lceil N^{\frac{a-\underline{\chi}}{1-\underline{\chi}}}\rceil \quad \text{and} \quad \widetilde{K}_* := \frac{N}{\widetilde{J}_*}\leq N^{\frac{1-a}{1-\underline{\chi}}}.
\]
For $i\in\Iintv{0,\widetilde{J}_*-1}$, define
\[
E_i = \Bigl\{T\bigl(i\widetilde{K}\mathbf{u},(i+1)\widetilde{K}\mathbf{u}\bigr) < \widetilde{K}_*\,\mu(\mathbf{u}) - \widetilde{K}_*^{\underline{\chi}}\Bigr\}.
\]
Since $\widetilde{K}_*^{\underline{\chi}} \widetilde{J}_* = N^{\underline{\chi}} \widetilde{J}_*^{1-\underline{\chi}} \geq N^{a}$, for any $\epsilon>0$, if $\delta>0$ is small enough and $N$ is large enough, then we have
\al{
\P\Bigl(T(\mathbf{0}, N\mathbf{u}) < N\,\mu(\mathbf{u}) - N^{a}\Bigr)
&\ge \P\Bigl(\bigcap_{i=0}^{\widetilde{J}_*-1} E_i\Bigr)\\
&\ge \prod_{i=0}^{\widetilde{J}_*-1} \mathbb{P}(E_i)\\
&\ge e^{-\widetilde{J}_* \widetilde{K}_*^\delta}
\ge e^{-N^{(1+\epsilon)\frac{a-\underline{\chi}}{1-\underline{\chi}}}},
}
where the second inequality follows from the FKG inequality, and the third one uses Assumption~\ref{ass: lower bound for lower tail deviations}. \qed
\subsection{Proof of Theorem~\ref{thm: lower bound for  lower tail MD}-(2)}
The proof is similar to that before.  Let 
\[
\widetilde{J}_\sharp = \lceil \zeta^{-\frac{1}{1-\underline{\chi}}} N\rceil \quad \text{and} \quad \widetilde{K}_\sharp := \frac{N}{\widetilde{J}_\sharp}\leq \zeta^{\frac{1}{1-\underline{\chi}}}.
\]
For $i\in\Iintv{0,\widetilde{J}_\sharp-1}$, define
\[
E_i = \Bigl\{T\bigl(i\widetilde{K}_\sharp\mathbf{u},(i+1)\widetilde{K}_\sharp\mathbf{u}\bigr) < \widetilde{K}_\sharp\,\mu(\mathbf{u}) - \widetilde{K}_\sharp^{\underline{\chi}}\Bigr\}.
\]
Since $\widetilde{K}_\sharp^{\underline{\chi}} \widetilde{J}_\sharp = N^{\underline{\chi}}\, \widetilde{J}_\sharp^{1-\underline{\chi}} \geq \zeta N$, for any $\epsilon>0$, if $\delta$ is small enough and $N$ is large enough, then we have
\al{
\P\Bigl(T(\mathbf{0}, N\mathbf{u}) < N\,\mu(\mathbf{u}) - \zeta N\Bigr)
&\ge \P\Bigl(\bigcap_{i=0}^{\widetilde{J}_\sharp-1} E_i\Bigr)\\
&\ge \prod_{i=0}^{\widetilde{J}_\sharp-1} \mathbb{P}(E_i)\\
&\ge e^{-\widetilde{J}_\sharp \widetilde{K}_\sharp^\delta}
\ge e^{-\zeta^{(1-\epsilon)\frac{1}{1-\underline{\chi}}}\,N},
}
where the second inequality follows from the FKG inequality.\qed

\vspace{10pt}
\noindent {\bf Acknowledgments.} The authors would like to thank the organizers of the program {\it Random Interacting Systems, Scaling Limits, and Universality} held at the Institute for Mathematical Sciences, National University of Singapore, in December 2023, where the authors met and eventually initiated this project. They would also like to thank Barbara Dembin for her valuable comments on the introduction. W.-K.L.\ is supported by the National Science and Technology Council in Taiwan grant
number 113-2115-M-002-009-MY3. S.N.\ is supported by JSPS KAKENHI 22K20344 and 24K16937.

\appendix
    \bibliographystyle{plain}
    \bibliography{ref}

\begin{thebibliography}{99}

\bibitem{hammersley1965}
J.~M. Hammersley and D.~J.~A. Welsh.
\newblock First-passage percolation, subadditive processes, stochastic networks, and generalized renewal theory.
\newblock In {\em Bernoulli 1713, Bayes 1763, Laplace 1813}, pages 61--110. Springer, 1965.

\bibitem{barabasi1995fractal}
A.-L. Barab{\'a}si and H.~E. Stanley.
\newblock {\em Fractal Concepts in Surface Growth}.
\newblock Cambridge University Press, 1995.

\bibitem{kingman1973}
J.~F.~C. Kingman.
\newblock Subadditive ergodic theory.
\newblock {\em Ann. Probab.}, 1(6):883--899, 1973.

\bibitem{50years}
A.~Auffinger, M.~Damron, and J.~Hanson.
\newblock {\em 50 Years of First-Passage Percolation}, volume 68 of {\em University Lecture Series}.
\newblock American Mathematical Society, 2017.

\bibitem{chatterjee2013}
S.~Chatterjee.
\newblock The universal relation between scaling exponents in first-passage percolation.
\newblock {\em Ann. Math.}, 177(2):663--697, 2013.

\bibitem{Auffinger2014}
A.~Auffinger and M.~Damron.
\newblock A simplified proof of the relation between scaling exponents in first-passage percolation.
\newblock {\em Ann. Probab.}, 42(3):1197--1211, 2014.

\bibitem{kesten1993}
H.~Kesten.
\newblock On the speed of convergence in first-passage percolation.
\newblock {\em Ann. Appl. Probab.}, 3(2):296--338, 1993.

\bibitem{newmancurvature}
C.~M. Newman.
\newblock A surface view of first-passage percolation.
\newblock In {\em Proceedings of the International Congress of Mathematicians}, pages 1017--1023. Birkh{\"a}user, 1995.

\bibitem{almdeijfen2015}
D.~Ahlberg, M.~Deijfen, and S.~Griffiths.
\newblock Empirical evidence for the shape of the limit shape in first-passage percolation.
\newblock Preprint, 2015.

\bibitem{benjamini2003}
I.~Benjamini, G.~Kalai, and O.~Schramm.
\newblock First passage percolation has sublinear distance variance.
\newblock {\em Ann. Probab.}, 31(4):1970--1978, 2003.

\bibitem{BenaimRossignol2008}
M.~Bena{\"i}m and R.~Rossignol.
\newblock Exponential concentration for first passage percolation through modified Poincar{\'e} inequalities.
\newblock {\em Ann. Inst. Henri Poincar{\'e} Probab. Stat.}, 44(3):544--573, 2008.

\bibitem{Damron2015}
M.~Damron, J.~Hanson, and P.~Sosoe.
\newblock Sublinear variance in first-passage percolation for general distributions.
\newblock {\em Probab. Theory Related Fields}, 163(1-2):223--258, 2015.

\bibitem{newmanpiza1995}
C.~M. Newman and M.~S.~T. Piza.
\newblock Divergence of shape fluctuations in two dimensions.
\newblock {\em Ann. Probab.}, 23(3):977--1005, 1995.

\bibitem{bateschatterjee}
E.~Bates and S.~Chatterjee.
\newblock The endpoint distribution of directed polymers.
\newblock {\em Ann. Probab.}, 48(2):817--871, 2020.

\bibitem{DHHX}
M.~Damron, J.~Hanson, C.~Houdr{\'e}, and C.~Xu.
\newblock Lower bounds for fluctuations in first-passage percolation.
\newblock {\em J. Theoret. Probab.}, 33(3):1554--1571, 2020.

\bibitem{auffingerdamronhanson2015}
A.~Auffinger, M.~Damron, and J.~Hanson.
\newblock Rate of convergence of the mean for sub-additive ergodic sequences.
\newblock {\em Adv. Math.}, 285:138--181, 2015.

\bibitem{Alexander}
K.~S. Alexander.
\newblock A note on some rates of convergence in first-passage percolation.
\newblock {\em Ann. Appl. Probab.}, 3(1):81--90, 1993.

\bibitem{Tessera}
R.~Tessera.
\newblock Speed of convergence in first passage percolation and geodesicity of the average distance.
\newblock {\em Ann. Inst. Henri Poincar{\'e} Probab. Stat.}, 54(1):569--586, 2018.

\bibitem{damronkubota}
M.~Damron and N.~Kubota.
\newblock Rate of convergence in first-passage percolation under low moments.
\newblock {\em Stochastic Process. Appl.}, 126(10):3065--3076, 2016.

\bibitem{nakajima2019divergence}
S.~Nakajima.
\newblock Divergence of non-random fluctuation in first passage percolation.
\newblock {\em Electron. Commun. Probab.}, 24:1--13, 2019.

\bibitem{aspects}
H.~Kesten.
\newblock Aspects of first passage percolation.
\newblock In {\em {\'E}cole d'{\'e}t{\'e} de probabilit{\'e}s de Saint-Flour, XIV---1984}, volume 1180 of {\em Lecture Notes in Math.}, pages 125--264. Springer, 1986.

\bibitem{verges2024largedeviationprinciplespeed}
B.~Verg{\`e}s.
\newblock Large deviation principle for the speed of geodesics in first-passage percolation.
\newblock Preprint arXiv:2402.xxxxx, 2024.

\bibitem{ChowZhang}
Y.~Chow and Y.~Zhang.
\newblock Large deviations in first-passage percolation.
\newblock {\em Ann. Appl. Probab.}, 13(4):1601--1614, 2003.

\bibitem{CranstonMountfordGore}
M.~Cranston, D.~Gauthier, and T.~S. Mountford.
\newblock On large deviation regimes for random media models.
\newblock {\em Ann. Appl. Probab.}, 19(2):826--862, 2009.

\bibitem{BasuGangulySly}
R.~Basu, S.~Ganguly, and A.~Sly.
\newblock Upper tail large deviations in first passage percolation.
\newblock {\em Comm. Pure Appl. Math.}, 74(8):1577--1640, 2021.

\bibitem{CoscoNakajima}
C.~Cosco and S.~Nakajima.
\newblock A variational formula for large deviations in first-passage percolation under tail estimates.
\newblock {\em Ann. Appl. Probab.}, 34(2):1676--1729, 2024.

\bibitem{basu2023}
R.~Basu, V.~Sidoravicius, and A.~Sly.
\newblock Tail estimates for rotationally invariant first passage percolation.
\newblock Preprint, 2023.

\bibitem{Johansson2000}
K.~Johansson.
\newblock Shape fluctuations and random matrices.
\newblock {\em Comm. Math. Phys.}, 209(2):437--476, 2000.

\bibitem{TracyWidom1994}
C.~A. Tracy and H.~Widom.
\newblock Level-spacing distributions and the Airy kernel.
\newblock {\em Comm. Math. Phys.}, 159(1):151--174, 1994.

\bibitem{kpz1986}
M.~Kardar, G.~Parisi, and Y.-C. Zhang.
\newblock Dynamic scaling of growing interfaces.
\newblock {\em Phys. Rev. Lett.}, 56(9):889--892, 1986.

\bibitem{Corwin2012}
I.~Corwin.
\newblock The Kardar-Parisi-Zhang equation and universality class.
\newblock {\em Random Matrices Theory Appl.}, 1(1):1130001, 2012.

\bibitem{ferrari_integrable_probability}
P.~L. Ferrari and H.~Spohn.
\newblock Random growth models.
\newblock In {\em The Oxford Handbook of Random Matrix Theory}, pages 782--801. Oxford University Press, 2011.

\bibitem{MatetskiQuastelRemenik2021}
K.~Matetski, J.~Quastel, and D.~Remenik.
\newblock The KPZ fixed point.
\newblock {\em Acta Math.}, 227(1):115--203, 2021.

\bibitem{Borodin2015}
A.~Borodin.
\newblock Integrable probability: From representation theory to Macdonald processes.
\newblock {\em Probab. Surv.}, 11:1--58, 2014.

\bibitem{Quastel2011}
J.~Quastel.
\newblock Introduction to KPZ.
\newblock In {\em Current Developments in Mathematics, 2011}, pages 125--194. Int. Press, 2012.

\bibitem{GangulyHegde2023}
S.~Ganguly and M.~Hegde.
\newblock Optimal tail exponents in general last passage percolation via bootstrapping \& geodesic geometry.
\newblock {\em Probab. Theory Related Fields}, 186(1-2):221--284, 2023.

\bibitem{BasuGangulyHammondHegde2022}
R.~Basu, S.~Ganguly, A.~Hammond, and M.~Hegde.
\newblock Interlacing and scaling exponents for the geodesic watermelon in last passage percolation.
\newblock {\em Comm. Math. Phys.}, 393(3):1241--1309, 2022.

\bibitem{Gueudre_2015}
T.~Gueudre, P.~Le~Doussal, A.~Rosso, A.~Henry, and P.~Calabrese.
\newblock Short-time growth of a Kardar-Parisi-Zhang interface with flat initial conditions.
\newblock {\em Phys. Rev. E}, 86(4):041151, 2012.

\end{thebibliography}

\end{document}